\newtheorem{thm}{Theorem}
\newtheorem*{thm2}{Theorem}
\newtheorem{prop}{Proposition}[section]
\newtheorem{lem}[prop]{Lemma}
\newtheorem{cor}[prop]{Corollary}
\theoremstyle{definition}
\newtheorem{defn}[prop]{Definition}
\newtheorem{remark}[prop]{Remark}
\newtheorem{example}[prop]{Example}
\newtheorem{question}[prop]{Question}
\newcommand{\fin}{\ensuremath{\mathrm{fin}}}
\newcommand{\R}{\ensuremath{\mathbb{R}}}
\newcommand{\cP}{\ensuremath{\mathcal{P}}}
\newcommand{\cPfin}{\ensuremath{\mathcal{P}}^\fin}
\newcommand{\cPq}[2]{\ensuremath{\mathcal{P}}_{#1,#2}}
\newcommand{\supp}{\ensuremath{\mathrm{supp}}}
\newcommand{\Sp}{\ensuremath{\mathbb{S}}}
\newcommand{\cpl}{\ensuremath{\mathrm{Cpl}}}
\newcommand{\dWq}{\ensuremath{d_{\mathrm{W},q}^X}}
\newcommand{\dWp}{\ensuremath{d_{\mathrm{W},p}^X}}
\newcommand{\dWqq}[1]{\ensuremath{d_{\mathrm{W},#1}^X}}
\newcommand{\di}{\ensuremath{d_{\mathrm{I}}}}
\newcommand{\dB}{\ensuremath{d_{\mathrm{B}}}}
\newcommand{\vr}[2]{\mathrm{VR}(#1;#2)}
\newcommand{\vrf}[1]{\mathrm{VR}(#1,\smb)}
\newcommand{\vrp}[2]{\mathrm{VR}_{p}(#1;#2)}
\newcommand{\vrpf}[1]{\mathrm{VR}_{p}(#1;\smb)}
\newcommand{\vrpp}[3]{\mathrm{VR}_{#1}(#2;#3)}
\newcommand{\vrppf}[2]{\mathrm{VR}_{#1}(#2;\smb)}
\newcommand{\vrpfin}[2]{\mathrm{VR}_{p}^\fin(#1;#2)}
\newcommand{\vrpffin}[1]{\mathrm{VR}_{p}^\fin(#1;\smb)}
\newcommand{\vrppfin}[3]{\mathrm{VR}_{#1}^\fin(#2;#3)}
\newcommand{\vrppffin}[2]{\mathrm{VR}_{#1}^\fin(#2;\smb)}
\newcommand{\cech}[2]{\mathrm{\check{C}}(#1;#2)}
\newcommand{\cechf}[1]{\mathrm{\check{C}}(#1;\smb)}
\newcommand{\cechp}[2]{\mathrm{\check{C}}_{p}(#1;#2)}
\newcommand{\cechpf}[1]{\mathrm{\check{C}}_{p}(#1;\smb)}
\newcommand{\cechpp}[3]{\mathrm{\check{C}}_{#1}(#2;#3)}
\newcommand{\cechppf}[2]{\mathrm{\check{C}}_{#1}(#2;\smb)}
\newcommand{\acechp}[3]{\mathrm{\check{C}}_{p}(#1,#2;#3)} 
\newcommand{\acechpffin}[2]{\mathrm{\check{C}}_{p}^\fin(#1, #2;\smb)}
\newcommand{\acechpfin}[3]{\mathrm{\check{C}}_{p}^\fin(#1, #2;#3)}
\newcommand{\acechpf}[2]{\mathrm{\check{C}}_{p}(#1, #2;\smb)}
\newcommand{\cechpfin}[2]{\mathrm{\check{C}}_{p}^\fin(#1;#2)}
\newcommand{\cechpffin}[1]{\mathrm{\check{C}}_{p}^\fin(#1;\smb)}
\newcommand{\cechppfin}[3]{\mathrm{\check{C}}_{#1}^\fin(#2;#3)}
\newcommand{\cechppffin}[2]{\mathrm{\check{C}}_{#1}^\fin(#2;\smb)}
\newcommand{\norm}[1]{\left\lVert#1\right\rVert}
\newcommand{\dgm}{\ensuremath{\mathrm{dgm}}}
\newcommand{\dgmvr}{\dgm^\ensuremath{\mathrm{VR}}}
\newcommand{\dgmcech}{\dgm^\ensuremath{\mathrm{\check{C}}}}
\newcommand{\diam}{\mathrm{diam}}
\newcommand{\rad}{\mathrm{rad}}
\newcommand{\dGH}{d_\mathrm{GH}}
\newcommand{\dHT}{d_\mathrm{HT}}
\newcommand{\smb}{\bullet}
\newcommand{\mi}{\mathfrak{i}}
\newcommand{\iqp}{\mi_{q, p}}
\newcommand{\filt}[3]{[#1,#2;#3]}
\newcommand{\filtf}[2]{[#1,#2;\smb]}
\title{The Persistent Topology of Optimal Transport Based Metric Thickenings}
\date{\today}
\begin{document}
\author[1]{Henry Adams}
\author[2]{Facundo M\'emoli}
\author[3]{Michael Moy}
\author[4]{Qingsong Wang}

\affil[1]{Department of Mathematics, 
Colorado State University\\
\texttt{henry.adams@colostate.edu}}

\affil[2]{Department of Mathematics and Department of Computer Science and Engineering,
The Ohio State University\\

\texttt{facundo.memoli@gmail.com}}

\affil[3]{Department of Mathematics, 
Colorado State University\\
\texttt{michael.moy@colostate.edu}}

\affil[4]{Department of Mathematics,
The Ohio State University\\ 
\texttt{wang.8973@osu.edu}}

\maketitle

\begin{abstract}
A metric thickening of a given metric space $X$ is any metric space admitting an isometric embedding of $X$.
Thickenings have found use in applications of topology to data analysis, where one may approximate the shape of a dataset via the persistent homology of an increasing sequence of spaces.
We introduce two new families of metric thickenings, the $p$-Vietoris--Rips and $p$-\v{C}ech metric thickenings for all $1\le p\le \infty$, which include all probability measures on $X$ whose $p$-diameter or $p$-radius is bounded from above, equipped with an optimal transport metric.
The $p$-diameter (resp.\ $p$-radius) of a measure is a certain $\ell_p$ relaxation of the usual notion of diameter (resp.\ radius) of a subset of a metric space.
These families recover the previously studied Vietoris--Rips and \v{C}ech metric thickenings when $p=\infty$.
As our main contribution, we prove a stability theorem for the persistent homology of $p$-Vietoris--Rips and $p$-\v{C}ech metric thickenings, which is novel even in the case $p=\infty$.
In the specific case $p=2$, we prove a Hausmann-type theorem for thickenings of manifolds, and we derive the complete list of homotopy types of the $2$-Vietoris--Rips thickenings of the $n$-sphere as the scale increases.
\end{abstract}

\maketitle

\tableofcontents


\nomenclature[01]{$(X,d_X)$}{A metric space.}

\nomenclature[02]{$\Sp^n$}{Sphere endowed with the usual geodesic metric.}

\nomenclature[03]{$Z_n$}{The metric space with $n$ points and all interpoint distances equal to 1.}

\nomenclature[04]{$\diam(A)$}{The diameter of a subset $A$ of a metric space.}

\nomenclature[05]{$\rad(A)$}{The radius of a subset $A$ of a metric space.}

\nomenclature[06]{$\dGH$}{The Gromov--Haussdorff distance.}

\nomenclature[07]{$\cP_X$}{Set of all Radon probability measures on the metric space $X$.}

\nomenclature[08]{$\cPfin_X$}{Set of all finitely supported Radon probability measures on the metric space $X$.}

\nomenclature[09]{$\cPq{q}{X}$}{Set of all Radon probability measures with finite moments of order $q$.}

\nomenclature[10]{$\filtf{\cP_X}{\mi^X}$}{The sublevelset filtration associated to invariant $\mi$.}

\nomenclature[11]{$\filtf{\cPfin_X}{\mi^X}$}{The sublevelset filtration with finite support associated to invariant $\mi$.}

\nomenclature[12]{$\cpl(\alpha,\beta)$}{The set of all couplings between probability measures $\alpha$ and $\beta.$}

\nomenclature[13]{$\dWq$}{The $q$-Wasserstein distance on the metric space $X$.}

\nomenclature[14]{$\diam_p(\alpha)$}{The $p$-diameter of the probability measure $\alpha$.}

\nomenclature[15]{$\rad_p(\alpha)$}{The $p$-radius of the probability measure $\alpha$.}

\nomenclature[16]{$\vrf{X}$}{The standard Vietoris-Rips simplicial complex filtration.}

\nomenclature[17]{$\vrpf{X}$}{The $p$-Vietoris-Rips filtration.}

\nomenclature[18]{$\vrpffin{X}$}{The $p$-Vietoris-Rips filtration with finite support.}

\nomenclature[19]{$\cechf{X}$}{The standard \v{C}ech simplicial complex filtration.}

\nomenclature[20]{$\cechpf{X}$}{The $p$-\v{C}ech metric thickening filtration.}

\nomenclature[21]{$\cechpffin{X}$}{The $p$-\v{C}ech metric thickening filtration with finite support.}

\nomenclature[22]{$\dB$}{Bottleneck distance between persistence diagrams.}

\nomenclature[23]{$\di^\mathcal{C}$}{Interleaving distance between functors $(\R,\leq)\to \mathcal{C}$.}

\nomenclature[24]{$\dgm$}{Persistence diagram for a persistence module.}

\nomenclature[25]{$\dgmvr_{k}$}{Degree $k$-persistence diagram arising from the Vietoris--Rips simplicial complex filtration.}

\nomenclature[26]{$\dgmvr_{k,p}$}{Degree $k$-persistence diagram arising from the $p$-Vietoris--Rips metric thickening.}

\nomenclature[27]{$\dgmcech_{k}$}{Degree $k$-persistence diagram arising from the \v{C}ech simplicial complex filtration.}

\nomenclature[28]{$\dgmcech_{k,p}$}{Degree $k$-persistence diagram arising from the $p$-\v{C}ech metric thickening.}

\nomenclature[29]{$\varepsilon$ and $\delta$}{distortion variables}

\nomenclature[30]{$r, s, t$}{Scale parameters.}

\printnomenclature
\section{Introduction}
\label{sec:intro}

Geometric simplicial complexes, such as Vietoris--Rips or \v{C}ech complexes, are one of the cornerstones of topological data analysis.
One can approximate the shape of a dataset $X$ by building a growing sequence of Vietoris--Rips complexes with $X$ as the underlying set, and then computing persistent homology.
The shape of the data, as measured by persistence, is reflective of important patterns within~\cite{Carlsson2009}.

The popularity of Vietoris--Rips complexes relies on at least three facts.
First, Vietoris--Rips filtrations and their persistent homology signatures are computable~\cite{bauer2021ripser}.
Second, Vietoris--Rips persistent homology is stable~\cite{chazal2009gromov,ChazalDeSilvaOudot2014}, meaning that the topological data analysis pipeline is robust to certain types of noise.
Third, Vietoris--Rips complexes are topologically faithful at low scale parameters: one can use them to recover the homotopy types~\cite{Latschev2001} or homology groups~\cite{ChazalOudot2008} of an unknown underlying space, when given only a finite noisy sampling.

At higher scale parameters, we mostly do not know how Vietoris--Rips complexes behave.
This is despite the fact that one of the key insights of persistent homology is to allow the scale parameter to vary from small to large, tracking the lifetimes of features as the scale increases.
Our practice is ahead of our theory in this regard: data science practitioners are building Vietoris--Rips complexes with scale parameters larger than those for which the reconstruction results of~\cite{Latschev2001,ChazalOudot2008} apply.

Stability implies that as more and more data points are sampled from some ``true'' underlying space $M$, the Vietoris--Rips persistent homology of the dataset $X$ converges to the Vietoris--Rips persistent homology of $M$.
The simplest possible case is when the dataset $X$ is sampled from a manifold $M$, and so we cannot fully understand the Vietoris--Rips persistent homology of data without also understanding the Vietoris--Rips persistent homology of manifolds.
However, not much is known about Vietoris--Rips complexes of manifolds, except at small scales~\cite{Hausmann1995}.
Even the Vietoris--Rips persistent homology of the $n$-sphere $\Sp^n$ is almost entirely unknown.

Two potential obstacles for understanding the homotopy types of Vietoris--Rips complexes of a manifold $M$ are:
\begin{enumerate}
	\item the natural inclusion $M \hookrightarrow \vr{M}{r}$ is not continuous, and
	\item we do not yet have a full Morse theory for Vietoris--Rips complexes of manifolds.
\end{enumerate}

There are by now several strategies for handling these two obstacles.
One strategy is to remain in the setting of Vietoris--Rips simplicial complexes.
Obstacle (1) is then unavoidable.
Regarding obstacle (2), Bestvina--Brady Morse theory has only been successfully applied at low scale parameters, allowing Zaremsky~\cite{zaremsky2019} to prove that $\vr{\Sp^n}{r}$ recovers the homotopy type of $\Sp^n$ for $r$ small enough, but not to derive new homotopy types that appear as $r$ increases.
Simplicial techniques have been considered for a long time, but even successes such as an understanding of the homotopy types of the Vietoris--Rips complexes of the circle at all scales~\cite{AA-VRS1} are not accompanied by a broader Morse theory (though some techniques feel Morse-theoretic).

A second strategy is very recent.
\cite{lim2020vietoris,okutan2019persistence} show that the Vietoris--Rips simplicial complex filtration is equivalent to thickenings of the Kuratowski embedding into $L^\infty(M)$ or any other \emph{injective} metric space: in particular, the two filtrations have the same persistent homology.
This overcomes obstacle (1): the inclusion of a metric space into a thickening of its Kuratowski embedding is continuous, and indeed an isometry onto its image.
This connection has created new opportunities, such as the Morse theoretic techniques employed by Katz~\cite{katz1983filling,katz1989diameter,katz9filling,katz1991neighborhoods}.
This Morse theory allows one to prove the first new homotopy types that occur for Vietoris--Rips simplicial complexes of the circle and 2-sphere, but have not yet inspired progress for larger scales, or for spheres above dimension two.

A third strategy is to consider Vietoris--Rips metric thickenings, which rely on optimal transport and Wasserstein-distances~\cite{AAF}.
We refer to these spaces as the $\infty$-metric thickenings, for reasons that will become clear in the following paragraph.
Such thickenings were invented in order to enable Morse-theoretic proofs of the homotopy types of Vietoris--Rips type spaces.
The first new homotopy type of the $\infty$-Vietoris--Rips metric thickening of the $n$-sphere is known~\cite{AAF}, but only for a single (non-persistent) scale parameter.
It was previously only conjectured that the $\infty$-Vietoris--Rips metric thickenings have the same persistent homology as the more classical Vietoris--Rips simplicial complexes~\cite[Conjecture~6.12]{AAF}; one of our contributions is to answer this conjecture in the affirmative.
\cite{MirthThesis} considers a Morse theory in Wasserstein space, which is inspired in part by applications to $\infty$-Vietoris--Rips metric thickenings, but which does not apply as-is to these thickenings as the $\infty$-diameter functional is not ``$\lambda$-convex''~\cite{santambrogio2017euclidean}.

We introduce a generalization: the $p$-Vietoris--Rips metric thickening for any $1\le p\le \infty$.
Let $X$ be an arbitrary metric space.
For $1\leq p \leq \infty$, the $p$-Vietoris--Rips metric thickening at scale parameter $r> 0$ contains all probability measures on $X$ whose $p$-diameter is less than $r$.
The $p$-Vietoris--Rips metric thickening will be equipped with the topology induced from the weak topology on $\cP_X$.
When $X$ is bounded, the weak topology is generated by an optimal transport based metric; see Corollary~\ref{cor:dwq_generate_weak_topology}.
For $p$ finite, the $p$-diameter of a probability measure $\alpha$ on the metric space $X$ is defined as
\[\diam_p(\alpha):=\left(\iint_{X\times X} d_X^p(x,x')\,\alpha(dx)\,\alpha(dx')\right)^{1/p},\]
and $\diam_\infty(\alpha)$ is defined to be the diameter of the support of $\alpha$.

The $p$-Vietoris--Rips metric thickenings at scale $r$ form a metric bifiltration of $X$ that is covariant in $r$ and contravariant in $p$.
Indeed, we have an inclusion map $\vrpp{p}{X}{r} \hookrightarrow \vrpp{p'}{X}{r'}$ for $r\le r'$ and $p \ge p'$; see Figure~\ref{fig:vrpBifiltration}.
\[
\xymatrix{
    \vrpp{p'}{X}{r} \ar@{^{(}->}[r] & \vrpp{p'}{X}{r'} \\
    \vrp{X}{r} \ar@{^{(}->}[r]\ar@{^{(}->}[u] &  \vrp{X}{r'}\ar@{^{(}->}[u]
	}
\]

\begin{figure}
\centering
\includegraphics[width=0.9\textwidth]{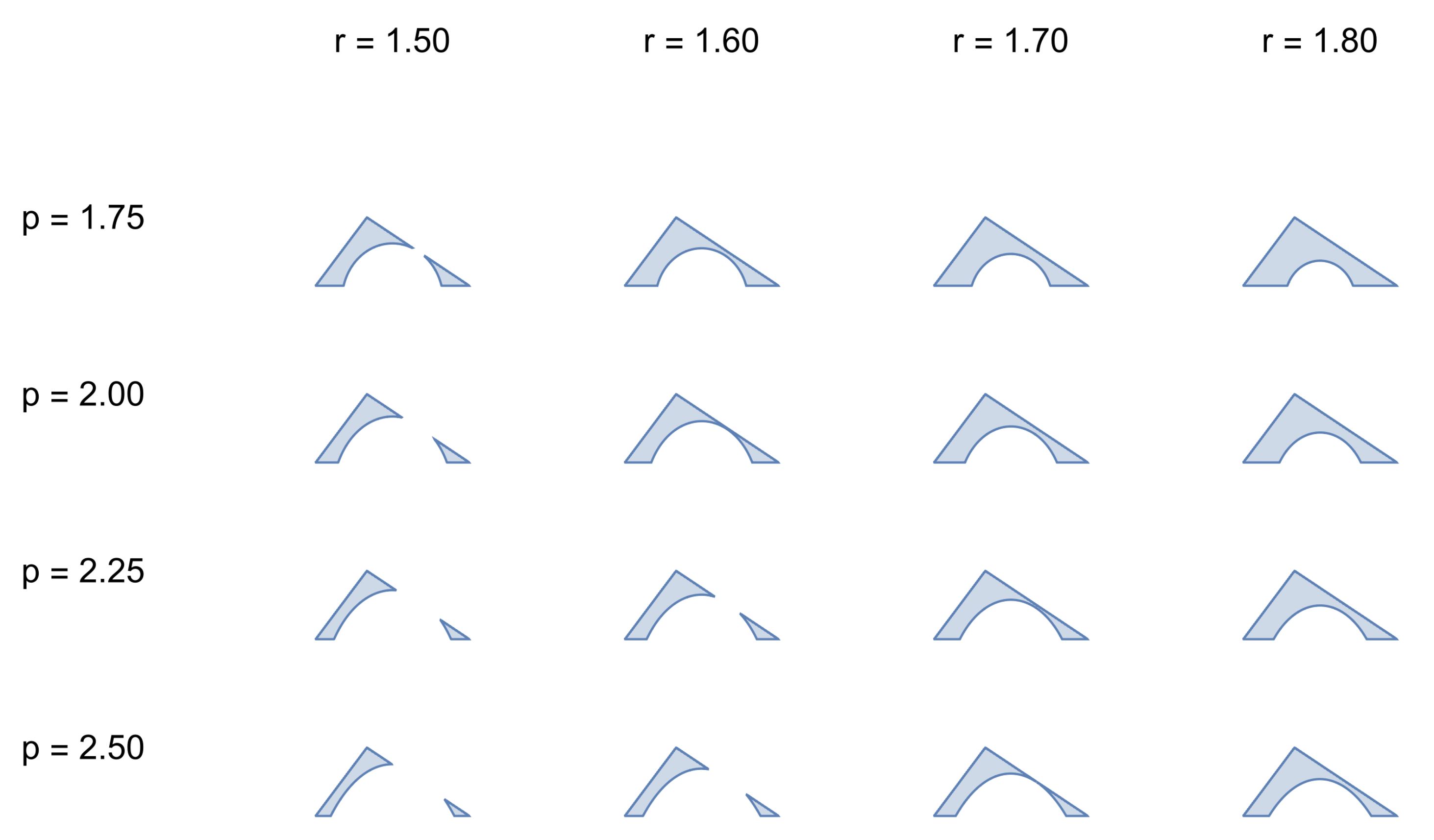}
\caption{The $p$-Vietoris--Rips bifiltration $\vrp{X}{r}$ for $X$ a metric space of three points in $\R^2$, with $\cP_X$ visualized as the convex hull of $X$ in $\R^2$.
Note $\vrpp{p}{X}{r}\subseteq\vrpp{p'}{X}{r'}$ for $r\le r'$ and $p \ge p'$.}
\label{fig:vrpBifiltration}
\end{figure}

\begin{figure}
\centering
\includegraphics[width=0.9\textwidth]{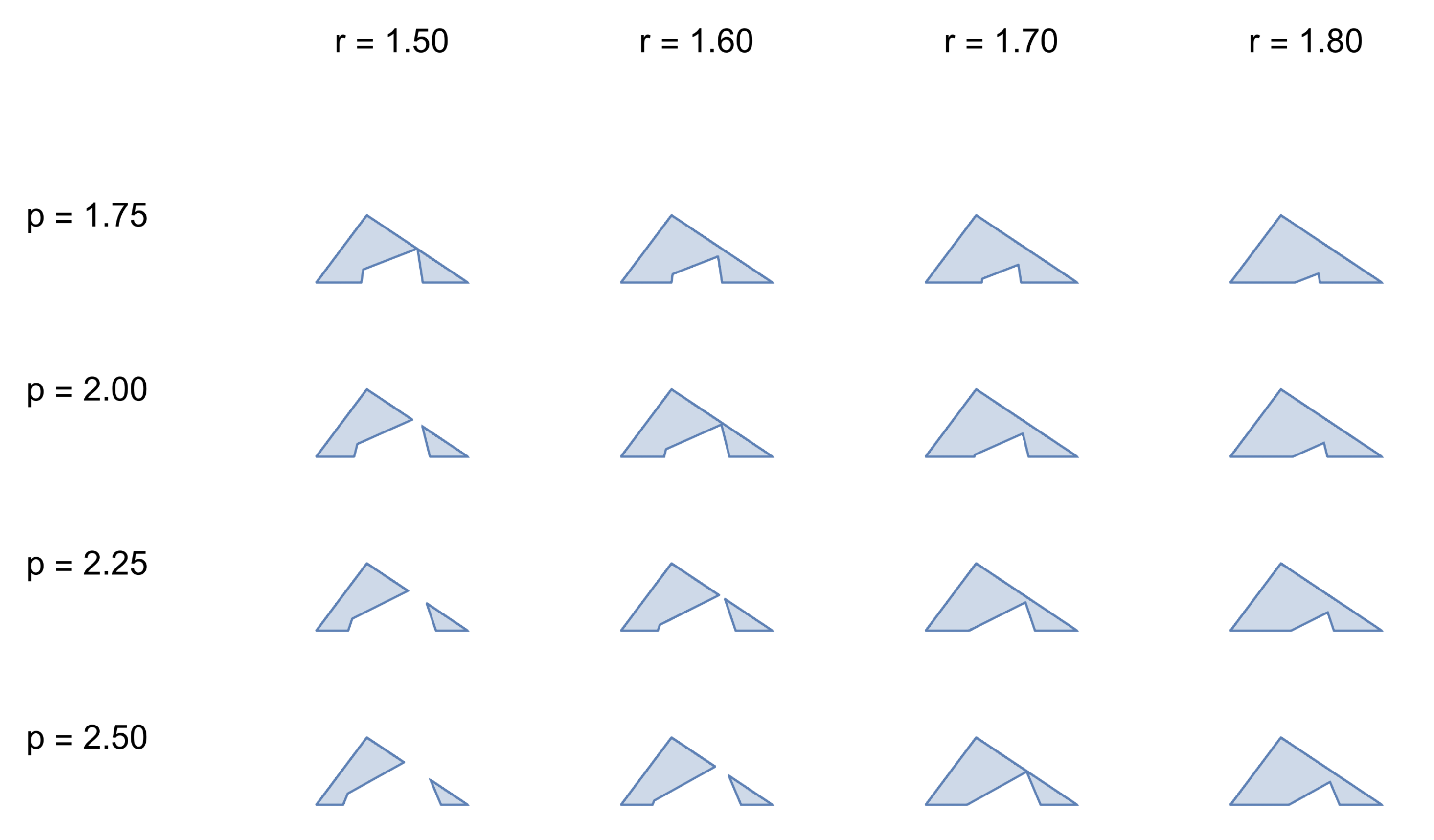}
\caption{
The $p$-\u{C}ech bifiltration $\cechp{X}{r}$ for $X$ a metric space of three points in $\R^2$, with $\cP_X$ visualized as the convex hull of $X$ in $\R^2$.
Note $\cechpp{p}{X}{r}\subseteq\cechpp{p'}{X}{r'}$ for $r\le r'$ and $p \ge p'$.}
\label{fig:cechpBifiltration}
\end{figure}

As one of our main contributions, we prove that the $p$-Vietoris--Rips metric thickening is stable.
This means that if two totally bounded metric spaces $X$ and $Y$ are close in the Gromov--Hausdorff distance, then their filtrations $\vrpf{X}$ and $\vrpf{Y}$ are close in the homotopy interleaving distance.
This was previously unknown even in the case $p=\infty$ (see~\cite[Conjecture~6.14]{AAF}); we prove stability for all $1\le p\le \infty$.
As a consequence, it follows that the (undecorated) persistent homology diagrams for the Vietoris--Rips simplicial complexes $\vr{X}{r}$ and for the $p=\infty$ Vietoris--Rips metric thickenings $\vrppf{\infty}{X}$ are identical.
In other words, the persistent homology barcodes for $\vrf{X}$ and $\vrppf{\infty}{X}$ are identical up to replacing closed interval endpoints with open endpoints, or vice-versa.
This answers~\cite[Conjecture~6.12]{AAF} in the affirmative.
Another consequence of stability is that the $p$-metric thickenings give the same persistence diagrams whether one considers all Radon probability measures, or instead the restricted setting of only measures with finite support.

The proof of stability for metric thickenings is more intricate than the proof of stability for the corresponding simplicial complexes (see for instance~\cite{chazal2009gromov,ChazalDeSilvaOudot2014,memoli2017distance}).
Whereas simplicial complexes can be compared via simplicial maps, the direct analogues of simplicial maps on metric thickenings are not necessarily continuous.
Thus, new techniques are required to construct continuous maps between metric thickenings.
Our technique, relying on partitions of unity, allows us to continuously approximate measures on one metric space $X$ by measures on another metric space $Y$ (where this approximation depends on the Gromov--Hausdorff distance between $X$ and $Y$), thereby allowing us to construct the desired interleavings.

We furthermore introduce the $p$-\v{C}ech metric thickenings, again for $1\le p \le \infty$, and prove analogues of all of the above results.
The $p$-\v{C}ech metric thickening at scale parameter $r> 0$ contains all probability measures supported on $X$ whose $p$-radius is less than $r$; see Figure~\ref{fig:cechpBifiltration}.

We also deduce the complete spectrum of homotopy types of $2$-Vietoris--Rips and $2$-\v{C}ech metric thickenings of the $n$-sphere, equipped with the Euclidean metric $\ell_2$:
$\vrpp{2}{(\Sp^n,\ell_2)}{r}$ first attains the homotopy type of $\Sp^n$ for scales $r\le \sqrt{2}$, and then for all scales $r>\sqrt{2}$ the space is contractible.
By contrast, the Vietoris--Rips simplicial complexes or the $\infty$-Vietoris--Rips metric thickenings of the $n$-sphere (with either the Euclidean or the geodesic metric) are only known for a bounded range of scales, including only a single change in homotopy type (\cite[Section~5]{AAF}),
even though infinitely many changes in homotopy type are conjectured (c.f.~\cite[Question~8.1]{ABF2}).
See however,~\cite[Corollary 7.18]{lim2020vietoris} for results for round spheres with the $\ell_\infty$ metric.

One of our main motivations for introducing the $p$-Vietoris--Rips and $p$-\v{C}ech metric thickenings is to enable effective Morse theories on these types of spaces.
The $p$-variance for \v{C}ech metric thickenings is a minimum of linear functionals, and therefore fits in the framework of Morse theory for min-type functions~\cite{baryshnikov2014min,gershkovich1997morse,bryzgalova1978maximum,matov1982topological}.
On the Vietoris--Rips side, we remark that gradient flows of functionals on Wasserstein space can be defined when the functional is ``$\lambda$-convex''~\cite{santambrogio2017euclidean,MirthThesis}.
Though the $\infty$-diameter is not $\lambda$-convex, we hope that the $p$-diameter functional for $p<\infty$ may be $\lambda$-convex in certain settings.

\paragraph*{Organization.}
In Section~\ref{sec:background} we describe background material and set notation.
We define the $p$-Vietoris--Rips and $p$-\v{C}ech metric thickenings in Section~\ref{sec:p-relaxation}, and consider their basic properties in Section~\ref{sec:basic-properties}.
In Section~\ref{sec:stability} we prove stability.
We consider Hausmann-type theorems in Section~\ref{sec:hausmann}, and deduce the $2$-Vietoris--Rips metric thickenings of Euclidean spheres in Section~\ref{sec:spheres}.
In Section~\ref{app:spread} we bound the length of intervals in $p$-Vietoris--Rips and $p$-\v{C}ech metric thickenings using a generalization of the spread of a metric space, called the $p$-spread.

We conclude the paper by providing some discussion in Section~\ref{sec:conclusion}.

In Appendix~\ref{app:Metrization_weak_topology} we explain how the $q$-Wasserstein distance metrizes the weak topology for $1\le q<\infty$.
We describe connections to min-type Morse theories in Appendix~\ref{app:Morse}.
In Appendix~\ref{app:finite-Cech} we show that $p$-\v{C}ech thickenings of finite metric spaces are homotopy equivalent to simplicial complexes.
We prove the persistent homology diagrams of the $p$-Vietoris--Rips and $p$-\v{C}ech metric thickenings of a family of discrete metric spaces in Appendix~\ref{app:pd_delta_1_space}, and we describe the $0$-dimensional persistent homology of the $p$-Vietoris--Rips and $p$-\v{C}ech metric thickenings of an arbitrary metric space in Appendix~\ref{app:PH0}.
We consider crushings in Appendix~\ref{app:crushings}.
In Appendix~\ref{app:ambient}, we show that the main properties we prove for the (intrinsic) $p$-\v{C}ech metric thickening also hold for the ambient $p$-\v{C}ech metric thickening.

\section{Background}
\label{sec:background}

This section introduces background material and notation.

\paragraph{Metric spaces and the Gromov--Hausdorff distance.}
Let $(X,d_X)$ be a metric space.
For any $x\in X$, we let $B(x;r):=\{y\in X~|~d_X(x,y)<r\}$ denote the open ball centered at $x$ of radius $r$.

Given a metric space $(X,d_X)$, the \emph{diameter} of a non-empty subset $A\subset X$ is the number $\diam(A):=\sup_{a,a'\in A}d_X(a,a')$, whereas its \emph{radius} is the number $\rad(A):=\inf_{x\in X}\sup_{a\in A}d_X(x,a)$.
Note that in general we have \[\frac{\diam(A)}{2}\leq \rad(A)\leq \diam(A).\]

\begin{defn}[Uniform discrete metric space]
For any natural number $n$, we use $Z_n$ to denote the metric space consisting of $n$ points with all interpoint distances equal to 1.
\end{defn}

\begin{defn}[$\varepsilon$-net]
Let $X$ be a metric space.
A subset $U\subset X$ is called an \emph{$\varepsilon$-net} of $X$ if for any point $x\in X$, there is a point $u\in U$ with $d_X(x, u)< \varepsilon$.
\end{defn}

Let $X$ and $Y$ be metric spaces.
The \emph{distortion} of an arbitrary function $\varphi\colon X\to Y$ is \[\mathrm{dis}(\varphi):=\sup_{x,x'\in X}|d_X(x,x')-d_Y(\varphi(x),\varphi(x'))|.\]
The \emph{codistortion} of a pair of arbitrary functions $\varphi\colon X\to Y$, $\psi\colon Y\to X$ is
\[\mathrm{codis}(\varphi,\psi):=\sup_{x\in X,y\in Y}|d_X(x,\psi(y))-d_Y(\varphi(x),y)|.\]
We will use the following expression for the Gromov--Hausdorff distance between $X$ and $Y$~\cite{banach}: 
\begin{equation}\label{eq:def-dgh}\dGH(X,Y) = \frac{1}{2}\inf_{\varphi,\psi}\max\big(\mathrm{dis}(\varphi),\mathrm{dis}(\psi),\mathrm{codis}(\varphi,\psi)\big).
\end{equation}

\begin{remark}\label{remark:codis}
Note that if $\mathrm{codis}(\varphi,\psi)<\eta$, then for all $(x,y)\in X\times Y$,
\[|d_X(x,\psi(y))-d_Y(\varphi(x),y)|<\eta.\]
In particular, by letting $y=\varphi(x)$ above, this means that
\[d_X(x,\psi\circ\varphi(x))<\eta\quad\text{for all } x\in X.\]
\end{remark}

See~\cite[Chapter 7]{bbi} for more details about the Gromov--Hausdorff distance.

\paragraph{Simplicial complexes.}
Two of the most commonly used methods for producing filtrations in applied topology are the Vietoris--Rips and \v{C}ech simplicial complexes, defined as follows.
Let $X$ be a metric space, and let $r\ge0$.
The \emph{Vietoris--Rips simplicial complex $\vr{X}{r}$} has $X$ as its vertex set, and contains a finite subset $[x_0,\ldots,x_k]$ as a simplex when $\diam([x_0,\ldots,x_k]):=\max_{i,j}d_X(x_i,x_j)<r$.
The \emph{\v{C}ech simplicial complex $\cech{X}{r}$} has $X$ as its vertex set, and contains a finite subset $[x_0,\ldots,x_k]$ as a simplex when $\cap_{i=0}^k B(x_i;r)\neq\emptyset$.

\paragraph{Probability measures and Wasserstein distances.}
Our main reference for measure theory elements is~\cite{bogachev2018weak} (albeit we use slightly different notation).
Given a metric space $(X,d_X)$, by $\cP_X$ we denote the set of all \emph{Radon} probability measures on $X$.
We equip $\cP_X$ with the weak topology: a sequence $\alpha_1, \alpha_2, \alpha_3, \ldots \in \cP_X$ is said to converge weakly to $\alpha\in \cP_X$ if for all bounded, continuous functions $\varphi\colon X\to \R$, we have $\lim_{n \to \infty} \int_X \varphi(x)\,\alpha_n(dx) = \int_X \varphi(x)\,\alpha(dx)$.
The \emph{support} $\supp(\alpha)$ of a probability measure $\alpha\in\cP_X$ is the largest closed set $C$ such that every open set which has non-empty intersection with $C$ has positive measure.
If $\supp(\alpha)$ consists of a finite set of points, then $\alpha$ is called \emph{finitely supported} and can be written as $\alpha = \sum_{i\in I} a_i\cdot\delta_{x_i}$, where $I$ is finite, $a_i \geq 0$ for all $i$, $\sum_{i \in I} a_i = 1$, and each $\delta_{x_i}$ is a Dirac delta measure at $x_i$.
Let $\cPfin_X$ denote the set of all finitely supported Radon probability measures on $X$.

Given another metric space $Y$ and a measurable map $f\colon X\to Y$, the \emph{pushforward} map $f_\sharp \colon \cP_X\to \cP_Y$ induced by $f$ is defined by $f_\sharp (\alpha)(B)=\alpha(f^{-1}(B))$ for every Borel set ${B\subset Y}$.
In the case of finitely supported probability measures, we have the explicit formula $f_\sharp \left( \sum_{i \in I} a_i\cdot\delta_{x_i} \right) = \sum_{i \in I} a_i\cdot \delta_{f(x_i)}$, so $f_\sharp$ restricts to a function $\cPfin_X \to \cPfin_Y$.
If $f$ is a continuous map, then $f_{\sharp}$ is a continuous map between $\cP_X$ and $\cP_Y$ in the weak topology; see Chapter~5 in~\cite{bogachev2018weak}.
In the finitely supported case, the restriction of $f_\sharp$ is a continuous map from $\cPfin_X$ to $\cPfin_Y$.

Given $\alpha,\beta\in\cP_X$, a \emph{coupling} between them is any probability measure $\mu$ on $X\times X$ with marginals $\alpha$ and $\beta$: $(\pi_1)_\sharp\mu=\alpha$ and $(\pi_2)_\sharp \mu= \beta$, where $\pi_i\colon X\times X\to X$ is the projection map defined by $\pi_i(x_1,x_2)=x_i$ for $i=1,2$.
By $\cpl(\alpha,\beta)$ we denote the set of all couplings between $\alpha$ and $\beta$.
Notice that $\cpl(\alpha,\beta)$ is always non-empty as the product measure $\alpha\otimes \beta$ is in $\cpl(\alpha,\beta)$.

Given $q\in[1,\infty)$, let $\cPq{q}{X}$ be the subset of $\cP_X$ consisting of Radon probability measures with finite moments of order $q$, that is, measures $\alpha$ with $\int_X d_X^q(x,x_0)\, \alpha(dx) < \infty$ for some, and thus any, $x_0\in X$.
Note that when $X$ is a bounded metric space, we have $\cPq{q}{X} = \cP_X$.
We can equip $\cPq{q}{X}$ with the $q$-\emph{Wasserstein distance} (or \emph{Kantorovich} $q$-metric).
In this setting, the $q$-Wasserstein distance is given by
\begin{equation}\label{eq:dW}
    \dWq(\alpha,\beta):= \inf_{\mu\in\cpl(\alpha,\beta)} \left(\iint_{X\times X}d_X^q(x,x')\,\mu(dx\times dx')\right)^{\frac{1}{q}}
\end{equation}
and it can be shown that $\dWq$ defines a metric on $\cPq{q}{X}$; see Chapter~3.3 in~\cite{bogachev2018weak}.
If $q = \infty$, then again for any metric space $(X,d_X)$ we define the \emph{$\infty$-Wasserstein distance}
\begin{equation}\label{eq:dW_infty}
	\dWqq{\infty}(\alpha,\beta):=
	\inf_{\mu\in\cpl(\alpha,\beta)} \sup_{(x,x')\in\supp(\mu)}d_X(x,x')
\end{equation}
for $\alpha, \beta \in \cPq{\infty}{X}$, the set of measures with bounded support.
The $\infty$-Wasserstein distance $\dWqq{\infty}$ is clearly symmetric, and the triangle inequality comes from a similar gluing trick as in Chapter~3.3 of~\cite{bogachev2018weak}.
Thus, $\dWqq{\infty}$ is a pseudo-metric that is lower bounded by a metric, for example $\dWqq{1}$; therefore $\dWqq{\infty}$ is also a metric.
For general $q \le q'$, it follows from H\"{o}lder's inequality that $\dWq \le \dWqq{q'}$.

It is known that on a bounded metric space, for any $q \in [1, \infty)$, the $q$-Wasserstein metric generates the weak topology.
For a summary of the result, see Appendix~\ref{app:Metrization_weak_topology}.
On the other hand, the $\infty$-Wasserstein metric $\dWqq{\infty}$ generates a finer topology than the weak topology in general.

Let $f,g \colon X\to \R$ be measurable functions over the metric space $X$, and let $p\in[1,\infty)$.
We will frequently use the Minkowski inequality, which states that
\[\left(\int_X |f(x) + g(x)|^p\,\alpha(dx)\right)^{\frac{1}{p}}\leq \left(\int_X |f|^p(x)\,\alpha(dx)\right)^{\frac{1}{p}} + \left(\int_X |g|^p(x)\,\alpha(dx)\right)^{\frac{1}{p}}.\]

The following proposition shows that we may construct linear homotopies in spaces of probability measures, analogous to linear homotopies in Euclidean spaces.
Linear homotopies will play an important role in Section~\ref{sec:stability}.

\begin{prop}\label{prop:linear_homotopies}
Let $Z$ be a metric space (or more generally a first-countable space), let $X$ be a metric space, and let $f,g \colon Z \to \cP_X$ be continuous.
Then the linear homotopy $H \colon Z \times [0,1] \to \cP_X$ given by $H(z,t) = (1-t)\,f(z) + t\,g(z)$ is continuous.
\end{prop}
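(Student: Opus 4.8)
The plan is to verify continuity of $H$ at an arbitrary point $(z_0, t_0) \in Z \times [0,1]$ using the sequential characterization of continuity, which is valid here since $Z \times [0,1]$ is first-countable (as $Z$ is) and the weak topology on $\cP_X$ is what we must land in. So I would take a sequence $(z_n, t_n) \to (z_0, t_0)$ and show $H(z_n, t_n) \to H(z_0, t_0)$ weakly, i.e., that $\int_X \varphi \, d\big((1-t_n)f(z_n) + t_n g(z_n)\big) \to \int_X \varphi \, d\big((1-t_0)f(z_0) + t_0 g(z_0)\big)$ for every bounded continuous $\varphi \colon X \to \R$.

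The key computation is to expand the integral using linearity of the integral in the measure argument: $\int_X \varphi \, dH(z_n,t_n) = (1-t_n)\int_X \varphi \, df(z_n) + t_n \int_X \varphi \, dg(z_n)$. Now I would use that $f$ and $g$ are continuous into $\cP_X$ with the weak topology, so $\int_X \varphi \, df(z_n) \to \int_X \varphi \, df(z_0)$ and likewise for $g$ (again invoking sequential continuity, legitimate since $Z$ is first-countable). Combined with $t_n \to t_0$, and using that $\varphi$ is bounded so $\left|\int_X \varphi \, df(z_n)\right| \le \sup|\varphi| < \infty$ (hence these sequences of real numbers are bounded), the product $t_n \int_X \varphi \, dg(z_n)$ converges to $t_0 \int_X \varphi \, dg(z_0)$ by the algebra of limits of real sequences, and similarly for the other term. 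Adding gives the desired convergence. One should also note at the outset that $H(z,t)$ genuinely lands in $\cP_X$: a convex combination of two Radon probability measures is again a Radon probability measure, which is immediate.

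I do not expect a serious obstacle here; this is essentially the observation that weak convergence is tested against a fixed bounded continuous $\varphi$, for which everything reduces to elementary limits in $\R$. The only point requiring a little care is the appeal to sequential continuity: continuity of $f$ and $g$ is stated with respect to the weak topology, and the weak topology on $\cP_X$ need not be metrizable in general, but it is the \emph{domain} first-countability ($Z \times [0,1]$ first-countable) that lets us use sequences to detect continuity of $H$, and it is precisely the \emph{definition} of weak convergence (convergence of all integrals against bounded continuous functions) that we verify in the codomain — so no metrizability of $\cP_X$ is needed. An alternative, if one prefers to avoid sequences entirely, is to write $H$ as the composite $Z \times [0,1] \to \cP_X \times \cP_X \times [0,1] \to \cP_X$ where the first map is $(f,g,\mathrm{id})$ and the second is $(\alpha,\beta,t) \mapsto (1-t)\alpha + t\beta$, and check directly that evaluation against a fixed $\varphi$ is continuous in $(\alpha,\beta,t)$; but the sequential argument is cleaner to write out.
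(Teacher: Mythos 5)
Your argument is correct and follows the same route as the paper: reduce to sequential continuity using first-countability of the domain, expand the integral against a fixed bounded continuous test function by linearity, and pass to the limit. The paper additionally invokes metrizability of the weak topology on $\cP_X$ (available here since $X$ is metric), which is harmless but, as you correctly observe, unnecessary—domain first-countability alone suffices for sequential continuity to imply continuity; note however that your parenthetical ``again invoking sequential continuity, legitimate since $Z$ is first-countable'' is backwards for $f$ and $g$, since continuity always implies sequential continuity with no hypothesis on the domain.
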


\begin{proof}
It suffices to show sequential continuity, because $Z \times I$ is metrizable (or more generally first-countable)
and the weak topology on the set $\cP_X$ is also metrizable; see~\cite[Theorem~3.1.4]{bogachev2018weak} and also Appendix~\ref{app:Metrization_weak_topology}.
If $\{ (z_n, t_n) \}_n$ converges to $(z,t)$ in $Z \times I$, then to show weak convergence of the image in $\cP_X$, let $\gamma \colon X \to \mathbb{R}$ be any bounded, continuous function.
We have
\begin{align*}
\lim_{n \to \infty} \int_X \gamma(x) H(z_n,t_n)(dx) & = \lim_{n \to \infty} \left( (1-t_n)\int_X \gamma(x) f(z_n)(dx) + t_n \int_X \gamma(x) g(z_n)(dx) \right)\\
& = (1-t)\int_X \gamma(x) f(z)(dx) + t \int_X \gamma(x) g(z)(dx)\\
& = \int_X \gamma(x) H(z,t)(dx)
\end{align*}
where the second equality uses the fact that $f(z_n)$ and $g(z_n)$ are weakly convergent, since $f$ and $g$ are continuous.
Therefore $H(z_n,t_n)$ converges weakly to $H(z,t)$, so $H$ is continuous.
\end{proof}

\paragraph{Fr\'{e}chet means.}

For each $p\in[1,\infty]$ let the \emph{$p$-Fr\'{e}chet function of $\alpha\in \cP_X$}, namely $F_{\alpha,p}\colon X\to \R\cup\{\infty\}$, be defined by
\[F_{\alpha,p}(x):=
\begin{dcases}
\left(\int_X d_X^p(z,x)\,\alpha(dz)\right)^{\frac{1}{p}} & p<\infty \\
\\
\sup_{z\in \supp(\alpha)} d_X(x,z)                       & p=\infty.
\end{dcases}
\]

Note that $F_{\alpha,p}(x)=\dWp(\delta_x,\alpha)$.
A point $x\in X$ that minimizes $F_{\alpha,p}(x)$ is called a \emph{Fr\'{e}chet mean} of $\alpha$; in general Fr\'{e}chet means need not be unique.
See~\cite{Karcher1977} for some of the basic properties of Fr\'{e}chet means.

\paragraph{Metric thickenings with $p=\infty$.}

Let $X$ be a bounded metric space.
The Vietoris--Rips and \v{C}ech metric thickenings were introduced in~\cite{AAF} with the notation $\mathrm{VR}^m(X;r)$ and $\mathrm{\check{C}}^m(X;r)$, where the superscript $m$ denoted ``metric.''
We instead denote these spaces by $\vrppfin{\infty}{X}{r}$ and $\cechppfin{\infty}{X}{r}$, since one of the main contributions of this paper will be to introduce the generalizations $\vrp{X}{r}$ and $\cechp{X}{r}$ (and their finitely supported variants $\vrpfin{X}{r}$ and $\cechpfin{X}{r}$) for any $1\le p\le \infty$.

The \emph{Vietoris--Rips metric thickening $\vrppfin{\infty}{X}{r}$} is the space of all finitely supported probability measures of the form $\sum_{i=0}^k a_i \delta_{x_i}$, such that $\diam(\{x_0,\ldots,x_k\})<r$, equipped with the $q$-Wasserstein metric for some $1\le q< \infty$.
The choice of $q\in [1,\infty)$ does not affect the homeomorphism type by Corollary~\ref{cor:dwq_generate_weak_topology}.
The \emph{\v{C}ech metric thickening $\cechppfin{\infty}{X}{r}$} is the space of all finite probability measures of the form $\sum_{i=0}^k a_i \delta_{x_i}$, such that $\cap_{i=0}^k B(x_i;r)\neq\emptyset$, equipped with the $q$-Wasserstein metric for some $1\le q< \infty$.

\paragraph{Comparisons.}
We give a brief survey of the various advantages and disadvantages of using simplicial complexes and $p=\infty$ metric thickenings.

The Vietoris--Rips and \v{C}ech simplicial complexes were developed first, and they enjoy the benefits of simplicial and combinatorical techniques.
For this reason, these complexes (and related complexes) have been used in (co)homology theories for metric spaces, and in discrete versions of homotopy theories~\cite{Vietoris27,roe1993coarse,berestovskii2007uniform,dranishnikov2009cohomological,cencelj2012combinatorial,brodskiy2013rips,plaut2013discrete,BarceloCapraroWhite,brazas2014thick,conant2014discrete,memoli2019persistent,rieser2020semiuniform}.
The persistent homology of Vietoris--Rips complexes on top of finite metric spaces can be efficiently computed~\cite{bauer2021ripser}.
Even when built on top of infinite metric spaces, much is known about the theory of Vietoris--Rips simplicial complexes using Hausmann's theorem~\cite{Hausmann1995}, Latschev's theorem~\cite{Latschev2001}, the stability of Vietoris--Rips persistent homology~\cite{ChazalDeSilvaOudot2014,chazal2009gromov,cohen2007stability}, Bestvina--Brady Morse theory~\cite{zaremsky2019}, and the Vietoris--Rips complexes of the circle~\cite{AA-VRS1}.

Despite this rich array of simplicial techniques, there are some key disadvantages to Vietoris--Rips simplicial complexes.
If $X$ is not a discrete metric space, then the inclusion from $X$ into $\vr{X}{r}$ for any $r\ge0$ is not continuous since the vertex set of a simplicial complex is equipped with the discrete topology.
Another disadvantage is that even though we start with a metric space $X$, the Vietoris--Rips simplicial complex $\vr{X}{r}$ may not be metrizable.
Indeed, a simplicial complex is metrizable if and only if it is locally finite~\cite[Proposition~4.2.16(2)]{sakai2013geometric}, i.e.\ if and only if each vertex is contained in only a finite number of simplices.
So when $X$ is infinite, $\vr{X}{r}$ is often not metrizable, i.e.\ its topology cannot be induced by any metric.
Though Vietoris--Rips complexes accept metric spaces as input, they do not remain in this same category, and may produce as output topological spaces that cannot be equipped with any metric structure.

By contrast, the metric thickening $\vrppfin{\infty}{X}{r}$ is always a metric space that admits an isometric embedding $X\hookrightarrow \vrppfin{\infty}{X}{r}$~\cite{AAF}.
Furthermore, metric thickenings allow for nicer proofs of Hausmann's theorem.
For $M$ a Riemannian manifold and $r>0$ sufficiently small depending on the curvature of $M$, Hausmann produces a map $T\colon \vr{M}{r}\to M$ from the simplicial complex to the manifold that is not canonical in the sense that it depends on a total order of all points in the manifold.
Also, the inclusion map $M\hookrightarrow\vr{M}{r}$ is not continuous, and therefore cannot be a homotopy inverse for $T$.
Nevertheless, Hausmann is able to prove $T$ is a homotopy equivalence without constructing an explicit inverse.
By contrast, in the context of metric thickenings, one can produce a canonical map $\vrppfin{\infty}{M}{r}\to M$ by mapping a measure to its Fr\'{e}chet mean (whenever $r$ is small enough so that measures of diameter less than $r$ have unique Fr\'{e}chet means).
The (now continuous) inclusion $M\hookrightarrow\vrppfin{\infty}{M}{r}$ can be shown to be a homotopy inverse via linear homotopies~\cite[Theorem~4.2]{AAF}.

One of the main contributions of this paper is showing how these various spaces relate to each other, especially when it comes to persistent homology.
This allows one to work either simplicially, geometrically, or with measures --- whichever perspective is most convenient for the task at hand.

\paragraph{Homology and Persistent homology.}
For each integer $k\geq 0$, let $H_k$ denote the singular homology functor from the category $\mathrm{Top}$ of topological spaces to the category $\mathrm{Vec}$ of vector spaces and linear transformations.
We use coefficients in a fixed field, so that homology groups are furthermore vector spaces.
For background on persistent homology, we refer the reader to~\cite{EdelsbrunnerHarer,edelsbrunner2000topological,zomorodian2005computing}.

In applications of topology, such as topological data analysis~\cite{Carlsson2009}, one often models a dataset not as a single space $X$, but instead as an increasing sequence of spaces.
We refer to an increasing sequence of spaces, i.e.\ a functor from the poset $(\R, \leq)$ to $\mathrm{Top}$, as a \emph{filtration}.
If $X$ is a metric space, then a common filtration is the Vietoris-Rips simplicial complex filtration $\vrf{X}$.
In this paper we will introduce relaxed versions, the $p$-Vietoris--Rips metric thickening filtrations $\vrpf{X}$ (see Section~\ref{sec:p-relaxation}).

By applying homology (with coefficients in a field) to a filtration, we obtain a functor from the poset $(\R, \leq)$ to $\mathrm{Vec}$, i.e.\ a \emph{persistence module}.
We will use symbols like $V$, $W$ and so on to denote persistence modules.
Following~\cite{chazal2016structure}, a persistence module is \emph{Q-tame} if for any $s<t$, the structure map $V(s)\to V(t)$ is of finite rank.
In~\cite{chazal2016structure}, it is shown that a Q-tame persistence module $V$ can be converted into a \emph{persistence diagram}\footnote{Specifically, we consider \emph{undecorated} persistence diagrams, as described in~\cite{chazal2016structure}.}, $\dgm(V)$, which is a multiset in the extended open half-plane consisting of pairs $p = (b, d),\,-\infty \leq b < d \leq + \infty.$
The persistence diagrams can be compared via the bottleneck distance $\dB$, which is defined as follows.
Given persistence diagrams $D_1$ and $D_2$, a subset $M\subset D_1\times D_2$ is said to be a \emph{partial matching} between $D_1$ and $D_2$ if the following are satisfied:
\begin{itemize}
	\item For every point $p$ in $D_1$, there is at most one point $q$ in $D_2$ such that $(p, q)\in M$.
(If there is no such $q$, we then say $p$ is unmatched.)
	\item For every point $q$ in $D_2$, there is at most one point $p$ in $D_1$ such that $(p, q)\in M$.
(If there is no such $p$, we then say $q$ is unmatched.)
\end{itemize}
The \emph{bottleneck distance $\dB$} between two persistence diagrams $D_1$ and $D_2$ is
\[
	\dB(D_1, D_2) := \inf_M \max \left\{ \sup_{(p, q)\in M}\lVert p-q \rVert_\infty, \quad \sup_{s\in D_1 \sqcup D_2 \text{ unmatched}} \left|\frac{s_b-s_d}{2}\right| \right\},
\]
where $s$ is an element in $D_1$ or $D_2$, $s_b$ is the birth time of $s$, $s_d$ is the death time of $s$, and $M$ varies among all possible partial matchings.

\paragraph{Interleavings}

Let $\mathcal{C}$ be a category.
We call any functor from the poset $(\R, \leq)$ to $\mathcal{C}$ \emph{an $\R$-space}.
Such a functor gives a \emph{structure map} $X(s)\to X(t)$ for any $s\le t$.
For $X$ an $\R$-space, the \emph{$\delta$-shift} of $X$ is the functor $X^\delta\colon \R\to C$ with $X^\delta(t)= X(t+\delta)$ for all $t\in \R$.
We have a natural transformation $\mathrm{id}_X^\delta$ from $X$ to $X^\delta$ which maps $X(t)$ to $X^\delta(t)$ using the structure maps from $X$.
For two $\R$-spaces $X$ and $Y$, we say they are \emph{$\delta$-interleaved} if there are natural transformations $F \colon X\to Y^\delta$ and $G\colon Y\to X^\delta$ such that $F\circ G = \mathrm{id}_X^{2\delta}$ and $G\circ F = \mathrm{id}_Y^{2\delta}$.
Now we define a pseudo-distance between $\R$-spaces, which we call the \emph{interleaving distance $\di^\mathcal{C}$}, as follows:
\[\di^\mathcal{C}(X, Y) := \inf\{\delta~|~X\text{ and }Y\text{ are }\delta\text{-interleaved}\}.\]

We note that $\di^{\mathrm{Vec}}$ is the interleaving distance for persistence modules.
The isometry theorem of~\cite{lesnick2015theory,chazal2016structure} states that $\di^{\mathrm{Vec}}(V,W) = \dB(\dgm(V),\dgm(W))$ for Q-tame persistence modules $V$ and $W$.

We will use the following lemma in our proofs in Section~\ref{sec:stability} on stability.

\begin{lem}\label{lem:qtame_approximation}
If a persistence module $P$ can be approximated arbitrarily well in the interleaving distance by $Q$-tame persistence modules, then $P$ is also $Q$-tame.
\end{lem}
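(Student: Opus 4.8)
The plan is to check $Q$-tameness of $P$ straight from the definition: fix $s<t$ and exhibit the structure map $P(s)\to P(t)$ as a composite that factors through a finite-rank map belonging to a $Q$-tame approximant of $P$.

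First I would use the hypothesis to pick, for the given $s<t$, a $Q$-tame persistence module $V$ with $\di^{\mathrm{Vec}}(P,V)<\tfrac{t-s}{2}$, so that there is a $\delta$-interleaving of $P$ and $V$ with $\delta<\tfrac{t-s}{2}$; denote the interleaving natural transformations by $F\colon P\to V^\delta$ and $G\colon V\to P^\delta$. The choice of $\delta$ guarantees $s+\delta<t-\delta$, so the structure map $V(s+\delta)\to V(t-\delta)$ is defined and, by $Q$-tameness of $V$, has finite rank.

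Next I would establish the factorization
\[
\bigl(P(s)\to P(t)\bigr)\;=\;G_{t-\delta}\circ\bigl(V(s+\delta)\to V(t-\delta)\bigr)\circ F_s,
\]
where the unlabeled arrows are the structure maps of $P$ and of $V$ respectively and $F_s\colon P(s)\to V(s+\delta)$, $G_{t-\delta}\colon V(t-\delta)\to P(t)$ are the interleaving components. This is a diagram chase using (i) naturality of $G$ along $s+\delta\le t-\delta$, which moves $G_{t-\delta}$ past the $V$-structure map to produce $\bigl(P(s+2\delta)\to P(t)\bigr)\circ G_{s+\delta}$, (ii) the interleaving relation, which identifies $G_{s+\delta}\circ F_s$ with the structure map $P(s)\to P(s+2\delta)$, and (iii) functoriality of $P$, which composes the two $P$-structure maps into $P(s)\to P(t)$.

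Given the factorization, $\operatorname{rank}\bigl(P(s)\to P(t)\bigr)\le\operatorname{rank}\bigl(V(s+\delta)\to V(t-\delta)\bigr)<\infty$, and since $s<t$ were arbitrary this shows $P$ is $Q$-tame. I do not anticipate a genuine obstacle here: the only points needing care are choosing $\delta$ small enough that the shifted parameters still satisfy $s+\delta<t-\delta$, and keeping the naturality square and the two interleaving identities straight when assembling the factorization.
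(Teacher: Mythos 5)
Your proof is correct and follows essentially the same approach as the paper: both factor the structure map $P(s)\to P(t)$ through a structure map of a nearby $Q$-tame module via the interleaving maps, differing only in bookkeeping (you choose $\delta<(t-s)/2$ and factor through $V(s+\delta)\to V(t-\delta)$, while the paper fixes $\delta=(t-s)/3$ and factors through $P_\varepsilon(s+\varepsilon/3)\to P_\varepsilon(s+2\varepsilon/3)$). You also spell out the naturality/interleaving diagram chase that the paper leaves implicit; there is no gap.
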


\begin{proof}
For any $s < t$, let $\varepsilon = t - s$, so there is a Q-tame persistence module $P_\varepsilon$ such that $\di(P, P_\varepsilon) < \frac{\varepsilon}{3}$.
Then using the maps of an $\frac{\varepsilon}{3}$-interleaving between $P$ and $P_\varepsilon$,
the structure map $P(s)\to P(t)$ can be factored as follows.

\setlength\mathsurround{0pt}
\[
\begin{tikzcd}
P(s) \arrow[rd] \arrow[rrr] & & & P(t)\\
& P_\varepsilon\left(s + \frac{\varepsilon}{3}\right) \arrow[r] & P_\varepsilon\left(s + \frac{2\varepsilon}{3}\right) \arrow[ru] & 
\end{tikzcd}
\]
As $P_\varepsilon$ is a Q-tame module, the rank of $P_\varepsilon\left(s + \frac{\varepsilon}{3}\right) \to P_\varepsilon(s + \frac{2\varepsilon}{3})$ is finite, and hence so is the rank of $P(s)\to P(t)$.
Since $s$ and $t$ are arbitrary, we get the Q-tameness of $P$.
\end{proof}

\paragraph{The homotopy type distance}

We next recall the definition of the homotopy type distance $\dHT$ distance from~\cite{frosini2017persistent}.
The following is a small generalization of~\cite[Definition 2.2]{frosini2017persistent} in that we do not require the maps $\varphi_X$ and $\varphi_Y$ to be continuous.

\begin{defn}
\label{defn:delta-homotopy}
Let $(X, \varphi_X)$ and $(Y, \varphi_Y)$ be two topological spaces with real-valued functions on them.
For any $\delta \ge 0$, a \emph{$\delta$-map} between $(X, \varphi_X)$ and $(Y, \varphi_Y)$ is a continuous map $\Phi\colon X\to Y$ such that $\varphi_Y\circ \Phi(x)\leq \varphi_X(x) + \delta$ for any $x\in X$.
For any two $\delta$-maps $\Phi_0\colon X\to Y$ and $\Phi_1\colon X\to Y$, a \emph{$\delta$-homotopy} between $\Phi_0$ and $\Phi_1$ with respect to the pair $(\varphi_X, \varphi_Y)$ is a continuous map $H\colon X\times [0, 1] \to Y$ such that
\begin{enumerate}
	\item $\Phi_0 \equiv H(\smb, 0)$;
	\item $\Phi_1 \equiv H(\smb, 1)$;
	\item $H(\smb, t)$ is a $\delta$-map with respect to the pair $(\varphi_X, \varphi_Y)$ for every $t\in [0, 1]$.
\end{enumerate}
\end{defn}

\begin{defn}[{\cite[Definitions~2.5 and~2.6]{frosini2017persistent}}]
\label{defn:dHT}
For every $\delta\geq 0$ and for any two pairs $(X, \varphi_X)$ and $(Y, \varphi_Y)$, we say $(X, \varphi_X)$ and $(Y, \varphi_Y)$ are \emph{$\delta$-homotopy equivalent} if there exist $\delta$-maps $\Phi\colon X\to Y$ and $\Psi\colon Y\to X$, with respect to $(\varphi_X, \varphi_Y)$ and $(\varphi_Y, \varphi_X)$, respectively, such that
\begin{itemize}
	\item the map $\Psi\circ \Phi\colon X\to X$ is $2\delta$-homotopic to $\mathrm{id}_X$ with respect to $(\varphi_X, \varphi_X)$, and
	\item the map $\Phi\circ \Psi\colon Y\to Y$ is $2\delta$-homotopic to $\mathrm{id}_Y$ with respect to $(\varphi_Y, \varphi_Y)$.
\end{itemize}
The \emph{$\dHT$-distance} between $(X, \varphi_X)$ and $(Y, \varphi_Y)$ is
\[\dHT((X, \varphi_X), (Y, \varphi_Y)):= \inf\{\delta\geq 0\mid \text{$(X, \varphi_X)$ and $(Y, \varphi_Y)$ are $\delta$-homotopy equivalent}\}.\]
If $(X, \varphi_X)$ and $(Y, \varphi_Y)$ are not $\delta$-homotopy equivalent for any $\delta$, then we declare
\[\dHT((X, \varphi_X), (Y, \varphi_Y)) = \infty.\]
\end{defn}

\begin{prop}[{\cite[Proposition~2.10]{frosini2017persistent}}]\label{prop:dht_pseudometric}
The $\dHT$ distance is an extended pseudo-metric on the set of pairs of topological spaces and real-valued functions.
\end{prop}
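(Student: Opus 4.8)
The plan is to verify directly the three defining properties of an extended pseudo-metric for $\dHT$: that it vanishes on the diagonal, that it is symmetric, and that it satisfies the triangle inequality. Symmetry is immediate from Definition~\ref{defn:dHT}, since a pair of $\delta$-maps $(\Phi,\Psi)$ witnessing that $(X,\varphi_X)$ and $(Y,\varphi_Y)$ are $\delta$-homotopy equivalent also witnesses, with the roles of $\Phi$ and $\Psi$ exchanged, that $(Y,\varphi_Y)$ and $(X,\varphi_X)$ are $\delta$-homotopy equivalent; hence the two defining infima agree. For vanishing on the diagonal, take $\Phi=\Psi=\mathrm{id}_X$, which are $0$-maps with respect to $(\varphi_X,\varphi_X)$, and use the constant homotopy $H(x,t)=x$ (whose every time-slice is the identity, hence a $0$-map) to see that $\mathrm{id}_X=\Psi\circ\Phi$ is $0$-homotopic to $\mathrm{id}_X$ with respect to $(\varphi_X,\varphi_X)$; thus $(X,\varphi_X)$ is $0$-homotopy equivalent to itself and $\dHT((X,\varphi_X),(X,\varphi_X))=0$.

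The content of the proposition is the triangle inequality, and the plan is to establish the following composition statement: if $(X,\varphi_X)$ and $(Y,\varphi_Y)$ are $\delta_1$-homotopy equivalent and $(Y,\varphi_Y)$ and $(Z,\varphi_Z)$ are $\delta_2$-homotopy equivalent, then $(X,\varphi_X)$ and $(Z,\varphi_Z)$ are $(\delta_1+\delta_2)$-homotopy equivalent. Granting this, the triangle inequality follows by taking infima: for any $\varepsilon>0$ one can choose a $\delta_1$-homotopy equivalence and a $\delta_2$-homotopy equivalence with $\delta_1<\dHT((X,\varphi_X),(Y,\varphi_Y))+\varepsilon$ and $\delta_2<\dHT((Y,\varphi_Y),(Z,\varphi_Z))+\varepsilon$, whence $\dHT((X,\varphi_X),(Z,\varphi_Z))\le\delta_1+\delta_2$, and letting $\varepsilon\to 0$ gives the claim (the case where one of the two distances equals $\infty$ being trivial).

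To prove the composition statement, let $\Phi_1\colon X\to Y$, $\Psi_1\colon Y\to X$ witness the first $\delta_1$-homotopy equivalence and $\Phi_2\colon Y\to Z$, $\Psi_2\colon Z\to Y$ the second, and set $\Phi:=\Phi_2\circ\Phi_1$ and $\Psi:=\Psi_1\circ\Psi_2$. Checking that $\Phi$ and $\Psi$ are $(\delta_1+\delta_2)$-maps with respect to $(\varphi_X,\varphi_Z)$ and $(\varphi_Z,\varphi_X)$ respectively is a two-line chaining of the $\delta$-map inequalities, e.g.\ $\varphi_Z(\Phi_2(\Phi_1(x)))\le\varphi_Y(\Phi_1(x))+\delta_2\le\varphi_X(x)+\delta_1+\delta_2$. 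The one slightly delicate point is producing the required $2(\delta_1+\delta_2)$-homotopy from $\Psi\circ\Phi=\Psi_1\circ(\Psi_2\circ\Phi_2)\circ\Phi_1$ to $\mathrm{id}_X$ with respect to $(\varphi_X,\varphi_X)$, and symmetrically from $\Phi\circ\Psi$ to $\mathrm{id}_Z$. For the first, the plan is to concatenate two homotopies: first, conjugate the given $2\delta_2$-homotopy $K$ from $\Psi_2\circ\Phi_2$ to $\mathrm{id}_Y$ on the right by $\Phi_1$ and on the left by $\Psi_1$, i.e.\ $(x,t)\mapsto\Psi_1(K(\Phi_1(x),t))$, which is continuous and runs from $\Psi\circ\Phi$ to $\Psi_1\circ\Phi_1$; second, append the given $2\delta_1$-homotopy from $\Psi_1\circ\Phi_1$ to $\mathrm{id}_X$. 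Continuity of the concatenation follows from the pasting lemma on $X\times[0,\tfrac12]$ and $X\times[\tfrac12,1]$, and the slice estimate for the first piece is $\varphi_X(\Psi_1(K(\Phi_1(x),t)))\le\varphi_Y(K(\Phi_1(x),t))+\delta_1\le\varphi_Y(\Phi_1(x))+2\delta_2+\delta_1\le\varphi_X(x)+2\delta_1+2\delta_2$, while the slices of the second piece are already $2\delta_1$-maps, hence $2(\delta_1+\delta_2)$-maps since $\delta_2\ge 0$; so every slice of the concatenation is a $2(\delta_1+\delta_2)$-map with respect to $(\varphi_X,\varphi_X)$.

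The homotopy from $\Phi\circ\Psi=\Phi_2\circ(\Phi_1\circ\Psi_1)\circ\Psi_2$ to $\mathrm{id}_Z$ is handled the same way, conjugating the $2\delta_1$-homotopy from $\Phi_1\circ\Psi_1$ to $\mathrm{id}_Y$ by $\Psi_2$ and $\Phi_2$ and then appending the $2\delta_2$-homotopy from $\Phi_2\circ\Psi_2$ to $\mathrm{id}_Z$, with an entirely parallel slice estimate. I do not expect a genuine obstacle here beyond this bookkeeping; in particular, dropping the continuity hypothesis on $\varphi_X$ and $\varphi_Y$ from~\cite[Proposition~2.10]{frosini2017persistent} is harmless, since continuity of the functions $\varphi$ is never invoked in any of these estimates nor in the pasting argument.
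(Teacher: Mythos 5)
Your proof is correct. A point worth making explicit: the paper does not actually supply a proof of this proposition --- it is stated as a citation of \cite[Proposition~2.10]{frosini2017persistent}, the paper's only contribution being the observation (made in the surrounding text) that dropping the continuity hypothesis on $\varphi_X$ and $\varphi_Y$ costs nothing, exactly as you note at the end. So there is no "paper's own proof" against which to compare; you have, in effect, reconstructed the argument that the reference gives.

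Your three-axiom verification is the natural one and I do not see any gaps. Symmetry and vanishing on the diagonal are immediate as you say. For the triangle inequality, your composition lemma and the two-stage concatenated homotopy are the standard device, and the chain of slice estimates
\[
\varphi_X\bigl(\Psi_1(K(\Phi_1(x),t))\bigr)\le\varphi_Y\bigl(K(\Phi_1(x),t)\bigr)+\delta_1\le\varphi_Y(\Phi_1(x))+2\delta_2+\delta_1\le\varphi_X(x)+2\delta_1+2\delta_2
\]
is exactly right (each inequality invoking, respectively, that $\Psi_1$ is a $\delta_1$-map, that each time-slice of $K$ is a $2\delta_2$-map on $(Y,\varphi_Y)$, and that $\Phi_1$ is a $\delta_1$-map). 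The endpoints of the two concatenated homotopies agree at $\Psi_1\circ\Phi_1$ so the pasting lemma applies, and passing from the composition lemma to the triangle inequality via an $\varepsilon$-argument handles the fact that the infimum in Definition~\ref{defn:dHT} need not be attained. One small implicit step you rely on --- that ``$\delta$-homotopy equivalent'' is monotone in $\delta$, so that witnesses exist for any $\delta_1,\delta_2$ strictly above the respective $\dHT$ values --- is indeed immediate from the definitions, since a $\delta$-map is a $\delta'$-map for $\delta'\ge\delta$ and likewise for homotopies; it would not hurt to say so in a finished write-up.
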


A pair $(X, \varphi_X)$ (where $X$ is a topological space and $\varphi_X\colon X\to\R$ is a real-valued function) induces an $\R$-space $\filtf{X}{\varphi_X}$ given by the sublevel set filtration, and defined as follows:
\[ \filt{X}{\varphi_X}{r} := \varphi_X^{-1}((-\infty, r))\mbox{ for }r\in \R\]
and
\[ \filtf{X}{\varphi_X} := \left\{\varphi_X^{-1}\big((-\infty,r)\big) \subseteq \varphi_X^{-1}\big((-\infty,r')\big)\right\}_{r\leq r'}.\]
The following theorem shows the interleaving distance of the persistent homology of the sublevel set filtrations is bounded by the $\dHT$ distance of the respective pairs.
The theorem is a slight generalization of~\cite[Lemma~3.1]{frosini2017persistent} in which we do not require the continuity of $\varphi_X$ and $\varphi_Y$; we omit its (identical) proof.

\begin{lem}[{\cite[Lemma 3.1]{frosini2017persistent}}]
\label{lem:dht_bound_interleaving}
Let $(X, \varphi_X)$ and $(Y, \varphi_Y)$ be two pairs.
Then for any integer $k\geq 0$, we have
\[\di^{\mathrm{Vec}}(H_k\circ \filtf{X}{\varphi_X}, H_k\circ \filtf{Y}{\varphi_Y})\leq \dHT((X, \varphi_X), (Y, \varphi_Y)).\]
\end{lem}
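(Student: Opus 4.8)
The plan is to reduce the statement to the original Lemma~3.1 of~\cite{frosini2017persistent} by observing that nothing in its proof uses continuity of $\varphi_X$ or $\varphi_Y$ — only that the maps $\Phi$, $\Psi$, and the homotopy $H$ appearing in a $\delta$-homotopy equivalence are continuous, which is still required in Definition~\ref{defn:dHT}. Concretely, I would first fix $\delta > \dHT((X,\varphi_X),(Y,\varphi_Y))$ (if the right-hand side is $\infty$ there is nothing to prove), so that there exist $\delta$-maps $\Phi\colon X\to Y$ and $\Psi\colon Y\to X$ with $\Psi\circ\Phi$ being $2\delta$-homotopic to $\mathrm{id}_X$ and $\Phi\circ\Psi$ being $2\delta$-homotopic to $\mathrm{id}_Y$. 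It then suffices to show that such data yields a $\delta$-interleaving of the persistence modules $H_k\circ\filtf{X}{\varphi_X}$ and $H_k\circ\filtf{Y}{\varphi_Y}$ in $\mathrm{Vec}$, and then let $\delta$ decrease to $\dHT$.

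The key steps are as follows. First, from the $\delta$-map condition $\varphi_Y\circ\Phi(x)\le\varphi_X(x)+\delta$ one gets $\Phi\big(\varphi_X^{-1}((-\infty,r))\big)\subseteq\varphi_Y^{-1}((-\infty,r+\delta))$ for every $r$, so $\Phi$ restricts to a continuous map $\filt{X}{\varphi_X}{r}\to\filt{Y}{\varphi_Y}{r+\delta}$; these restrictions are compatible with the inclusion structure maps, so applying $H_k$ gives a natural transformation $F\colon H_k\circ\filtf{X}{\varphi_X}\to (H_k\circ\filtf{Y}{\varphi_Y})^\delta$. Symmetrically $\Psi$ gives $G\colon H_k\circ\filtf{Y}{\varphi_Y}\to (H_k\circ\filtf{X}{\varphi_X})^\delta$. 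Second, I would check that $G\circ F=\mathrm{id}^{2\delta}$ at the level of homology: the composite $\Psi\circ\Phi$ restricts to a continuous map $\filt{X}{\varphi_X}{r}\to\filt{X}{\varphi_X}{r+2\delta}$, and the $2\delta$-homotopy $H$ between $\Psi\circ\Phi$ and $\mathrm{id}_X$ has the property that each $H(\smb,t)$ is a $2\delta$-map, hence $H$ restricts to a homotopy $\filt{X}{\varphi_X}{r}\times[0,1]\to\filt{X}{\varphi_X}{r+2\delta}$ between the restriction of $\Psi\circ\Phi$ and the inclusion-induced map; applying $H_k$ shows the two induced maps on homology agree, which is exactly the interleaving identity $G\circ F=\mathrm{id}^{2\delta}$. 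The identity $F\circ G=\mathrm{id}^{2\delta}$ is symmetric. Finally, this exhibits a $\delta$-interleaving, so $\di^{\mathrm{Vec}}(H_k\circ\filtf{X}{\varphi_X},H_k\circ\filtf{Y}{\varphi_Y})\le\delta$; taking the infimum over admissible $\delta$ finishes the proof.

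The only real point to verify — and the place where one might worry about the removal of the continuity hypothesis on $\varphi_X,\varphi_Y$ — is that the sublevel sets $\varphi_X^{-1}((-\infty,r))$ and the restrictions of continuous maps and homotopies between them genuinely assemble into functors on $(\R,\le)$ and natural transformations between them. But this is automatic: sublevel sets are nested along inclusions regardless of whether $\varphi_X$ is continuous, and restricting a globally continuous map $\Phi$ (or homotopy $H$) to a subspace is always continuous, so functoriality and naturality hold verbatim. Thus the continuity of $\varphi_X$ and $\varphi_Y$ is never used in the argument of~\cite[Lemma~3.1]{frosini2017persistent}, and the proof goes through unchanged; I would simply remark this and omit the identical details.
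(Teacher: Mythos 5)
Your proposal is correct and reconstructs precisely the argument that the paper leaves implicit: it cites the original \cite[Lemma~3.1]{frosini2017persistent}, notes that dropping continuity of $\varphi_X,\varphi_Y$ changes nothing, and omits the ``identical'' proof. Your three steps---restricting $\delta$-maps and $2\delta$-homotopies to sublevel sets with the appropriate parameter shift, applying $H_k$ to obtain the interleaving natural transformations and the identities $G^\delta\circ F=\mathrm{id}^{2\delta}$ (and symmetrically), and observing that continuity of the filtration functions is never invoked since sublevel sets are nested and restriction of continuous maps stays continuous---are exactly what that omitted proof consists of.
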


\section{The $p$-relaxation of metric thickenings}\label{sec:p-relaxation}

We now describe the construction of the $p$-relaxations of metric thickenings.

\subsection{The relaxed diameter and radius functionals}

Throughout this section, $(X,d_X)$ will denote a bounded metric space.
Recall that $\cP_X$ is the set of all Radon probability measures on $X$, equipped with the weak topology.
We now introduce for each $p\in[1,\infty]$ the \emph{$p$-diameter} of a measure $\alpha$ in $\cP_X$, where the $\infty$-diameter of $\alpha$ is precisely the diameter of its support $\supp{(\alpha)}$ (as a subset of $X$).
Consider for each $p\in[1,\infty]$ the $p$-\emph{diameter} map $\diam_p\colon \cP_X\to \R_{\geq 0}$
given by
\[\diam_p(\alpha) =
\begin{dcases}\left(\iint_{X\times X}d_X^p(x,x')\,\alpha(dx)\,\alpha(dx')\right)^{\frac{1}{p}} & \mbox{when $p<\infty$} \\
& \\
\diam(\supp(\alpha)) & \mbox{for $p=\infty$.}
\end{dcases}
\]
We remark that the $p$-diameter has been studied and critiqued as a measure of diversity in population biology~\cite{rao1982diversity,pavoine2005measuring}, and it has also been considered in relation to the Gromov--Wasserstein distance~\cite{memoli2011gromov}.
Similarly, define the $p$-\emph{radius} map $\rad_p\colon \cP_X\to \R_{\geq 0}$
via
\[\rad_p(\alpha) =
\begin{dcases}\inf_{x\in X}\left(\int_{X}d_X^p(x,x')\,\alpha(dx')\right)^{\frac{1}{p}} & \mbox{when $p<\infty$} \\
& \\
\rad(\supp(\alpha))
& \mbox{for $p=\infty$.}
\end{dcases}
\]
Note that the $p$-radius of a measure is simply the $p$-th root of its $p$-variance.

We observe that
\[\diam_p(\alpha) = \left(\int_X F^p_{\alpha, p}(x)\,\alpha(dx)\right)^\frac{1}{p} = \left(\int_X \big(\dWp(\alpha, \delta_x)\big)^p\,\alpha(dx)\right)^\frac{1}{p}\]
and
\[\rad_p(\alpha) = \inf_{x\in X}F_{\alpha,p}(x) = \inf_{x\in X}\dWp(\delta_x,\alpha).\]

\begin{prop}
\label{prop:basic-diamp}
The functions $\diam_p,\rad_p\colon \cP_X\to \R$ satisfy
\[\rad_p(\alpha)\leq \diam_p(\alpha)\leq 2\,\rad_p(\alpha).\]
\end{prop}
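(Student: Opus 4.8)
The plan is to prove the two inequalities separately, handling the cases $p<\infty$ and $p=\infty$ in parallel. For the lower bound $\rad_p(\alpha)\le\diam_p(\alpha)$, the cleanest route is to observe that, for $p<\infty$,
\[
\rad_p(\alpha)=\inf_{x\in X}F_{\alpha,p}(x)\le\left(\int_X F_{\alpha,p}^p(x)\,\alpha(dx)\right)^{1/p}=\diam_p(\alpha),
\]
since the value at any particular point is at most the $\alpha$-average of the $p$-th power of $F_{\alpha,p}$ (equivalently, the infimum of $F_{\alpha,p}$ is bounded above by the $L^p(\alpha)$-norm of $F_{\alpha,p}$). In the identity-based reformulation the paper just established, this is literally ``$\inf \le$ average.'' For $p=\infty$ this is just the classical inequality $\rad(A)\le\diam(A)$ applied to $A=\supp(\alpha)$, which was already recorded in the background section.

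For the upper bound $\diam_p(\alpha)\le 2\,\rad_p(\alpha)$, I would use the triangle inequality inside the integral. For $p<\infty$, fix any $x\in X$ and write $d_X(x_1,x_2)\le d_X(x_1,x)+d_X(x,x_2)$; then
\[
\diam_p(\alpha)=\left(\iint d_X^p(x_1,x_2)\,\alpha(dx_1)\,\alpha(dx_2)\right)^{1/p}
\le\left(\iint \big(d_X(x_1,x)+d_X(x,x_2)\big)^p\,\alpha(dx_1)\,\alpha(dx_2)\right)^{1/p},
\]
and applying the Minkowski inequality (stated in the background) to the two summands, each term integrates to $F_{\alpha,p}(x)$ over the appropriate variable, giving the bound $2F_{\alpha,p}(x)$. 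Taking the infimum over $x\in X$ yields $\diam_p(\alpha)\le 2\,\rad_p(\alpha)$. For $p=\infty$ this is again the classical $\diam(A)\le 2\,\rad(A)$ for $A=\supp(\alpha)$.

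I do not expect any serious obstacle here; the only point requiring a little care is the Minkowski step in the upper bound, where one must view the two-variable integrand $d_X(x_1,x)+d_X(x,x_2)$ as a sum of two functions on the product space $(X\times X,\alpha\otimes\alpha)$ — one depending only on $x_1$, the other only on $x_2$ — so that Minkowski applies and each resulting $L^p(\alpha\otimes\alpha)$-norm collapses to an $L^p(\alpha)$-norm, namely $F_{\alpha,p}(x)$. One should also note $\rad_p$ and $\diam_p$ are finite since $X$ is bounded, so all quantities are genuine real numbers and the inequalities are meaningful as stated.
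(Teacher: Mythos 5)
Your proof is correct and uses essentially the same steps as the paper: the lower bound by observing that the infimum of $F_{\alpha,p}$ is dominated by its $L^p(\alpha)$-average, and the upper bound via the triangle inequality followed by Minkowski on the product space. Your reformulation in terms of $F_{\alpha,p}$ and taking the infimum at the end (in place of the paper's $\varepsilon$-argument) is a cosmetic difference only.
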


\begin{proof}
To see that $\rad_p(\alpha)\leq \diam_p(\alpha)$, note that we have
\begin{align*}
\diam_p(\alpha) &= \Big(\iint_{X\times X} \big(d_X(x,x')\big)^p\alpha(dx)\,\alpha(dx')\Big)^{\frac{1}{p}}   \\
& =\Big(\int_{X} \left(\int_Xd^p_X(x, x')\,\alpha(dx)\,\right)\,\alpha(dx')\Big)^{\frac{1}{p}} \\
& \geq \left(\inf_{x'\in X}\left(\int_{X}d_X^p(x,x')\,\alpha(dx)\right)\right)^{\frac{1}{p}} \\
&= \inf_{x'\in X}\left(\int_{X}d_X^p(x,x')\,\alpha(dx)\right)^{\frac{1}{p}} \\
&= \rad_p(\alpha).
\end{align*}

To see that $\diam_p(\alpha)\leq 2\,\rad_p(\alpha)$, for any $\varepsilon>0$, there is a point $z\in X$ with
\[\left(\int_Xd_X^p(x, z)\alpha(dx)\right)^\frac{1}{p} \le \rad_p(\alpha)+\varepsilon.\]
Then we have
\begin{align*}
\diam_p(\alpha) & = \Big(\iint_{X\times X} \big(d_X(x,x')\big)^p\alpha(dx)\,\alpha(dx')\Big)^{\frac{1}{p}}    \\
& \leq \Big(\iint_{X\times X} \big(d_X(x,z) + d_X(z, x')\big)^p\alpha(dx)\,\alpha(dx')\Big)^{\frac{1}{p}}  \\
& \leq \Big(\iint_{X\times X} \big(d_X(x,z)\big)^p\alpha(dx)\,\alpha(dx')\Big)^{\frac{1}{p}} + \Big(\iint_{X\times X} \big(d_X(z,x')\big)^p\alpha(dx)\,\alpha(dx')\Big)^{\frac{1}{p}} \\
& \le 2\,\rad_p(\alpha)+2\varepsilon.
\end{align*}
Since $\varepsilon>0$ was arbitrary, this shows $\diam_p(\alpha)\leq 2\,\rad_p(\alpha)$.
\end{proof}

\begin{remark}
The tightness of the bound $\rad_p(\alpha)\leq \diam_p(\alpha)$ can be seen from the calculation for the uniform measure on $Z_n$, the metric space on a set of size $n$ with all interpoint distances equal to $1$.
To see the (asymptotic) tightness of $\diam_p(\alpha)\leq 2\,\rad_p(\alpha)$, we consider the metric space $Z_n\cup \{O\}$, where the newly introduced ``center" $O$ has distance $1/2$ to every other point.
Then for the measure $\alpha := \sum_{z\in Z_n} \frac{1}{n}\delta_z$, we have $\rad_p(\alpha) = 1/2$ and $\diam_p(\alpha) = (\frac{n-1}{n})^{\frac{1}{p}}$.
We obtain asymptotic tightness by letting $n$ go to infinity.
\end{remark}

\subsection{The relaxed Vietoris--Rips and \v{C}ech metric thickenings}

\begin{defn}[$p$-Vietoris--Rips filtration]\label{defn:vrp}
For each $r>0$ and $p\in[1,\infty]$, let the \emph{$p$-Vietoris--Rips metric thickening at scale $r$} be
\[\vrp{X}{r}:=\{\alpha\in\cP_X~|~\diam_p(\alpha)<r\}.\]

We regard $\vrp{X}{r}$ as a topological space by endowing it with the subspace topology from $\cP_X$ (i.e., the weak topology).
By convention, when $r \leq 0$, we will let ${\vrp{X}{r} = \varnothing}$.
By \[\vrpf{X}:=\left\{\vrp{X}{r}\stackrel{\nu_{r,r'}^X}{\longhookrightarrow} \vrp{X}{r'}\right\}_{r \leq r'}\]
we will denote the filtration thus induced.

We use $\vrpfin{X}{r}$ and $\vrpffin{X}$ to denote the finitely-supported variants, obtained by replacing $\cP_X$ in the above definition with $\cPfin_X$.
\end{defn}

Note that in the specific case $p=\infty$, the $\infty$-diameter of a measure is simply the diameter of its support.
For a bounded metric space $X$, the weak topology is generated by the $1$-Wasserstein metric, so the definition of $\vrpfin{X}{r}$ generalizes~\cite[Definition~3.1]{AAF}, which is the specific case $p=\infty$.

\begin{remark}[$\vrpf{X}$ as a softening of $\mathrm{VR}_{\infty}(X,\smb)$]
We should point out that the definition of $\vrpf{X}$ can be extended to the whole range $p\in[0,\infty]$ as follows.
One first notices that $\diam_p(\alpha)$ can still be defined as above for $p\in(0,1)$, and by $\diam_0(\alpha) = 0$ when $p=0$.
Furthermore, if $\alpha = \sum_{i\in I}a_i\,\delta_{x_i}$, then
\[\lim_{p\downarrow 0}\diam_p(\alpha) = \prod_{i,j\in I} \big(d_X(x_i,x_j)\big)^{a_i\cdot a_j},\] 
which equals $0$ since the product contains terms with $i=j$.

At any rate, by the standard generalized means inequality~\cite{bullen2013means}, we have $\diam_{p'}(\alpha)\leq \diam_p(\alpha)$ for any $p'\leq p$ in the range $[0,\infty]$.
So for fixed $r>0$, not only does $\mathrm{VR}_p(X;r)$ become larger and larger as $p$ decreases, but also for $p=0$, $\mathrm{VR}_0(X;r)$ contains \emph{all} Radon probability measures on $X$ and thus has trivial reduced homology.
    
\end{remark}

\begin{defn}[$p$-\v{C}ech filtration]
\label{defn:cechp}
For each $r>0$ and $p\in[1,\infty]$, let the \emph{$p$-\v{C}ech metric thickening at scale $r$} be
\[\cechp{X}{r}:=\{\alpha\in\cP_X~|~\rad_p(\alpha)<r\}.\]

We regard $\cechp{r}{X}$ as a topological space by endowing it with the subspace topology from $\cP_X$ (i.e., the weak topology).
By convention, when $r \leq 0$, we will let ${\cechp{X}{r} = \varnothing}$.

By
\[\cechpf{X}:=\left\{\cechp{X}{r}\stackrel{\nu_{r,r'}^X}{\longhookrightarrow} \cechp{X}{r'}\right\}_{r \leq r'}\] we will denote the filtration thus induced.\footnote{We will use $\nu_{r,r'}^X$ to denote the inclusion maps in both \v{C}ech and Vietoris--Rips thickenings; this will not lead to confusion in this paper.}

\begin{remark}
Let $\alpha$ be a measure in $\cP_X$.
As $\rad_p(\alpha) = \inf_{x\in X}\dWp(\delta_x,\alpha)$, we have $\rad_p(\alpha) <r$ if and only if there is some $x\in X$ such that $\dWp(\delta_x,\alpha) < r$.
Therefore, $\cechp{X}{r}$ is exactly the union over all $x\in X$ of the balls $B(\delta_x; r)$, with respect to the $p$-Wasserstein metric, 
centered at points $\delta_x$ in the isometric image of $X$ in $\cP_X$.
\end{remark}

We use $\cechpfin{X}{r}$ and $\cechpffin{X}$ to denote the finitely-supported variants, obtained by replacing $\cP_X$ in the above definition with $\cPfin_X$.
\end{defn}

Note that in the specific case $p=\infty$, the $\infty$-radius of a measure is simply the radius of its support.
For a bounded metric space $X$, the weak topology is generated by the $1$-Wasserstein metric, so the definition of $\cechpfin{X}{r}$ generalizes~\cite{AAF} which considers the specific case $p=\infty$.

Though the above definitions are given with the $<$ convention, we remark that they instead could have been given with the $\le$ convention, namely $\diam_p(\alpha)\le r$ or $\rad_p(\alpha)\le r$.
In this paper, we restrict attention to the $<$ convention, even though many of the statements we give are also true with the $\le$ convention.

The next proposition shows that $p$-Vietoris--Rips and $p$-\v{C}ech metric thickenings are nested in the same way that Vietoris--Rips and \v{C}ech simplicial complexes are.

\begin{prop}
\label{prop:nesting}
Let $X$ be a bounded metric space.
Then, for any $r> 0$, we have
\[\vrp{X}{r}\subseteq \cechp{X}{r}\subseteq\vrp{X}{2r}.\]
\end{prop}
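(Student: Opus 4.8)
The plan is to mirror the classical proof that $\mathrm{VR}(X;r) \subseteq \check{C}(X;r) \subseteq \mathrm{VR}(X;2r)$, but at the level of measures, using the identities $\diam_p(\alpha) = \left(\int_X \big(\dWp(\alpha,\delta_x)\big)^p\,\alpha(dx)\right)^{1/p}$ and $\rad_p(\alpha) = \inf_{x\in X}\dWp(\delta_x,\alpha)$ recorded just before Proposition~\ref{prop:basic-diamp}.

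For the first inclusion $\vrp{X}{r}\subseteq\cechp{X}{r}$, I would invoke Proposition~\ref{prop:basic-diamp}, which already gives $\rad_p(\alpha)\le\diam_p(\alpha)$ for every $\alpha\in\cP_X$. Hence $\diam_p(\alpha)<r$ immediately implies $\rad_p(\alpha)<r$, so any $\alpha$ in $\vrp{X}{r}$ lies in $\cechp{X}{r}$.

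For the second inclusion $\cechp{X}{r}\subseteq\vrp{X}{2r}$, suppose $\rad_p(\alpha)<r$. Then there exists $z\in X$ with $\dWp(\delta_z,\alpha)<r$, i.e.\ $\left(\int_X d_X^p(x,z)\,\alpha(dx)\right)^{1/p}<r$ in the finite-$p$ case (and $\sup_{x\in\supp\alpha}d_X(x,z)<r$ when $p=\infty$). Now estimate $\diam_p(\alpha)$ exactly as in the proof of the bound $\diam_p(\alpha)\le 2\,\rad_p(\alpha)$ in Proposition~\ref{prop:basic-diamp}: apply the triangle inequality $d_X(x,x')\le d_X(x,z)+d_X(z,x')$ pointwise, then Minkowski's inequality on $X\times X$ with the product measure $\alpha\otimes\alpha$, to get $\diam_p(\alpha)\le \left(\int_X d_X^p(x,z)\,\alpha(dx)\right)^{1/p}+\left(\int_X d_X^p(z,x')\,\alpha(dx')\right)^{1/p}<2r$ (and the analogous $\sup$ argument for $p=\infty$). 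Therefore $\alpha\in\vrp{X}{2r}$. In fact both inclusions follow formally from Proposition~\ref{prop:basic-diamp} together with the strictness of the inequalities defining the filtrations, so one could simply cite that proposition; I would spell out the triangle-inequality/Minkowski step for completeness.

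There is no real obstacle here — this is essentially a packaging of Proposition~\ref{prop:basic-diamp}. The only mild care needed is handling the $p=\infty$ case separately (replacing integrals by suprema over the support and Minkowski by the ordinary triangle inequality), and noting that the strict inequalities $<r$ are preserved because $\rad_p(\alpha)\le\diam_p(\alpha)$ and $\diam_p(\alpha)\le 2\rad_p(\alpha)<2r$ are compatible with strictness. The same argument works verbatim for the finitely supported variants $\vrpfin{X}{r}$ and $\cechpfin{X}{r}$.
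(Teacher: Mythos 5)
Your proof is correct and matches the paper's: the paper likewise deduces both inclusions directly from the chain $\rad_p(\alpha)\le\diam_p(\alpha)\le 2\,\rad_p(\alpha)$ established in Proposition~\ref{prop:basic-diamp}. The extra triangle-inequality/Minkowski spelling-out you add is just re-deriving the second half of that proposition, so it is redundant but harmless.
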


\begin{proof}
This follows immediately from Proposition~\ref{prop:basic-diamp}, which implies that for any $\alpha\in\cP_X$,
\[\diam_p(\alpha) \ge \rad_p(\alpha) \ge \tfrac{1}{2}\,\diam_p(\alpha).\]
\end{proof}

If $X$ and $Z$ are metric spaces with $X\subseteq Z$, and if the metric on $Z$ is an extension of that on $X$, then following Gromov~\cite[Section~1B]{Gromov93} we say that $Z$ is a \emph{$r$-metric thickening of $X$} if for all $z\in Z$, there is some $x\in X$ with $d_Z(x,z)\le r$.

\begin{prop}\label{prop:r-metric_thickenings}
Let $X$ be a bounded metric space.
When equipped with the $q$-Wasserstein metric for $1\le q\le p$, we have that $\cechp{X}{r}$ and $\vrp{X}{r}$ are each $r$-metric thickenings of $X$ for all $r > 0$.
\end{prop}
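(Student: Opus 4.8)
The plan is to verify the two defining properties of an $r$-metric thickening: that the $q$-Wasserstein metric on $\cechp{X}{r}$ (resp.\ $\vrp{X}{r}$) restricts to $d_X$ on the isometric copy $\{\delta_x : x \in X\}$, and that every measure in the thickening lies within $q$-Wasserstein distance $r$ of some $\delta_x$. The first property is standard: for any $x,x' \in X$, the only coupling of $\delta_x$ and $\delta_{x'}$ is $\delta_{(x,x')}$, so $\dWqq{q}(\delta_x,\delta_{x'}) = d_X(x,x')$ for every $q \in [1,\infty]$; hence $X \hookrightarrow \cechp{X}{r}$ via $x \mapsto \delta_x$ is an isometric embedding and the metric on $\cechp{X}{r}$ extends that on $X$. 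This handles the ``metric extension'' hypothesis in Gromov's definition.

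For the second (and main) property, I would argue as follows. By Proposition~\ref{prop:nesting}, $\vrp{X}{r} \subseteq \cechp{X}{2r}$, but a cleaner route is to handle each case on its own terms. Take $\alpha \in \cechp{X}{r}$, so $\rad_p(\alpha) < r$; by the displayed identity $\rad_p(\alpha) = \inf_{x\in X}\dWp(\delta_x,\alpha)$, there exists $x \in X$ with $\dWp(\delta_x,\alpha) < r$. Since $q \le p$, Hölder's inequality gives $\dWqq{q}(\delta_x,\alpha) \le \dWp(\delta_x,\alpha) < r$ (this monotonicity $\dWqq{q} \le \dWqq{q'}$ for $q \le q'$ is recorded in the Background section, and extends to $q' = \infty$). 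So every point of $\cechp{X}{r}$ is within $q$-Wasserstein distance $r$ of a point of $X$, as required. For $\vrp{X}{r}$, take $\alpha$ with $\diam_p(\alpha) < r$. Using $\diam_p(\alpha) = \big(\int_X F_{\alpha,p}^p(x)\,\alpha(dx)\big)^{1/p}$ when $p < \infty$ (and a direct argument when $p = \infty$), the average over $\alpha$ of $F_{\alpha,p}(x)^p$ is less than $r^p$, so there is a point $x$ in $\supp(\alpha)$ — in particular in $X$ — with $F_{\alpha,p}(x) = \dWp(\delta_x,\alpha) < r$; again Hölder gives $\dWqq{q}(\delta_x,\alpha) < r$. (Alternatively one simply invokes $\vrp{X}{r}\subseteq\cechp{X}{r}$ from Proposition~\ref{prop:nesting} and reduces to the \v{C}ech case.)

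The step requiring the most care is pinning down that the witnessing point $x$ genuinely lies in $X$ rather than merely in some completion, and that a minimizing/averaging point can be extracted even when $\rad_p$ or $\diam_p$ is realized only in the limit: the $\inf$ in $\rad_p$ need not be attained, but since we have the strict inequality $\rad_p(\alpha) < r$ we only need an approximate minimizer, which always exists in $X$. For $\diam_p$ with $p<\infty$, the existence of $x \in \supp(\alpha)$ with $F_{\alpha,p}(x) < r$ follows because the $\alpha$-average of $F_{\alpha,p}^p$ is $< r^p$, so the set $\{x : F_{\alpha,p}(x)^p < r^p\}$ has positive $\alpha$-measure and hence meets $\supp(\alpha)$. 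The case $p=\infty$ reduces to the classical statement that a bounded subset of diameter (resp.\ radius) less than $r$ is contained in an $r$-ball, applied to $\supp(\alpha)$; here one should note that for $p=\infty$ the relevant Wasserstein metric is $\dWqq{q}$ for finite $q$, and $\dWqq{q}(\delta_x,\alpha) \le \dWqq{\infty}(\delta_x,\alpha) = F_{\alpha,\infty}(x)$, so the same bound goes through.
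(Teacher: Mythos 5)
Your proposal is correct and takes essentially the same route as the paper: for the \v{C}ech case extract $x\in X$ with $\dWp(\delta_x,\alpha)<r$ from $\rad_p(\alpha)<r$ and apply the monotonicity $\dWq\le\dWp$, then handle the Vietoris--Rips case via the nesting $\vrp{X}{r}\subseteq\cechp{X}{r}$ of Proposition~\ref{prop:nesting}. The paper's own proof is precisely this two-line argument; the only extra content in your version is the optional direct averaging argument for $\vrp{X}{r}$ and the careful remarks about strict inequality and $p=\infty$, which are sound but not needed once one reduces to the \v{C}ech case.
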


\begin{proof}
We use the isometric embedding $X \to \cechp{X}{r}$ given by $x \mapsto \delta_{x}$.
Let $\alpha\in\cechp{X}{r}$.
Hence there exists some $x\in X$ with $r > \dWp(\delta_x,\alpha) \ge \dWq(\delta_x,\alpha)$, which shows that $\cechp{X}{r}$ equipped with the $q$-Wasserstein metric is an $r$-metric thickening of $X$.

The Vietoris--Rips case follows immediately since $\vrp{X}{r}\subseteq \cechp{X}{r}$ by Proposition~\ref{prop:nesting}.
\end{proof}

The following remark follows the perspective introduced in the recent paper~\cite{lim2020vietoris}, which shows that the filling radius is related to the persistent homology of the Vietoris--Rips simplicial complex filtration $\vrf{X}$.
\begin{remark}\label{rmk:filling_radius}
Let $X$ be a closed connected $n$-dimensional manifold so that the \emph{fundamental class} of $X$ is well-defined.
Let $Z$ be an $r$-metric thickening of $X$ for some $r>0$.
It is shown in~\cite[Page~8, Another Corollary]{gromov1983filling} that the map
\[
H_n(X)\hookrightarrow H_n(Z)
\]
induced from the inclusion $\iota:X\rightarrow Z$ is an injection whenever $r$ is less than a scalar geometric invariant $\mathrm{FillRad}(X)$ called the \emph{filling radius} of $X$.
Hence Proposition~\ref{prop:r-metric_thickenings} implies that for all $p\in [1, \infty]$, the persistence diagram in dimension $n$ of either the filtration $\vrpf{X}$ or $\cechpf{X}$ contains an interval that with left endpoint equal to $0$ and length at least $\mathrm{FillRad}(X)$.
As proved in~\cite{katz1983filling}, the filling radius of the $n$-sphere (with its geodesic metric) is $\mathrm{FillRad}(\Sp^n) = \frac{1}{2} \arccos\left(\frac{-1}{n+1}\right)$.
Therefore, when $X=\Sp^n$, the $n$-dimensional persistence diagram of either $\vrpf{\Sp^n}$ or $\cechpf{\Sp^n}$ contains an interval starting at zero which is no shorter than $\frac{1}{2} \arccos\left(\frac{-1}{n+1}\right)$.
\end{remark}

\begin{lem}
For all $r>0$ and all $p,p'\in[1,\infty]$ with $p\geq p'$, one has
\[\vrpp{p}{X}{r}\subseteq \vrpp{p'}{X}{r}
\quad\mbox{and}\quad
\cechpp{p}{X}{r} \subseteq \cechpp{p'}{X}{r}.\]
\end{lem}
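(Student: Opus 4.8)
The plan is to deduce both inclusions from the pointwise monotonicity statements $\diam_{p'}(\alpha)\le\diam_p(\alpha)$ and $\rad_{p'}(\alpha)\le\rad_p(\alpha)$, valid for every $\alpha\in\cP_X$ whenever $p\ge p'$. Granting these, the argument is immediate: if $\alpha\in\vrpp{p}{X}{r}$ then $\diam_{p'}(\alpha)\le\diam_p(\alpha)<r$, so $\alpha\in\vrpp{p'}{X}{r}$; the $p$-\v{C}ech case is identical, replacing $\diam_p$ by $\rad_p$.

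For the diameter inequality, note that $\diam_p(\alpha)$ is precisely the $L^p(\alpha\otimes\alpha)$-norm of the bounded function $(x,x')\mapsto d_X(x,x')$ on $X\times X$. When $p,p'<\infty$, the bound $\diam_{p'}(\alpha)\le\diam_p(\alpha)$ is the standard monotonicity of $L^q$-norms with respect to a probability measure, i.e.\ an application of Jensen's inequality to $t\mapsto t^{p/p'}$ (equivalently the generalized power mean inequality of~\cite{bullen2013means} already invoked after Definition~\ref{defn:vrp}). When $p=\infty$ and $p'<\infty$, we instead use that $d_X(x,x')\le\diam(\supp(\alpha))=\diam_\infty(\alpha)$ for $(\alpha\otimes\alpha)$-almost every $(x,x')$, so integrating the $p'$-th power and taking $p'$-th roots gives $\diam_{p'}(\alpha)\le\diam_\infty(\alpha)$.

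The radius inequality follows by applying the same reasoning to the $p$-Fr\'echet functions. Since $\rad_p(\alpha)=\inf_{x\in X}F_{\alpha,p}(x)$, it suffices to check $F_{\alpha,p'}(x)\le F_{\alpha,p}(x)$ for each fixed $x\in X$ and then take the infimum over $x$. For $p,p'<\infty$ this is again monotonicity of $L^q(\alpha)$-norms applied to $z\mapsto d_X(z,x)$, and for $p=\infty$ it follows from $d_X(z,x)\le\sup_{z'\in\supp(\alpha)}d_X(z',x)=F_{\alpha,\infty}(x)$, which holds for $\alpha$-almost every $z$ because $\alpha$ is concentrated on $\supp(\alpha)$. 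There is no real obstacle in this argument; the only point requiring mild care is the passage to $p=\infty$, where the $L^q$-norm monotonicity must be replaced by the elementary almost-everywhere bound by the (essential) supremum, and where boundedness of $X$ is used to keep all quantities finite.
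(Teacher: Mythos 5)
Your proof is correct and takes essentially the same route as the paper: the paper cites H\"older's inequality applied to $\diam_p$ and $\rad_p$, and your argument via monotonicity of $L^q$-norms with respect to a probability measure (equivalently Jensen or the power mean inequality) is the same idea, just spelled out in more detail, with the $p=\infty$ endpoint and the passage through the Fr\'echet functions handled explicitly.
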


\begin{proof}
This comes from applying the H\"{o}lder inequality on $\diam_p$ and $\rad_p$.
\end{proof}

\begin{example}\label{ex:Delta-n}
We recall that $Z_{n+1}$ is the metric space consisting of $n+1$ points with all interpoint distances equal to 1.
For any natural number $n$, the Vietoris--Rips or \v{C}ech simplicial complex filtrations of $Z_{n+1}$ do not produce any non-diagonal point in their persistence diagram except in homological dimension zero.
However, for any $1<p<\infty$, both the $p$-Vietoris--Rips and $p$-\v{C}ech filtrations of $Z_{n+1}$ will contain non-diagonal points in their persistence diagrams for homological degrees from $0$ to $n-1$.
More specifically, both the $p$-Vietoris--Rips and $p$-\v{C}ech filtrations are homotopy equivalent to a filtration of an $n$-simplex $\Delta_n$ by its $k$-skeleta, $\Delta_n^{(k)}$.
We prove the following result in Appendix~\ref{app:pd_delta_1_space}.
For $r$ in the interval $\left( \left(\frac{k}{k+1}\right)^{\frac{1}{p}}, \left(\frac{k+1}{k+2}\right)^{\frac{1}{p}}\right]$ with $0\leq k \leq n-1$, we have
\[ \vrp{Z_{n+1}}{r} \simeq \cechp{Z_{n+1}}{r} \simeq \Delta_{n}^{(k)}, \]
and when $r>\left(\frac{n}{n+1}\right)^{\frac{1}{p}}$, both $\vrp{Z_{n+1}}{{r}}$ and $\cechp{Z_{n+1}}{{r}}$ become the $n$-simplex $\Delta_{n}$ which is contractible.

From this we get the persistence diagrams
\[ \dgmvr_{k,p}(Z_{n+1}) = \dgmcech_{k,p}(Z_{n+1}) = \begin{cases}
\left(0, \left(\tfrac{1}{2}\right)^{\tfrac{1}{p}}\,\right)^{\otimes {n}}\oplus (0, \infty)  & \,\text{if $k=0$},             \\
\left(\left(\tfrac{k}{k+1}\right)^{\tfrac{1}{p}}, \left(\tfrac{k+1}{k+2}\right)^{\frac{1}{p}}\,\right)^{\otimes {{n}\choose{k+1}}} & \,\text{if $1\le k \leq n-1$}, \\
\emptyset & \,\text{if $k \ge n$}.
\end{cases}
\]
Note that from the definition of $\vrppf{\infty}{Z_{n+1}}$, we have $\dgmvr_{k,\infty}=\emptyset$ for $k\geq 1$, and furthermore that $\lim_{p\uparrow \infty}\dgmvr_{k,p}(Z_{n+1}) = \dgmvr_{k,\infty}(Z_{n+1})$ for each $k\geq 0$.
An analogous result holds for $\cechpf{Z_{n+1}}$.

\begin{figure}
\centering
\includegraphics[width=0.6\textwidth]{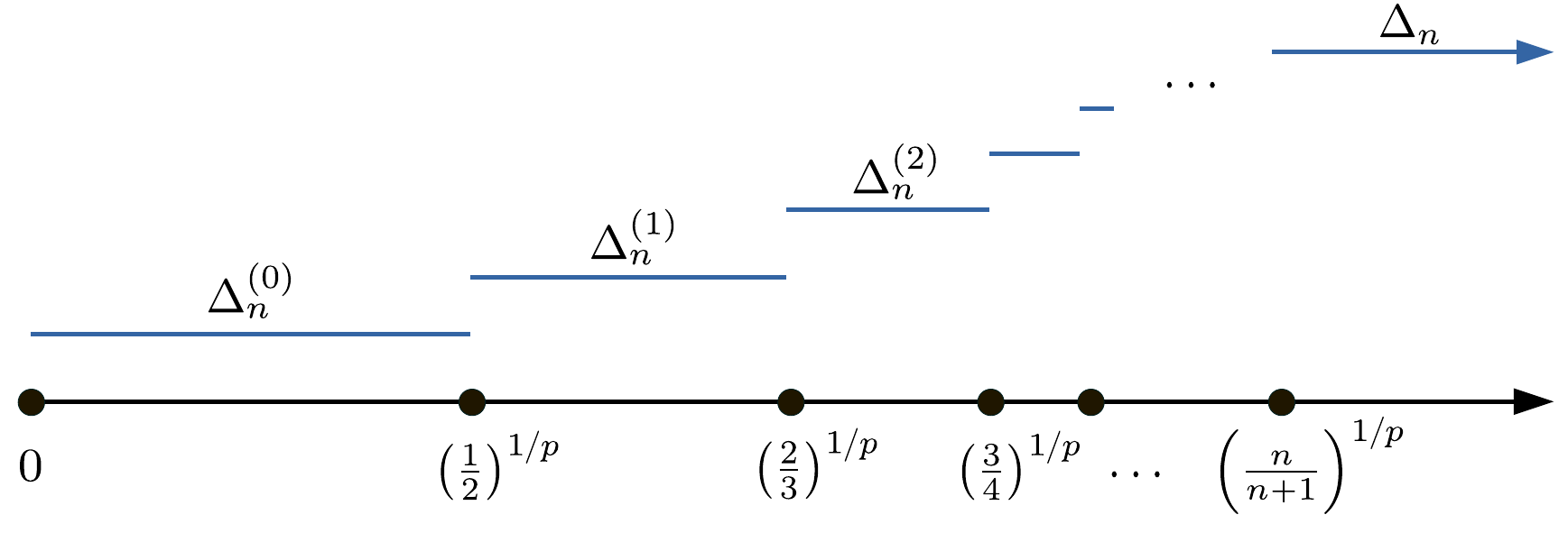}
\caption{Homotopy types of both $\vrp{Z_{n+1}}{\smb}$ and $\cechp{Z_{n+1}}{\smb}$.}
\label{fig:homotopyTypesDelta}
\end{figure}

\end{example}

\subsection{A more general setting: controlled invariants}
We now generalize the relaxed $p$-Vietoris-Rips and $p$-\v{C}ech filtrations.

\begin{defn}[Controlled invariants on $\cP_X$]\label{def:controlled-invariant}
Let $\mi$ be a functional that associates to each bounded metric space $X$ a function $\mi^X\colon \cP_X\to \mathbb{R}$.
We say, for some $L>0$, that $\mi$ is an \emph{$L$-controlled} invariant if the following conditions are satisfied:
\begin{enumerate}
	\item \emph{Stability under pushforward}: For any map $f$ between finite metric spaces $X$ and $Y$ and any $\alpha$ in $\cP_X$,
    \[\mi^Y(f_\sharp(\alpha))  \leq \mi^X(\alpha)+L\cdot \mathrm{dis}(f).\]
	\item \emph{Stability with respect to $\dWqq{\infty}$}: For any bounded metric space $X$ and any $\alpha, \beta$ in $\cP_X$,
	\[|\mi^X(\alpha) - \mi^X(\beta)| \leq 2L\cdot \dWqq{\infty}(\alpha, \beta).\]
\end{enumerate}
\end{defn}

In the next section, we will prove that both $\diam_p$ and $\rad_p$ induce controlled invariants.

For any controlled invariant $\mi$, we will use the notation $\filtf{\cP_X}{\mi^X}$ to denote the sublevel set filtration induced by the pair $(\cP_X, \mi^X)$.
That is,
\[  \filt{\cP_X}{\mi^X}{r}=(\mi^X)^{-1}\big((-\infty,r)\big)\mbox{ for }r\in \R\]
and
\[ \filtf{\cP_X}{\mi^X}=\left\{(\mi^X)^{-1}\big((-\infty,r)\big) \subseteq (\mi^X)^{-1}\big((-\infty,r')\big)\right\}_{r\leq r'}.\]
This construction generalizes both the definition of $\vrpf{X}$ and that of $\cechpf{X}$.

Similarly, any controlled invariant $\mi\colon \cPfin_X\to\R$ induces an analogous sublevel set filtration $\filtf{\cPfin_X}{\mi^X}$ of $\cPfin_X$.
This construction generalizes both the definition of $\vrpffin{X}=\filtf{\cP_X}{\diam_p^X}$ and that of $\cechpffin{X}=\filtf{\cP_X}{\rad_p^X}$.

We'll see later on in Corollary~\ref{cor:cP_cPfin_0-interleaved} that the filtrations $\filtf{\cP_X}{\mi^X}$ and $\filtf{\cPfin_X}{\mi^X}$ yield persistence modules which are $\epsilon$-interleaved for arbitrary small $\epsilon>0$.


\section{Basic properties}\label{sec:basic-properties}

We prove some basic properties.
These include the convexity of the Wasserstein distance, the fact that $\diam_p$ and $\rad_p$ are $1$-controlled, and the fact that for $X$ finite, the $p=\infty$ metric thickenings $\vrpp{\infty}{X}{r}$ and $\cechpp{\infty}{X}{r}$ are homeomorphic to the simplicial complexes $\vr{X}{r}$ and $\cech{X}{r}$, respectively.
We begin with the property that nearby measures have nearby integrals.
Throughout this section, $X$ is a bounded metric space.

\begin{lem}\label{lem:dwq_stability_wrt_Lipschitz}
Let $\varphi$ be an $L$-Lipschitz function on $X$.
Then for any $\alpha, \beta$ in $\cP_X$ and any $q\in [1, \infty]$, we have
\[\left\vert \left(\int_X|\varphi(x)|^q\, \, \alpha(dx)\right)^{\frac{1}{q}} -\left(\int_X|\varphi(x)|^q\, \, \beta(dx)\right)^{\frac{1}{q}}  \right\vert\leq L \cdot \dWq(\alpha, \beta).\]
\end{lem}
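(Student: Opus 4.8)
The statement is a Wasserstein-stability estimate for the $L^q$-norm of an $L$-Lipschitz function. I would prove it by reducing to the case $q = 1$ applied to the function $|\varphi|^q$, after first disposing of the easy observations that make this reduction legitimate. The key point is that $|\varphi|^q$ is itself Lipschitz with an explicit constant on the bounded space $X$, and that the reverse triangle inequality for the $L^q$-norm lets us control the difference of norms by the $L^1$-distance of the $q$-th powers.

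\medskip

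First I would treat $q = \infty$ separately: here the left-hand side is $|\,\sup_{\supp\alpha}|\varphi| - \sup_{\supp\beta}|\varphi|\,|$, and using any coupling $\mu$ realizing (or nearly realizing) $\dWqq{\infty}(\alpha,\beta)$, every point of $\supp\alpha$ is within $\dWqq{\infty}(\alpha,\beta)$ of a point of $\supp\beta$ in the support of $\mu$, so the two suprema differ by at most $L\cdot\dWqq{\infty}(\alpha,\beta)$ by $L$-Lipschitzness of $\varphi$; symmetry finishes this case. For $q \in [1,\infty)$, the reverse Minkowski inequality gives
\[
\left\vert \left(\int_X|\varphi|^q\,\alpha(dx)\right)^{1/q} - \left(\int_X|\varphi|^q\,\beta(dx)\right)^{1/q}\right\vert \le \left(\int_X \big|\,|\varphi|^q - |\varphi|^q\,\big|\cdots\right)
\]
— more precisely, I would instead argue directly: fix a coupling $\mu \in \cpl(\alpha,\beta)$, write both integrals as integrals over $X\times X$ against $\mu$, and apply the pointwise inequality $\big||\varphi(x)|^q - |\varphi(x')|^q\big| \le C\,d_X(x,x')$ valid on a bounded space. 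To get $C$: on $[0,D]$ where $D = \diam(X)\cdot$(something bounding $|\varphi|$), the map $t\mapsto t^q$ is Lipschitz with constant $q\,(L\cdot\diam(X) + |\varphi(x_0)|)^{q-1}$ or similar, and composing with the $L$-Lipschitz $|\varphi|$ gives a Lipschitz constant for $|\varphi|^q$. Then Hölder/Minkowski on $\big(\int |\varphi|^q\,\alpha\big)^{1/q} - \big(\int|\varphi|^q\,\beta\big)^{1/q}$ combined with $\int_{X\times X}\big||\varphi(x)|^q - |\varphi(x')|^q\big|\,\mu \le C\int d_X(x,x')\,\mu \le C\,\dWqq{1}(\alpha,\beta)$ and $\dWqq{1}\le\dWq$ yields the bound — but with the wrong constant (it would be $C/q\cdot(\text{norm bound})^{1-q}\cdots$, i.e. something that collapses correctly to $L$).

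\medskip

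Actually the cleanest route, which I expect the authors take, avoids powers of the diameter entirely: apply the triangle inequality in $L^q(\mu)$ directly. Writing $\|\cdot\|_{q}$ for the $L^q(\mu)$-norm and $u(x,x') = \varphi(x)$, $v(x,x') = \varphi(x')$, we have $\big|\|u\|_q - \|v\|_q\big| \le \|u - v\|_q = \big(\int_{X\times X}|\varphi(x)-\varphi(x')|^q\,\mu\big)^{1/q} \le L\big(\int_{X\times X} d_X(x,x')^q\,\mu\big)^{1/q}$ using $L$-Lipschitzness pointwise and monotonicity. Taking the infimum over $\mu\in\cpl(\alpha,\beta)$ on the right gives exactly $L\cdot\dWq(\alpha,\beta)$, while $\|u\|_q = \big(\int_X|\varphi|^q\,\alpha\big)^{1/q}$ and $\|v\|_q = \big(\int_X|\varphi|^q\,\beta\big)^{1/q}$ by the marginal property of couplings. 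This is clean and gives the sharp constant $L$ with no dependence on $\diam(X)$; the $q=\infty$ case is the same argument with $\sup$ in place of the integral. The only mild subtlety — the "main obstacle," such as it is — is justifying that one may pass to the infimum over couplings on the right-hand side while the left-hand side does not depend on $\mu$; this is immediate since the left-hand side is $\mu$-independent and the inequality holds for every $\mu$. I would also remark that finiteness of all integrals is automatic since $X$ is bounded and $\varphi$ Lipschitz, hence bounded.

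\medskip

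One more point to be careful about in the write-up: the reverse triangle inequality $\big|\|u\|_q - \|v\|_q\big| \le \|u-v\|_q$ is exactly Minkowski's inequality as recalled in the background section, applied to $u = (u - v) + v$ and to $v = (v-u) + u$; I would cite that recalled Minkowski inequality rather than re-deriving it.
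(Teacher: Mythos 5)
Your final route (``apply the triangle inequality in $L^q(\mu)$ directly'') is exactly the paper's proof: fix a coupling $\mu$, rewrite the two integrals against $\mu$ using the marginals, apply the reverse Minkowski inequality to $u(x,x')=\varphi(x)$ and $v(x,x')=\varphi(x')$, bound $|\varphi(x)-\varphi(x')|$ by $L\,d_X(x,x')$, and infimize over $\mu$. The earlier detours (reduction to $q=1$, Lipschitz constant for $|\varphi|^q$) can be deleted, since they produce the wrong constant as you yourself observe; the write-up should go straight to the coupling argument.
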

\begin{proof}
Let $\mu \in \cpl(\alpha,\beta)$ be any coupling.
Then we have
\begin{align*}
& \left\vert\left( \int_X|\varphi(x)|^q\, \, \alpha(dx)\right)^{\frac{1}{q}}  -\left(\int_X|\varphi(x)|^q\, \, \beta(dx)\right)^{\frac{1}{q}}  \right\vert \\
=\ & \bigg\vert \left( \iint_{X\times X}|\varphi(x)|^q\, \, \mu(dx\times dx') \right)^{\frac{1}{q}} -\left( \iint_{X\times X}|\varphi(x')|^q\, \,  \mu(dx\times dx')  \right)^{\frac{1}{q}}\bigg\vert \\
\leq\ & \left(\iint_{X\times X}|\varphi(x) - \varphi(x')|^q\, \,  \mu(dx\times dx')\right)^{\frac{1}{q}}                  \\
\leq\ & L\left(\iint_{X\times X}d_X^q(x, x')\,  \mu(dx\times dx')\right)^{\frac{1}{q}}.
\end{align*}
We obtain the claim by infimizing the right hand side with respect to the coupling $\mu$.
\end{proof}

We will often use the following convexity result of the Wasserstein distance; for a more general result, see~\cite[Theorem 4.8]{villani2008optimal}.

\begin{lem}\label{lem:bound-distance-convex-comb}
If $c_1, \dots, c_n$ are non-negative real numbers satisfying $\sum_{k=1}^n c_k = 1$ and if $\alpha_1, \dots, \alpha_n,\alpha'_1, \dots, \alpha'_n \in \cP_X$, then for all $q \in [1, \infty)$, we have
\[
\dWq \Big( \sum_{k=1}^n c_k \alpha_k ,\, \sum_{k=1}^n c_k \alpha'_k \Big) \leq \left( \sum_{k=1}^n c_k \left(\dWq(\alpha_k, \alpha'_k) \right)^q \right)^\frac{1}{q}
\]
and
\[
\dWqq{\infty} \Big( \sum_{k=1}^n\,c_k \alpha_k ,\, \sum_{k=1}^n c_k\,\alpha'_k \Big) \leq \max_k\,\dWqq{\infty}(\alpha_k, \alpha'_k).\]
\end{lem}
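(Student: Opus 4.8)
The plan is to handle both inequalities via an explicit ``diagonal'' coupling construction. For each $k$, fix an arbitrary coupling $\mu_k \in \cpl(\alpha_k, \alpha_k')$, and form the convex combination $\mu := \sum_{k=1}^n c_k \mu_k$, which is a measure on $X \times X$. First I would check that $\mu$ is indeed a coupling of $\sum_k c_k \alpha_k$ and $\sum_k c_k \alpha_k'$: since pushforward is linear in the measure, $(\pi_1)_\sharp \mu = \sum_k c_k (\pi_1)_\sharp \mu_k = \sum_k c_k \alpha_k$, and similarly for $\pi_2$. Hence $\mu$ is admissible in the infimum defining each Wasserstein distance, which gives an upper bound on $\dWqq{q}$ (resp.\ $\dWqq{\infty}$) by the corresponding cost of $\mu$.

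For the finite-$q$ case, the cost of $\mu$ is
\[
\iint_{X\times X} d_X^q(x,x')\,\mu(dx\times dx') = \sum_{k=1}^n c_k \iint_{X\times X} d_X^q(x,x')\,\mu_k(dx\times dx').
\]
Taking the $q$-th root and then infimizing each inner integral over $\mu_k \in \cpl(\alpha_k,\alpha_k')$ yields the bound $\left(\sum_k c_k (\dWqq{q}(\alpha_k,\alpha_k'))^q\right)^{1/q}$, as desired. For the $\infty$-case, note that $\supp(\mu) \subseteq \bigcup_k \supp(\mu_k)$ (using $c_k \ge 0$, discarding any $k$ with $c_k = 0$), so
\[
\sup_{(x,x')\in\supp(\mu)} d_X(x,x') \le \max_k \sup_{(x,x')\in\supp(\mu_k)} d_X(x,x');
\]
infimizing each term on the right over $\mu_k$ gives $\max_k \dWqq{\infty}(\alpha_k,\alpha_k')$. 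A small technical point to address in the $q<\infty$ case is that the infimum in the definition of $\dWqq{q}$ may not be attained, so strictly speaking one should choose each $\mu_k$ to be $\varepsilon$-optimal, carry the argument through, and let $\varepsilon \downarrow 0$ at the end; similarly in the $\infty$-case.

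The only genuinely non-routine point is the support inclusion $\supp\big(\sum_k c_k \mu_k\big) \subseteq \bigcup_{k : c_k > 0} \supp(\mu_k)$ used in the $\infty$-case: this holds because the right-hand side is a closed set whose complement is open with $\mu$-measure zero (each $\mu_k$ vanishes on it), and $\supp(\mu)$ is by definition the smallest closed set whose complement has measure zero. Everything else is a direct manipulation of integrals and infima, so I expect no serious obstacle.
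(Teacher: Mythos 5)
Your proof is correct and uses essentially the same strategy as the paper: form the convex combination $\mu = \sum_k c_k \mu_k$ of (near-)optimal couplings $\mu_k$, verify it is a coupling of the mixtures, and bound the cost. The paper chooses $\varepsilon$-optimal $\mu_k$ up front and lets $\varepsilon \downarrow 0$; you note the same technicality at the end, and your support-inclusion remark fills in the $q=\infty$ detail that the paper leaves as "can be checked separately using the same matching."
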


\begin{proof}
Let $\varepsilon>0$ and $q \in [1,\infty)$.
For each $k$, suppose $\mu_k$ is a coupling between $\alpha_k$ and $\alpha'_k$ such that
\[ \iint_{X\times X}d^q_X(x,x')\,\mu_k(dx\times dx') < \left(\dWq(\alpha_k, \alpha'_k) + \varepsilon\right)^q.\]
Then it can be checked that $\sum_{k=1}^n c_k \mu_k$ is a coupling between $\sum_{k=1}^n c_k \alpha_k$ and $\sum_{k=1}^n c_k \alpha'_k$.
We have
\begin{align*}
\dWq \left( \sum_{k=1}^n\,c_k \alpha_k ,\, \sum_{k=1}^n c_k\,\alpha'_k \right) & \leq \left( \iint_{X\times X}d_X^q(x,x')\,\sum_{k=1}^n c_k \mu_k(dx\times dx') \right)^\frac{1}{q}        \\
& < \left( \sum_{k=1}^n c_k \left(\dWq(\alpha_k, \alpha'_k) + \varepsilon \right)^q \right)^\frac{1}{q}     \\
& \leq \left( \sum_{k=1}^n c_k \left(\dWq(\alpha_k, \alpha'_k) \right)^q \right)^\frac{1}{q} + \varepsilon.
\end{align*}
Since this holds for all $\varepsilon>0$, the claimed inequality holds.
The case for $q = \infty$ can be checked separately using the same matching.
\end{proof}

We define an invariant generalizing $\diam_p$, and then prove that it is $1$-controlled.

\begin{defn}[$\iqp^X$ invariant]\label{defn:iqp}
Let $X$ be a bounded metric space.
For any $p, q \in [1, \infty]$, we define the \emph{$\iqp$ invariant}, which associates to each bounded metric space $X$ a function $\iqp^X\colon \cP_X\to \R$, 
\[\iqp^X(\alpha)=\left(\int_X\big(\dWq(\alpha,\delta_x)\big)^p\alpha(dx)\right)^{1/p}.\]
Note that $\iqp^X$ recovers $\diam_p^X$ when $p=q$.
\end{defn}

\begin{remark}
In the contruction of $\iqp^X$, one can get other interesting filtration functions by replacing the Wasserstein distance with other distances between measures.
These include, for example, the L\'{e}vy--Prokhorov metric, the Fortet--Mourier metric, and variants of the Kantorovich--Rubinstein metric.
For definitions of these metrics, see~\cite[Section 3]{bogachev2018weak}.
\end{remark}

\begin{lem}\label{lem:iqp_distortion_via_pushforward}
Let $X$ and $Y$ be bounded metric spaces, and let $f \colon X\to Y$ be a map.
Then for any element $\alpha\in \cP_X$ and $p\in[1,\infty]$, we have
\[\iqp^Y(f_\sharp(\alpha))  \leq \iqp^X(\alpha)+\mathrm{dis}(f).\]
\end{lem}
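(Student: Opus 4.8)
The plan is to bound $\iqp^Y(f_\sharp(\alpha))$ by comparing, for each $y$ in the support of $f_\sharp(\alpha)$, the distance $\dWq(f_\sharp(\alpha),\delta_y)$ against a distance computed upstairs in $\cP_X$. The natural idea is to realize $y$ as $f(x)$ for an appropriate $x$ and then push forward a good coupling. Concretely, for any $x\in X$, the pushforward $(f\times f)_\sharp\mu$ of a coupling $\mu$ between $\alpha$ and $\delta_x$ is a coupling between $f_\sharp(\alpha)$ and $\delta_{f(x)}$. Since $\mu$ here must be $\alpha\otimes\delta_x$ (the only coupling with a Dirac marginal), we get
\[
\dWq(f_\sharp(\alpha),\delta_{f(x)})^q \le \int_X d_Y^q(f(z),f(x))\,\alpha(dz) \le \int_X \big(d_X(z,x)+\mathrm{dis}(f)\big)^q\,\alpha(dz),
\]
using $d_Y(f(z),f(x)) \le d_X(z,x) + \mathrm{dis}(f)$. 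By the Minkowski inequality (in $L^q(\alpha)$), the right side is at most $\big(\dWq(\alpha,\delta_x) + \mathrm{dis}(f)\big)^q$ — here I'm using $\dWq(\alpha,\delta_x) = \big(\int_X d_X^q(z,x)\,\alpha(dz)\big)^{1/q} = F_{\alpha,q}(x)$. So $\dWq(f_\sharp(\alpha),\delta_{f(x)}) \le \dWq(\alpha,\delta_x) + \mathrm{dis}(f)$ for every $x\in X$.

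Next I would integrate this pointwise bound. Writing $\beta := f_\sharp(\alpha) \in \cP_Y$, I want
\[
\iqp^Y(\beta) = \left(\int_Y \dWq(\beta,\delta_y)^p\,\beta(dy)\right)^{1/p}.
\]
The change-of-variables formula for pushforwards gives $\int_Y g(y)\,\beta(dy) = \int_X g(f(x))\,\alpha(dx)$ for measurable $g\ge 0$; applying this with $g(y) = \dWq(\beta,\delta_y)^p$ and then the inequality from the previous paragraph (raised to the $p$-th power, valid since both sides are nonnegative) yields
\[
\iqp^Y(\beta)^p = \int_X \dWq(\beta,\delta_{f(x)})^p\,\alpha(dx) \le \int_X \big(\dWq(\alpha,\delta_x) + \mathrm{dis}(f)\big)^p\,\alpha(dx).
\]
Taking $p$-th roots and applying the Minkowski inequality once more (now in $L^p(\alpha)$, with the constant function $\mathrm{dis}(f)$ as the second summand) gives
\[
\iqp^Y(\beta) \le \left(\int_X \dWq(\alpha,\delta_x)^p\,\alpha(dx)\right)^{1/p} + \mathrm{dis}(f) = \iqp^X(\alpha) + \mathrm{dis}(f),
\]
which is the claim. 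For $p=\infty$ or $q=\infty$ one replaces the relevant integral by a supremum over $\supp(\alpha)$ (resp.\ uses the $\dWqq{\infty}$ definition) and the same triangle-inequality estimates go through, so these cases should be handled with a brief separate remark rather than a full recomputation.

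The only genuinely delicate point is making sure the measurability and change-of-variables manipulations are legitimate: one needs $y\mapsto \dWq(\beta,\delta_y)$ to be measurable (it is $F_{\beta,q}$, which is continuous — indeed $1$-Lipschitz by the triangle inequality for $\dWq$, or by Lemma~\ref{lem:dwq_stability_wrt_Lipschitz}), and one needs the coupling-pushforward argument to be valid even when $f$ is merely a (Borel-)measurable map rather than continuous. Neither is serious, but it is worth a sentence. I do not expect any real obstacle beyond bookkeeping; the proof is essentially two applications of Minkowski's inequality sandwiching one pushforward change of variables.
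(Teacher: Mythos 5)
Your argument is correct and is essentially the paper's proof. The paper writes it as one chain of inequalities, using directly that $\dWq(\beta,\delta_y) = \bigl(\int_Y d_Y^q(y',y)\,\beta(dy')\bigr)^{1/q}$ (you reach the same bound via the pushforward coupling, where equality actually holds since the product is the unique coupling with a Dirac marginal), and then applies Minkowski once in $L^q(\alpha(dx'))$ to split off $\mathrm{dis}(f)$ from the inner integral and once in $L^p(\alpha(dx))$ for the outer — exactly your two Minkowski steps, just compressed into a single displayed inequality.
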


\begin{proof}
We compute
\begin{align*}
\iqp^Y(f_\sharp(\alpha)) & =\left(\int_Y\big(d_{\mathrm{W}, q}^Y(f_\sharp(\alpha),\delta_y)\big)^p\,f_\sharp(\alpha)(dy)\right)^{1/p} \\
& =\left(\int_X\big(d_{\mathrm{W}, q}^Y(f_\sharp(\alpha),\delta_{f(x)})\big)^p\,\alpha(dx)\right)^{1/p}    \\
& =\left(\int_X\bigg(\int_Yd_Y^q(y,f(x))\,f_\sharp(\alpha)(dy)\bigg)^\frac{p}{q}\,\alpha(dx)\right)^{1/p}     \\
& =\left(\int_X\bigg(\int_Xd_Y^q(f(x'),f(x))\,\alpha(dx')\bigg)^\frac{p}{q}\,\alpha(dx)\right)^{1/p}          \\
& \leq\left(\int_X\bigg(\int_X\big(d_X(x',x)+ \mathrm{dis}(f) \big)^q\,\alpha(dx')\bigg)^\frac{p}{q}\,\alpha(dx)\right)^{1/p} \\
& \leq \iqp^X(\alpha)  + \mathrm{dis}(f).
\end{align*}
\end{proof}

\begin{lem}\label{lem:iqp_stability_wrt_dW}
Let $X$ be a bounded metric space.
Then for any $\alpha, \beta$ in $\cP_X$, we have
\[|\mi^X_{q,p}(\alpha) - \iqp^X(\beta)|\leq  \dWq(\alpha, \beta) + \dWp(\alpha, \beta).\]
In particular, $|\mi^X_{q,p}(\alpha) - \iqp^X(\beta)|\leq  2\,\dWqq{\infty}(\alpha, \beta)$.
\end{lem}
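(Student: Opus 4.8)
The plan is to bound the difference $|\iqp^X(\alpha) - \iqp^X(\beta)|$ by applying the triangle inequality in two stages, mirroring the two-variable structure of the defining integral $\iqp^X(\gamma) = \left(\int_X (\dWq(\gamma,\delta_x))^p\,\gamma(dx)\right)^{1/p}$. The integrand depends on $\gamma$ both through the \emph{integrating measure} and through the \emph{function} $x\mapsto \dWq(\gamma,\delta_x)$ being integrated. So I would introduce the hybrid quantity $\left(\int_X (\dWq(\beta,\delta_x))^p\,\alpha(dx)\right)^{1/p}$ and write
\[|\iqp^X(\alpha) - \iqp^X(\beta)| \leq \left|\iqp^X(\alpha) - \Big(\textstyle\int_X (\dWq(\beta,\delta_x))^p\,\alpha(dx)\Big)^{1/p}\right| + \left|\Big(\textstyle\int_X (\dWq(\beta,\delta_x))^p\,\alpha(dx)\Big)^{1/p} - \iqp^X(\beta)\right|.\]

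For the first term, both quantities are $L^p(\alpha)$-norms of the functions $x\mapsto \dWq(\alpha,\delta_x)$ and $x\mapsto \dWq(\beta,\delta_x)$, so by the (reverse) Minkowski inequality it is at most $\left(\int_X |\dWq(\alpha,\delta_x) - \dWq(\beta,\delta_x)|^p\,\alpha(dx)\right)^{1/p}$, and since $|\dWq(\alpha,\delta_x) - \dWq(\beta,\delta_x)|\le \dWq(\alpha,\beta)$ for every $x$ by the triangle inequality for $\dWq$, this is bounded by $\dWq(\alpha,\beta)$. For the second term, I am changing the integrating measure from $\alpha$ to $\beta$ while integrating the fixed function $\varphi(x) := \dWq(\beta,\delta_x)$; this function is $1$-Lipschitz on $X$ because $|\dWq(\beta,\delta_x) - \dWq(\beta,\delta_{x'})|\le \dWq(\delta_x,\delta_{x'}) = d_X(x,x')$. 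Hence Lemma~\ref{lem:dwq_stability_wrt_Lipschitz}, applied with exponent $p$ and Lipschitz constant $L=1$, gives that the second term is at most $\dWp(\alpha,\beta)$. Adding the two bounds yields $|\iqp^X(\alpha) - \iqp^X(\beta)| \le \dWq(\alpha,\beta) + \dWp(\alpha,\beta)$.

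The ``in particular'' claim then follows from the monotonicity of Wasserstein distances: $\dWq \le \dWqq{\infty}$ and $\dWp \le \dWqq{\infty}$ (both instances of H\"older's inequality, as noted in the Background section), so $\dWq(\alpha,\beta) + \dWp(\alpha,\beta)\le 2\,\dWqq{\infty}(\alpha,\beta)$. The only mildly delicate points are making sure Lemma~\ref{lem:dwq_stability_wrt_Lipschitz} is stated in enough generality to cover exponent $p=\infty$ (it is, since $q\in[1,\infty]$ there) and handling the $p=\infty$ case of the first term, where the $L^p(\alpha)$-norm becomes an essential supremum over $\supp(\alpha)$ and the reverse Minkowski step is replaced by the elementary inequality $|\sup_x a(x) - \sup_x b(x)| \le \sup_x |a(x)-b(x)|$; neither presents a genuine obstacle. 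I expect the bookkeeping around the $p=\infty$ and $q=\infty$ edge cases to be the most error-prone part, but the core argument is just two applications of the triangle inequality packaged through Minkowski and the Lipschitz-stability lemma.
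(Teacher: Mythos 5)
Your proof is correct and follows essentially the same route as the paper: both introduce the hybrid quantity $\bigl(\int_X (\dWq(\beta,\delta_x))^p\,\alpha(dx)\bigr)^{1/p}$, bound the change in the inner $\dWq$-argument via the triangle inequality for $\dWq$ combined with Minkowski, and bound the change of integrating measure via Lemma~\ref{lem:dwq_stability_wrt_Lipschitz} applied to the $1$-Lipschitz function $x\mapsto\dWq(\beta,\delta_x)$. The only cosmetic difference is that you use the reverse Minkowski inequality to get both terms as absolute values directly, whereas the paper establishes a one-sided inequality and obtains the absolute value by swapping $\alpha$ and $\beta$ at the end.
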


\begin{proof}
We first note that
\begin{align*}
\iqp^X(\alpha) & =\left(\int_X\big(\dWq(\alpha,\delta_x)\big)^p\alpha(dx)\right)^{1/p}                           \\
& \leq\left(\int_X\big(\dWq(\beta ,\delta_x) + \dWq(\alpha, \beta)\big)^p\alpha(dx)\right)^{1/p}    \\
& \leq \left(\int_X\big(\dWq(\beta ,\delta_x) \big)^p \alpha(dx)\right)^{1/p} + \dWq(\alpha, \beta).
\end{align*}
We also have
\begin{align*}
&\left\vert\left(\int_X\big(\dWq(\beta ,\delta_x) \big)^p \alpha(dx)\right)^{1/p} - \iqp(\beta)\right \vert \\
=& \bigg\vert\left(\int_X\big(\dWq(\beta ,\delta_x) \big)^p \alpha(dx)\right)^{1/p} - \left(\int_X\big(\dWq(\beta ,\delta_x) \big)^p \beta(dx)\right)^{1/p}\bigg \vert \\
\leq & \dWp(\alpha, \beta),
\end{align*}
where the last line follows from Lemma~\ref{lem:dwq_stability_wrt_Lipschitz} since $\dWq(\beta, \delta_x)$ is a $1$-Lipschitz function in $x$.
Therefore,
\[\iqp(\alpha) \leq  \iqp(\beta) + \dWq(\alpha, \beta) + \dWp(\alpha, \beta).\]
We then get the result by swapping the roles of $\alpha$ and $\beta$.
\end{proof}

\begin{remark}
This establishes the continuity of $\iqp$ for $q, p \in [1, \infty)$.
If $p$ or $q$ equals infinity, then $\iqp$ is not necessarily continuous in the weak topology since $\dWqq{\infty}$ does not necessarily induce the same topology on $\cP_X$ as $\dWp$ does for $p<\infty$.
Indeed, if $X=[0,1]$ is the unit interval, then $\frac{n-1}{n}\delta_0+\frac{1}{n}\delta_1\to\delta_0$ in $\cP_X$ as $n\to\infty$, even though $\diam_\infty(\frac{n-1}{n}\delta_0+\frac{1}{n}\delta_1)$ is equal to $1$ for all $n\ge 1$, and hence does not converge to $0=\diam(\delta_0)$.
\end{remark}

The above two lemmas imply that $\iqp$ (and hence $\diam_p=\mi_{p.p}$) is a $1$-controlled invariant.
We next consider the analogous properties for $\rad_p$.

\begin{lem}[$\rad_p$ under a map with bounded distortion] \label{lem:rad_p_dist}
Let $X$ and $Y$ be bounded metric spaces, and let $f\colon X\to Y$ be a map.
Then for any $\alpha \in \cP_X$ and $p\in[1,\infty]$, we have
\[\rad_p(f_\sharp(\alpha))  \leq  \rad_p(\alpha)+ \mathrm{dis}(f).\]
\end{lem}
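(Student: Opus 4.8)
\emph{Plan.} The plan is to exploit the variational characterization $\rad_p(\alpha)=\inf_{x\in X}F_{\alpha,p}(x)=\inf_{x\in X}\dWp(\delta_x,\alpha)$ recorded just before Proposition~\ref{prop:basic-diamp}, and to estimate $\rad_p(f_\sharp\alpha)$ by restricting the infimum over $Y$ to points of the form $y=f(x)$. This is exactly the $\inf$-analogue of the proof of Lemma~\ref{lem:iqp_distortion_via_pushforward}, where the outer integral $\int_X(\cdots)\,\alpha(dx)$ is replaced by $\inf_{x\in X}(\cdots)$.

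\emph{Main steps.} First, I would observe that
\[\rad_p\big(f_\sharp(\alpha)\big)=\inf_{y\in Y}\dWp\big(\delta_y,f_\sharp(\alpha)\big)\le \inf_{x\in X}\dWp\big(\delta_{f(x)},f_\sharp(\alpha)\big)=\inf_{x\in X}F_{f_\sharp(\alpha),p}\big(f(x)\big).\]
Next, for $p<\infty$, the change-of-variables formula for pushforwards gives
\[F_{f_\sharp(\alpha),p}\big(f(x)\big)=\left(\int_Y d_Y^p\big(f(x),y'\big)\,f_\sharp(\alpha)(dy')\right)^{1/p}=\left(\int_X d_Y^p\big(f(x),f(x')\big)\,\alpha(dx')\right)^{1/p}.\]
Since $d_Y(f(x),f(x'))\le d_X(x,x')+\mathrm{dis}(f)$ pointwise, and since the $L^p(\alpha)$-norm of the constant function $\mathrm{dis}(f)$ equals $\mathrm{dis}(f)$ because $\alpha$ is a probability measure, Minkowski's inequality yields
\[F_{f_\sharp(\alpha),p}\big(f(x)\big)\le \left(\int_X d_X^p(x,x')\,\alpha(dx')\right)^{1/p}+\mathrm{dis}(f)=F_{\alpha,p}(x)+\mathrm{dis}(f).\]
Taking the infimum over $x\in X$ then gives $\rad_p(f_\sharp(\alpha))\le \rad_p(\alpha)+\mathrm{dis}(f)$.

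\emph{The $p=\infty$ case and the one point of care.} For $p=\infty$ the same scheme works, using $\supp(f_\sharp(\alpha))\subseteq\overline{f(\supp(\alpha))}$ together with the fact that the supremum of the (continuous) distance function over a set equals its supremum over the closure; this identifies $F_{f_\sharp(\alpha),\infty}(f(x))\le \sup_{z\in\supp(\alpha)}d_Y(f(x),f(z))\le \sup_{z\in\supp(\alpha)}d_X(x,z)+\mathrm{dis}(f)=F_{\alpha,\infty}(x)+\mathrm{dis}(f)$, and infimizing over $x$ finishes. There is no genuine obstacle here: the only thing to watch is this support/closure bookkeeping in the $p=\infty$ case, and the trivial remark that the claimed inequality holds vacuously if $\mathrm{dis}(f)=\infty$ — though since $X$ and $Y$ are bounded, $\mathrm{dis}(f)$ is in fact finite.
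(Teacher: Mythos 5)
Your proof is correct and follows the same route as the paper's: restrict the infimum defining $\rad_p(f_\sharp\alpha)$ from $Y$ to $f(X)$, push forward to get an integral over $X$, bound $d_Y(f(x),f(x'))$ by $d_X(x,x')+\mathrm{dis}(f)$, and apply Minkowski. You also spell out the $p=\infty$ case, which the paper only describes as ``similar.''
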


\begin{proof}
We only give the proof for the case $p<\infty$; the case $p=\infty$ is similar.
\begin{align*}
\rad_p(f_\sharp(\alpha)) & = \inf_{y\in Y}\left(\int_{Y}d_Y^p(y,y')\,f_\sharp(\alpha)(dy')\right)^{\frac{1}{p}}                        \\
& \leq \inf_{y\in f(X)}\left(\int_{Y}d_Y^p(y,y')\,f_\sharp(\alpha)(dy')\right)^{\frac{1}{p}}                  \\
& =\inf_{x\in X}\left(\int_{Y}d_Y^p(f(x),y')\,f_\sharp(\alpha)(dy')\right)^{\frac{1}{p}}                      \\
& =\inf_{x\in X}\left(\int_{X}d_Y^p(f(x),f(x'))\,\alpha(dx')\right)^{\frac{1}{p}}                             \\
& \leq \inf_{x\in X}\left(\int_{X}\left(d_X(x, x') + \mathrm{dis}(f)\right)^{p}\,\alpha(dx')\right)^{\frac{1}{p}} \\
& \leq \inf_{x\in X}\left(\int_{X}d_X^p(x,x')\,\alpha(dx')\right)^{\frac{1}{p}} + \mathrm{dis}(f)                 \\
& =\rad_p(\alpha) + \mathrm{dis}(f).
\end{align*}
\end{proof}

\begin{lem}[Stability of $\rad_p$]\label{lem:stability_of_rad_p}
Let $X$ be a bounded metric space.
For any two probability measures $\alpha,\beta \in \cP_X$ and for every $p\in[1,\infty]$, we have
\[ |\rad_p(\alpha)-\rad_p(\beta)|\leq \dWp(\alpha,\beta).\]
\end{lem}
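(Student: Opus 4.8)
The plan is to recall that $\rad_p(\alpha) = \inf_{x \in X} F_{\alpha,p}(x) = \inf_{x \in X} \dWp(\delta_x, \alpha)$, so the $p$-radius is an infimum of distances to $\alpha$ from the Dirac measures. Since an infimum of $1$-Lipschitz functions (here, over the variable $\alpha \in \cP_X$) is again $1$-Lipschitz in $\alpha$ with respect to $\dWp$, the inequality should follow almost immediately. Concretely, I would fix $\alpha, \beta \in \cP_X$ and $\varepsilon > 0$, and choose $x \in X$ with $\dWp(\delta_x, \alpha) < \rad_p(\alpha) + \varepsilon$.

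Then by the triangle inequality for $\dWp$ (which is a metric on $\cP_X = \cPq{p}{X}$ since $X$ is bounded, for $p < \infty$; for $p = \infty$ it is also a metric as noted in the background section), I have
\[
\rad_p(\beta) \le \dWp(\delta_x, \beta) \le \dWp(\delta_x, \alpha) + \dWp(\alpha, \beta) < \rad_p(\alpha) + \varepsilon + \dWp(\alpha, \beta).
\]
Letting $\varepsilon \downarrow 0$ gives $\rad_p(\beta) - \rad_p(\alpha) \le \dWp(\alpha, \beta)$. Swapping the roles of $\alpha$ and $\beta$ yields $|\rad_p(\alpha) - \rad_p(\beta)| \le \dWp(\alpha, \beta)$, as desired. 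The case $p = \infty$ is handled identically using that $\dWqq{\infty}$ is a metric and that $\rad_\infty(\alpha) = \inf_{x\in X}\dWqq{\infty}(\delta_x,\alpha)$.

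I do not anticipate a real obstacle here — the only thing to be slightly careful about is making sure the relevant Wasserstein distance is genuinely a metric (so that the triangle inequality is available) in all cases $p \in [1,\infty]$, which is already established in the background section. One could alternatively derive this as a consequence of Lemma~\ref{lem:dwq_stability_wrt_Lipschitz} applied to the $1$-Lipschitz map $x \mapsto \dWp(\delta_x, \beta)$ together with a direct estimate, mirroring the structure of the proof of Lemma~\ref{lem:iqp_stability_wrt_dW}, but the infimum-of-distances argument above is cleaner and shorter, so that is the route I would take.
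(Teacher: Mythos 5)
Your proof is correct and takes essentially the same approach as the paper: both observe that $\rad_p(\alpha) = \inf_{x\in X}\dWp(\delta_x,\alpha)$ and apply the triangle inequality for $\dWp$ to conclude that this infimum is $1$-Lipschitz in $\alpha$. The only cosmetic difference is that you introduce an $\varepsilon$-approximate minimizer, whereas the paper manipulates the infima directly.
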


\begin{proof}
We compute
\begin{align*}
\rad_p(\alpha) & = \inf_{x\in X} \dWp(\alpha, \delta_x)   \\
& \leq \inf_{x\in X} \left(\dWp(\beta, \delta_x) + \dWp(\beta, \alpha)\right) \\
&= \inf_{x\in X} \dWp(\beta, \delta_x) + \dWp(\beta, \alpha)              \\
& = \rad_p(\beta) + \dWp(\beta, \alpha).
\end{align*}
\end{proof}

\begin{remark}
Lemma~\ref{lem:stability_of_rad_p} establishes the continuity of $\rad_p$ for $p \in [1,\infty)$, as these functions are 1-Lipschitz.
Similarly to $\diam_\infty$, we note that $\rad_\infty$ is not necessarily continuous because the metric topology given by $\dWqq{\infty}$ is not necessarily equal to the weak topology.
\end{remark}

The above two lemmas imply that $\rad_p$ is a $1$-controlled invariant.

We end this section of basic properties by showing that $\cP_X$ is homeomorphic to a simplex when $X$ is finite.
Hence for $X$ finite the $p=\infty$ metric thickenings $\vrpp{\infty}{X}{r}$ and $\cechpp{\infty}{X}{r}$ are homeomorphic to the simplicial complexes $\vr{X}{r}$ and $\cech{X}{r}$, respectively (see also~\cite[Corollary~6.4]{AAF}).

\begin{lem}
\label{lem:fin-prob-simplex}
If $X$ is a finite metric space with $n$ points, then $\cP_X$ is homeomorphic to the standard $(n-1)$-simplex.
\end{lem}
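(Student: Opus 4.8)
The plan is to exhibit an explicit bijection between $\cP_X$ and the standard simplex and then check it is a homeomorphism using compactness. Write $X = \{x_1, \dots, x_n\}$ with the (necessarily discrete) metric topology, and let $\Delta^{n-1} = \{(a_1,\dots,a_n) \in \R^n : a_i \ge 0,\ \sum_i a_i = 1\}$ with the subspace topology from $\R^n$. Since $X$ is finite, every Radon probability measure on $X$ is finitely supported, so the map
\[
\Psi \colon \Delta^{n-1} \to \cP_X, \qquad (a_1,\dots,a_n) \mapsto \sum_{i=1}^n a_i\,\delta_{x_i}
\]
is a well-defined bijection (injectivity is clear, and surjectivity follows because any $\alpha \in \cP_X$ is determined by the values $a_i = \alpha(\{x_i\})$).

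The key step is continuity of $\Psi$ in the weak topology. Since every function $\varphi \colon X \to \R$ is automatically bounded and continuous (as $X$ is finite and discrete), weak convergence of $\alpha_m \to \alpha$ in $\cP_X$ is equivalent to $\int_X \varphi\, d\alpha_m \to \int_X \varphi\, d\alpha$ for all $\varphi$, which in turn is equivalent to $\alpha_m(\{x_i\}) \to \alpha(\{x_i\})$ for each $i$ (take $\varphi$ to be the indicator of $x_i$). Hence the weak topology on $\cP_X$ is exactly the one pulled back along the coordinate functions $\alpha \mapsto \alpha(\{x_i\})$, and $\Psi$ is a continuous bijection whose inverse $\alpha \mapsto (\alpha(\{x_1\}),\dots,\alpha(\{x_n\}))$ is also continuous. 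Alternatively, one can invoke Proposition~\ref{prop:linear_homotopies}-style reasoning: $\Psi$ restricted to segments is continuous, but the cleanest argument is the coordinate description just given, or simply that $\Delta^{n-1}$ is compact and $\cP_X$ is Hausdorff (being metrizable via any $q$-Wasserstein distance, as noted in the background), so a continuous bijection from a compact space to a Hausdorff space is automatically a homeomorphism.

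I expect the only mild subtlety — and hence the "main obstacle," though it is minor — is being careful that the weak topology on $\cP_X$ genuinely coincides with the Euclidean subspace topology on the simplex; this is where one must use that $X$ finite forces every real-valued function on $X$ to be bounded and continuous, so that testing against indicators of points suffices and no measure-theoretic pathologies arise. Given that, the homeomorphism follows either from the explicit two-sided continuity of $\Psi$ and $\Psi^{-1}$, or from the compact-to-Hausdorff principle, and the statement is complete.
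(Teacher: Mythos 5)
Your proof is correct and follows essentially the same route as the paper: both set up the coordinate bijection between $\cP_X$ and $\Delta^{n-1}$ and then verify that weak convergence in $\cP_X$ is equivalent to coordinate-wise (hence Euclidean) convergence. Your additional compact-to-Hausdorff shortcut is a valid alternative finish but is not needed once two-sided continuity is checked directly.
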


\begin{proof}
Let $X = \{x_1, \dots, x_n\}$.
The space $\cP_X$ of probability measures is in bijection with the standard $n-1$ simplex $\Delta_{n-1} = \{ (y_1, \dots, y_n) \in \R^n \mid \sum_{i=1}^n y_i = 1,\, y_i \geq 0 \text{ for all $i$} \}$ via the function $f\colon \cP_X \to \Delta_{n-1}$ defined by $f \left( \sum_{i=1}^n a_i \delta_{x_i} \right) = (a_1, \dots, a_n)$.
Suppose we have a sequence $\{\alpha_k\}$ in $\cP_X$ given by $\alpha_k = \sum_{i=1}^n a_{k,i} \delta_{x_i}$.
By the definition of weak convergence, $\{ \alpha_k \}$ converges to $\alpha = \sum_{i=1}^n a_i \delta_{x_i}$ in $\cP_X$ if and only if $\int_X \phi(x) \alpha_k(dx)$ converges to $\int_X \phi(x) \alpha(dx)$ for all bounded and continuous functions $\phi\colon X \to \R$.
These integrals are equal to $\sum_{i=1}^n a_{k,i} \phi (x_i)$ and $\sum_{i=1}^n a_i \phi (x_i)$ respectively, so $\{ \alpha_k \}$ converges to $\alpha$ if and only if $\lim_{k \to \infty} a_{k,i} = a_i$ for each $i$.
Therefore, $\{ \alpha_k \}$ converges to $\alpha$ in $\cP_X$ if and only if $\{ f(\alpha_k) \}$ converges to $f(\alpha)$ in $\Delta_{n-1}$.
\end{proof}

In general, the $p=\infty$ metric thickenings $\vrpp{\infty}{X}{r}$ and $\cechpp{\infty}{X}{r}$ of a finite metric space $X$ are in bijection, as sets,  with the geometric realizations of the usual Vietoris--Rips and \v{C}ech simplicial complexes on $X$, via the natural bijection $\sum_{i=1}^n a_i \delta_{x_i} \mapsto \sum_{i=1}^n a_i x_i$ where $\sum_{i=1}^n a_i x_i$ denotes the formal sum in the geometric realization.
Therefore, Lemma~\ref{lem:fin-prob-simplex} implies the following; see also~\cite[Proposition~6.6]{AAF}

\begin{lem}\label{lem:finite_infty-thickenings_homeomorphic_to_complexes}
For a finite metric space $X$ and any $r>0$, we have homeomorphisms $\vrpp{\infty}{X}{r} \cong \vr{X}{r}$ and $\cechpp{\infty}{X}{r} \cong \cech{X}{r}$.
\end{lem}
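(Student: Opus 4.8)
The plan is to upgrade the set-level bijection recorded just above the statement to a homeomorphism. Concretely, I will show that under the homeomorphism $\cP_X\cong\Delta_{n-1}$ furnished by Lemma~\ref{lem:fin-prob-simplex}, the subspace $\vrpp{\infty}{X}{r}$ is carried exactly onto the geometric realization $|\vr{X}{r}|$ as it sits inside $\Delta_{n-1}$ in the standard way, and then invoke finiteness of $X$ to see that this geometric realization carries the subspace topology from $\Delta_{n-1}$.

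First I would fix an enumeration $X=\{x_1,\dots,x_n\}$ and let $f\colon\cP_X\to\Delta_{n-1}$ be the homeomorphism of Lemma~\ref{lem:fin-prob-simplex}, which sends $\delta_{x_i}$ to the $i$-th vertex $e_i$ and, in general, $\sum_i a_i\delta_{x_i}$ to $\sum_i a_i e_i$. I would regard $|\vr{X}{r}|$ as the union of those closed faces $[e_i:i\in J]$ of $\Delta_{n-1}$ for which $\{x_i:i\in J\}$ is a simplex of $\vr{X}{r}$, i.e.\ $\diam(\{x_i:i\in J\})<r$. Since a simplicial complex is closed under passing to faces, a point $y\in\Delta_{n-1}$ lies in $|\vr{X}{r}|$ if and only if the index set $I(y)$ of its positive barycentric coordinates is itself such a simplex (equivalently, the unique open face of $\Delta_{n-1}$ containing $y$ is a face of $|\vr{X}{r}|$).

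Next I would match up the two subsets. Given $\alpha=\sum_i a_i\delta_{x_i}\in\cP_X$, set $I=\{i:a_i>0\}$; then $\supp(\alpha)=\{x_i:i\in I\}$ and $I(f(\alpha))=I$. By definition $\diam_\infty(\alpha)=\diam(\supp(\alpha))=\max_{i,j\in I}d_X(x_i,x_j)$, so $\alpha\in\vrpp{\infty}{X}{r}$ if and only if $\{x_i:i\in I\}$ spans a simplex of $\vr{X}{r}$, which by the previous paragraph happens if and only if $f(\alpha)\in|\vr{X}{r}|$. Hence $f$ restricts to a bijection $\vrpp{\infty}{X}{r}\to|\vr{X}{r}|$; since $f$ is a homeomorphism, this restriction is a homeomorphism of subspaces, and because $X$ is finite the subspace topology on $|\vr{X}{r}|\subseteq\Delta_{n-1}$ agrees with its CW topology. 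This gives $\vrpp{\infty}{X}{r}\cong\vr{X}{r}$.

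Finally, the \v{C}ech case is verbatim the same, with $\rad_\infty(\alpha)=\rad(\supp(\alpha))$ in place of $\diam_\infty$ and the elementary observation that for a finite set $A\subseteq X$ one has $\rad(A)<r$ if and only if $\bigcap_{a\in A}B(a;r)\neq\varnothing$; thus $\rad_\infty(\alpha)<r$ if and only if $\{x_i:i\in I\}$ spans a simplex of $\cech{X}{r}$, and the same argument yields $\cechpp{\infty}{X}{r}\cong\cech{X}{r}$. The only point needing care is the coincidence of the subspace and CW topologies on the geometric realization, which is exactly where finiteness of $X$ enters; the rest is bookkeeping with supports and faces, so I do not anticipate a genuine obstacle.
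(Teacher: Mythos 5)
Your proposal is correct and takes essentially the same approach as the paper: the paper's proof is precisely to combine the set-level bijection $\sum a_i\delta_{x_i}\mapsto\sum a_i x_i$ with the homeomorphism $\cP_X\cong\Delta_{n-1}$ of Lemma~\ref{lem:fin-prob-simplex} and the matching of $\diam_\infty$ (resp.\ $\rad_\infty$) with the diameter (resp.\ radius) of the support. You simply spell out the bookkeeping—identifying supports with open faces and noting that for a finite complex the subspace and CW topologies on the realization coincide—which the paper leaves implicit.
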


\section{Stability}\label{sec:stability}

In this section we establish the stability of all the filtrations we have introduced in this paper, under reasonable assumptions on the metric spaces.
The proof requires new techniques in order to construct maps that adequately compare two filtrations of metric thickenings.
In the proof of stability for simplicial complexes (\cite{ChazalDeSilvaOudot2014}), simplicial maps are used to construct maps between filtrations of simplicial complexes, and the shift in parameter can be bounded using the Gromov--Hausdorff distance.
A naive attempt to apply this technique to filtrations of metric thickenings fails because there is no analogue of simplicial maps for metric thickenings.
Indeed, if $X$ and $Y$ are metric spaces that are close in the Gromov-Hausdorff distance, then we get a map $f\colon X \to Y$ that need not be continuous, but which is of bounded distortion.
This induces a \emph{continuous} map $\vr{X}{r}\to \vr{Y}{r'}$ between simplicial complexes, as long as $r'$ is chosen to be large enough, and from there one can obtain interleavings.
However, the analogous map $\vrp{X}{r}\to \vrp{Y}{r'}$ between metric thickenings cannot be continuous if $f\colon X\to Y$ is not continuous, since there are natural isometric embeddings of $X$ and $Y$ into $\vrp{X}{r}$ and $\vrp{Y}{r'}$ (see Proposition~\ref{prop:r-metric_thickenings}).
In essence, the fact that metric thickenings have a more well-behaved topology means that it is more difficult to construct interleavings between them.

To overcome this difficulty, we instead construct continuous functions between metric thickenings by distributing mass according to finite partitions of unity, subordinate to open coverings by $\delta$-balls.
This ensures that these maps distort distances in a controlled way, allowing for a controlled change in the invariants defining the filtration and thus providing an interleaving.
Creating these maps and checking their properties will require many of the ideas from the previous sections.

In Section~\ref{ssec:statement-of-stability} we state our main results, Theorem~\ref{thm:general_stability} and its immediate consequence, Theorem~\ref{thm:main}.
We give two applications of stability.
First, Section~\ref{sec:tameness} applies the stability theorem in order to show that various persistence modules of interest are Q-tame, and therefore have persistence diagrams.
Next, in Section~\ref{sec:comparability} we apply stability to show the close relationship between $\infty$-metric thickenings and simplicial complexes, generalizing and answering in the affirmative~\cite[Conjecture~6.12]{AAF} which states that these filtrations have identical persistence diagrams.
We give the proof of the stability theorem, Theorem~\ref{thm:general_stability}, in Section~\ref{ssec:proof-stability}.

\subsection{Statement of the stability theorem}
\label{ssec:statement-of-stability}

Let $X$ and $Y$ be totally bounded metric spaces, let $p\in [1,\infty]$, and let $k\ge 0$.
We will see, for example, that we have the following stability bound:
\begin{equation}
\label{eq:main}
\di\big(H_k\circ\vrpf{X},H_k\circ\vrpf{Y}\big) \leq 2\,\dGH(X,Y).
\end{equation}
This implies that if two metric spaces are close in the Gromov--Hausdorff distance, then their resulting filtrations and persistence modules are also close.

\begin{example}
\label{ex:tight}
In this example we consider the metric space $Z_{n+1}$ from Example~\ref{ex:Delta-n}, which has $n+1$ points all at interpoint distance 1 apart.
Let $\zeta_{p,k}$ be the interleaving distance between persistent homology modules
\[\zeta_{p,k}:=\di\big(H_k\circ\vrpf{Z_{n+1}},H_k\circ\vrpf{Z_{m+1}}\big) \quad \mbox{for }n\neq m.\]
Notice that from Example~\ref{ex:Delta-n}, when $n\neq m$ and $p$ is finite, we have
\[\zeta_{p,0} = \frac{1}{2} \left(\frac{1}{2}\right)^{\frac{1}{p}} 
\quad \mbox{and} \quad 
\zeta_{p,1} = \frac{1}{2}\left(\left(\frac{2}{3}\right)^{\frac{1}{p}}- \left(\frac{1}{2}\right)^{\frac{1}{p}}\right).\]
However, when $p=\infty$, $\zeta_{\infty,0} = \frac{1}{2}$ and $\zeta_{\infty,k} =0$ for all $k\geq 1$.
	
From these calculations we can make the following observations:
	
\begin{enumerate}
    \item[(i)] For $p$ infinite, the only value of $k$ for which $\zeta_{\infty,k}\neq 0$ is $k=0$.
	\item[(ii)] For $p$ finite, we have $\zeta_{p,1}> 0$.
	\item[(iii)] $\sup_k\zeta_{\infty,k} = \frac{1}{2} > \frac{1}{2} \left(\frac{1}{2}\right)^{\frac{1}{p}} = \sup_{k} \zeta_{p,k} $ for $p$ finite.
\end{enumerate}
	
From items (i) and (ii) above we can see that whereas persistence diagrams for $k\geq 1$ of $p=\infty$ thickenings do not contain discriminative information for the $Z_{n}$ spaces, in contrast, the analogous quantities for $p$ finite do absorb useful information.
	
Since it is known (cf.~\cite[Example 4.1]{dgh-props}) that $\dGH(Z_{n+1}, Z_{m+1})=\frac{1}{2}$ whenever $n\neq m$, item (iii) above suggests that the lower bound for Gromov--Hausdorff given by Equation~\eqref{eq:main} may not be tight for $p$ finite.
This phenomenon can actually be explained by the more general theorem below, which identifies a certain pseudo-metric between filtrations which mediates between the two terms appearing in Equation~\eqref{eq:main}.

\end{example}

\begin{thm}\label{thm:general_stability}
Let $\mi$ be an $L$-controlled invariant.
Then for any two totally bounded metric spaces $X$ and $Y$ and any integer $k\geq 0$, we have
\begin{align*}
\di\big(H_k\circ \filtf{\cP_X}{\mi^X},H_k\circ \filtf{\cP_Y}{\mi^Y}\big)&\leq \dHT\big((\cP_X, \mi^X), (\cP_Y, \mi^Y)\big)\leq 2L\cdot\dGH(X, Y) \\
\di\big(H_k\circ \filtf{\cPfin_X}{\mi^X},H_k\circ \filtf{\cPfin_Y}{\mi^Y}\big)&\leq \dHT\big((\cPfin_X, \mi^X), (\cPfin_Y, \mi^Y)\big)\leq 2L\cdot\dGH(X, Y).
\end{align*}
\end{thm}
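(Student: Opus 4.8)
The plan is to establish the right-hand inequality first — namely, $\dHT\big((\cP_X,\mi^X),(\cP_Y,\mi^Y)\big)\le 2L\cdot\dGH(X,Y)$ — and then invoke Lemma~\ref{lem:dht_bound_interleaving} to get the interleaving bound for free. The finitely-supported case will follow by the same construction, restricted to $\cPfin$, since the pushforward maps and linear homotopies we use all preserve finite support. So the entire content is: given $\eta>\dGH(X,Y)$, build a $2L\eta$-homotopy equivalence between the pairs $(\cP_X,\mi^X)$ and $(\cP_Y,\mi^Y)$.

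Fix maps $\varphi\colon X\to Y$ and $\psi\colon Y\to X$ witnessing $\dGH(X,Y)<\eta$, so that $\mathrm{dis}(\varphi),\mathrm{dis}(\psi),\mathrm{codis}(\varphi,\psi)<2\eta$. The key difficulty, as the authors flag in the introduction, is that $\varphi_\sharp\colon\cP_X\to\cP_Y$ need not be continuous because $\varphi$ need not be. To fix this, I would \emph{smooth} $\varphi$ using a partition of unity: since $X$ is totally bounded, cover $X$ by finitely many open $\delta$-balls $B(x_i;\delta)$ (with $\delta$ a small parameter, $\delta\ll\eta$), take a subordinate partition of unity $\{\rho_i\}$, and define a continuous map $\Phi\colon\cP_X\to\cP_Y$ by
\[
\Phi(\alpha)=\sum_i\Big(\int_X\rho_i(x)\,\alpha(dx)\Big)\,\delta_{\varphi(x_i)}.
\]
This is continuous in the weak topology (the coefficients $\int\rho_i\,d\alpha$ depend continuously on $\alpha$, and there are finitely many of them), and it is $\dWqq{\infty}$-close to $\varphi_\sharp(\alpha)$: one can build an explicit coupling routing mass at $x$ to $\varphi(x_i)$ with weight $\rho_i(x)$, and since $x$ and $x_i$ are within $\delta$, the distortion bound $\mathrm{dis}(\varphi)<2\eta$ gives $d_Y(\varphi(x),\varphi(x_i))$ bounded in terms of $2\eta$ and $\delta$, so $\dWqq{\infty}(\Phi(\alpha),\varphi_\sharp(\alpha))$ is controlled. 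Combining the pushforward-stability property $\mi^Y(\varphi_\sharp\alpha)\le\mi^X(\alpha)+L\,\mathrm{dis}(\varphi)$ with the $\dWqq{\infty}$-stability property $|\mi^Y(\Phi\alpha)-\mi^Y(\varphi_\sharp\alpha)|\le 2L\,\dWqq{\infty}(\Phi\alpha,\varphi_\sharp\alpha)$, both from the definition of an $L$-controlled invariant, yields $\mi^Y(\Phi(\alpha))\le\mi^X(\alpha)+2L\eta + (\text{a term} \to 0\text{ as }\delta\to 0)$, so $\Phi$ is (essentially) a $2L\eta$-map. Define $\Psi\colon\cP_Y\to\cP_X$ symmetrically using a partition of unity on $Y$ and the map $\psi$.

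It remains to check that $\Psi\circ\Phi$ is $4L\eta$-homotopic to $\mathrm{id}_{\cP_X}$ with respect to $(\mi^X,\mi^X)$, and symmetrically for $\Phi\circ\Psi$. Here I would use the \emph{linear homotopy} $H(\alpha,t)=(1-t)\,\alpha+t\,(\Psi\circ\Phi)(\alpha)$, which is continuous by Proposition~\ref{prop:linear_homotopies}. The point is that $\Psi\circ\Phi(\alpha)$ is $\dWqq{\infty}$-close to $(\psi\circ\varphi)_\sharp(\alpha)$ (by concatenating the two coupling estimates), and $(\psi\circ\varphi)_\sharp(\alpha)$ is in turn $\dWqq{\infty}$-close to $\alpha$ itself: by Remark~\ref{remark:codis}, $d_X(x,\psi\varphi(x))<2\eta$ for all $x$, so the diagonal-type coupling gives $\dWqq{\infty}(\alpha,(\psi\varphi)_\sharp\alpha)<2\eta$. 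Hence every $\alpha$ and $\Psi\Phi(\alpha)$ are within $\dWqq{\infty}$-distance $\approx 4\eta$ (plus $\delta$-terms), and so is every convex combination $H(\alpha,t)$ by the convexity bound of Lemma~\ref{lem:bound-distance-convex-comb}; applying $\dWqq{\infty}$-stability of $\mi^X$ once more shows $\mi^X(H(\alpha,t))\le\mi^X(\alpha)+4L\eta+o(1)$, i.e.\ $H$ is a $4L\eta$-homotopy (up to the vanishing $\delta$-error). Letting $\delta\to 0$ and then $\eta\downarrow\dGH(X,Y)$ gives $\dHT\le 2L\cdot\dGH(X,Y)$. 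I expect the main obstacle to be bookkeeping the two small parameters cleanly — making sure the $\delta$-dependent error terms genuinely vanish uniformly and do not interact badly with the $\eta$-terms — and verifying continuity of $\Phi$, $\Psi$, and the homotopies rigorously in the weak topology; the partition-of-unity construction is exactly what is needed to make these maps continuous, which is the crux of why metric-thickening stability is harder than simplicial-complex stability.
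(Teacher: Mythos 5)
Your proposal follows the paper's approach essentially step for step: define $\widehat\Phi=\iota_V\circ\varphi_\sharp\circ\Phi_U$ via a partition of unity subordinate to a finite $\delta$-ball cover (your formula $\Phi(\alpha)=\sum_i(\int\rho_i\,d\alpha)\,\delta_{\varphi(x_i)}$ \emph{is} $\iota_V\circ\varphi_\sharp\circ\Phi_U(\alpha)$), verify continuity in the weak topology, estimate $\mi$ along these maps using the two controlled-invariant axioms, use linear homotopies (Proposition~\ref{prop:linear_homotopies}, Lemma~\ref{lem:bound-distance-convex-comb}) for the $2\delta$-homotopy conditions, and finally invoke Lemma~\ref{lem:dht_bound_interleaving} and send $\delta\to 0$, $\eta\downarrow 2\dGH$. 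This is exactly the argument given in Lemma~\ref{lem:Ptau_cts_via_partition}, Lemma~\ref{lem:homotopy_scaffording}, and the proof of Theorem~\ref{thm:general_stability}.

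One wrinkle to fix in how you chain the two axioms. You write $\mi^Y(\varphi_\sharp\alpha)\le\mi^X(\alpha)+L\,\mathrm{dis}(\varphi)$ and then compare $\Phi(\alpha)$ to $\varphi_\sharp\alpha$ via $\dWqq{\infty}$-stability. But the pushforward-stability clause in Definition~\ref{def:controlled-invariant} is stated only for maps between \emph{finite} metric spaces, so it does not literally apply to $\varphi\colon X\to Y$ when $X$, $Y$ are infinite. Worse, a map $\varphi$ witnessing $\dGH(X,Y)<\eta$ need not be measurable, so $\varphi_\sharp\alpha$ need not even be defined. The fix is to reorder the two estimates, which is exactly how the paper proceeds: first compare $\alpha$ with $\Phi_U(\alpha)\in\cP_U$ (supported on the finite net $U$, with $\dWqq{\infty}(\alpha,\Phi_U(\alpha))<\delta$), using the $\dWqq{\infty}$-stability clause; then push $\Phi_U(\alpha)$ forward along the restriction $\varphi|_U\colon U\to\varphi(U)$, a map between finite metric spaces, which is what the pushforward clause covers. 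With that rearrangement the same chain of inequalities goes through and your bookkeeping of the $\eta$- and $\delta$-terms is correct. The same reordering applies to your treatment of $\Psi\circ\Phi$ versus $(\psi\circ\varphi)_\sharp\alpha$ in the homotopy step: work directly with $\widehat\Psi\circ\widehat\Phi(\alpha)$ (which has finite support) rather than with $(\psi\circ\varphi)_\sharp\alpha$.
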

Note that there are instances when the quantity in the middle vanishes yet the spaces $X$ and $Y$ are non-isometric; see Appendix~\ref{app:crushings} for results about this in terms of the notion of \emph{crushing} considered by Hausmann~\cite{Hausmann1995} and~\cite{memoli2021quantitative,AAF}.

\begin{cor}\label{cor:iqp_stability}
For any two totally bounded metric spaces $X$ and $Y$, for any $q, p \in [1, \infty]$, and for any integer $k \geq 0$, we have
\begin{align*}
\di\big(H_k\circ \filtf{\cP_X}{\iqp^X},H_k\circ \filtf{\cP_Y}{\iqp^Y}\big) &\leq \dHT\big((\cP_X,\,\iqp^X), (\cP_Y,\,\iqp^Y)\big) \leq 2\,\dGH(X, Y) \\
\di\big(H_k\circ \filtf{\cPfin_X}{\iqp^X},H_k\circ \filtf{\cPfin_Y}{\iqp^Y}\big) &\leq \dHT\big((\cPfin_X,\,\iqp^X), (\cPfin_Y,\,\iqp^Y)\big) \leq 2\,\dGH(X, Y).
\end{align*}
\end{cor}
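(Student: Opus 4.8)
The plan is to deduce this corollary directly from Theorem~\ref{thm:general_stability} by verifying that, for every $q,p\in[1,\infty]$, the functional $\iqp$ is a \emph{$1$-controlled} invariant in the sense of Definition~\ref{def:controlled-invariant}. Two conditions must be checked. For \emph{stability under pushforward}, we must show that for any map $f\colon X\to Y$ between bounded (in particular, finite) metric spaces and any $\alpha\in\cP_X$, one has $\iqp^Y(f_\sharp(\alpha))\le \iqp^X(\alpha)+\mathrm{dis}(f)$; this is exactly Lemma~\ref{lem:iqp_distortion_via_pushforward}, which gives the constant $L=1$. For \emph{stability with respect to $\dWqq{\infty}$}, we must show that $|\iqp^X(\alpha)-\iqp^X(\beta)|\le 2\,\dWqq{\infty}(\alpha,\beta)$ for all $\alpha,\beta\in\cP_X$; this is precisely the ``in particular'' clause of Lemma~\ref{lem:iqp_stability_wrt_dW}, again with $L=1$. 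Since $f_\sharp$ maps $\cPfin_X$ into $\cPfin_Y$ and the $\dWqq{\infty}$-estimate restricts verbatim, these two lemmas simultaneously certify that $\iqp$ is a $1$-controlled invariant on both $\cP_X$ and $\cPfin_X$.

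With this observation in place the result is immediate: applying Theorem~\ref{thm:general_stability} to the $1$-controlled invariant $\mi=\iqp$ produces both chains of inequalities in the statement, with the final bound $2L\cdot\dGH(X,Y)=2\,\dGH(X,Y)$. Specializing to $q=p$ recovers the corresponding stability statement for $\diam_p=\mi_{p,p}$, so in fact this single argument handles every filtration function $\iqp$ at once, including the $\diam_p$ case of primary interest.

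There is essentially no obstacle here beyond bookkeeping; the substantive analytic work---the Minkowski-inequality manipulations bounding $\iqp^Y(f_\sharp(\alpha))$ and the observation that $x\mapsto \dWq(\beta,\delta_x)$ is $1$-Lipschitz, feeding into Lemma~\ref{lem:dwq_stability_wrt_Lipschitz}---has already been carried out in Lemmas~\ref{lem:iqp_distortion_via_pushforward} and~\ref{lem:iqp_stability_wrt_dW}. The only point deserving a moment's care is that the control constant is \emph{uniformly} equal to $1$ across all $q,p\in[1,\infty]$, including the endpoint values; this is exactly what those lemmas assert, and the endpoint cases (where $\dWqq{\infty}$ rather than $\dWq$ or $\dWp$ appears) are precisely the ones Definition~\ref{def:controlled-invariant} and Theorem~\ref{thm:general_stability} are designed to accommodate.
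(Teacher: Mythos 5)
Your argument is exactly the paper's: you verify that $\iqp$ is a $1$-controlled invariant by invoking Lemma~\ref{lem:iqp_distortion_via_pushforward} (pushforward stability) and Lemma~\ref{lem:iqp_stability_wrt_dW} ($\dWqq{\infty}$-stability), and then apply Theorem~\ref{thm:general_stability} with $L=1$. The proposal is correct and takes essentially the same approach as the paper.
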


\begin{proof}
From Lemma~\ref{lem:iqp_distortion_via_pushforward} and Lemma~\ref{lem:iqp_stability_wrt_dW} we know $\iqp$ is a $1$-controlled invariant, and then we apply Theorem~\ref{thm:general_stability}.
\end{proof}

\begin{thm}\label{thm:main}
Let $X$ and $Y$ be totally bounded metric spaces, let $p\in[1,\infty]$, and let $k \geq 0$ be an integer.
Then the $k$-th persistent homology of $\vrpf{X}$ and $\vrpf{Y}$ are $\varepsilon$-interleaved for any $\varepsilon\geq \dGH(X, Y)$, and similarly for $\cechpf{X}$ and $\cechpf{Y}$:
\begin{align*}
\di\big(H_k\circ\vrpf{X},H_k\circ\vrpf{Y}\big) &\leq \dHT\big((\cP_X, \diam_p^X), (\cP_Y, \diam_p^Y)\big) \leq 2\,\dGH(X,Y) \\
\di\big(H_k\circ\vrpffin{X},H_k\circ\vrpffin{Y}\big) &\leq \dHT\big((\cPfin_X, \diam_p^X), (\cPfin_Y, \diam_p^Y)\big) \leq 2\,\dGH(X,Y) \\
\di\big(H_k\circ\cechpf{X},H_k\circ\cechpf{Y}\big) &\leq \dHT\big((\cP_X, \rad_p^X), (\cP_Y, \rad_p^Y)\big) \leq 2\,\dGH(X,Y) \\
\di\big(H_k\circ\cechpffin{X},H_k\circ\cechpffin{Y}\big) &\leq \dHT\big((\cPfin_X, \rad_p^X), (\cPfin_Y, \rad_p^Y)\big) \leq 2\,\dGH(X,Y).
\end{align*}
\end{thm}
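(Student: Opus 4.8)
The plan is to deduce Theorem~\ref{thm:main} as an immediate specialization of Theorem~\ref{thm:general_stability}, once we check that $\diam_p$ and $\rad_p$ are $1$-controlled invariants in the sense of Definition~\ref{def:controlled-invariant}. For $\diam_p$ I would use that $\diam_p^X = \iqp^X$ with $q = p$: Lemma~\ref{lem:iqp_distortion_via_pushforward} supplies the stability-under-pushforward axiom $\iqp^Y(f_\sharp\alpha) \le \iqp^X(\alpha) + \mathrm{dis}(f)$, and Lemma~\ref{lem:iqp_stability_wrt_dW} supplies the $\dWqq{\infty}$-stability axiom $|\iqp^X(\alpha) - \iqp^X(\beta)| \le 2\,\dWqq{\infty}(\alpha,\beta)$, so $L = 1$ works. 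Symmetrically, Lemma~\ref{lem:rad_p_dist} together with Lemma~\ref{lem:stability_of_rad_p} shows $\rad_p$ is $1$-controlled. Both arguments apply verbatim with $\cPfin_X$ in place of $\cP_X$, since $f_\sharp$ preserves finite support (explicitly, $f_\sharp(\sum_i a_i\delta_{x_i}) = \sum_i a_i\delta_{f(x_i)}$) and the coupling and Minkowski estimates used in those lemmas never invoke anything about the cardinality of supports.

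Next I would match the filtrations with the sublevel set filtrations appearing in Theorem~\ref{thm:general_stability}: by definition $\vrp{X}{r} = \{\alpha \in \cP_X : \diam_p(\alpha) < r\} = (\diam_p^X)^{-1}((-\infty,r))$, so $\vrpf{X} = \filtf{\cP_X}{\diam_p^X}$ as $\R$-spaces, and likewise $\cechpf{X} = \filtf{\cP_X}{\rad_p^X}$, with the finitely-supported variants obtained by replacing $\cP_X$ with $\cPfin_X$ throughout. Applying Theorem~\ref{thm:general_stability} with $\mi = \diam_p$ and $L = 1$ then yields the first two displayed chains of inequalities, and applying it with $\mi = \rad_p$ and $L = 1$ yields the last two. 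The $\varepsilon$-interleaving statement is just the leftmost inequality $\di(H_k\circ\vrpf{X}, H_k\circ\vrpf{Y}) \le 2\,\dGH(X,Y)$ (and its three siblings) read through the definition of the interleaving distance as an infimum over admissible shifts.

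There is essentially no obstacle here: all of the substance sits in Theorem~\ref{thm:general_stability}, whose proof (Section~\ref{ssec:proof-stability}) constructs continuous maps between metric thickenings via finite partitions of unity subordinate to coverings by $\delta$-balls, and in the basic-properties lemmas of Section~\ref{sec:basic-properties}. The one point worth an explicit sentence is that the two controlled-invariant axioms genuinely descend to $\cPfin_X$, which is immediate from the explicit pushforward formula noted above.
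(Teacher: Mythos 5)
Your proposal is correct and takes essentially the same route as the paper: verify that $\diam_p = \mi_{p,p}$ and $\rad_p$ are $1$-controlled via Lemmas~\ref{lem:iqp_distortion_via_pushforward}, \ref{lem:iqp_stability_wrt_dW}, \ref{lem:rad_p_dist}, \ref{lem:stability_of_rad_p}, identify the filtrations as sublevel-set filtrations, and invoke Theorem~\ref{thm:general_stability}. The only cosmetic difference is that the paper factors the Vietoris--Rips case through Corollary~\ref{cor:iqp_stability} rather than inlining it.
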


\begin{proof}
The $p$-Vietoris--Rips case follows from Corollary~\ref{cor:iqp_stability} by letting $q=p$, in which case $\mi_{p,p}^X=\diam_p^X$ (see Definition~\ref{defn:iqp}).
The $p$-\v{C}ech case follows from Theorem~\ref{thm:general_stability} since $\rad_p$ is a $1$-controlled invariant by Lemmas~\ref{lem:rad_p_dist} and~\ref{lem:stability_of_rad_p}.
\end{proof}

In the corollary below, we use Theorem~\ref{thm:general_stability} to show that a controlled invariant on all Radon probability measures produces a filtration at interleaving distance zero from that same invariant restricted to only finitely supported measures.

\begin{cor}
\label{cor:cP_cPfin_0-interleaved}
Let $X$ be a totally bounded metric space, let $k \geq 0$ be an integer, and let $\mi$ be an $L$-controlled invariant.
Then
\[\di(H_k\circ \filtf{\cP_X}{\mi^X}, H_k\circ \filtf{\cPfin_X}{\mi^X}) = \dHT\big((\cP_X, \mi^X), (\cPfin_X, \mi^X)\big) = 0.\]
\end{cor}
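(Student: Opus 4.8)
The plan is to reduce this statement to an application of Theorem~\ref{thm:general_stability} together with Lemma~\ref{lem:dht_bound_interleaving}. The first inequality (the interleaving distance bound by $\dHT$) will follow directly from Lemma~\ref{lem:dht_bound_interleaving} once I observe that the two filtrations $\filtf{\cP_X}{\mi^X}$ and $\filtf{\cPfin_X}{\mi^X}$ are precisely the sublevel set filtrations of the pairs $(\cP_X, \mi^X)$ and $(\cPfin_X, \mi^X)$. So the real content is to prove $\dHT\big((\cP_X, \mi^X), (\cPfin_X, \mi^X)\big) = 0$, i.e.\ that for every $\delta > 0$ the two pairs are $\delta$-homotopy equivalent in the sense of Definition~\ref{defn:dHT}.

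For a fixed $\delta > 0$, I would build the two required $\delta$-maps as follows. The inclusion $\iota\colon \cPfin_X \hookrightarrow \cP_X$ is continuous and satisfies $\mi^X\circ\iota = \mi^X$, so it is a $0$-map (hence a $\delta$-map) with respect to $(\mi^X, \mi^X)$. In the other direction I need a continuous map $\Psi\colon \cP_X \to \cPfin_X$ that approximately preserves $\mi^X$; this is the heart of the argument. Since $X$ is totally bounded, I would pick a finite $\eta$-net $\{x_1,\dots,x_N\}$ of $X$ (with $\eta$ chosen small relative to $\delta/L$) and a finite partition of unity $\{\rho_i\}$ subordinate to the cover by the balls $B(x_i;\eta)$. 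Define $\Psi(\alpha) = \sum_{i=1}^N \big(\int_X \rho_i\,d\alpha\big)\,\delta_{x_i}$. This $\Psi$ is continuous in the weak topology because each coefficient $\alpha \mapsto \int_X \rho_i\,d\alpha$ is weakly continuous, and it lands in $\cPfin_X$. The key estimate is that $\dWqq{\infty}(\alpha, \Psi(\alpha)) \le \eta$: one builds an explicit coupling moving mass from $x$ to the $x_i$'s only when $\rho_i(x)>0$, hence over distances $<\eta$. Then stability of $\mi$ with respect to $\dWqq{\infty}$ (property~(2) of an $L$-controlled invariant) gives $|\mi^X(\Psi(\alpha)) - \mi^X(\alpha)| \le 2L\eta$, so choosing $\eta \le \delta/(2L)$ makes $\Psi$ a $\delta$-map.

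It remains to check the two homotopy conditions. The composite $\iota\circ\Psi\colon \cP_X \to \cP_X$ should be $2\delta$-homotopic to $\mathrm{id}_{\cP_X}$ with respect to $(\mi^X,\mi^X)$, and $\Psi\circ\iota\colon \cPfin_X\to\cPfin_X$ should be $2\delta$-homotopic to $\mathrm{id}_{\cPfin_X}$. For both, I would use the linear homotopy $H(\alpha,t) = (1-t)\,\alpha + t\,(\iota\circ\Psi)(\alpha)$, which is continuous by Proposition~\ref{prop:linear_homotopies}. To see that each time slice $H(\smb,t)$ is a $2\delta$-map, note $\dWqq{\infty}\big(\alpha, H(\alpha,t)\big) \le \dWqq{\infty}(\alpha,\Psi(\alpha)) \le \eta$ by the convexity bound of Lemma~\ref{lem:bound-distance-convex-comb} (using that $\alpha = (1-t)\alpha + t\alpha$ and the common marginal), so again property~(2) gives $|\mi^X(H(\alpha,t)) - \mi^X(\alpha)| \le 2L\eta \le \delta \le 2\delta$. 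The same linear homotopy works for $\Psi\circ\iota$ since $\cPfin_X$ is convex and closed under the operations involved. Since $\delta>0$ was arbitrary, $\dHT = 0$, and then Theorem~\ref{thm:general_stability} (or directly Lemma~\ref{lem:dht_bound_interleaving}) forces the interleaving distance to be $0$ as well.

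The main obstacle I anticipate is verifying the $\dWqq{\infty}$ estimate cleanly — in particular writing down the coupling between $\alpha$ and $\Psi(\alpha)$ supported on pairs at distance $<\eta$ and confirming it has the right marginals when the $\rho_i$ overlap — and making sure the linear-homotopy time slices genuinely stay within $\dWqq{\infty}$-distance $\eta$ of the identity so that the $2\delta$-map condition holds for all $t$, not just the endpoints. Everything else is bookkeeping with the definition of an $L$-controlled invariant and the already-established continuity facts.
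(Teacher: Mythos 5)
Your argument is correct, but it takes a genuinely different route from the paper's. The paper gets this as a quick corollary of Theorem~\ref{thm:general_stability}: for any $\varepsilon>0$ pick a finite $\varepsilon$-net $U\subseteq X$; the stability theorem gives $\dHT\big((\cP_X,\mi^X),(\cP_U,\mi^U)\big)\le 2L\varepsilon$ and $\dHT\big((\cPfin_X,\mi^X),(\cPfin_U,\mi^U)\big)\le 2L\varepsilon$; since $U$ is finite, $(\cP_U,\mi^U)=(\cPfin_U,\mi^U)$, so the triangle inequality (Proposition~\ref{prop:dht_pseudometric}) gives $\dHT\big((\cP_X,\mi^X),(\cPfin_X,\mi^X)\big)\le 4L\varepsilon$, and letting $\varepsilon\to 0$ finishes. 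You instead unwind the stability theorem and build the $\delta$-homotopy equivalence by hand: one direction is the inclusion $\iota\colon\cPfin_X\hookrightarrow\cP_X$ (a $0$-map), and the other is essentially the map $\Phi_U$ of Lemma~\ref{lem:Ptau_cts_via_partition} recast as $\Psi\colon\cP_X\to\cPfin_X$, with the $\dWqq{\infty}$ bound, the $L$-controlled property, and linear homotopies (Proposition~\ref{prop:linear_homotopies}, Lemma~\ref{lem:bound-distance-convex-comb}) providing the $\delta$-map and $2\delta$-homotopy conditions. Both paths are valid; the paper's is shorter because it treats stability as a black box and uses the pseudometric property of $\dHT$, while yours is a self-contained re-derivation that exposes the mechanism but duplicates work already done in Section~\ref{ssec:proof-stability}. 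One small caveat worth noting in your version: the coupling establishing $\dWqq{\infty}(\alpha,\Psi(\alpha))\le\eta$ is exactly $\mu=\sum_i(\rho_i\,\alpha)\otimes\delta_{x_i}$, whose support lies in $\bigcup_i\big(B(x_i;\eta)\times\{x_i\}\big)$; you sketched this correctly, and the paper's Lemma~\ref{lem:Ptau_cts_via_partition} already carries out the verification.
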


\begin{proof}
For any $\varepsilon>0$, let $U$ be a $\varepsilon$-net in $X$.
Then the stability theorem, Theorem~\ref{thm:general_stability}, shows
\begin{align*}
\dHT\big((\cP_X, \mi^X), (\cP_U), \mi^U)\big) &\leq 2L\cdot \dGH(X, U) \leq 2L \varepsilon \\
\dHT\big((\cPfin_X, \mi^X), (\cPfin_U, \mi^U)\big) &\leq 2L\cdot \dGH(X, U) \leq 2L \varepsilon
\end{align*}
Note $(\cP_U, \mi^U) = (\cPfin_U, \mi^U)$ since $U$ is finite.
Since $\dHT$ satisfies the triangle inequality (Proposition~\ref{prop:dht_pseudometric}), we have
\[\dHT\big((\cP_X, \mi^X), (\cPfin_X, \mi^X)\big)\leq 4L\varepsilon.\]
By letting $\varepsilon$ go to zero, we see $\dHT\big((\cP_X, \mi^X), (\cPfin_X, \mi^X)\big) = 0$.
It then follows from Lemma~\ref{lem:dht_bound_interleaving} that $\di(H_k\circ \filtf{\cP_X}{\mi^X}, H_k\circ \filtf{\cPfin_X}{\mi^X})=0$.
\end{proof}

\subsection{Consequence of stability: Tameness}
\label{sec:tameness}

We next determine conditions under which a persistence module $H_k\circ\filtf{\cP_X}{\mi^X}$ associated with a controlled invariant $\mi$ will be Q-tame.
In particular, we apply the results to the Vietoris--Rips and \v{C}ech metric thickenings.

\begin{cor}\label{cor:controlled_invariant_tameness}
Let $\mi$ be a controlled invariant and let $k \geq 0$ be an integer.
Suppose the persistence module $H_k\circ\filtf{\cP_V}{\mi^V}$ (resp.\ $H_k\circ\filtf{\cPfin_V}{\mi^V}$) is $Q$-tame for any finite metric space $V$.
Then for any totally bounded metric space $X$, the persistence module $H_k\circ\filtf{\cP_X}{\mi^X}$ (resp.\ $H_k\circ\filtf{\cPfin_X}{\mi^X}$) is $Q$-tame.
\end{cor}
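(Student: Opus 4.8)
The plan is to deduce this immediately by combining the stability theorem (Theorem~\ref{thm:general_stability}) with the $Q$-tameness approximation lemma (Lemma~\ref{lem:qtame_approximation}), exploiting total boundedness to produce arbitrarily fine \emph{finite} approximations of $X$. The key point is that a totally bounded metric space admits, for every $\varepsilon>0$, a finite $\varepsilon$-net $U\subseteq X$, and that such a net is Gromov--Hausdorff $\varepsilon$-close to $X$: indeed the correspondence $\{(u,x)\mid d_X(u,x)<\varepsilon\}$ has distortion at most $2\varepsilon$, so $\dGH(X,U)\le\varepsilon$.

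First I would fix $\varepsilon>0$ and such a finite net $U$, equipped with the restricted metric. Applying Theorem~\ref{thm:general_stability} to the pair $X,U$ with the $L$-controlled invariant $\mi$ yields
\[
\di\big(H_k\circ \filtf{\cP_X}{\mi^X},\, H_k\circ \filtf{\cP_U}{\mi^U}\big)\;\le\;\dHT\big((\cP_X,\mi^X),(\cP_U,\mi^U)\big)\;\le\;2L\cdot\dGH(X,U)\;\le\;2L\varepsilon,
\]
and the identical chain of inequalities with $\cPfin$ in place of $\cP$. By hypothesis the module $H_k\circ\filtf{\cP_U}{\mi^U}$ (resp.\ $H_k\circ\filtf{\cPfin_U}{\mi^U}$) is $Q$-tame, since $U$ is a finite metric space. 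Hence, letting $\varepsilon$ range over a sequence tending to $0$, the persistence module $H_k\circ\filtf{\cP_X}{\mi^X}$ (resp.\ $H_k\circ\filtf{\cPfin_X}{\mi^X}$) is approximated arbitrarily well in the interleaving distance by $Q$-tame modules. Lemma~\ref{lem:qtame_approximation} then gives that it is itself $Q$-tame, which is the claim.

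I do not anticipate a real obstacle here: all of the substantive content is already packaged into Theorem~\ref{thm:general_stability} and Lemma~\ref{lem:qtame_approximation}. The only thing needing any attention is the reduction from ``totally bounded'' to ``approximable by finite spaces,'' which is precisely the definition of total boundedness together with the elementary $\dGH$-estimate for nets; this is the same device already used in the proof of Corollary~\ref{cor:cP_cPfin_0-interleaved}, supplemented only by the hypothesis that finite metric spaces yield $Q$-tame modules in degree $k$.
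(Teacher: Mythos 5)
Your proof is correct and follows exactly the same route as the paper's: choose a finite $\varepsilon$-net $U$ of $X$, apply Theorem~\ref{thm:general_stability} to bound the interleaving distance by $2L\varepsilon$, and then invoke Lemma~\ref{lem:qtame_approximation}. The paper's own proof states this in two sentences without the explicit $\dGH$-estimate for nets, which you spell out, but the substance is identical.
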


\begin{proof}
Since $X$ is totally bounded, Theorem~\ref{thm:general_stability} implies that the persistence module $H_k\circ\filtf{\cP_X}{\mi^X}$ can be approximated arbitrarily well in the interleaving distance by the Q-tame persistence modules on finite $\varepsilon$-nets $V_\varepsilon$ as $\varepsilon$ goes to zero.
Then the result follows from Lemma~\ref{lem:qtame_approximation}.
\end{proof}

Now, we give a sufficient condition for the Q-tameness of the persistence module $H_k\circ\filtf{\cP_V}{\mi^V}$ over a finite metric space $V$.

\begin{cor}\label{cor:qtame_cts_finite}
Suppose $\mi$ is a controlled invariant such that for any finite metric space $V$, $\mi^V$ is a continuous function on $\cP_V$.
Then for any totally bounded metric space $X$, the persistence modules $H_k\circ\filtf{\cP_X}{\mi^X}$ and $H_k\circ\filtf{\cPfin_X}{\mi^X}$ are Q-tame for any integer $k\geq 0$.
\end{cor}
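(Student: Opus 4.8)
The plan is to reduce to the finite case handled by Corollary~\ref{cor:controlled_invariant_tameness}. That corollary already tells us that if $H_k\circ\filtf{\cP_V}{\mi^V}$ is $Q$-tame for every \emph{finite} metric space $V$, then $H_k\circ\filtf{\cP_X}{\mi^X}$ is $Q$-tame for every totally bounded $X$ (and likewise for the finitely supported variants). So it suffices to establish $Q$-tameness over finite metric spaces under the extra hypothesis that $\mi^V$ is continuous on $\cP_V$.

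First I would fix a finite metric space $V$, say with $n$ points, and invoke Lemma~\ref{lem:fin-prob-simplex} to identify $\cP_V$ with the standard $(n-1)$-simplex $\Delta_{n-1}$, which is a compact metrizable space. Under this identification $\mi^V$ becomes a continuous real-valued function on a compact space, so each sublevel set $\filt{\cP_V}{\mi^V}{r} = (\mi^V)^{-1}((-\infty,r))$ is an open subset of $\Delta_{n-1}$. The key structural fact I would use is that an open subset of a finite-dimensional simplex (more generally, an ANR, or a finite-dimensional locally compact space) has finitely generated singular homology in each degree $k$; indeed $\Delta_{n-1}$ is an ANR, hence so is any open subset, and an open subset of $\Delta_{n-1}$ is homotopy equivalent to a CW complex of dimension at most $n-1$, with finitely generated homology since it embeds in $\R^{n-1}$. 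Consequently each $H_k\big(\filt{\cP_V}{\mi^V}{r}\big)$ is finite-dimensional, which immediately forces every structure map $H_k\big(\filt{\cP_V}{\mi^V}{s}\big)\to H_k\big(\filt{\cP_V}{\mi^V}{t}\big)$ to have finite rank; hence $H_k\circ\filtf{\cP_V}{\mi^V}$ is $Q$-tame. For the finitely supported version, note that when $V$ is finite we have $\cPfin_V=\cP_V$, so the same argument applies verbatim.

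Having shown $Q$-tameness over all finite metric spaces, I would simply apply Corollary~\ref{cor:controlled_invariant_tameness} to conclude that $H_k\circ\filtf{\cP_X}{\mi^X}$ and $H_k\circ\filtf{\cPfin_X}{\mi^X}$ are $Q$-tame for any totally bounded $X$ and any $k\ge 0$. The pieces fit together cleanly because $\iqp$ (hence $\diam_p$) and $\rad_p$ are $1$-controlled invariants by the lemmas of Section~\ref{sec:basic-properties}, and for $p,q\in[1,\infty)$ the remarks there establish the needed continuity, so this corollary specializes to the tameness of the $p$-Vietoris--Rips and $p$-\v{C}ech persistence modules.

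The main obstacle I anticipate is the homological finiteness claim for open subsets of $\Delta_{n-1}$: one must be slightly careful because a generic open subset of a simplex need not be triangulable in an obvious way, so the cleanest route is to cite that finite-dimensional ANRs (equivalently open subsets of Euclidean spaces) have the homotopy type of CW complexes and have finitely generated homology in each degree. An alternative that avoids ANR theory entirely is to first handle $\mi=\diam_p$ or $\mi=\rad_p$ directly — there the sublevel sets are intersections of $\cP_V$ with open balls or sublevel sets of explicit continuous (even semialgebraic, for $p$ an integer) functions — but stating the result for a general continuous controlled invariant is more uniform, so I would take the ANR route and make the finiteness input explicit.
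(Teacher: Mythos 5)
Your global strategy matches the paper's exactly: reduce to finite $V$ via Corollary~\ref{cor:controlled_invariant_tameness}, identify $\cP_V$ with $\Delta_{n-1}$ via Lemma~\ref{lem:fin-prob-simplex}, and then conclude that a continuous function on a compact simplex has $Q$-tame sublevel-set persistence. The problem is that your justification of that last step is false. You assert that an open subset of $\Delta_{n-1}$ (equivalently, of $\R^{n-1}$) necessarily has finitely generated singular homology in each degree; this is not true, and in particular it is not true for sublevel sets of continuous functions on a simplex. For a concrete counterexample inside a simplex, let $C=\{(1/m,1/2):m\geq 1\}\cup\{(0,1/2)\}$, a closed subset of $[0,1]^2\subset\Delta_2$, and let $f(p)=-\,d(p,C)$, which is continuous. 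Then the sublevel set $f^{-1}\bigl((-\infty,0)\bigr)=[0,1]^2\setminus C$ is open but has infinitely generated $H_1$. So the assertion that each $H_k\bigl(\filt{\cP_V}{\mi^V}{r}\bigr)$ is finite-dimensional cannot be used to establish $Q$-tameness. (Being an ANR, or having the homotopy type of a CW complex, does not imply finitely generated homology; the CW complex may be infinite.)

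What makes the argument go through in the paper is not levelwise finiteness but the subtler fact, cited as Theorem 2.22 of~\cite{chazal2016structure}, that for a continuous real-valued function on a compact polyhedron the \emph{structure maps} $H_k\bigl(f^{-1}((-\infty,s))\bigr)\to H_k\bigl(f^{-1}((-\infty,t))\bigr)$ with $s<t$ have finite rank, even though the individual groups may be infinite-dimensional. (In the example above, for $0<t$ the target is the whole square, which is contractible, so the map has rank zero despite the infinite rank of the source; the general claim is proved by factoring the structure map through a compact intermediate polyhedron.) So your reduction is fine, and the finitely supported case $\cPfin_V=\cP_V$ is handled correctly, but you need to replace the finiteness claim for open subsets by a direct appeal to that $Q$-tameness criterion for continuous functions on compact polyhedra, as the paper does.
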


\begin{proof}
According to Corollary~\ref{cor:controlled_invariant_tameness}, it suffices to check Q-tameness over finite metric spaces.
If $V$ is a finite metric space, then we may identify $\cP_V$ with a simplex by Lemma~\ref{lem:fin-prob-simplex}.
Then $\filtf{\cP_V}{\mi^V}$ is a sublevel set filtration on a simplex, and since $\mi^V$ is continuous,
Theorem 2.22 of~\cite{chazal2016structure} shows $H_k\circ\filtf{\cP_V}{\mi^V}$ is Q-tame.
\end{proof}

Here we summarize the $Q$-tameness results related to relaxed Vietoris--Rips and \v{C}ech metric thickenings.

\begin{cor}\label{cor:totally_bounded_implies_q-tame}
Let $X$ be a totally bounded metric space, let $p\in [1, \infty]$, and let $k \geq 0$ be an integer.
The persistence modules $H_k\circ \vrpf{X}$, $H_k\circ \vrpffin{X}$, $H_k\circ \cechpf{X}$, and $H_k\circ \cechpffin{X}$ are Q-tame.
\end{cor}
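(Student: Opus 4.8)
The plan is to reduce the statement to the case of finite metric spaces, where everything is transparent, and to treat $1\le p<\infty$ and $p=\infty$ separately, since the defining functionals are continuous in the weak topology in the former case but not the latter. In both cases the relevant invariants are controlled: $\diam_p=\mi_{p,p}$ is a $1$-controlled invariant by Lemmas~\ref{lem:iqp_distortion_via_pushforward} and~\ref{lem:iqp_stability_wrt_dW}, and $\rad_p$ is a $1$-controlled invariant by Lemmas~\ref{lem:rad_p_dist} and~\ref{lem:stability_of_rad_p}. Moreover $\filtf{\cP_X}{\diam_p^X}=\vrpf{X}$, $\filtf{\cPfin_X}{\diam_p^X}=\vrpffin{X}$, $\filtf{\cP_X}{\rad_p^X}=\cechpf{X}$, and $\filtf{\cPfin_X}{\rad_p^X}=\cechpffin{X}$, so it suffices to prove Q-tameness of $H_k$ applied to the sublevel set filtrations of $(\cP_X,\diam_p^X)$, $(\cPfin_X,\diam_p^X)$, $(\cP_X,\rad_p^X)$, and $(\cPfin_X,\rad_p^X)$.

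For $1\le p<\infty$, I would note that $\diam_p^V$ and $\rad_p^V$ are weakly continuous on $\cP_V$ for every metric space $V$: the bounds $|\rad_p(\alpha)-\rad_p(\beta)|\le\dWp(\alpha,\beta)$ and $|\mi_{p,p}(\alpha)-\mi_{p,p}(\beta)|\le 2\,\dWp(\alpha,\beta)$ from Lemmas~\ref{lem:stability_of_rad_p} and~\ref{lem:iqp_stability_wrt_dW}, together with the fact that for $p<\infty$ the $p$-Wasserstein metric metrizes the weak topology on a bounded metric space (see Appendix~\ref{app:Metrization_weak_topology}), give weak continuity. Applying Corollary~\ref{cor:qtame_cts_finite} with $\mi=\diam_p$ and then with $\mi=\rad_p$ immediately yields that $H_k\circ\vrpf{X}$, $H_k\circ\vrpffin{X}$, $H_k\circ\cechpf{X}$, and $H_k\circ\cechpffin{X}$ are Q-tame for every totally bounded $X$.

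For $p=\infty$, Corollary~\ref{cor:qtame_cts_finite} is unavailable because $\diam_\infty$ and $\rad_\infty$ are not weakly continuous (the diameter or radius of the support can jump as mass collapses onto a face of the simplex). Instead I would invoke Corollary~\ref{cor:controlled_invariant_tameness}, which reduces the problem to checking Q-tameness over finite metric spaces $V$, where $\cP_V=\cPfin_V$. By Lemma~\ref{lem:finite_infty-thickenings_homeomorphic_to_complexes}, for finite $V$ the filtrations $\vrppffin{\infty}{V}$ and $\cechppffin{\infty}{V}$ are, compatibly with their filtration maps, homeomorphic to the simplicial complex filtrations $\vrf{V}$ and $\cechf{V}$; since $V$ is finite, each $\vr{V}{r}$ and $\cech{V}{r}$ is a finite simplicial complex with finite-dimensional homology in every degree, and only finitely many distinct complexes arise as $r$ varies, so these persistence modules are pointwise finite-dimensional, hence in particular all their structure maps have finite rank, i.e.\ they are Q-tame. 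Corollary~\ref{cor:controlled_invariant_tameness} then transfers this to $H_k\circ\vrppf{\infty}{X}$ and $H_k\circ\cechppf{\infty}{X}$ (and their finitely supported versions) for arbitrary totally bounded $X$. The only real obstacle is this discontinuity at $p=\infty$; once it is circumvented by passing through genuine finite simplicial complexes, the argument is a straightforward assembly of results already established, requiring no new estimates.
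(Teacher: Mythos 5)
Your argument is correct and follows essentially the same route as the paper: for $p<\infty$ you use the $1$-Lipschitz bounds to get weak continuity of $\diam_p$ and $\rad_p$ and then apply Corollary~\ref{cor:qtame_cts_finite}, and for $p=\infty$ you reduce to finite $V$ via Corollary~\ref{cor:controlled_invariant_tameness} and use Lemma~\ref{lem:finite_infty-thickenings_homeomorphic_to_complexes} to pass to genuine finite simplicial complexes. The only cosmetic difference is that you obtain the finitely supported cases directly from the ``resp.''\ statements of Corollaries~\ref{cor:qtame_cts_finite} and~\ref{cor:controlled_invariant_tameness}, whereas the paper separately deduces them from the full-support cases via Corollary~\ref{cor:cP_cPfin_0-interleaved} and Lemma~\ref{lem:qtame_approximation}; both are valid and rest on the same underlying reduction.
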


\begin{proof}
For $H_k\circ \vrpf{X}$, when $p$ is finite, Lemma~\ref{lem:iqp_stability_wrt_dW} implies $\diam_p$ is a continuous function over any $\cP_X$ where $X$ is a bounded metric space.
Then we get the result by applying Corollary~\ref{cor:qtame_cts_finite}.
When $p= \infty$, Lemma~\ref{lem:finite_infty-thickenings_homeomorphic_to_complexes} shows the persistent homology associated with $\diam_\infty$ is Q-tame over finite metric spaces.
We then get the result by applying Corollary~\ref{cor:controlled_invariant_tameness}.

We obtain the case of $H_k\circ \vrpffin{X}$ from $H_k\circ \vrpf{X}$ by using the interleaving distance result in Corollary~\ref{cor:cP_cPfin_0-interleaved} along with the Q-tame approximation result in Lemma~\ref{lem:qtame_approximation}.

For $H_k\circ \cechpf{X}$, when $p$ is finite, Lemma~\ref{lem:stability_of_rad_p} implies $\rad_p$ is a continuous function over any $\cP_X$ where $X$ is a bounded metric space.
The rest is similar to the Vietoris--Rips case.
\end{proof}

\begin{remark}
The proof that $H_k \circ \cechpf{X}$ is Q-tame can be made more direct at one step.
For $U$ finite, to see that $H_k \circ \cechpf{U}$ is Q-tame, one could appeal to Theorem~\ref{thm_pCech_homotopy_equiv_simplicial_complexes} in Appendix~\ref{app:finite-Cech} instead of Theorem~2.22 of~\cite{chazal2016structure}.
\end{remark}

The Q-tame persistence modules given by this theorem allow us to discuss persistence diagrams, using the results of~\cite{chazal2016structure}.

\begin{cor}
\label{cor:p-bottleneck-stability}
Let $X$ be a totally bounded metric space, let $p\in [1, \infty]$, and let $k \geq 0$ be an integer.
Then $H_k\circ \vrpf{X}$ and $H_k\circ \vrpffin{X}$ have the same persistence diagram, denoted $\dgmvr_{k,p}(X)$.
Similarly, $H_k\circ \cechpf{X}$ and $H_k\circ \cechpffin{X}$ have the same persistence diagram, denoted $\dgmcech_{k,p}(X)$
\end{cor}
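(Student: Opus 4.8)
The plan is to derive this corollary purely formally from results already in hand: the Q-tameness statement of Corollary~\ref{cor:totally_bounded_implies_q-tame}, the vanishing of the interleaving distance between a controlled-invariant filtration on all Radon measures and its finitely supported counterpart (Corollary~\ref{cor:cP_cPfin_0-interleaved}), and the isometry theorem equating interleaving distance with bottleneck distance for Q-tame persistence modules.

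First I would record that $\vrpf{X} = \filtf{\cP_X}{\diam_p^X}$ and $\vrpffin{X} = \filtf{\cPfin_X}{\diam_p^X}$, and that $\diam_p = \mi_{p,p}$ is a $1$-controlled invariant by Lemmas~\ref{lem:iqp_distortion_via_pushforward} and~\ref{lem:iqp_stability_wrt_dW}. Applying Corollary~\ref{cor:cP_cPfin_0-interleaved} with $\mi = \diam_p$ gives $\di(H_k\circ\vrpf{X}, H_k\circ\vrpffin{X}) = 0$. Corollary~\ref{cor:totally_bounded_implies_q-tame} tells us both modules are Q-tame, hence both have well-defined undecorated persistence diagrams in the sense of~\cite{chazal2016structure}. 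The isometry theorem then yields
\[\dB\big(\dgm(H_k\circ\vrpf{X}),\,\dgm(H_k\circ\vrpffin{X})\big) = \di\big(H_k\circ\vrpf{X},\,H_k\circ\vrpffin{X}\big) = 0,\]
and since $\dB$ is a genuine metric on the set of undecorated persistence diagrams of Q-tame modules, the two diagrams coincide; I would call the common value $\dgmvr_{k,p}(X)$.

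The \v{C}ech case I would handle identically, substituting $\rad_p$ for $\diam_p$: here $\cechpf{X} = \filtf{\cP_X}{\rad_p^X}$ and $\cechpffin{X} = \filtf{\cPfin_X}{\rad_p^X}$, $\rad_p$ is $1$-controlled by Lemmas~\ref{lem:rad_p_dist} and~\ref{lem:stability_of_rad_p}, so Corollary~\ref{cor:cP_cPfin_0-interleaved} forces zero interleaving distance, Corollary~\ref{cor:totally_bounded_implies_q-tame} gives Q-tameness, and the isometry theorem together with the metric property of $\dB$ forces the diagrams to agree, which I would denote $\dgmcech_{k,p}(X)$.

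There is no serious obstacle here---the analytic work was all done in the stability theorem (Theorem~\ref{thm:general_stability}) and the tameness corollary. The one point I would be careful about is conceptual rather than technical: a vanishing interleaving distance between two Q-tame persistence modules does not make them isomorphic, so the conclusion must be phrased at the level of persistence diagrams, where it follows because the bottleneck distance separates undecorated diagrams of Q-tame modules. That is exactly the statement being asserted, and no more is needed.
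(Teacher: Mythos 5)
Your proposal is correct and mirrors the paper's own proof essentially line for line: both derive Q-tameness from Corollary~\ref{cor:totally_bounded_implies_q-tame}, zero interleaving distance from Corollary~\ref{cor:cP_cPfin_0-interleaved}, and then invoke the isometry theorem of~\cite{chazal2016structure} to conclude the undecorated diagrams coincide. Your remark that one must use the fact that $\dB$ separates undecorated diagrams of Q-tame modules (rather than concluding the modules are isomorphic) is a good point of care, and is precisely what the reference to~\cite[Theorem 4.20]{chazal2016structure} secures elsewhere in the paper.
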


\begin{proof}
Persistence diagrams are well-defined for Q-tame persistence modules, so for any totally bounded metric space $X$, any $p \in [1,\infty]$, and any $k \geq 0$, by Corollary~\ref{cor:totally_bounded_implies_q-tame} we have persistence diagrams associated to $H_k\circ \vrpf{X}$ and $H_k\circ \vrpffin{X}$.
From Corollary~\ref{cor:cP_cPfin_0-interleaved} we know that the interleaving distance between $H_k\circ \vrpf{X}$ and $H_k\circ \vrpffin{X}$ is zero, and so the Isometry Theorem~\cite[Theorem 4.11]{chazal2016structure} implies that these persistence modules have the same (undecorated) persistence diagram, denoted $\dgmvr_{k,p}(X)$.
The same proof also works for \v{C}ech metric thickenings.
\end{proof}

Combining the Isometry Theorem (\cite[Theorem 4.11]{chazal2016structure}) with Theorem~\ref{thm:main} and Corollary~\ref{cor:totally_bounded_implies_q-tame}, we obtain the following.

\begin{cor}
If $X$ and $Y$ are totally bounded metric spaces, then for any $p \in [1, \infty]$ and any integer $k \geq 0$, we have
\begin{align*}
\dB\left( \dgmvr_{k,p}(X), \dgmvr_{k,p}(Y) \right) &\leq 2\,\dGH(X,Y) \\
\dB\left( \dgmcech_{k,p}(X), \dgmcech_{k,p}(Y) \right) &\leq 2\,\dGH(X,Y).
\end{align*}
\end{cor}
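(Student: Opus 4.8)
The plan is to chain together the three ingredients named in the statement. First, I would invoke Corollary~\ref{cor:totally_bounded_implies_q-tame}: since $X$ and $Y$ are totally bounded, each of the persistence modules $H_k\circ\vrpf{X}$, $H_k\circ\vrpf{Y}$, $H_k\circ\cechpf{X}$, and $H_k\circ\cechpf{Y}$ is Q-tame. This is exactly what makes the undecorated persistence diagrams $\dgmvr_{k,p}(X)$, $\dgmvr_{k,p}(Y)$, $\dgmcech_{k,p}(X)$, and $\dgmcech_{k,p}(Y)$ well-defined, as already recorded in Corollary~\ref{cor:p-bottleneck-stability}.

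Next, I would apply Theorem~\ref{thm:main} to bound the interleaving distances of the relevant modules: $\di(H_k\circ\vrpf{X}, H_k\circ\vrpf{Y})\le 2\,\dGH(X,Y)$ and $\di(H_k\circ\cechpf{X}, H_k\circ\cechpf{Y})\le 2\,\dGH(X,Y)$. Finally, since both modules in each pair are Q-tame, the Isometry Theorem~\cite[Theorem~4.11]{chazal2016structure} identifies the interleaving distance with the bottleneck distance between the associated diagrams, i.e.\ $\dB(\dgm(V),\dgm(W))=\di(V,W)$ for Q-tame $V$ and $W$. Substituting the diagrams from the first step gives $\dB(\dgmvr_{k,p}(X),\dgmvr_{k,p}(Y))\le 2\,\dGH(X,Y)$, and the same reasoning applied to the $p$-\v{C}ech filtrations yields the analogous inequality for $\dgmcech_{k,p}$.

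There is essentially no obstacle here: the whole argument is a short formal composition of results already proved. The only point requiring a moment's care is that the Isometry Theorem is available only for Q-tame modules, so one must first route the total boundedness hypothesis on $X$ and $Y$ through Corollary~\ref{cor:totally_bounded_implies_q-tame} before the isometry statement can be brought to bear; after that the proof is immediate.
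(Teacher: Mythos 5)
Your proof is correct and follows exactly the route the paper indicates (quoting Theorem~\ref{thm:main}, Corollary~\ref{cor:totally_bounded_implies_q-tame}, and the Isometry Theorem of~\cite{chazal2016structure} in that order). The paper leaves this combination implicit in a single preceding sentence; you have simply spelled it out.
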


\subsection{Consequence of stability: Connecting $\infty$-metric thickenings and simplicial complexes}
\label{sec:comparability}

We show how the $\infty$-Vietoris--Rips and $\infty$-\v{C}ech metric thickenings recover the persistent homology of the Vietoris--Rips and \v{C}ech simplicial complexes.
Our Corollary~\ref{cor:infty-simplicial-dgms} answers~\cite[Conjecture~6.12]{AAF} in the affirmative.

We recall that $\vrp{X}{r}$ denotes the $p$-Vietoris--Rips metric thickening, that $\vrpfin{X}{r}$ denotes the $p$-Vietoris--Rips metric thickening for measures of finite support, and that $\vr{X}{r}$ denotes the Vietoris--Rips simplicial complex.
Theorem~\ref{thm:main} shows, for any $p\in[1,\infty]$, any $\delta>0$, and any finite $\delta$-net $U_\delta$ of a totally bounded metric space $X$, that
\[
d_\mathrm{I}\big(H_k\circ\vrpffin{X},H_k\circ\vrpffin{U_\delta}\big)\leq 2\delta.
\]
For $p=\infty$, by Lemma~\ref{lem:finite_infty-thickenings_homeomorphic_to_complexes}, we have $H_k\circ\vrppf{\infty}{U_\delta} \cong H_k\circ\vrf{U_\delta}$, so from the above we have
\[
d_\mathrm{I}\big(H_k\circ\vrppffin{\infty}{X} , H_k\circ\vrf{U_\delta}\big)\leq 2\delta.
\]
Now, by the triangle inequality for the interleaving distance, by the inequality above, and by the Gromov--Hausdorff stability of $X\mapsto H_k\circ\vrf{X}$~\cite{ChazalDeSilvaOudot2014,chazal2009gromov}, we have
\begin{align*}
& d_\mathrm{I}(H_k\circ\vrppffin{\infty}{X},H_k\circ\vrf{X}) \\
\leq\ & d_\mathrm{I}(H_k\circ\vrppffin{\infty}{X},H_k\circ\vrf{U_\delta}) + d_\mathrm{I}(H_k\circ\vrf{U_\delta},H_k\circ\vrf{X}) \\
\leq\ & 4\delta.
\end{align*}

Since this holds for any $\delta>0$, we find $d_\mathrm{I}(H_k\circ\vrppffin{\infty}{X},H_k\circ\vrf{X}) = 0$.
This implies that the bottleneck distance between persistence diagrams is $0$, and the (undecorated) diagrams are in fact equal (see for instance~\cite[Theorem 4.20]{chazal2016structure}).
We can apply Corollary~\ref{cor:cP_cPfin_0-interleaved} to get the same result for $\vrppf{\infty}{X}$ (measures with infinite support), and the same proof works in the \v{C}ech case.
We state this as the following theorem.

\begin{cor}
\label{cor:infty-simplicial-dgms}
For any totally bounded metric space $X$ and any integer $k \geq 0$, we have
\begin{align*}
d_\mathrm{I}(H_k\circ\vrppffin{\infty}{X},\ & H_k\circ\vrf{X}) = 0 \\
d_\mathrm{I}(H_k\circ\vrppf{\infty}{X},\ & H_k\circ\vrf{X}) = 0 \\
\dgmvr_{k,\infty}(X) &= \dgmvr_{k}(X) \\
d_\mathrm{I}(H_k\circ\cechppffin{\infty}{X},\ & H_k\circ\cechf{X}) = 0 \\
d_\mathrm{I}(H_k\circ\cechppf{\infty}{X},\ & H_k\circ\cechf{X}) = 0 \\
\dgmcech_{k,\infty}(X) &= \dgmcech_{k}(X).
\end{align*}
\end{cor}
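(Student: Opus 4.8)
The plan is to reduce everything to finite metric spaces, where Lemma~\ref{lem:finite_infty-thickenings_homeomorphic_to_complexes} identifies the $\infty$-thickenings with the corresponding simplicial complexes, and then to pass to a limit along finer and finer finite nets using the stability theorem. In other words, one sandwiches the $\infty$-thickening of $X$ between its restriction to a finite $\delta$-net (via Theorem~\ref{thm:main}) and the simplicial complex of $X$ (via the classical Gromov--Hausdorff stability of the Vietoris--Rips and \v{C}ech filtrations), and lets $\delta \to 0$.

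Fix a totally bounded metric space $X$ and an integer $k \ge 0$. For each $\delta > 0$ choose a finite $\delta$-net $U_\delta \subseteq X$, so that $\dGH(X, U_\delta) \le \delta$. First I would apply Theorem~\ref{thm:main} with $p = \infty$ to the finitely supported $\infty$-Vietoris--Rips thickenings to obtain $\di\big(H_k \circ \vrppffin{\infty}{X}, H_k \circ \vrppffin{\infty}{U_\delta}\big) \le 2\delta$. Since $U_\delta$ is finite, Lemma~\ref{lem:finite_infty-thickenings_homeomorphic_to_complexes} gives homeomorphisms $\vrpp{\infty}{U_\delta}{r} \cong \vr{U_\delta}{r}$ compatible with the inclusion maps, and because $\cPfin_{U_\delta} = \cP_{U_\delta}$ for finite $U_\delta$, this yields an isomorphism of persistence modules $H_k \circ \vrppffin{\infty}{U_\delta} \cong H_k \circ \vrf{U_\delta}$. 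Combining, $\di\big(H_k \circ \vrppffin{\infty}{X}, H_k \circ \vrf{U_\delta}\big) \le 2\delta$.

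Next I would invoke the Gromov--Hausdorff stability of the Vietoris--Rips simplicial complex filtration~\cite{ChazalDeSilvaOudot2014, chazal2009gromov}, giving $\di\big(H_k \circ \vrf{U_\delta}, H_k \circ \vrf{X}\big) \le 2\,\dGH(X, U_\delta) \le 2\delta$, and then the triangle inequality for the interleaving distance yields $\di\big(H_k \circ \vrppffin{\infty}{X}, H_k \circ \vrf{X}\big) \le 4\delta$ for every $\delta > 0$; letting $\delta \downarrow 0$ forces this interleaving distance to be $0$. To upgrade this to equality of undecorated persistence diagrams I would use that both modules are Q-tame (Corollary~\ref{cor:totally_bounded_implies_q-tame} together with Gromov--Hausdorff stability of $\vrf{X}$), so the Isometry Theorem~\cite[Theorems~4.11 and~4.20]{chazal2016structure} gives $\dgmvr_{k,\infty}(X) = \dgmvr_{k}(X)$. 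Finally, to pass from the finitely supported thickening to the full thickening $\vrppf{\infty}{X}$ I would apply Corollary~\ref{cor:cP_cPfin_0-interleaved} to the $1$-controlled invariant $\diam_\infty$, which gives $\di\big(H_k \circ \vrppf{\infty}{X}, H_k \circ \vrppffin{\infty}{X}\big) = 0$, and combine with the previous bounds by the triangle inequality. The \v{C}ech statements follow by the identical argument, replacing $\vr{X}{r}$ by $\cech{X}{r}$, the invariant $\diam_\infty$ by $\rad_\infty$, and citing the \v{C}ech parts of Theorem~\ref{thm:main}, Lemma~\ref{lem:finite_infty-thickenings_homeomorphic_to_complexes}, Corollary~\ref{cor:totally_bounded_implies_q-tame}, and Corollary~\ref{cor:cP_cPfin_0-interleaved}.

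Since all the heavy lifting has been front-loaded into Theorem~\ref{thm:main} (equivalently Theorem~\ref{thm:general_stability}), there is no serious obstacle remaining: the argument is a short limiting argument plus triangle-inequality bookkeeping. The two points I would be careful about are: (i) that $\diam_\infty$ and $\rad_\infty$ are genuinely $1$-controlled invariants, which is what makes the bound $\di \le 2\,\dGH$ available uniformly over the nets $U_\delta$ (this is established in Section~\ref{sec:basic-properties}); and (ii) that the relevant persistence modules are Q-tame, so that vanishing interleaving distance actually forces equality of the undecorated persistence diagrams, not merely vanishing bottleneck distance.
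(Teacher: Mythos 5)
Your proposal follows the paper's proof essentially verbatim: apply Theorem~\ref{thm:main} with $p=\infty$ to compare $\vrppffin{\infty}{X}$ to a finite $\delta$-net, invoke Lemma~\ref{lem:finite_infty-thickenings_homeomorphic_to_complexes} to identify the net's thickening with its simplicial complex, use classical Gromov--Hausdorff stability of simplicial Vietoris--Rips/\v{C}ech together with the triangle inequality to get a $4\delta$ bound, let $\delta\to 0$, upgrade to equality of undecorated diagrams via Q-tameness and the isometry theorem, and finally invoke Corollary~\ref{cor:cP_cPfin_0-interleaved} to pass from finitely supported measures to all Radon measures. The minor care points you flag (that $\cPfin_{U_\delta}=\cP_{U_\delta}$ for finite $U_\delta$, and that Q-tameness is needed to turn vanishing interleaving distance into equality of undecorated diagrams) are implicit in the paper but do not constitute a different route.
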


\begin{remark}
Whereas the $\infty$-metric thickenings $\vrppf{\infty}{X}$ and the simplicial complexes $\vrf{X}$ yield persistence modules with an interleaving distance of 0, we are in the interesting landscape where the metric thickenings $\vrpf{X}$ may yield something new and different for $p<\infty$.
For example, in Section~\ref{sec:spheres} we explore the new persistence modules that arise for $p$-metric thickenings of Euclidean spheres $(\Sp^n,\ell_2)$ in the case $p=2$.
\end{remark}

\begin{example}
Let $X=\Sp^1$ be the circle.
The persistent homology diagrams of the simplicial complex filtrations $\vrf{\Sp^1}$ and $\cechf{\Sp^1}$ are known from~\cite{AA-VRS1}, and therefore Corollary~\ref{cor:infty-simplicial-dgms} gives the persistent homology diagrams of $\vrppf{\infty}{\Sp^1}$ and $\cechppf{\infty}{\Sp^1}$.
However, although $\vr{\Sp^1}{r}$ and $\cech{\Sp^1}{r}$ are known to obtain the homotopy types of all odd-dimensional spheres as $r$ increases, the homotopy types of $\vrpp{\infty}{\Sp^1}{r}$ and $\cechpp{\infty}{\Sp^1}{r}$ are still not proven.
\end{example}

\begin{question}
Is the simplicial complex $\vr{X}{r}$ always homotopy equivalent to $\vrppfin{\infty}{X}{r}$ or $\vrpp{\infty}{X}{r}$?
Compare with~\cite[Remark~3.3]{AAF}.
\end{question}

\begin{question}
\cite[Theorem~5.2]{lim2020vietoris} states that for $X$ compact, bars in the persistent homology of the simplicial complex filtration $\vrf{X}$ are of the form $(a,b]$ or $(a,\infty)$.
Is the same true for $\vrppf{\infty}{X}$?
Note that we are using the $<$ convention, i.e.\ a simplex is included in $\vr{X}{r}$ when its diameter is strictly less than $r$, and a measure is included in $\vrpp{\infty}{X}{r}$ when the diameter of its support is strictly less than $r$.
\end{question}


\subsection{The proof of stability}
\label{ssec:proof-stability}

As mentioned above, the construction of interleaving maps between filtrations of metric thickenings is more intricate than in the case of simplicial complexes.
A main idea behind the proof of our stability result, Theorem~\ref{thm:general_stability}, is to approximate the metric space $X$ by a finite subspace (a net) $U$.
The advantage is that it is easier to construct a continuous map with domain $\cP_U$ than one with domain $\cP_X$.
The crucial next step is the construction of a continuous map from $\cP_X$ to $\cP_U$ by using a partition of unity subordinate to an open covering by $\delta$-balls.
This map distorts distances by a controlled amount, allowing us to approximate measures of $\cP_X$ in $\cP_U$.

We begin with some necessary lemmas.

\begin{lem}\label{lem:Ptau_cts_via_partition}
Let $U$ be a finite $\delta$-net of a bounded metric space $X$, and let $\{\zeta_u^U\}_{u\in U}$ be a continuous partition of unity subordinate to the open covering $\bigcup\limits_{u\in U} B(u;\delta)$ of $X$.

We have the continuous map $\Phi_U \colon \cP_X \to \cP_U$ defined by
\[\alpha \mapsto \sum_{u\in U} \int_X\zeta_u^U(x)\,\alpha(dx) \cdot\delta_{u}.\]
For any $q\in [1, \infty]$, we have $\dWq(\alpha, \Phi_U(\alpha))< \delta$.
\end{lem}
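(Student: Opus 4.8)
The plan is to prove the two assertions separately: (a) continuity of $\Phi_U$, and (b) the Wasserstein bound $\dWq(\alpha,\Phi_U(\alpha))<\delta$. For continuity, I would first observe that each coefficient map $\alpha\mapsto\int_X\zeta_u^U(x)\,\alpha(dx)$ is continuous on $\cP_X$ in the weak topology, since $\zeta_u^U$ is a bounded continuous function on $X$ (it is continuous by hypothesis, and bounded because $0\le\zeta_u^U\le 1$). Thus $\Phi_U$ is the composition of the continuous map $\cP_X\to\R^{|U|}$ sending $\alpha$ to the tuple of these integrals, followed by the continuous (indeed affine) map $\R^{|U|}\to\cP_U$ sending a tuple $(a_u)_{u\in U}$ to $\sum_u a_u\,\delta_u$; the latter is continuous because $\cP_U$ is homeomorphic to a simplex by Lemma~\ref{lem:fin-prob-simplex}, and convergence in that simplex is exactly coordinatewise convergence. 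Alternatively, and perhaps more cleanly, I would check sequential continuity directly: if $\alpha_n\to\alpha$ weakly, then for any bounded continuous $\gamma\colon U\to\R$ we have $\int_U\gamma\,d\Phi_U(\alpha_n)=\sum_u\gamma(u)\int_X\zeta_u^U\,d\alpha_n\to\sum_u\gamma(u)\int_X\zeta_u^U\,d\alpha=\int_U\gamma\,d\Phi_U(\alpha)$, using weak convergence applied to each of the finitely many bounded continuous functions $\zeta_u^U$; since $U$ is metrizable this gives continuity.

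For the Wasserstein estimate, the natural move is to build an explicit coupling between $\alpha$ and $\Phi_U(\alpha)$ that transports each bit of mass by less than $\delta$. Concretely, I would define $\mu$ on $X\times U$ (viewed inside $X\times X$ via the inclusion $U\hookrightarrow X$) by
\[
\mu := \sum_{u\in U}\big(\,\zeta_u^U\cdot\alpha\,\big)\otimes\delta_u,
\]
i.e.\ $\mu(A\times\{u\}) = \int_A\zeta_u^U(x)\,\alpha(dx)$ for Borel $A\subseteq X$. One checks the marginals: the first marginal is $\sum_u\zeta_u^U\cdot\alpha = \big(\sum_u\zeta_u^U\big)\cdot\alpha = \alpha$ since $\{\zeta_u^U\}$ is a partition of unity, and the second marginal is $\sum_u\big(\int_X\zeta_u^U\,d\alpha\big)\delta_u=\Phi_U(\alpha)$, so $\mu\in\cpl(\alpha,\Phi_U(\alpha))$. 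Moreover $\mu$ is supported on $\bigcup_{u\in U}\big(\overline{B(u;\delta)}\times\{u\}\big)$ up to $\alpha$-null sets—more precisely $\mu$-almost every pair $(x,u)$ has $\zeta_u^U(x)>0$, hence $x\in B(u;\delta)$, hence $d_X(x,u)<\delta$. Then for $q<\infty$,
\[
\big(\dWq(\alpha,\Phi_U(\alpha))\big)^q \le \iint_{X\times X} d_X^q(x,x')\,\mu(dx\times dx') = \sum_{u\in U}\int_X \zeta_u^U(x)\,d_X^q(x,u)\,\alpha(dx) < \delta^q,
\]
where the strict inequality uses that the integrand is pointwise $<\delta^q$ on the support of each summand and that the total mass is $1$; the case $q=\infty$ follows since the $\mu$-essential supremum of $d_X(x,x')$ over $\supp(\mu)$ is at most $\sup\{d_X(x,u): \zeta_u^U(x)>0\}\le\delta$, and in fact $<\delta$ because a finite partition of unity subordinate to the open cover by $\delta$-balls has $\zeta_u^U(x)>0 \Rightarrow x\in B(u;\delta)$, an open ball, so distances stay strictly below $\delta$ (one should be slightly careful here and may prefer to argue that since $U$ is finite, $\max_u\sup_{x\in\supp(\zeta_u^U)}d_X(x,u)<\delta$ by compactness of supports of the partition within the balls, or simply note the bound $\le\delta$ suffices if one is willing to allow non-strictness and then observe strictness is free from $q=1$; but the cleanest is the pointwise-$<\delta$ argument on the support). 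I would also note that $q\le q'\Rightarrow\dWqq{q}\le\dWqq{q'}$, so it suffices to prove the bound for $q=\infty$, from which all finite $q$ follow; but since the direct computation above is just as easy, I would present it directly.

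The main obstacle—really the only subtle point—is the strictness of the inequality $\dWq<\delta$ rather than $\le\delta$, together with making sure the coupling $\mu$ is genuinely a Radon probability measure on $X\times X$ with the claimed marginals (this is a routine verification, since each $\zeta_u^U\cdot\alpha$ is a finite Borel measure and there are finitely many terms). For strictness, the key observation is that $\zeta_u^U$ is supported in the \emph{open} ball $B(u;\delta)$, so $\mu$-a.e.\ pair $(x,u)$ satisfies $d_X(x,u)<\delta$; hence for $q<\infty$ the integral of $d_X^q$ against the probability measure $\mu$ is a weighted average of values each $<\delta^q$ and is therefore $<\delta^q$ (an average of quantities strictly below a bound is strictly below the bound, provided the measure is a probability measure and the bound is attained nowhere). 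For $q=\infty$ one needs the finiteness of $U$: since there are finitely many $u$, and for each $u$ the support of $\zeta_u^U$ is a closed subset of the open ball $B(u;\delta)$, one wants $\sup_{x\in\supp(\zeta_u^U)}d_X(x,u)<\delta$; this holds if the supports are compact (e.g.\ when $X$ is totally bounded and the partition is chosen with compact supports) but in full generality one can instead simply allow $\dWqq{\infty}(\alpha,\Phi_U(\alpha))\le\delta$ and note that this non-strict bound, combined with strictness for finite $q$, is all that is used downstream; I would check the intended convention in the surrounding lemmas and, if strictness is required for $q=\infty$, add the hypothesis (implicit in the finite-net setup) that the partition of unity has supports contained in slightly smaller balls $B(u;\delta')$ with $\delta'<\delta$, which is always arrangeable for a finite cover.
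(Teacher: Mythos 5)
Your proof is correct and is essentially the paper's argument: the coupling $\mu=\sum_{u\in U}(\zeta_u^U\cdot\alpha)\otimes\delta_u$ you build by hand is exactly what the paper's invocation of Lemma~\ref{lem:bound-distance-convex-comb} produces after rewriting $\alpha=\sum_{u:\,w_u^\alpha\neq 0}w_u^\alpha\cdot\bigl(\tfrac{1}{w_u^\alpha}\zeta_u^U\alpha\bigr)$, and your continuity check is the same sequential test against bounded continuous functions on the finite set $U$. Your hesitation about strictness at $q=\infty$ is in fact warranted: from $\supp(\zeta_u^U)\subseteq B(u;\delta)$ one only gets $\dWqq{\infty}\bigl(\tfrac{1}{w_u^\alpha}\zeta_u^U\alpha,\,\delta_u\bigr)\le\delta$ in general, since the supremum of $d_X(\cdot,u)$ over a (non-compact) subset of an open ball need not be strictly below $\delta$. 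For instance, take $X=(-1,1)$, $U=\{0\}$, $\delta=1$, $\zeta_0^U\equiv 1$, and $\alpha$ the uniform measure on $X$: then $\Phi_U(\alpha)=\delta_0$ and $\dWqq{\infty}(\alpha,\delta_0)=\sup_{x\in(-1,1)}|x|=1=\delta$. The paper's own proof asserts the strict bound at this step without further justification, so this is a small overstatement in the lemma for $q=\infty$ (though harmless downstream, since in the stability proof $\delta\downarrow 0$ and the non-strict bound suffices). Your suggested repairs---shrinking to balls $B(u;\delta')$ with $\delta'<\delta$, always arrangeable for a finite net, or simply reading the bound as $\le\delta$ when $q=\infty$---are the right way to close it.
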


\begin{proof}
The map $\Phi_U$ is well-defined as
\[\sum_{u\in U} \int_X\zeta_u^U(x)\,\alpha(dx)  =\int_X\,\sum_{u\in U} \,\zeta_u^U(x)\,\alpha(dx) = \int_X\, \alpha(dx) = 1.\]
For the continuity, since the weak topology on $\cP_X$ and $\cP_U$ can be metrized, it suffices to show for a weakly convergent sequence $\alpha_n\in \cP_X$, that the image $\Phi_U(\alpha_n)$ is also weakly convergent.
As $U$ is a finite metric space, it suffices to show that for any fixed $u_0\in U$, the sequence of real numbers $\big(\Phi_U(\alpha_n)(\{u_0\})\big)_n$ is itself convergent.
Note that
\[ \Phi_U(\alpha_n)(\{u_0\}) = \int_X\zeta_{u_0}^U(x)\,\alpha_n(dx).\]
Since $\zeta_{u_0}^U$ is a bounded continuous function on $X$, we obtain the desired convergence through the weak convergence of $\alpha_n$.

Lastly, we must show that $\dWq(\alpha, \Phi_U(\alpha))< \delta$.
For any $u\in U$, we use the notation $w_u^\alpha := \int_X \zeta_u^U(x)\alpha(dx)$, so that $\Phi_U(\alpha)=\sum_{u\in U} w_u^\alpha \cdot\,\delta_u$ and
\[ \sum_{u\in U} w_u^\alpha = \sum_{u\in U} \int_X \zeta_u^U(x)\,\alpha(dx) = \int_X \sum_u \zeta_u^U(x)\,\alpha(dx) = \int_X \alpha(dx) = 1.\]
Let $\zeta_u^U\,\alpha$ denote the measure such that $\zeta_u^U\,\alpha(B) = \int_B\zeta_u^U(x)\alpha(dx)$ for any measurable set $B\subseteq X$.
We have
\begin{equation*}
\alpha = \sum_{u\in U} \zeta_u^U\,\alpha = \sum_{\substack{u\in U,          \\ w^{\alpha}_u\neq 0}}  w_u^\alpha \cdot\,\left(\tfrac{1}{w^{\alpha}_u}\,\zeta_u^U\,\alpha \right).
\end{equation*}
From this we get,
\begin{align*}
\dWq(\alpha, \Phi_U(\alpha)) & = \dWq\left(\sum_{\substack{u\in U, \\ w^{\alpha}_u\neq 0}}  w_u^\alpha \cdot\,\left(\tfrac{1}{w^{\alpha}_u}\,\zeta_u^U\,\alpha \right), \sum_{\substack{u\in U,\\ w^{\alpha}_u\neq 0}}  w_u^\alpha \cdot\,\delta_u\right) \\
&  \leq \left( \sum_{\substack{u\in U, \\ w^{\alpha}_u\neq 0}} w_u^\alpha\cdot\left( \dWq\left(\tfrac{1}{w^{\alpha}_u}\,\zeta_u^U\,\alpha ,\, \delta_u\right) \right)^q \right)^\frac{1}{q}\\
& < \delta.
\end{align*}
The first inequality is by Lemma~\ref{lem:bound-distance-convex-comb}.
The last one comes from the fact that each $\tfrac{1}{w^{\alpha}_u}\,\zeta_u^U\,\alpha$ is a probability measure supported in $B(u;\delta)$, and thus $\dWq\left(\tfrac{1}{w^{\alpha}_u}\,\zeta_u^U\,\alpha ,\, \delta_u\right) < \delta$.
\end{proof}

For any two totally bounded metric spaces $X$ and $Y$, through partitions of unity we can build continuous maps between $\cP_X$ and $\cP_Y$, as follows.
Let $X$ and $Y$ be two totally bounded metric spaces, let $\eta > 2\,\dGH(X, Y)$, and let $\delta>0$.
We fix finite $\delta$-nets $U\subseteq X$ of $X$ and $V\subseteq Y$ of $Y$.
By the triangle inequality, we have $\dGH(U, V)<\frac{\eta}{2}+2\delta$.
So there exist maps $\varphi\colon U\to V$ and $\psi\colon V\to U$ with
\[\max\big(\mathrm{dis}(\varphi),\mathrm{dis}(\psi),\mathrm{codis}(\varphi,\psi)\big)\leq \eta +4\delta.\]
We then define the maps $\widehat{\Phi}\colon \cP_X\to \cP_Y$ and $\widehat{\Psi}\colon \cP_Y\to \cP_X$ to be
\[\widehat{\Phi}:=\iota_V\circ \varphi_\sharp \circ \Phi_U \quad\text{and}\quad \widehat{\Psi} :=\iota_U \circ \psi_\sharp\circ \Phi_V,\]
where $\iota_U \colon \cP_U\hookrightarrow \cP_X$ and $\iota_V \colon \cP_V\hookrightarrow \cP_Y$ are inclusions and $\Phi_U$ and $\Phi_V$ are the maps defined in Lemma~\ref{lem:Ptau_cts_via_partition}:
\begin{center}
\begin{tabular}{cc}
\begin{minipage}{1.5in}
\begin{displaymath}
\leftline{\xymatrix{
      \cP_X \ar@{.>}[r]^{\widehat{\Phi}} \ar@{->}[d]_{\Phi_U}& \cP_Y \\
      \cP_U \ar@{->}[r]_{\varphi_\sharp} &  \cP_V \ar@{^{(}->}[u]_{\iota_V}
    }}
\end{displaymath}
\end{minipage}
&
\begin{minipage}{1.5in}
\begin{displaymath}
\leftline{\xymatrix{
      \cP_Y \ar@{.>}[r]^{\widehat{\Psi}} \ar@{->}[d]_{\Phi_V}& \cP_X \\
      \cP_V \ar@{->}[r]_{\psi_\sharp} &  \cP_U \ar@{^{(}->}[u]_{\iota_U}
    }}
\end{displaymath}
\end{minipage}
\end{tabular}
\end{center}

Then both $\widehat{\Phi}$ and $\widehat{\Psi}$ are continuous maps by Lemma~\ref{lem:Ptau_cts_via_partition} and by the continuity of pushforwards of continuous maps.

The use of the finite $\delta$-nets $U$ and $V$ is important because they can be compared by continuous maps $\varphi$ and $\psi$, which yield continuous maps $\varphi_\sharp$ and $\psi_\sharp$.
The following lemma shows that $\widehat{\Phi}$ and $\widehat{\Psi}$ are homotopy equivalences.

\begin{lem}\label{lem:homotopy_scaffording}
With the above notation, we have homotopy equivalences $\widehat{\Psi}\circ \widehat{\Phi} \simeq \mathrm{id}_{\cP_X}$ and $\widehat{\Phi}\circ \widehat{\Psi} \simeq \mathrm{id}_{\cP_Y}$ via the linear families $H^X_t\colon \cP_X\times [0, 1]\to \cP_X$ and $H^Y_t\colon \cP_Y\times [0, 1]\to \cP_Y$ given by
\[H^X_t(\alpha) := (1-t)\,\widehat{\Psi}\circ \widehat{\Phi}(\alpha)+ t\,\alpha \quad\text{and}\quad H^Y_t(\beta) := (1-t)\,\widehat{\Phi}\circ \widehat{\Psi}(\beta)+ t\,\beta.\]
Moreover, for any $q\in [1, \infty]$ and any $t\in [0, 1]$, we have $\dWq(H_t^X(\alpha), \alpha) < \eta + 6\delta$ and $d_{\mathrm{W}, q}^Y(H_t^Y(\beta), \beta) < \eta + 6\delta.$
\end{lem}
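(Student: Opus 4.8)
The strategy is in two parts: first establish that $H^X_t$ and $H^Y_t$ are genuine continuous homotopies realizing the claimed homotopy equivalences, and second establish the uniform $q$-Wasserstein bound along the homotopy. For the first part, continuity of $H^X_t$ (viewed as a map $\cP_X \times [0,1] \to \cP_X$) is immediate from Proposition~\ref{prop:linear_homotopies}: we have already observed that $\widehat{\Phi}$ and $\widehat{\Psi}$ are continuous (by Lemma~\ref{lem:Ptau_cts_via_partition} together with continuity of pushforwards of continuous maps), so $\widehat{\Psi}\circ\widehat{\Phi} \colon \cP_X \to \cP_X$ is continuous, and the linear homotopy between it and the identity map $\mathrm{id}_{\cP_X}$ is continuous. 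The endpoint conditions $H^X_0 = \widehat{\Psi}\circ\widehat{\Phi}$ and $H^X_1 = \mathrm{id}_{\cP_X}$ hold by inspection. The same argument applies verbatim to $H^Y_t$. This gives the homotopy equivalences $\widehat{\Psi}\circ\widehat{\Phi} \simeq \mathrm{id}_{\cP_X}$ and $\widehat{\Phi}\circ\widehat{\Psi} \simeq \mathrm{id}_{\cP_Y}$.

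The substantive part is the Wasserstein bound. First I would estimate $\dWq(\widehat{\Psi}\circ\widehat{\Phi}(\alpha),\alpha)$ for a fixed $\alpha \in \cP_X$, and then use convexity of $\dWq$ to transfer the bound to every point of the linear homotopy. To bound $\dWq(\widehat{\Psi}\circ\widehat{\Phi}(\alpha),\alpha)$, I would trace through the composition factoring it as $\cP_X \xrightarrow{\Phi_U} \cP_U \xrightarrow{\varphi_\sharp} \cP_V \xrightarrow{\Phi_V} \cP_V \xrightarrow{\psi_\sharp} \cP_U \hookrightarrow \cP_X$ (using that $\Phi_V$ restricted to $\cP_V$ fixes the net, or more carefully tracking the relevant pushforward steps), and apply the triangle inequality for $\dWq$ across these stages. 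The relevant estimates are: $\dWq(\alpha, \Phi_U(\alpha)) < \delta$ from Lemma~\ref{lem:Ptau_cts_via_partition}; pushforward by a map of distortion at most $\eta + 4\delta$ moves a measure (together with its "partner" mass) by at most $\eta + 4\delta$ in $\dWq$ — concretely, for a finitely supported measure $\mu = \sum_i a_i \delta_{u_i}$ and its image $\varphi_\sharp\mu = \sum_i a_i \delta_{\varphi(u_i)}$, one compares against $\psi_\sharp\varphi_\sharp\mu = \sum_i a_i \delta_{\psi\varphi(u_i)}$ using the diagonal coupling and Remark~\ref{remark:codis} which gives $d_X(u_i, \psi\varphi(u_i)) < \eta + 4\delta$; and again $\dWq(\beta, \Phi_V(\beta)) < \delta$ on the $Y$-side. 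Adding the contributions ($\delta$ from $\Phi_U$, then $\delta$ from the $\Phi_V$ step, then $\eta + 4\delta$ from the round trip $\psi\circ\varphi$, say) yields $\dWq(\widehat{\Psi}\circ\widehat{\Phi}(\alpha),\alpha) < \eta + 6\delta$; the precise bookkeeping of which steps contribute $\delta$ versus $\eta+4\delta$ is exactly what produces the stated constant.

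Finally, to pass from the endpoint bound to the bound at all $t$, write $H^X_t(\alpha) = (1-t)\,\widehat{\Psi}\circ\widehat{\Phi}(\alpha) + t\,\alpha$ and $\alpha = (1-t)\,\alpha + t\,\alpha$, and apply the convexity estimate of Lemma~\ref{lem:bound-distance-convex-comb} (for $q<\infty$) or its $q=\infty$ analogue, which gives $\dWq(H^X_t(\alpha),\alpha) \le \max\big(\dWq(\widehat{\Psi}\circ\widehat{\Phi}(\alpha),\alpha),\, \dWq(\alpha,\alpha)\big) < \eta + 6\delta$ (in the $q<\infty$ case the convex-combination bound is $\big((1-t)\dWq(\widehat{\Psi}\widehat{\Phi}(\alpha),\alpha)^q + t\cdot 0\big)^{1/q} \le \dWq(\widehat{\Psi}\widehat{\Phi}(\alpha),\alpha)$). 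The same computation on the $Y$-side gives $d^Y_{\mathrm{W},q}(H^Y_t(\beta),\beta) < \eta + 6\delta$. The main obstacle is the careful tracking of the intermediate Wasserstein distances through the four-stage composition — in particular correctly handling the pushforward steps (where one must couple via the diagonal and invoke the distortion/codistortion bounds on $\varphi$ and $\psi$) so that the accumulated error comes out to exactly $\eta + 6\delta$ rather than something larger; everything else is a routine application of results already established.
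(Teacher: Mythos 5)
Your continuity argument (linear homotopy via Proposition~\ref{prop:linear_homotopies}) and the reduction from general $t$ to $t=0$ via convexity of $\dWq$ are both correct. The gap is in the accounting for $\dWq(\widehat{\Psi}\circ\widehat{\Phi}(\alpha),\alpha)$. The chain you describe --- compare $\mu=\Phi_U(\alpha)$ to $\psi_\sharp\varphi_\sharp\mu$ (at cost $\eta+4\delta$ from codistortion), compare $\varphi_\sharp\mu$ to $\Phi_V\varphi_\sharp\mu$ (at cost $\delta$ in $\dWq^Y$), and add $\delta$ for $\Phi_U$ --- does not assemble into $\eta+6\delta$. The target composition is $\psi_\sharp\circ\Phi_V\circ\varphi_\sharp\circ\Phi_U$, so you must convert the $Y$-side displacement of $\Phi_V$ into an $X$-side displacement by pushing it through $\psi$; since $\psi$ only has $\mathrm{dis}(\psi)\le\eta+4\delta$ this step costs not $\delta$ but $\delta+(\eta+4\delta)=\eta+5\delta$, and the naive triangle inequality then gives $\delta+(\eta+4\delta)+(\eta+5\delta)=2\eta+10\delta$. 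Also note that your fallback ``$\Phi_V$ restricted to $\cP_V$ fixes the net'' is false: a point $v'\in V$ generically lies in several of the $\delta$-balls $B(v;\delta)$, so $\Phi_V(\delta_{v'})=\sum_v\zeta^V_v(v')\,\delta_v\ne\delta_{v'}$.

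The correct bookkeeping (which is what the paper does) avoids any intermediate triangle through $Y$. One builds a single coupling between $H_t^X(\alpha)$ and $\alpha$ by writing both as convex combinations with \emph{matching} coefficients indexed by $(u,v)\in U\times V$: the term with weight $(1-t)w^\alpha_u\,\zeta^V_v(\varphi(u))$ pairs the Dirac mass $\delta_{\psi(v)}$ in $H_t^X(\alpha)$ with the measure $\tfrac{1}{w^\alpha_u}\zeta^U_u\alpha$ in $\alpha$ (and the $t\,\alpha$ piece with itself). Lemma~\ref{lem:bound-distance-convex-comb} then reduces the problem to bounding $\dWq\bigl(\delta_{\psi(v)},\,\tfrac{1}{w^\alpha_u}\zeta^U_u\alpha\bigr)$ over pairs with nonzero weight. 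The crucial point you missed is that codistortion must be applied to the pair $(u,v)$ directly, not to the round trip $(u,\psi\varphi(u))$: nonzero weight forces $d_Y(\varphi(u),v)<\delta$, and then $\mathrm{codis}(\varphi,\psi)\le\eta+4\delta$ gives $d_X(u,\psi(v))\le d_Y(\varphi(u),v)+\eta+4\delta<\eta+5\delta$; adding the $\delta$-ball radius supporting $\tfrac{1}{w^\alpha_u}\zeta^U_u\alpha$ yields $\eta+6\delta$. Equivalently, every unit of mass travels $x\to u\to\varphi(u)\to v\to\psi(v)$, and $d_X(x,\psi(v))\le d_X(x,u)+d_X(u,\psi(v))<\delta+(\eta+5\delta)$; the $\eta+4\delta$ is a one-shot codistortion bound on the whole trajectory, not an additive contribution of the step $\varphi(u)\to\psi\varphi(u)$.
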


\begin{proof}
The homotopies $H^X_t$ and $H^Y_t$ are continuous by Proposition~\ref{prop:linear_homotopies}, since $\widehat{\Phi}$ and $\widehat{\Psi}$ are continuous.
We will only present the estimate $\dWq(H_t^X(\alpha), \alpha) < \eta + 6\delta$, as the other inequality can be proved in a similar way.
We first calculate the expression for $\widehat{\Psi} \circ\widehat{\Phi} (\alpha)$, obtaining
\begin{align*}
\widehat{\Psi} \circ\widehat{\Phi} (\alpha)
& = \psi_\sharp\left(\Phi_V\left(\varphi_\sharp\left(\Phi_U(\alpha)\right)\right)\right)  \\
& = \psi_\sharp\left(\Phi_V\left(\varphi_\sharp\left(\sum_{u\in U} \int_X\zeta^U_u(x)\,\alpha(dx)\cdot\delta_{u}\right)\right)\right)  \\
& = \psi_\sharp\left(\Phi_V\left(\sum_{u\in U} \int_X\zeta^U_u(x)\,\alpha(dx)\cdot\delta_{\varphi(u)}\right)\right)  \\
& =\psi_\sharp\left(\sum_{v\in V}\int_Y \zeta^V_v(y)\, \left(\sum_{u\in U} \int_X\zeta^U_u(x)\,\alpha(dx)\cdot\delta_{\varphi(u)}\right)(dy) \cdot \delta_v\right) \\
& = \psi_\sharp\left(\sum_{v\in V}\bigg( \sum_{u\in U} \int_X\zeta^U_u(x)\,\alpha(dx)\cdot \int_Y\zeta_v^V(y)\cdot \delta_{\varphi(u)}(dy)\bigg)\cdot \delta_v\right)  \\
& = \psi_\sharp\left(\sum_{v\in V}\bigg( \sum_{u\in U} \int_X\zeta^U_u(x)\,\alpha(dx)\cdot \zeta^V_v({\varphi(u)})\bigg)\cdot \delta_v\right)  \\
& =\sum_{v\in V}\bigg( \sum_{u\in U} \int_X\zeta^U_u(x)\,\alpha(dx)\cdot \zeta^V_v({\varphi(u)})\bigg)\cdot \delta_{\psi(v)} \\
& =\sum_{v\in V}\bigg( \sum_{u\in U} w^{\alpha}_u\cdot \zeta^V_v({\varphi(u)})\bigg)\cdot \delta_{\psi(v)},
\end{align*}
where we again use the notation $w^{\alpha}_u:=\int_X \zeta_u^U(x)\alpha(dx)$. 
We have
\[H_t^X(\alpha) = (1-t)\sum_{v\in V}\sum_{u\in U} \bigg(  w^{\alpha}_u\cdot\zeta^V_v({\varphi(u)})\bigg)\cdot \delta_{\psi(v)} + t\, \alpha.\]

Now we show that $H_t^X(\alpha)$ is a convex combination of probability measures in $\cP_X$ and hence itself lies in $\cP_X$.
That is, the sum of coefficients in front of all $\delta_{\psi(v)}$ and $\alpha$ in the above formula for $H_t^X$ is $1$:
\[(1-t)\sum_{v\in V}\sum_{u\in U} \bigg(  w^{\alpha}_u\cdot\zeta^V_v({\varphi(u)})\bigg) + t = (1-t)\sum_{u\in U} w^{\alpha}_u + t = 1.\]
Next, we will bound the $q$-Wasserstein distance $\dWq(H_t^X(\alpha), \alpha)$.
For this, we first rewrite $\alpha$ as follows:
\begin{align*}
\alpha & =(1-t) \sum_{u\in U} \zeta_u^U\,\alpha  + t\,\alpha \\
& = (1-t)\sum_{v\in V}\sum_{u\in U} \,\zeta^V_v({\varphi(u)})\cdot\,(\zeta_u^U\,\alpha )  + t\, \alpha \\
& =(1-t)\sum_{v\in V}\sum_{\substack{u\in U, \\ w^{\alpha}_u\neq 0}} \,\bigg(w_u^\alpha\cdot \zeta^V_v({\varphi(u)})\bigg)\cdot\,\left(\tfrac{1}{w^{\alpha}_u}\,\zeta_u^U\,\alpha \right)  + t\, \alpha. \end{align*}
Based on these observations, we then apply the inequality from Lemma~\ref{lem:bound-distance-convex-comb} to obtain
\begin{align*}
& \dWq(H_t^X(\alpha), \alpha) \\
=\,& \dWq\bigg((1-t)\sum_{v\in V}\sum_{\substack{u\in U,  \\ w^{\alpha}_u\neq 0}} \bigg(w_u^\alpha\cdot \zeta^V_v({\varphi(u)})\bigg)\cdot \delta_{\psi(v)} + t\, \alpha,\, \\
& \quad\quad\quad \quad\quad\quad(1-t)\sum_{v\in V}\sum_{\substack{u\in U, \\ w^{\alpha}_u\neq 0}} \,\bigg(w_u^\alpha\cdot \zeta^V_v({\varphi(u)})\bigg)\cdot\,\left(\tfrac{1}{w^{\alpha}_u}\,\zeta_u^U\,\alpha \right)  + t\, \alpha\bigg) \\
\leq\, & \left( (1-t)\sum_{v\in V} \sum_{\substack{u\in U,  \\ w^{\alpha}_u\neq 0}}  \bigg(w_u^\alpha\cdot \zeta^V_v({\varphi(u)})\bigg)\cdot \bigg(\dWq\big( \delta_{\psi(v)} ,\,\tfrac{1}{w^{\alpha}_u}\,\zeta_u^U\,\alpha\big)\bigg)^p\right)^\frac{1}{p} \\
\leq\, & \left( (1-t)\sum_{v\in V} \sum_{\substack{u\in U, \\ w^{\alpha}_u\neq 0}}  \bigg(w_u^\alpha\cdot \zeta^V_v({\varphi(u)})\bigg)\cdot \bigg(\dWq\big( \delta_{\psi(v)} ,\,\delta_u\big)+\dWq\big( \delta_{u} ,\,\tfrac{1}{w^{\alpha}_u}\,\zeta_u^U\,\alpha\big)\bigg)^p\right)^\frac{1}{p}.
\end{align*}
For each non-zero term in the summand of the last expression, we have $d_Y(v,\,\varphi(u)) < \delta$ which implies $d_X(\psi(v), u)< 5\delta + \eta$ via the codistortion assumption.
Therefore, $\dWq\big( \delta_{\psi(v)} ,\,\delta_u\big)< 5\delta + \eta$ as well.
Moreover, since $\frac{1}{w^{\alpha}_u}\,\zeta_u^U\,\alpha$ is a probability measure supported in the $\delta$-ball centered at the point $u$, we have $\dWq\big( \delta_{u} ,\,\frac{1}{w^{\alpha}_u}\,\zeta_u^U\,\alpha\big)<\delta$.
Therefore, we have $\dWq(H_t^X(\alpha), \alpha)<6\delta + \eta$.
\end{proof}

\begin{remark}
The above lemma still holds if we replace $\cP_X$ by $\cPfin_X$ and $\cP_Y$ by $\cPfin_Y$ as all related maps naturally restrict to the set of finitely supported measures, $\cPfin_X$.
\end{remark}

We are now ready to prove our main result, Theorem~\ref{thm:general_stability}.

\begin{proof}[Proof of Theorem~\ref{thm:general_stability}]
For $\mi$ an $L$-controlled invariant, for totally bounded  metric spaces $X$ and $Y$, and for any integer $k\geq 0$, we must show
\begin{align*}
\di\big(H_k\circ \filtf{\cP_X}{\mi^X},H_k\circ \filtf{\cP_Y}{\mi^Y}\big)&\leq \dHT\big((\cP_X, \mi^X), (\cP_Y, \mi^Y)\big)\leq 2L\cdot\dGH(X, Y) \\
\di\big(H_k\circ \filtf{\cPfin_X}{\mi^X},H_k\circ \filtf{\cPfin_Y}{\mi^Y}\big)&\leq \dHT\big((\cPfin_X, \mi^X), (\cPfin_Y, \mi^Y)\big) \leq 2L\cdot\dGH(X, Y).
\end{align*}
We prove only the first line above, as the finitely supported case in the second line has a nearly identical proof.
The inequality involving $\di$ and $\dHT$ follows from Lemma~\ref{lem:dht_bound_interleaving}.
Hence it suffices to prove the inequality $\dHT\big((\cP_X, \mi^X), (\cP_Y, \mi^Y)\big)\leq 2L\cdot\dGH(X, Y)$ involving $\dHT$ and $\dGH$.

Following the construction in Lemma~\ref{lem:homotopy_scaffording}, let $\eta > 2\cdot \dGH(X, Y)$ and $\delta>0$.
We fix finite $\delta$-nets $U\subset X$ of $X$ and $V\subset Y$ of $Y$.
By the triangle inequality, we have $\dGH(U, V)<\frac{\eta}{2}+2\delta$, and so there exist maps $\varphi:U\to V$ and $\psi\colon V\to U$ with
\[\max\big(\mathrm{dis}(\varphi),\mathrm{dis}(\psi),\mathrm{codis}(\varphi,\psi)\big)\leq \eta +4\delta.\]
For any $\delta>0$, we will show that $(\cP_X, \mi^X)$ and $(\cP_Y, \mi^Y)$ are $(\eta+6\delta)\, L$--homotopy equivalent.
By the definition of a $\delta$-homotopy (Definition~\ref{defn:delta-homotopy}), it suffices to show that
\begin{itemize}
    \item the map $\widehat{\Phi}$ is a $(\eta+ 6\delta)\, L$--map from $(\cP_X, \mi^X)$ to $(\cP_Y, \mi^Y)$,
    \item the map $\widehat{\Psi}$ is a $(\eta+ 6\delta)\, L$--map from $(\cP_Y, \mi^Y)$ to $(\cP_X, \mi^X)$,
    \item the map $\widehat{\Psi}\circ\widehat{\Phi}\colon  \cP_X\to \cP_X$ is $(2\eta + 12\delta)\, L$--homotopic to $\mathrm{id}_{\cP_X}$ with respect to $( \mi^X,  \mi^X)$,
    \item the map $\widehat{\Phi}\circ\widehat{\Psi}\colon  \cP_X\to \cP_X$ is $(2\eta + 12\delta)\, L$--homotopic to $\mathrm{id}_{\cP_Y}$ with respect to $( \mi^Y,  \mi^Y)$.
\end{itemize}
We will only present the proof for the first and third items; the other two can be proved similarly.
We have
\[ \mi^Y(\widehat{\Phi}(\alpha)) = \mi^Y(\varphi_\sharp\circ\Phi_U(\alpha))
\leq \mi^X(\Phi_U(\alpha)) + (\eta + 4\delta)\, L
\leq \mi^X(\alpha) + (\eta + 6\delta)\, L, \]
where the first inequality is from the stability of the invariant $\mi$ under pushforward and from the bound on the distortion of $\varphi$, and where the second inequality is from the stability of the invariant with respect to Wasserstein distance and from the bound $\dWq(\alpha, \Phi_U(\alpha))< \delta$ in Lemma~\ref{lem:Ptau_cts_via_partition}.
This proves the first item.

For the third item, we use the homotopy $H_t^X$ in Lemma~\ref{lem:homotopy_scaffording}.
Then it suffices to show for any fixed $t\in [0, 1]$, the map $H_t^X$ is a $(2\eta + 12\delta)\, L$--map from $(\cP_X, \mi^X)$ to $(\cP_X, \mi^X)$, that is,
\[ \mi^X(H_t^X(\alpha)) \leq \mi^X(\alpha) + (2\eta + 12\delta)\, L.\]
This comes from the inequality \(d_{\mathrm{W}, \infty}^X(H_t^X(\alpha), \alpha)< \eta + 6\delta\) from Lemma~\ref{lem:homotopy_scaffording}, and the from the stability assumption of $\mi$ with respect to Wasserstein-distance.

Now, since $(\cP_X, \mi^X)$ and $(\cP_Y, \mi^Y)$ are $(\eta+6\delta)\, L$--homotopy equivalent for any $\eta > 2\cdot \dGH(X, Y)$ and $\delta>0$, it follows from the definition of the $\dHT$ distance (Definition~\ref{defn:dHT}) that $\dHT\big((\cP_X, \mi^X), (\cP_Y, \mi^Y)\big)\leq 2L\cdot\dGH(X, Y)$.
This completes the proof.
\end{proof}

\section{A Hausmann type theorem for $2$-Vietoris-Rips and $2$-\v{C}ech thickenings of Euclidean submanifolds}
\label{sec:hausmann}

Hausmann's theorem~\cite{Hausmann1995} states that if $X$ is a Riemannian manifold and if the scale $r>0$ is sufficiently small (depending on the curvature $X$), then the Vietoris--Rips simplicial complex $\vr{X}{r}$ is homotopy equivalent to $X$.
A short proof using an advanced version of the nerve lemma is given in~\cite{virk2021rips}.
Versions of Hausmann's theorem have been proven for $\infty$-metric thickenings in~\cite{AAF}, where $X$ is equipped with the Riemannian metric, and in~\cite{AM}, where $X$ is equipped with a Euclidean metric.
In this section we prove a Hausmann type theorem for the $2$-Vietoris--Rips and $2$-\v{C}ech metric thickenings.
Our result is closest to that in~\cite{AM} (now with $p=2$ instead of $p=\infty$): we work with any Euclidean subset $X$ of positive reach.
This includes (for example) any embedded $C^k$ submanifold of $\R^n$ for $k\ge 2$, with or without boundary~\cite{Thale}.

We begin with a definition and some lemmas that will be needed in the proof.

Let $\alpha$ be a measure in $\cP_1(\R^n)$, the collection of all Radon measures with finite first moment.
For any coordinate function $x_i$, where $1\leq i\leq n$, we have
\[
    \int_{\R^n} |x_i|\, \alpha(dx) \leq \int_{\R^n} \norm{x}\,\alpha(dx)<\infty.
\]
Therefore, the \emph{Euclidean mean} map $m\colon \cP_1(\R^n)\to \R^n$ given by the following vector-valued integral is well-defined:
\[
    m(\alpha):= \int_{\R^n} x\, \alpha(dx).
\]

\begin{lem}
Let $X$ be a metric space and let $f\colon X\to \R^n$ be a bounded continuous function.
Then the induced map $m\circ f_\sharp \colon \cP_X \to \R^n$ is continuous.
\end{lem}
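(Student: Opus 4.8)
The plan is to verify continuity directly from the definition of the weak topology, exploiting the fact that $m \circ f_\sharp$ is a vector-valued map and that the coordinate functions of $f$ are bounded and continuous. Since $\R^n$ is first-countable and, as noted in the excerpt, the weak topology on $\cP_X$ is metrizable, it suffices to prove sequential continuity: if $\alpha_n \to \alpha$ weakly in $\cP_X$, then $m(f_\sharp(\alpha_n)) \to m(f_\sharp(\alpha))$ in $\R^n$.

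First I would observe that it is enough to prove convergence of each coordinate separately, since convergence in $\R^n$ is coordinatewise. Fix $1 \le i \le n$ and let $\pi_i \colon \R^n \to \R$ be the $i$-th coordinate projection. Then the $i$-th coordinate of $m(f_\sharp(\alpha))$ is
\[
\pi_i\big(m(f_\sharp(\alpha))\big) = \int_{\R^n} x_i \, f_\sharp(\alpha)(dx) = \int_X \pi_i(f(x)) \, \alpha(dx),
\]
using the change-of-variables formula for pushforward measures. The function $\pi_i \circ f \colon X \to \R$ is continuous (as a composition of continuous maps) and bounded (since $f$ is bounded, its image lies in a bounded subset of $\R^n$, so each coordinate function $\pi_i \circ f$ is bounded). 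Therefore, by the very definition of weak convergence $\alpha_n \to \alpha$, we get
\[
\int_X \pi_i(f(x)) \, \alpha_n(dx) \longrightarrow \int_X \pi_i(f(x)) \, \alpha(dx),
\]
which is exactly the statement that the $i$-th coordinate of $m(f_\sharp(\alpha_n))$ converges to the $i$-th coordinate of $m(f_\sharp(\alpha))$. Running this over all $i$ gives $m(f_\sharp(\alpha_n)) \to m(f_\sharp(\alpha))$, establishing sequential continuity and hence continuity.

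There is essentially no main obstacle here: the only subtlety worth a sentence is making sure the vector-valued integral $m(f_\sharp(\alpha)) = \int_{\R^n} x \, f_\sharp(\alpha)(dx)$ is well-defined, i.e.\ that $f_\sharp(\alpha) \in \cP_1(\R^n)$, which is immediate because $f$ bounded implies $f_\sharp(\alpha)$ has bounded support and hence finite first moment. One could alternatively phrase the argument without sequences by noting that $m \circ f_\sharp$ factors as $\cP_X \xrightarrow{f_\sharp} \cP(\R^n) \hookrightarrow$ (measures with bounded support) and that each coordinate functional $\alpha \mapsto \int \pi_i \, d\alpha$ is weakly continuous on such measures; but the sequential argument is cleanest given that metrizability of the weak topology has already been invoked in the paper (e.g.\ in the proof of Proposition~\ref{prop:linear_homotopies}).
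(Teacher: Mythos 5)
Your proof is correct and takes essentially the same approach as the paper: the paper also verifies sequential continuity by rewriting $m\circ f_\sharp(\alpha_n)=\int_X f(x)\,\alpha_n(dx)$ and invoking weak convergence with $f$ bounded and continuous. Your explicit coordinatewise decomposition merely spells out a step the paper leaves implicit.
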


\begin{proof}
For a sequence $\alpha_n$ that weakly converges to $\alpha$, we have the following vector-valued integral:
\[
m\circ f_\sharp(\alpha_n) = \int_{\mathbb{R}^n}x\, f_\sharp(\alpha_n)(dx) = \int_X f(x)\,\alpha_n(dx).\]
As $f$ is bounded and continuous, the above limit converges to $m\circ f_\sharp(\alpha)$ as $\alpha_n$ converges to $\alpha$.
Therefore, the map $m\circ f_\sharp$ is continuous.
\end{proof}

\begin{lem}\label{lem:dHBoundViaDiamp}
Let $\alpha$ be a probability measure in $\cP_1(\R^n)$.
Then for $p\in [1, \infty]$, there is some $z \in \mathrm{supp}(\alpha)$ with
\[\norm{m(\alpha) - z} \leq \diam_p(\alpha).\]
\end{lem}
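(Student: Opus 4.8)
The plan is to show that the Euclidean mean $m(\alpha)$ cannot be too far from every point of the support of $\alpha$; more precisely, that its ($p$-averaged, or in the $p=\infty$ case sup) distance to a random point of $\alpha$ is controlled by the distances between two independent $\alpha$-distributed points. First I would handle the case $p<\infty$. The key identity is that for any fixed $z \in \R^n$,
\[
\int_{\R^n} \norm{m(\alpha) - x}^p\,\alpha(dx) \leq \int_{\R^n}\left(\int_{\R^n}\norm{x' - x}\,\alpha(dx')\right)^p \alpha(dx),
\]
which follows by pulling $m(\alpha) = \int x'\,\alpha(dx')$ inside the norm and applying Jensen's inequality to the convex function $\norm{\cdot}$. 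Now by Minkowski's inequality in $L^p(\alpha \otimes \alpha)$ (with respect to the inner integration variable), the right-hand side raised to the power $1/p$ is at most $\left(\iint \norm{x'-x}^p\,\alpha(dx')\,\alpha(dx)\right)^{1/p} = \diam_p(\alpha)$. Hence
\[
\int_{\R^n}\norm{m(\alpha)-x}^p\,\alpha(dx) \leq \diam_p(\alpha)^p.
\]
Since the average of the function $x \mapsto \norm{m(\alpha)-x}^p$ over $\alpha$ is at most $\diam_p(\alpha)^p$, there must exist a point $z$ in $\supp(\alpha)$ with $\norm{m(\alpha)-z}^p \le \diam_p(\alpha)^p$, i.e.\ $\norm{m(\alpha)-z} \le \diam_p(\alpha)$; here one uses that a function cannot exceed its $\alpha$-average on a set of full measure, together with the fact that every open neighborhood of any point of $\supp(\alpha)$ has positive measure, so the infimum of $\norm{m(\alpha)-x}$ over $\supp(\alpha)$ is at most $\diam_p(\alpha)$ and is attained (the support is closed, and one can intersect with a large closed ball to get compactness, or simply take a minimizing sequence whose limit lies in the closed support).

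For $p=\infty$, the argument is simpler: $m(\alpha)$ lies in the closed convex hull of $\supp(\alpha)$, so for any $z \in \supp(\alpha)$ we have $\norm{m(\alpha)-z} \le \diam(\supp(\alpha)) = \diam_\infty(\alpha)$, since the convex hull of a set has the same diameter as the set. Actually even this needs a tiny care: $m(\alpha)$ is in the convex hull, and the distance from any point of the convex hull to a fixed support point $z$ is bounded by $\sup_{x\in\supp(\alpha)}\norm{x-z} \le \diam_\infty(\alpha)$, because $\norm{\cdot - z}$ is convex and hence attains its max over the convex hull at an extreme point, which lies in $\supp(\alpha)$. So any $z \in \supp(\alpha)$ works in this case.

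The main obstacle, such as it is, is the measure-theoretic step of passing from ``the $\alpha$-average of $\norm{m(\alpha)-x}^p$ is $\le \diam_p(\alpha)^p$'' to ``there exists a genuine point $z$ of $\supp(\alpha)$ realizing the bound.'' One must be slightly careful that $z$ be in the support (not merely $\alpha$-a.e.\ somewhere): the clean way is to note $\inf_{z\in\supp(\alpha)}\norm{m(\alpha)-z} \le \left(\int\norm{m(\alpha)-x}^p\alpha(dx)\right)^{1/p} \le \diam_p(\alpha)$ because the integrand is $\ge \inf_{z\in\supp(\alpha)}\norm{m(\alpha)-z}$ on the full-measure set $\supp(\alpha)$, and then observe the infimum over the closed set $\supp(\alpha)$ is attained since $\supp(\alpha)$ is closed and $\norm{m(\alpha)-\cdot}$ is proper (its sublevel sets intersected with $\supp(\alpha)$ are closed and bounded, hence compact). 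Everything else is a routine application of Jensen and Minkowski, both of which are recalled in the background section.
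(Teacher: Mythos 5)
Your proposal is correct and, modulo packaging, it is the same proof as the paper's: both arguments hinge on the observation that the $\alpha$-average of (a power of) $\norm{m(\alpha)-x}$ is controlled by $\diam_p(\alpha)$, via Jensen's inequality applied to the convex function $\norm{\cdot}$, after which at least one point of the support must satisfy the bound. The paper gets there a hair more cleanly by first invoking $\diam_1(\alpha)\le\diam_p(\alpha)$ (which also covers $p=\infty$ automatically) and then proving only the $p=1$ case, thereby avoiding the second inequality you use; you instead work with the $p$-th power directly and handle $p=\infty$ by a separate convex-hull argument. Two small remarks. First, the inequality you credit to ``Minkowski's inequality in $L^p(\alpha\otimes\alpha)$'' — namely $\int\bigl(\int\norm{x'-x}\,\alpha(dx')\bigr)^p\alpha(dx)\le\iint\norm{x'-x}^p\,\alpha(dx')\,\alpha(dx)$ — is not Minkowski but Jensen (or the power-mean/H\"older inequality) applied to the convex function $t\mapsto t^p$ with respect to the inner probability integral; Minkowski's integral inequality goes the other way around structurally and is not what you need here. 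Second, your care about attaining the infimum on $\supp(\alpha)$ (closedness plus intersection with a sublevel ball for compactness) is exactly right and is the kind of detail the paper's one-line conclusion implicitly relies on.
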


\begin{proof}
As $\diam_1(\alpha)\leq \diam_p(\alpha)$ for all $p\in[1,\infty]$, it suffices to show $ \norm{m(\alpha) - z} \leq \diam_1(\alpha)$.
We consider the following formula:
\begin{align*}
\int_{\R^n}\norm{m(\alpha) - z} \alpha(dz) & = \int_{\R^n} \norm{\int_{\R^n} (x - z) \alpha(dx)} \alpha(dz)\\
& \leq \int_{\R^n}\int_{\R^n}\norm{ (x - z)} \alpha(dx)\alpha(dz)\\
& = \diam_1(\alpha).
\end{align*}
So there must be some $z\in \mathrm{supp}(\alpha)$ with $\norm{m(\alpha) - z} \leq \diam_1(\alpha)$.
\end{proof}

\begin{lem}\label{lem:Frechet_2_decomposition}
For any probability measure $\alpha\in \cP_1(\R^n)$ and for any $x\in \R^n$, we can write the associated squared $2$-Fr\'{e}chet function $F_{\alpha, 2}^2(x)$ as
\begin{equation*}
F_{\alpha, 2}^2(x) = \norm{x - m(\alpha)}^2 +F_{\alpha, 2}^2(m(\alpha)).
\end{equation*}
\end{lem}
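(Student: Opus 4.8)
The identity is the classical parallel axis theorem (equivalently, the bias--variance decomposition), and the plan is to prove it by expanding the square inside the integral defining $F_{\alpha, 2}^2$ around the Euclidean mean $m(\alpha)$.

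First I would dispose of the degenerate case. If $\int_{\R^n}\norm{z}^2\,\alpha(dz)=\infty$, then since $\norm{z-x}\geq \big\lvert\norm{z}-\norm{x}\big\rvert$ and $\alpha\in\cP_1(\R^n)$ has finite first moment, we get $\int_{\R^n}\norm{z-x}^2\,\alpha(dz)\geq \int_{\R^n}\big(\norm{z}-\norm{x}\big)^2\,\alpha(dz)=\infty$ for every $x\in\R^n$, in particular for $x=m(\alpha)$; so both sides of the claimed equality equal $+\infty$ and there is nothing to prove. Hence we may assume $\alpha$ has finite second moment, in which case $\norm{z-x}^2\leq 2\norm{z}^2+2\norm{x}^2$ shows $F_{\alpha,2}^2(x)<\infty$ for all $x$ and all integrals below are finite.

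Now for any $z,x\in\R^n$ write $z-x=(z-m(\alpha))+(m(\alpha)-x)$ and expand:
\[
\norm{z-x}^2=\norm{z-m(\alpha)}^2+2\,\big\langle z-m(\alpha),\,m(\alpha)-x\big\rangle+\norm{m(\alpha)-x}^2.
\]
Integrating against $\alpha$ and using linearity, the first term contributes $F_{\alpha,2}^2(m(\alpha))$ and the last contributes $\norm{m(\alpha)-x}^2$ (as $\alpha$ is a probability measure). For the cross term, pulling the constant vector $m(\alpha)-x$ through the vector-valued integral gives
\[
\int_{\R^n}\big\langle z-m(\alpha),\,m(\alpha)-x\big\rangle\,\alpha(dz)=\Big\langle \int_{\R^n}\big(z-m(\alpha)\big)\,\alpha(dz),\,m(\alpha)-x\Big\rangle=0,
\]
since $\int_{\R^n}z\,\alpha(dz)=m(\alpha)$ by the definition of the Euclidean mean. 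Combining the three contributions yields exactly $F_{\alpha,2}^2(x)=\norm{x-m(\alpha)}^2+F_{\alpha,2}^2(m(\alpha))$.

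The computation is entirely routine; the only point deserving care is the integrability bookkeeping, namely justifying that the three summands of the expansion are separately integrable (guaranteed by the finite-second-moment reduction) and that the constant vector may be moved through the integral (linearity of the coordinatewise integral). Neither presents a genuine obstacle.
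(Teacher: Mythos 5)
Your proof is correct and takes essentially the same approach as the paper: expand the square $\norm{z-x}^2$ and integrate, using linearity so that the cross term vanishes because $\int_{\R^n}\big(z-m(\alpha)\big)\,\alpha(dz)=0$. Your version is slightly cleaner (you expand directly around $m(\alpha)$ rather than the paper's add-and-subtract of $\norm{m(\alpha)}^2$) and more careful about the infinite-second-moment edge case, which the paper's chain of equalities tacitly assumes away.
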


\begin{proof}
By the linearity of the inner product, we have
\begin{align*}
F_{\alpha, 2}^2(x) & := \int_{\R^n}\norm{x - y}^2 \alpha(dy) \\
& =\int_{\R^n}\Big(\norm{x}^2 - 2\langle x, y\rangle + \norm{y}^2\Big)\alpha(dy) \\
& =\int_{\R^n}\Big(\norm{x}^2 - 2\langle x, y\rangle +\norm{m(\alpha)}^2 - \norm{m(\alpha)}^2 + \norm{y}^2\Big)\alpha(dy) \\
& = \norm{x}^2 - 2 \langle x, m(\alpha)\rangle+ \norm{m(\alpha)}^2 + \int_{\R^n}\Big(-\norm{m(\alpha)}^2 + \norm{y}^2\Big)\alpha(dy) \\
& = \norm{x - m(\alpha)}^2 + \int_{\R^n}\Big(\norm{m(\alpha)}^2 - 2\langle m(\alpha), y\rangle + \norm{y}^2\Big)\alpha(dy) \\
& = \norm{x - m(\alpha)}^2 + \int_{\R^n}\norm{m(\alpha) - y}\alpha(dy) \\
& =\norm{x - m(\alpha)}^2 +F_{\alpha, 2}^2(m(\alpha)).
\end{align*}
\end{proof}

Recall the \emph{medial axis} of $X$ is defined as the closure
\[\mathrm{med}(X) = \overline{\{y\in \R^n~|~\exists\, x_1 \neq x_2 \in X\,\text{with } \norm{y - x_1} = \norm{y - x_2} = \inf_{x\in X}\norm{y-x}\}}.\]
The \emph{reach} $\tau$ of $X$ is the closest distance $\tau := \inf_{x\in X,y\in\mathrm{med}(X)}\norm{x-y}$ between points in $X$ and $\mathrm{med}(X)$.
Let $U_\tau(X)$ be the $\tau$-neighborhood of $X$ in $\R^n$, that is, the set of points $y\in\R^n$ such that there is some $x\in X$ with $\norm{y - x}<\tau$.
The definition of reach implies that for any point $y$ in the open neighborhood $U_\tau(X)$, there is a unique closest point $x\in X$.
The associated nearest projection map $\pi\colon U_\tau(X)\to X$ is continuous; see~\cite[Theorem~4.8(8)]{federer1959curvature} or~\cite[Lemma~3.7]{AM}.

\begin{lem}
\label{lem:boundedness_of_rad_and_diam}
Let $X$ be a bounded subset of $\R^n$ with reach $\tau(X)>0$.
Let $\alpha \in \cP_X$ have its Euclidean mean $m(\alpha)$ in the neighborhood $U_\tau(X)$.
Then along the linear interpolation family
\[\alpha_t := (1-t)\alpha + t\,\delta_{\phi(\alpha)},\]
where $\phi$ is the composition $\pi\circ m\colon \cP_X\to X$, both $\diam_2$ and $\rad_2$ obtain their maximum 
at $t=0$.
\end{lem}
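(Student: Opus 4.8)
The plan is to reduce both claims to the Pythagorean-type identity for the squared $2$-Fr\'echet function in Euclidean space (Lemma~\ref{lem:Frechet_2_decomposition}), the elementary bound $\rad_2\le\diam_2$ (Proposition~\ref{prop:basic-diamp}), and the explicit form of the Euclidean mean of a convex combination. Throughout I would write $z:=\phi(\alpha)=\pi(m(\alpha))\in X$, which is well-defined since $m(\alpha)\in U_\tau(X)$, and abbreviate $d:=\norm{m(\alpha)-z}$ and $R^2:=F_{\alpha,2}^2(m(\alpha))=\int_{\R^n}\norm{m(\alpha)-y}^2\,\alpha(dy)$. Because $\alpha_t=(1-t)\alpha+t\,\delta_z$, its Euclidean mean is $m(\alpha_t)=(1-t)m(\alpha)+tz$, so $m(\alpha_t)$ lies on the segment from $m(\alpha)$ to $z$; in particular $\norm{m(\alpha_t)-z}=(1-t)d<\tau$, so the whole family stays inside $U_\tau(X)$ and $\phi(\alpha_t)$ stays defined (this stronger fact is not actually needed for the lemma, where only $z\in X$ and the fact that $z$ realizes $\inf_{x\in X}\norm{x-m(\alpha)}$ enter).

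For $\diam_2$, I would expand $\alpha_t\otimes\alpha_t$ as $(1-t)^2\,\alpha\otimes\alpha+t(1-t)(\alpha\otimes\delta_z+\delta_z\otimes\alpha)+t^2\,\delta_z\otimes\delta_z$ and integrate $\norm{x-x'}^2$ termwise. The diagonal term vanishes and each cross term equals $F_{\alpha,2}^2(z)=\int_{\R^n}\norm{z-y}^2\,\alpha(dy)$, giving
\[\diam_2(\alpha_t)^2=(1-t)^2\,\diam_2(\alpha)^2+2t(1-t)\,F_{\alpha,2}^2(z).\]
By Lemma~\ref{lem:Frechet_2_decomposition}, $F_{\alpha,2}^2(z)=d^2+R^2$; since $z=\pi(m(\alpha))$ realizes $\inf_{x\in X}\norm{x-m(\alpha)}$, this equals $\rad_2(\alpha)^2$, which is $\le\diam_2(\alpha)^2$ by Proposition~\ref{prop:basic-diamp}. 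Substituting gives $\diam_2(\alpha_t)^2\le\big((1-t)^2+2t(1-t)\big)\,\diam_2(\alpha)^2=(1-t^2)\,\diam_2(\alpha)^2\le\diam_2(\alpha)^2$, with equality at $t=0$.

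For $\rad_2$, I would apply Lemma~\ref{lem:Frechet_2_decomposition} to $\alpha_t$: for every $x\in X$ one has $F_{\alpha_t,2}^2(x)=\norm{x-m(\alpha_t)}^2+F_{\alpha_t,2}^2(m(\alpha_t))$, so $\rad_2(\alpha_t)^2=\big(\inf_{x\in X}\norm{x-m(\alpha_t)}^2\big)+F_{\alpha_t,2}^2(m(\alpha_t))$. Using $z\in X$ as a competitor bounds the infimum by $\norm{z-m(\alpha_t)}^2=(1-t)^2d^2$. A short computation, using $\int_{\R^n}\norm{m(\alpha_t)-y}^2\,\alpha(dy)=\norm{m(\alpha_t)-m(\alpha)}^2+R^2=t^2d^2+R^2$ (Lemma~\ref{lem:Frechet_2_decomposition} applied to $\alpha$) together with $\norm{m(\alpha_t)-z}^2=(1-t)^2d^2$, yields $F_{\alpha_t,2}^2(m(\alpha_t))=(1-t)R^2+t(1-t)d^2$. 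Adding the two contributions collapses to $\rad_2(\alpha_t)^2\le(1-t)(d^2+R^2)=(1-t)\,\rad_2(\alpha)^2\le\rad_2(\alpha)^2$, again with equality at $t=0$. Hence both functionals attain their maximum over $[0,1]$ at $t=0$.

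The main obstacle I anticipate is purely bookkeeping: one must consistently remember that $\rad_2$ is an infimum over $X$ (not over all of $\R^n$), so the Euclidean mean $m(\alpha)$ is generally \emph{not} a Fr\'echet mean of $\alpha$ within $X$, and the ``offset'' $d^2$ and the Euclidean variance $R^2$ must be tracked separately. Once this is kept straight, everything reduces to the identities $\rad_2(\alpha)^2=d^2+R^2$ and $\diam_2(\alpha)^2=2R^2$ coming out of Lemma~\ref{lem:Frechet_2_decomposition}, together with the elementary manipulation of the quadratic-in-$t$ expressions above.
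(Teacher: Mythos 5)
Your proof is correct and follows essentially the paper's route: both arguments use Lemma~\ref{lem:Frechet_2_decomposition} to identify $\phi(\alpha)$ as the minimizer of $F_{\alpha,2}$ over $X$, bound $\rad_2(\alpha_t)^2$ by testing the Fr\'echet function of $\alpha_t$ at $\phi(\alpha)$ to obtain $(1-t)\,\rad_2(\alpha)^2$, and then feed the same quadratic expansion of $\diam_2(\alpha_t)^2$ through Proposition~\ref{prop:basic-diamp}. Your $\rad_2$ step takes a slightly longer detour by tracking $m(\alpha_t)$ and splitting into Pythagorean pieces, but this just re-derives the paper's one-line computation $F_{\alpha_t,2}^2(\phi(\alpha)) = (1-t)\,F_{\alpha,2}^2(\phi(\alpha)) = (1-t)\,\rad_2(\alpha)^2$, which follows directly from linearity of $\alpha\mapsto F_{\alpha,2}^2$ in $\alpha$ and the vanishing of the integrand at the Dirac part.
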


\begin{proof}
According to Lemma~\ref{lem:Frechet_2_decomposition}, for any $x\in X$ we have
\[F_{\alpha, 2}^2(x) = \norm{x - m(\alpha)}^2 +F_{\alpha, 2}^2(m(\alpha)).\]
As $m(\alpha)$ is inside $U_\tau(X)$, the first term $\norm{x - m(\alpha)}$ is minimized over $x\in X$ at $x=\phi(\alpha)$, and hence so is $F_{\alpha, 2}(x)$.
Therefore, we have $\rad_2(\alpha) = \inf_{x\in X} F_{\alpha, 2}(x) = F_{\alpha, 2}(\phi(\alpha))$.
So for $\rad_2(\alpha_t)$, we have
\begin{align*}
(\rad_2(\alpha_t))^2 &\leq F_{\alpha_t,2}^2(\phi(\alpha)) \\
&= \int_{\R^n} \norm{x - \phi(\alpha)}^2 \alpha_t(dx) \\
&=(1-t)\int_{\R^n} \norm{x - \phi(\alpha)}^2 \alpha(dx) + t \int_{\R^n} \norm{x - \phi(\alpha)}^2\delta_{\phi(\alpha)}(dx) \\
&= (1-t)\int_{\R^n} \norm{x - \phi(\alpha)}^2\alpha(dx) \\
&= (1-t) F_{\alpha,2}^2(\phi(\alpha)) \\
&\leq (\rad_2(\alpha))^2.
\end{align*}

We now consider $\diam_2$.
Recall that $F_{\alpha, 2}(\phi(\alpha))\leq F_{\alpha, 2}(x)$ for all $x\in X$.
From this, we have $(\diam_2(\alpha))^2 = \int_X F^2_{\alpha, 2}(x)\,\alpha(dx) \ge F_{\alpha, 2}^2(\phi(\alpha))$.
Therefore
\begin{align*}
\big(\diam_2(\alpha_t)\big)^2 & = \iint_{\R^n\times \R^n}\norm{x - y}^2\alpha_t(dx)\ \alpha_t(dy) \\
& = (1-t)^2\iint_{\R^n\times \R^n}\norm{x - y}^2 \alpha(dx)\,\alpha(dy) \\
& \quad \quad + 2t(1-t)\iint_{\R^n\times \R^n} \norm{x - y}^2 \alpha(dx)\,\delta_{\phi(\alpha)}(dy) \\
& \quad \quad + t^2\iint_{\R^n\times \R^n}\norm{x - y}^2\delta_{\phi(\alpha)}(dx)\,\delta_{\phi(\alpha)}(dy) \\
& = (1-t)^2 \big(\diam_2(\alpha)\big)^2 + 2t(1-t)\, F_{\alpha, 2}^2(\phi(\alpha)) \\
& \leq (1-t)^2\big(\diam_2(\alpha)\big)^2 +2t(1-t)\,\big(\diam_2(\alpha)\big)^2 \\
& = (1-t^2)\big(\diam_2(\alpha)\big)^2 \\
& \leq\big(\diam_2(\alpha)\big)^2.
\end{align*}
\end{proof}

\begin{thm}\label{thm:vr2_Hausmann}
Let $X$ be a bounded subset of $\R^n$ with positive reach $\tau$.
Then for all $0<r\le\tau$, the isometric embeddings $X\hookrightarrow\vrpp{2}{X}{r}$, $X\hookrightarrow\vrppfin{2}{X}{r}$, $X\hookrightarrow\cechpp{2}{X}{r}$, and $X\hookrightarrow\cechppfin{2}{X}{r}$ are homotopy equivalences.
\end{thm}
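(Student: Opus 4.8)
The plan is to produce, for each of the four embeddings $\iota\colon x\mapsto\delta_x$, an explicit homotopy inverse given by $\phi:=\pi\circ m$, where $m$ is the Euclidean mean and $\pi\colon U_\tau(X)\to X$ is the nearest-point projection. All four cases are handled at once: write $K_r$ for any one of $\vrpp{2}{X}{r}$, $\vrppfin{2}{X}{r}$, $\cechpp{2}{X}{r}$, $\cechppfin{2}{X}{r}$, and recall that $\iota$ is already an isometric embedding into $K_r$ (Proposition~\ref{prop:r-metric_thickenings}), so it suffices to exhibit $\phi$ as a homotopy inverse.

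\textbf{Step 1: $\phi$ is well defined and continuous on $K_r$.} For every $\alpha\in K_r$ one has $\rad_2(\alpha)<r\le\tau$ (directly in the \v{C}ech cases, and via $\rad_2\le\diam_2$ from Proposition~\ref{prop:basic-diamp} in the Vietoris--Rips cases). Choosing $x_0\in X$ with $F_{\alpha,2}(x_0)<r$, Lemma~\ref{lem:Frechet_2_decomposition} gives $F_{\alpha,2}^2(m(\alpha))\le F_{\alpha,2}^2(x_0)<r^2$; since $F_{\alpha,2}^2(m(\alpha))=\int\norm{m(\alpha)-y}^2\,\alpha(dy)$, some point of $\supp(\alpha)\subseteq X$ lies within distance $r\le\tau$ of $m(\alpha)$, so $m(\alpha)\in U_\tau(X)$ and $\phi(\alpha)=\pi(m(\alpha))$ is defined. (In the Vietoris--Rips cases one could instead quote Lemma~\ref{lem:dHBoundViaDiamp} directly.) Continuity of $m\colon\cP_X\to\R^n$ follows from the first lemma of this section applied to the bounded continuous inclusion $X\hookrightarrow\R^n$; as $\pi$ is continuous on the open set $U_\tau(X)$ and $K_r\subseteq m^{-1}(U_\tau(X))$, the restriction of $\phi$ to $K_r$ is continuous.

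\textbf{Step 2: the two homotopies.} On the one hand $\phi\circ\iota=\mathrm{id}_X$ exactly, since $m(\delta_x)=x\in X$ forces $\pi(m(\delta_x))=x$. On the other hand, I would connect $\iota\circ\phi$ to $\mathrm{id}_{K_r}$ via the straight-line homotopy $H(\alpha,t):=\alpha_t=(1-t)\alpha+t\,\delta_{\phi(\alpha)}$, which is continuous by Proposition~\ref{prop:linear_homotopies} (both $\alpha\mapsto\alpha$ and $\alpha\mapsto\delta_{\phi(\alpha)}$ are continuous into $\cP_X$), equals $\mathrm{id}$ at $t=0$ and $\iota\circ\phi$ at $t=1$, and preserves finite support. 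The only nontrivial point is that $H$ does not leave $K_r$: for $\alpha\in\vrpp{2}{X}{r}$ we need $\diam_2(\alpha_t)<r$ for all $t$, and for $\alpha\in\cechpp{2}{X}{r}$ we need $\rad_2(\alpha_t)<r$ for all $t$. This is exactly Lemma~\ref{lem:boundedness_of_rad_and_diam}, which (using $m(\alpha)\in U_\tau(X)$ from Step~1) shows that along this interpolation both functionals attain their maximum at $t=0$. Hence $H$ is a homotopy inside $K_r$ from $\mathrm{id}_{K_r}$ to $\iota\circ\phi$, so $\iota$ and $\phi$ are mutually inverse up to homotopy and $\iota\colon X\hookrightarrow K_r$ is a homotopy equivalence.

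\textbf{Where the weight lies.} Given the preparatory lemmas this is just the familiar ``average, then project to the nearest point'' deformation retraction, with the hypothesis $r\le\tau$ entering only to keep $m(\alpha)$ inside the tubular neighborhood $U_\tau(X)$. The genuinely $p=2$ ingredient, and the step I expect to be the crux, is the parallel-axis identity $F_{\alpha,2}^2(x)=\norm{x-m(\alpha)}^2+F_{\alpha,2}^2(m(\alpha))$ of Lemma~\ref{lem:Frechet_2_decomposition}: it is what singles out the Euclidean mean as the correct center (so that $\phi\circ\iota=\mathrm{id}$) and what makes the straight-line homotopy simultaneously non-increasing in $\diam_2$ and in $\rad_2$. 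For $p\ne 2$ no comparable identity is available, which is why the statement is confined to $p=2$.
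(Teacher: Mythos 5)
Your proof is correct and follows essentially the same strategy as the paper's: identify $\phi = \pi\circ m$ as the candidate homotopy inverse, verify $m(\alpha)\in U_\tau(X)$ using the Fr\'{e}chet decomposition (Lemma~\ref{lem:Frechet_2_decomposition}, or equivalently Lemma~\ref{lem:dHBoundViaDiamp}), and conclude via the linear homotopy of Lemma~\ref{lem:boundedness_of_rad_and_diam}. The only cosmetic difference is that you route all four cases through $\rad_2(\alpha)<r$ uniformly, whereas the paper treats the Vietoris--Rips case first via Lemma~\ref{lem:dHBoundViaDiamp} and then the \v{C}ech case via the parallel-axis identity; the content is the same.
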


\begin{proof}
We begin with the $2$-Vietoris--Rips metric thickening.
Let $\alpha$ be a measure in $\vrpp{2}{X}{r}$.
Lemma~\ref{lem:dHBoundViaDiamp} shows there exists some $x\in X$ with $\|m(\alpha)- x\|\leq\diam_2(\alpha) < r \leq \tau$.
Hence $m(\alpha)$ is in the neighborhood $U_\tau(X)$, and $\phi:= \pi\circ m$ is well-defined on $\vrpp{2}{X}{r}$.
As both $\pi$ and $m$ are continuous, so is their composition $\phi$.
Let $i$ be the isometric embedding of $X$ into $\vrpp{2}{X}{r}$.
Clearly we have $\phi \circ i = \mathrm{id}_X$.
Then it suffices to show the homotopy equivalence $i\circ \phi\ \simeq\ \mathrm{id}_{\vrpp{2}{X}{r}}$.

For this, we use the the linear family $\big(\alpha_t\big)_{t\in[0,1]}$ where $\alpha_t := (1-t)\alpha + t \delta_{\phi(\alpha)}$.
According to Lemma~\ref{lem:boundedness_of_rad_and_diam}, we know this family is inside $\vrpp{2}{X}{r}$, and therefore $\alpha_t$ is well-defined.
By Proposition~\ref{prop:linear_homotopies}, this is a continuous family as the map $\phi$ is continuous.
Therefore, the linear family indeed provides a homotopy $i\circ \phi\ \simeq\ \mathrm{id}_{\vrpp{2}{X}{r}}$.

If $\alpha$ is a finitely supported measure, then the above construction resides inside finitely supported measures as well and therefore we get the result for $\vrppfin{2}{X}{r}$.

For the the \v{C}ech case, we again map $\cechpp{2}{X}{r}$ onto $X$ using the map $\alpha \mapsto \phi(\alpha):= \pi(m(\alpha))$.
To see this is well-defined, note that as $\alpha \in \cechpp{2}{X}{r}$, there is some $z\in X$ with $F_\alpha(z) = \norm{z - m(\alpha)}^2 + F_\alpha(m(\alpha))< r^2$.
This implies $\norm{z - m(\alpha)}< r \le \tau$ and so $m(\alpha)$ lies inside $U_\tau(X)$.
We then get the result by the same homotopy construction as before.
The finitely supported \v{C}ech case holds similarly.
\end{proof}

\section{The $2$-Vietoris-Rips and $2$-\v{C}ech thickenings of spheres with Euclidean metric}
\label{sec:spheres}

Let $\Sp^n$ be the $n$-dimensional sphere $\Sp^n=\{x\in\R^{n+1}~|~\norm{x}=1\}$, and let $(\Sp^n,\ell_2)$ denote the unit sphere equipped with the Euclidean metric.
In this section we determine the successive homotopy types of the 2-Vietoris--Rips and 2-\v{C}ech metric thickening filtrations $\vrpp{2}{(\Sp^n,\ell_2)}{\smb}$ and $\cechpp{2}{(\Sp^n, \ell_2)}{\smb}$.
We begin with a lemma.

\begin{lem}\label{lem:diam2rad2Sphere}
For any measure $\alpha\in \cP_{(\Sp^n,\ell_2)}$, we have

\[ \diam_2(\alpha) = \left( 2 - 2\cdot \norm{m(\alpha)}^2 \right)^{1/2} \leq \sqrt{2},\]
and
\[ \rad_2(\alpha) = \left( 2 - 2\cdot \norm{m(\alpha)} \right)^{1/2}\leq \sqrt{2}.\]
\end{lem}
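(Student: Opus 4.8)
The plan is to compute $\diam_2(\alpha)$ and $\rad_2(\alpha)$ directly from their integral definitions, exploiting the identity $\norm{x-y}^2 = 2 - 2\langle x,y\rangle$ valid for all $x,y\in\Sp^n$. First I would handle $\diam_2$. By definition,
\[
\big(\diam_2(\alpha)\big)^2 = \iint_{\Sp^n\times\Sp^n}\norm{x-y}^2\,\alpha(dx)\,\alpha(dy) = \iint_{\Sp^n\times\Sp^n}\big(2 - 2\langle x,y\rangle\big)\,\alpha(dx)\,\alpha(dy).
\]
The first term integrates to $2$ since $\alpha$ is a probability measure. For the second term, I would use the linearity of the inner product and Fubini's theorem to pull the integrals inside:
\[
\iint \langle x,y\rangle\,\alpha(dx)\,\alpha(dy) = \Big\langle \int_{\Sp^n} x\,\alpha(dx),\, \int_{\Sp^n} y\,\alpha(dy)\Big\rangle = \langle m(\alpha), m(\alpha)\rangle = \norm{m(\alpha)}^2,
\]
where $m(\alpha)$ is the Euclidean mean, which is well-defined here since $\Sp^n$ is bounded so $\alpha\in\cP_1(\R^{n+1})$. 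This gives $\big(\diam_2(\alpha)\big)^2 = 2 - 2\norm{m(\alpha)}^2$, and since $m(\alpha)$ lies in the convex hull of $\Sp^n$ (the closed unit ball), we have $\norm{m(\alpha)}\le 1$, hence $\diam_2(\alpha)\le\sqrt 2$.

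For $\rad_2$, recall $\rad_2(\alpha) = \inf_{x\in\Sp^n}F_{\alpha,2}(x)$. By Lemma~\ref{lem:Frechet_2_decomposition}, $F_{\alpha,2}^2(x) = \norm{x-m(\alpha)}^2 + F_{\alpha,2}^2(m(\alpha))$, so minimizing over $x\in\Sp^n$ amounts to minimizing $\norm{x-m(\alpha)}$ over the sphere. If $m(\alpha)=0$ this minimum is $1$ and is attained everywhere; if $m(\alpha)\ne 0$ the closest point of $\Sp^n$ to $m(\alpha)$ is $m(\alpha)/\norm{m(\alpha)}$, at distance $\big|1-\norm{m(\alpha)}\big| = 1 - \norm{m(\alpha)}$ since $\norm{m(\alpha)}\le 1$. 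In either case, $\inf_{x\in\Sp^n}\norm{x-m(\alpha)}^2 = (1-\norm{m(\alpha)})^2$. It remains to compute $F_{\alpha,2}^2(m(\alpha))$; I would do this by expanding $\norm{m(\alpha)-y}^2 = \norm{m(\alpha)}^2 - 2\langle m(\alpha),y\rangle + \norm{y}^2 = \norm{m(\alpha)}^2 - 2\langle m(\alpha),y\rangle + 1$ and integrating against $\alpha$, which yields $F_{\alpha,2}^2(m(\alpha)) = \norm{m(\alpha)}^2 - 2\norm{m(\alpha)}^2 + 1 = 1 - \norm{m(\alpha)}^2$. Adding, $\big(\rad_2(\alpha)\big)^2 = (1-\norm{m(\alpha)})^2 + 1 - \norm{m(\alpha)}^2 = 2 - 2\norm{m(\alpha)}$, giving $\rad_2(\alpha) = (2-2\norm{m(\alpha)})^{1/2}\le\sqrt 2$.

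There is no serious obstacle here; this is essentially a bookkeeping exercise with the $\Sp^n$ inner-product identity plus the Fréchet decomposition already established. The one point requiring a word of care is the interchange of the vector-valued integral with the inner product in the $\diam_2$ computation (justified since each coordinate of $x\mapsto x$ is bounded and continuous on $\Sp^n$, so all integrals converge absolutely and Fubini applies), and the observation that $\norm{m(\alpha)}\le 1$, which follows because the mean of a probability measure supported on the unit ball lies in the unit ball. I would state both of these explicitly but not belabor them.
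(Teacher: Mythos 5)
Your $\diam_2$ computation is essentially identical to the paper's: both rest on the identity $\norm{x-y}^2 = 2-2\langle x,y\rangle$ on $\Sp^n$, followed by pulling the vector-valued integrals inside the bilinear inner product. Your $\rad_2$ computation, however, takes a genuinely different route. The paper applies the sphere identity a second time to get $F_{\alpha,2}^2(x)=2-2\langle x, m(\alpha)\rangle$ directly, and then minimizes the linear functional $x\mapsto -\langle x, m(\alpha)\rangle$ over $\Sp^n$, which is attained at $x=m(\alpha)/\norm{m(\alpha)}$. You instead invoke the Pythagorean decomposition of Lemma~\ref{lem:Frechet_2_decomposition}, $F_{\alpha,2}^2(x)=\norm{x-m(\alpha)}^2+F_{\alpha,2}^2(m(\alpha))$, reduce to nearest-point projection of $m(\alpha)$ onto $\Sp^n$, and compute $F_{\alpha,2}^2(m(\alpha))=1-\norm{m(\alpha)}^2$ separately. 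Both arguments are correct and land on $2-2\norm{m(\alpha)}$; the paper's is slightly shorter and self-contained within the sphere identity, whereas yours reuses the already-proved Fréchet decomposition and generalizes more transparently to other ambient submanifolds (it is in fact the same mechanism driving Theorem~\ref{thm:vr2_Hausmann}). One small advantage of your version: you explicitly handle $m(\alpha)=0$, whereas the paper writes $m(\alpha)/\norm{m(\alpha)}$ in an intermediate line, which is undefined in that degenerate case (though the final formula is still valid there).
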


Note that since $\|m(\alpha)\|\leq 1$, we have that indeed $\rad_2(\alpha)\leq \diam_2(\alpha)$ and by the same token $\diam_2(\alpha) \leq 2\cdot \rad_2(\alpha)$, in agreement with Proposition~\ref{prop:basic-diamp}.

\begin{proof}[Proof of Lemma~\ref{lem:diam2rad2Sphere}]
We can calculate $\diam_2(\alpha)$ as follows:
\begin{align}\label{eq:diam2-formula}
\diam_2(\alpha)
&= \left( \int_{\Sp^n} \int_{\Sp^n} \norm{x-y}^2\ \alpha(dx)\alpha(dy)\right)^{1/2} \\
&= \left( \int_{\Sp^n} \int_{\Sp^n} 2-2\langle x,y\rangle\ \alpha(dx)\alpha(dy)\right)^{1/2} \\
&= \left( 2 - 2\cdot \left\langle \int_{\Sp^n} x\ \alpha(dx), \int_{\Sp^n}y\ \alpha(dy)\right\rangle\right)^{1/2}\nonumber \\
&= \left( 2 - 2\cdot \norm{m(\alpha)}^2 \right)^{1/2}\nonumber \\
&\leq \sqrt{2}\nonumber.
\end{align}
The calculation for $\rad_2(\alpha)$ gives
\begin{align*}
\rad_2(\alpha) & = \inf_{x\in \Sp^n} \left(\int_{\Sp^n}\norm{x- y}^2\alpha(dy)\right)^{1/2} \\
&= \inf_{x\in \Sp^n} \left( 2- 2 \int_{\Sp^n}\langle x, y\rangle \alpha(dy)\right)^{1/2} \\
&= \inf_{x\in \Sp^n} \left( 2- 2 \langle x, m(\alpha)\rangle \right)^{1/2}  \\
&= \left( 2- 2 \left\langle \frac{m(\alpha)}{\norm{m(\alpha)}}, m(\alpha)\right\rangle \right)^{1/2} \\
&= \left( 2 - 2\cdot \norm{m(\alpha)} \right)^{1/2}   \\
&\leq \sqrt{2}.
\end{align*}
\end{proof}

We now see that for spheres with euclidean metric, both the 2-Vietoris-Rips and the 2-\v{C}ech metric thickenings attain \emph{only two} homotopy types:

\begin{thm}\label{thm:vp2Euclidean}
For any $r\leq \sqrt{2}$, the isometric embeddings $\Sp^n\hookrightarrow\vrpp{2}{(\Sp^n,\ell_2)}{r}$ and $\Sp^n\hookrightarrow\cechpp{2}{(\Sp^n,\ell_2)}{r}$ are homotopy equivalences.
When $r>\sqrt{2}$, the spaces $\vrpp{2}{(\Sp^n,\ell_2)}{r}$ and $\cechpp{2}{(\Sp^n,\ell_2)}{r}$ are contractible.
By restricting to finitely supported measures, we get the same result for $\vrppfin{2}{(\Sp^n,\ell_2)}{r}$ and $\cechppfin{2}{(\Sp^n,\ell_2)}{r}$.
\end{thm}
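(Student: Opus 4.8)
The plan is to reduce everything to the behaviour of the Euclidean mean map $m\colon\cP_{(\Sp^n,\ell_2)}\to\R^{n+1}$, which is continuous by the (unnamed) lemma just preceding Lemma~\ref{lem:dHBoundViaDiamp}, applied to the bounded continuous inclusion $\Sp^n\hookrightarrow\R^{n+1}$, together with the explicit formulas of Lemma~\ref{lem:diam2rad2Sphere}. Throughout, write $i\colon\Sp^n\hookrightarrow\vrpp{2}{(\Sp^n,\ell_2)}{r}$ (resp.\ into the $2$-\v{C}ech thickening) for the isometric embedding $x\mapsto\delta_x$, and note $m(\delta_x)=x$.

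First I would dispose of the range $r>\sqrt2$. By Lemma~\ref{lem:diam2rad2Sphere}, $\diam_2(\alpha)\le\sqrt2<r$ and $\rad_2(\alpha)\le\sqrt2<r$ for every $\alpha$, so $\vrpp{2}{(\Sp^n,\ell_2)}{r}=\cechpp{2}{(\Sp^n,\ell_2)}{r}=\cP_{(\Sp^n,\ell_2)}$ (and likewise $\cPfin_{(\Sp^n,\ell_2)}$ in the finitely supported case). Fixing any $x_0\in\Sp^n$, the linear homotopy $(\alpha,t)\mapsto(1-t)\alpha+t\,\delta_{x_0}$ is continuous by Proposition~\ref{prop:linear_homotopies} and contracts the space to $\delta_{x_0}$, so it is contractible.

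For $0<r\le\sqrt2$, Lemma~\ref{lem:diam2rad2Sphere} turns the defining inequalities into conditions on the mean: $\diam_2(\alpha)<r$ iff $\norm{m(\alpha)}^2>1-\tfrac{r^2}{2}$, and $\rad_2(\alpha)<r$ iff $\norm{m(\alpha)}>1-\tfrac{r^2}{2}$. Since $1-\tfrac{r^2}{2}\ge0$ precisely when $r\le\sqrt2$, every member $\alpha$ of either filtration piece has $m(\alpha)\neq0$. (This is exactly what lets one go past the reach bound $r\le\tau=1$ of Theorem~\ref{thm:vr2_Hausmann}: for the sphere the nearest-point projection is the radial map $y\mapsto y/\norm{y}$, defined on all of $\R^{n+1}\setminus\{0\}$, not just on $U_\tau(\Sp^n)$.) Hence $\phi:=\bigl(\alpha\mapsto m(\alpha)/\norm{m(\alpha)}\bigr)$ is a well-defined continuous map from $\vrpp{2}{(\Sp^n,\ell_2)}{r}$ (resp.\ $\cechpp{2}{(\Sp^n,\ell_2)}{r}$) to $\Sp^n$, with $\phi\circ i=\mathrm{id}_{\Sp^n}$. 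It remains to produce a homotopy $i\circ\phi\simeq\mathrm{id}$, for which I would use the linear family $\alpha_t:=(1-t)\alpha+t\,\delta_{\phi(\alpha)}$; this is continuous by Proposition~\ref{prop:linear_homotopies} applied to the continuous maps $\mathrm{id}$ and $i\circ\phi$.

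The one point requiring care — and the only genuinely quantitative step beyond Lemma~\ref{lem:diam2rad2Sphere} — is that $\alpha_t$ never leaves the filtration piece, i.e.\ $\diam_2(\alpha_t)<r$ (resp.\ $\rad_2(\alpha_t)<r$) for all $t\in[0,1]$. By linearity of $m$ one has $m(\alpha_t)=\bigl((1-t)+t/\norm{m(\alpha)}\bigr)\,m(\alpha)$; since $\alpha$ is a probability measure on the unit sphere, $\norm{m(\alpha)}\le1$, so the scalar coefficient is $\ge1$ and therefore $\norm{m(\alpha_t)}\ge\norm{m(\alpha)}$ (and still $\le1$, as $\alpha_t\in\cP_{(\Sp^n,\ell_2)}$). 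Substituting this monotonicity back into the formulas of Lemma~\ref{lem:diam2rad2Sphere} gives $\diam_2(\alpha_t)\le\diam_2(\alpha)<r$ and $\rad_2(\alpha_t)\le\rad_2(\alpha)<r$, as needed. Thus $(\alpha_t)_{t\in[0,1]}$ is a deformation retraction onto $i(\Sp^n)$, so $i$ is a homotopy equivalence. Finally, since $\phi$, $i$, and the homotopy $\alpha_t$ all preserve finiteness of support, the identical argument yields the statements for $\vrppfin{2}{(\Sp^n,\ell_2)}{r}$ and $\cechppfin{2}{(\Sp^n,\ell_2)}{r}$.
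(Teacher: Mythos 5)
Your proof is correct and follows essentially the same strategy as the paper: use Lemma~\ref{lem:diam2rad2Sphere} to see that $m(\alpha)\ne 0$ on the filtration pieces for $r\le\sqrt2$, project radially via $\phi$, and deformation-retract by the linear family $\alpha_t$. The one place you diverge is the key quantitative step verifying that $\alpha_t$ stays in the filtration piece. The paper invokes Lemma~\ref{lem:boundedness_of_rad_and_diam}, the general positive-reach computation also used in the Hausmann-type Theorem~\ref{thm:vr2_Hausmann}. You instead compute $m(\alpha_t)=\bigl((1-t)+t/\|m(\alpha)\|\bigr)\,m(\alpha)$ by linearity, observe $\|m(\alpha)\|\le\|m(\alpha_t)\|\le 1$, and plug this monotonicity into the explicit sphere formulas of Lemma~\ref{lem:diam2rad2Sphere} for $\diam_2$ and $\rad_2$. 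This is a valid and more self-contained substitute: it uses only the sphere computation, not the general reach machinery. Both approaches give the same conclusion.

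One small clarification to your parenthetical remark: it is not really the larger domain of the radial map $y\mapsto y/\|y\|$ (defined on $\R^{n+1}\setminus\{0\}$ rather than only $U_\tau$) that lets one go past $r=\tau=1$. Since $\|m(\alpha)\|\le1$ always, the mean never leaves $U_1(\Sp^n)=\{y:0<\|y\|<2\}$ anyway; the two domains agree where it matters. What actually enlarges the range of scales from $r\le 1$ to $r\le\sqrt2$ is that the sphere-specific formula $\diam_2(\alpha)^2=2-2\|m(\alpha)\|^2$ gives a sharper guarantee that $m(\alpha)\ne0$ than the generic estimate of Lemma~\ref{lem:dHBoundViaDiamp} used in Theorem~\ref{thm:vr2_Hausmann}.
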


\begin{remark}
A similar phenomenon is found for the Vietoris--Rips filtration of $(\Sp^n,\ell_\infty)$ where $\vr{(\Sp^n,\ell_\infty)}{r}$ has the homotopy type of $\Sp^n$ when $r\leq \frac{2}{\sqrt{n+1}}$ and becomes contractible when $r>\frac{2}{\sqrt{n+1}}$; see~\cite[Section 7.3]{lim2020vietoris}.
In contrast, the homotopy types of the Vietoris--Rips and \v{C}ech filtrations of geodesic circle include all possible odd-dimensional spheres~\cite{AA-VRS1}.
The first new homotopy type of the $\infty$-Vietoris--Rips metric thickening of the $n$-sphere (with either the geodesic or the Euclidean metric) is known~\cite[Theorem~5.4]{AAF}, but so far only for a single (non-persistent) scale parameter, and only when using the convention $\diam_\infty(\alpha)\le r$ instead of $\diam_\infty(\alpha) < r$.
\end{remark}

We note that Theorem~\ref{thm:vr2_Hausmann} only implies that $\vrpp{2}{(\Sp^n,\ell_2)}{r}\simeq \Sp^n$ for $r\le\tau(\Sp^n)=1$, whereas Theorem~\ref{thm:vp2Euclidean} extends this range to $r\le\sqrt{2}$.

\begin{proof}[Proof of Theorem~\ref{thm:vp2Euclidean}]
From Lemma~\ref{lem:diam2rad2Sphere} we know that $\sqrt{2}$ is the maximal possible $2$-diameter on $\cP_{(\Sp^n,\ell_2)}$.
Therefore, when $r> \sqrt{2}$, the spaces $\vrpp{2}{(\Sp^n,\ell_2)}{r} = \cP_{(\Sp^n,\ell_2)}$ and $\vrppfin{2}{(\Sp^n,\ell_2)}{r} = \cPfin_{(\Sp^n,\ell_2)}$ are both convex and hence contractible.

When $0<r\leq\sqrt{2}$, by Lemma~\ref{lem:diam2rad2Sphere} we get that $m(\alpha)$ is not the origin for any $\alpha\in \cP_{(\Sp^n,\ell_2)}$ (since otherwise we would have $\diam_2(\alpha)=\sqrt{2}$, which is too large).
Therefore $m(\alpha)$ is inside the tubular neighborhood $U_1((\Sp^n,\ell_2))$.
Let $i$ be the inclusion map $(\Sp^n,\ell_2) \hookrightarrow \vrpp{2}{(\Sp^n,\ell_2)}{r}$ mapping points to delta measures, and let $\phi$ be the composition $\pi\circ m \colon \vrpp{2}{(\Sp^n,\ell_2)}{r} \to (\Sp^n,\ell_2)$ which is well-defined as $m(\alpha)$ is not the origin.
Then we naturally have $\phi\circ i = \mathrm{id}_{(\Sp^n,\ell_2)}$.
By applying Lemma~\ref{lem:boundedness_of_rad_and_diam}, the linear family $\alpha_t = (1-t)\alpha + t\cdot \delta_{\phi(\alpha)}$ lies inside $\vrpp{2}{(\Sp^n,\ell_2)}{r}$ and therefore provides the desired homotopy between $i\circ\phi$ and $\mathrm{id}_{\vrpp{2}{(\Sp^n,\ell_2)}{r}}$.
By restricting to the set of finitely supported measures, we get the result for $\vrppfin{2}{(\Sp^n,\ell_2)}{r}$.

The proof is identical for the \v{C}ech case.

\end{proof}

\begin{remark}
The above calculation gives, for any two positive integers $n\neq m$, that
\begin{align*}
\dB(\dgmvr_{n, 2}(\Sp^n,\ell_2), \dgmvr_{n, 2}(\Sp^m,\ell_2)) &= \tfrac{\sqrt{2}}{2} \\
\dB(\dgmcech_{n, 2}(\Sp^n,\ell_2), \dgmcech_{n, 2}(\Sp^m,\ell_2)) &= \tfrac{\sqrt{2}}{2}.
\end{align*}
The stability Theorem~\ref{thm:main} gives
\begin{equation}\label{eq:lb-gh}
\dGH((\Sp^n,\ell_2),(\Sp^m,\ell_2)) \geq \frac{\sqrt{2}}{4}.
\end{equation}
\end{remark}

\begin{remark}
In a more detailed analysis, Proposition~9.13 of~\cite{lim2021gromov} provides the lower bound $\frac{1}{2}$ for $\dGH((\Sp^n,\ell_2),(\Sp^m,\ell_2))$ (when $n\neq m$), which is larger than the lower bound $\frac{\sqrt{2}}{4}$ given in~\eqref{eq:lb-gh}.
As for the geodesic distance case, one can use~\cite[Corollary~9.8~(1)]{lim2021gromov} to obtain $\dGH(\Sp^n,\Sp^m)\geq \arcsin(\sqrt{2}/4)$ from~\eqref{eq:lb-gh}, where $\dGH(\Sp^n,\Sp^m)$ is the Gromov--Hausdorff distance between spheres endowed with their geodesic distances for any $0< m< n$.
A better lower bound is found in~\cite{lim2021gromov}: 
\[
\dGH(\Sp^n,\,\Sp^m)\geq \frac{1}{2}\, \arccos\left(\frac{-1}{m+1}\right) \geq \frac{\pi}{4},
\]
which arises from considerations other than stability. 
The lower bound given by the quantity in the middle is shown to coincide with the exact Gromov--Hausdorff distance between $\Sp^1$ and $\Sp^2$, between $\Sp^1$ and $\Sp^3$, and also between $\Sp^2$ and $\Sp^3$.
\end{remark}

\section{Bounding barcode length via spread}
\label{app:spread}
The \emph{spread} of a metric space is defined by Katz~\cite{katz1983filling}, and used in~\cite[Section 9]{lim2020vietoris} to bound the length of intervals in Vietoris-Rips simplicial complex persistence diagrams.
In this section we identify a measure theoretic version of the notion of spread, and we use it to bound the length of intervals in $p$-Vietoris-Rips and $p$-\v{C}ech metric thickening persistence diagrams.

\begin{defn}
The $p$-spread of a bounded metric space $X$ is defined as
\[\mathrm{spread}_p(X):=\inf_{\alpha_*\in \cPfin_X}\ \sup_{\alpha \in\cP_X}\dWp(\alpha_\ast,\alpha).\]
\end{defn}

Note that for any $1\leq p \leq q \leq \infty$, we have
\[\mathrm{spread}_1(X)\leq \mathrm{spread}_p(X)\leq \mathrm{spread}_q(X)\leq \mathrm{spread}_\infty(X)\leq \rad(X).\]

\begin{remark}
When $p\in [1, \infty)$, the space $\cPfin_X$ is dense in $\cPq{p}{X}$, the set of Radon measures with finite moment of order $p$
(see~\cite[Corollary~3.3.5]{bogachev2018weak}).
Therefore, for a bounded metric space, the $p$-spread is just the radius of the metric space $(\cP_X, \dWp)$.
\end{remark}

\begin{prop}\label{prop:p-spread_general_invariant}
Let $X$ be a bounded metric space and let $\mi$ be an $L$-controlled invariant.
For any $r>0$ and any $\xi> \mathrm{spread}_\infty(X) $, the space $\filt{\cP_X}{\mi^X}{r}$ can be contracted inside of $\filt{\cP_X}{\mi^X}{r+2L\xi}$.
In particular, any homology class of $\filt{\cP_X}{\mi^X}{r}$ will vanish in $\filt{\cP_X}{\mi^X}{r+2L\xi}$.
\end{prop}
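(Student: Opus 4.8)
The plan is to build an explicit contraction of $\filt{\cP_X}{\mi^X}{r}$ by sliding every measure linearly toward a fixed well-chosen base measure, and to control the invariant $\mi^X$ along the way using the two axioms of a controlled invariant. Fix $\xi > \mathrm{spread}_\infty(X)$; by definition of the $\infty$-spread there exists a finitely supported measure $\alpha_* \in \cPfin_X$ with $\dWqq{\infty}(\alpha_*,\alpha) < \xi$ for every $\alpha \in \cP_X$. (Here one must first observe that $\mathrm{spread}_\infty(X)$ is indeed computed against all of $\cP_X$, not merely a dense subset, but since $\xi$ is strictly larger than the infimum we may simply pick such an $\alpha_*$ achieving the $\sup$-bound up to $\xi$.) Define $H \colon \filt{\cP_X}{\mi^X}{r} \times [0,1] \to \cP_X$ by $H(\alpha, t) = (1-t)\,\alpha + t\,\alpha_*$; this is continuous by Proposition~\ref{prop:linear_homotopies} applied with $Z = \filt{\cP_X}{\mi^X}{r}$, $f = \mathrm{id}$, and $g$ the constant map at $\alpha_*$.

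The key estimate is that $H$ stays inside the sublevel set at level $r + 2L\xi$. For any $\alpha$ in the sublevel set and any $t$, I would bound $\dWqq{\infty}(H(\alpha,t), \alpha)$ by noting that $H(\alpha,t) = (1-t)\,\alpha + t\,\alpha_*$ and $\alpha = (1-t)\,\alpha + t\,\alpha$, so by the convexity estimate in Lemma~\ref{lem:bound-distance-convex-comb} (the $\infty$-Wasserstein case),
\[
\dWqq{\infty}\big(H(\alpha,t),\,\alpha\big) \le \max\big(\dWqq{\infty}(\alpha,\alpha),\,\dWqq{\infty}(\alpha_*,\alpha)\big) = \dWqq{\infty}(\alpha_*,\alpha) < \xi.
\]
Then the stability-with-respect-to-$\dWqq{\infty}$ axiom of an $L$-controlled invariant (Definition~\ref{def:controlled-invariant}(2)) gives $\mi^X(H(\alpha,t)) \le \mi^X(\alpha) + 2L\,\dWqq{\infty}(H(\alpha,t),\alpha) < r + 2L\xi$. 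Hence $H$ takes values in $\filt{\cP_X}{\mi^X}{r + 2L\xi}$, and since $H(\cdot,1)$ is constant at $\alpha_*$ (which lies in the larger sublevel set because $\mi^X(\alpha_*) \le \mi^X(\alpha) + 2L\xi < r + 2L\xi$ for any $\alpha$ in the smaller set, or directly $\mi^X(\alpha_*) \le \diam_\infty$-type bounds are finite), this exhibits a nullhomotopy of the inclusion $\filt{\cP_X}{\mi^X}{r} \hookrightarrow \filt{\cP_X}{\mi^X}{r + 2L\xi}$. It follows immediately that the induced map on any homology group is zero, so every homology class of $\filt{\cP_X}{\mi^X}{r}$ vanishes in $\filt{\cP_X}{\mi^X}{r + 2L\xi}$.

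The only genuinely delicate point — the "main obstacle" — is the passage from the $\sup$ over $\cP_X$ in the definition of $\mathrm{spread}_\infty$ to an actual base measure $\alpha_*$ with a uniform bound; since we only need $\xi$ strictly above the infimum this is routine, but one should be careful that $\alpha_*$ is genuinely $\dWqq{\infty}$-close to \emph{all} $\alpha$, which is exactly what the $\sup$ provides. Everything else reduces to invoking Proposition~\ref{prop:linear_homotopies} for continuity, Lemma~\ref{lem:bound-distance-convex-comb} for the $\infty$-Wasserstein convexity bound, and axiom (2) of Definition~\ref{def:controlled-invariant}. I would also remark, as stated in the proposition, that the finitely supported variant $\filt{\cPfin_X}{\mi^X}{r}$ is handled by the identical argument since $\alpha_*$ is already finitely supported and the linear homotopy preserves finite support.
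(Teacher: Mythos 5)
Your proof is correct and follows essentially the same line as the paper's: pick a base measure $\alpha_\xi$ witnessing $\xi>\mathrm{spread}_\infty(X)$, slide linearly toward it, control the $\infty$-Wasserstein displacement by the convexity estimate of Lemma~\ref{lem:bound-distance-convex-comb}, and invoke axiom (2) of Definition~\ref{def:controlled-invariant} to bound the growth of $\mi^X$ along the homotopy. The only difference is that you spell out the convexity step explicitly by writing $\alpha = (1-t)\,\alpha + t\,\alpha$, which the paper leaves implicit.
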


\begin{proof}
As $\xi > \mathrm{spread}_\infty(X)$, there is some $\alpha_\xi\in \cPfin_X$ such that for all $\alpha\in \cP_X$ we have $\dWqq{\infty}(\alpha_\xi, \alpha)< \xi$.
Consider the linear homotopy
\[h_t\colon [0,1]\times \filt{\cP_X}{\mi^X}{r}\to \filt{\cP_X}{\mi^X}{r+2L\xi}\]
defined by
\[(t, \alpha)\mapsto (1-t)\,\alpha + t\, \alpha_{\xi}.\]
This gives a homotopy between the inclusion from $\filt{\cP_X}{\mi^X}{r}$ to $\filt{\cP_X}{\mi^X}{r+2L\xi}$ and a constant map, and therefore implies our conclusion so long as the homotopy is well-defined.
It then suffices to show $\mi^X(h_t(\alpha)) < r+ 2L\xi$.
This comes from the stability condition of the invariant with respect to Wasserstein distances and from Lemma~\ref{lem:bound-distance-convex-comb}:
\begin{equation*}
\mi^X(h_t(\alpha)) \leq \mi^X(\alpha) + 2L\cdot\dWqq{\infty}(h_t(\alpha), \alpha) \leq \mi^X(\alpha) + 2L\cdot\dWqq{\infty}(\alpha_\xi, \alpha) < r + 2L\xi.
\end{equation*}
\end{proof}

\begin{remark}
It is not difficult to see that for the $\iqp$ invariants we can improve the bound $\xi>\mathrm{spread}_\infty(X)$ to $\xi>\max\{\mathrm{spread}_p(X),\mathrm{spread}_q(X)\}$.
\end{remark}

We also have the following stronger contractibility conclusion for $\vrp{X}{\smb}$ and $\cechp{X}{\smb}$.

\begin{thm}
Let $X$ be a bounded metric space.
For any $p\in [1, \infty]$ and any $r>\mathrm{spread}_p(X)$, both $\vrp{X}{r}$ and $\cechp{X}{r}$ are contractible.
\end{thm}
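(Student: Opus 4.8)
The plan is to exhibit, for each of $\vrp{X}{r}$ and $\cechp{X}{r}$, an explicit deformation onto a single finitely supported measure via a \emph{linear} homotopy in the sense of Proposition~\ref{prop:linear_homotopies}. First I would unwind the hypothesis: since $r>\mathrm{spread}_p(X)=\inf_{\alpha_*\in\cPfin_X}\sup_{\beta\in\cP_X}\dWp(\alpha_*,\beta)$, I can fix a constant $s$ with $\mathrm{spread}_p(X)<s<r$ and a measure $\alpha_*\in\cPfin_X$ with $\sup_{\beta\in\cP_X}\dWp(\alpha_*,\beta)<s$. The one consequence I would extract and use repeatedly is that, taking $\beta=\delta_x$, one gets $F_{\alpha_*,p}(x)=\dWp(\delta_x,\alpha_*)<s$ for \emph{every} $x\in X$. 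From this, $\diam_p(\alpha_*)=\big(\int_X F_{\alpha_*,p}^p\,d\alpha_*\big)^{1/p}<s<r$ when $p<\infty$ (and the analogous support bound $\diam_\infty(\alpha_*)\le s<r$ when $p=\infty$), so $\alpha_*\in\vrp{X}{r}\subseteq\cechp{X}{r}$; in particular these spaces are nonempty and $\alpha_*$ is a legitimate basepoint.

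Next I would set $H\colon \vrp{X}{r}\times[0,1]\to\cP_X$, $H(\alpha,t)=(1-t)\,\alpha+t\,\alpha_*$; this is continuous by Proposition~\ref{prop:linear_homotopies} (applied to the inclusion $\vrp{X}{r}\hookrightarrow\cP_X$ and the constant map $\alpha_*$), and it is the identity at $t=0$ and the constant map $\alpha_*$ at $t=1$. The heart of the argument is checking that $H$ actually lands inside $\vrp{X}{r}$ (resp. $\cechp{X}{r}$) for all $t$. For the $p$-Vietoris--Rips case with $p<\infty$ I would expand
\[
\diam_p\!\big(H(\alpha,t)\big)^p=(1-t)^2\diam_p(\alpha)^p+2t(1-t)\,J_\alpha+t^2\diam_p(\alpha_*)^p,
\]
where $J_\alpha:=\iint_{X\times X}d_X^p(x,x')\,\alpha(dx)\,\alpha_*(dx')=\int_X F_{\alpha_*,p}(x)^p\,\alpha(dx)<s^p$ by the displayed consequence above, so that $\diam_p(H(\alpha,t))^p<\big((1-t)+t\big)^2 r^p=r^p$. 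For the $p$-\v{C}ech case with $p<\infty$, given $\alpha\in\cechp{X}{r}$ I would pick $x_\alpha\in X$ with $\dWp(\delta_{x_\alpha},\alpha)<r$ (possible since $\rad_p(\alpha)<r$) and use convexity of $\dWp$ in its second slot (Lemma~\ref{lem:bound-distance-convex-comb}) to get $\rad_p(H(\alpha,t))\le\dWp\big(\delta_{x_\alpha},H(\alpha,t)\big)\le\big((1-t)\dWp(\delta_{x_\alpha},\alpha)^p+t\,\dWp(\delta_{x_\alpha},\alpha_*)^p\big)^{1/p}<r$, again using $\dWp(\delta_{x_\alpha},\alpha_*)<s<r$.

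Finally I would dispatch $p=\infty$ separately, since there $\dWqq{\infty}$ enters through a $\max$ rather than an $\ell^p$-mean: for $t\in(0,1)$ one has $\supp H(\alpha,t)=\supp\alpha\cup\supp\alpha_*$, hence $\diam_\infty(H(\alpha,t))\le\max\big(\diam_\infty(\alpha),\,\diam_\infty(\alpha_*),\,\sup_{x\in\supp\alpha}F_{\alpha_*,\infty}(x)\big)<r$, and likewise $\rad_\infty(H(\alpha,t))=\inf_x\max\big(F_{\alpha,\infty}(x),F_{\alpha_*,\infty}(x)\big)\le\max\big(F_{\alpha,\infty}(x_\alpha),s\big)<r$ for a suitable $x_\alpha$. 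Once $H$ is known to take values in the relevant thickening, it corestricts to a deformation of that thickening onto $\{\alpha_*\}$, which proves contractibility. I expect the only genuinely non-routine step to be recognizing the identity $\iint d_X^p\,\alpha(dx)\,\alpha_*(dx')=\int F_{\alpha_*,p}^p\,d\alpha$ and bounding it by $s^p$ through the spread hypothesis applied to Dirac masses; the rest is bookkeeping with Minkowski's inequality, Lemma~\ref{lem:bound-distance-convex-comb}, and Proposition~\ref{prop:linear_homotopies}.
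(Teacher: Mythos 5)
Your proof is correct and follows essentially the same strategy as the paper's: both fix a finitely supported witness $\alpha_*$ to the spread bound, run the linear homotopy $H(\alpha,t)=(1-t)\alpha+t\,\alpha_*$ (justified by Proposition~\ref{prop:linear_homotopies}), and verify that $\diam_p$ (resp.\ $\rad_p$) stays below $r$ along the homotopy by expanding $\diam_p^p(H(\alpha,t))$ into the three bilinear cross-terms and bounding each by $r^p$ (resp.\ by applying the convexity of $\dWp$ in the second argument to $\dWp(\delta_{x_\alpha},\cdot)$). If anything you are slightly more thorough than the paper's write-up, which carries out the key expansion only with $p$-th powers and integrals and therefore implicitly treats $p<\infty$; your explicit handling of $p=\infty$ via $\supp H(\alpha,t)=\supp\alpha\cup\supp\alpha_*$ and the $\max$-decomposition of $\diam_\infty$ and $\rad_\infty$ fills in that case cleanly.
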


\begin{proof}
As $r > \mathrm{spread}_p(X)$, there is some $\alpha_r\in \cPfin_X$ such that for all $\alpha\in \cP_X$ we have $\dWp(\alpha_r, \alpha)< r$.
In particular, this implies that $\diam_p(\alpha_r) = (\int_X F_{\alpha_r,p}^p(x)\,\alpha_r(dx))^{1/p}< r$ and $\rad_p(\alpha_r) = \inf_{x\in X}F_{\alpha_r,p}(x) < r$.
Now, let $\mi$ be either $\diam_p$ or $\rad_p$.
Consider the linear homotopy
\[h_t\colon [0,1]\times \filt{\cP_X}{\mi^X}{r}\to \filt{\cP_X}{\mi^X}{r} \]
defined by
\[(t, \alpha)\mapsto (1-t)\,\alpha + t\, \alpha_{r}.\]
It then suffices to show for all $t\in [0, 1]$, we have $\diam_p(h_t(\alpha)) < r$ and $\rad_p(h_t(\alpha))< r $, so that this linear homotopy from the identity map on $\filt{\cP_X}{\mi^X}{r}$ to the constant map to $\alpha_r$ is well-defined.

In the $\diam_p$ case, we have
\begin{align*}
&\diam_p^p(h_t(\alpha)) \\
=&(1-t)^2\iint_{X\times X}d^p_X(x, x') \alpha(dx)\,\alpha(dx') + 2t\,(1-t)\iint_{X\times X} d^p_X(x, x') \alpha_r(dx)\,\alpha(dx') \\
& \quad \quad \quad + t^2\iint_{X\times X}d^p_X(x, x') \alpha_r(dx)\,\alpha_r(dx') \\
=& (1-t)^2 \diam_p^p(\alpha) + 2t\,(1-t)\int_X \left(\dWp(\alpha_r, \delta_{x'})\right)^p\,\alpha(dx')  \\
& \quad \quad \quad + t^2\int_X\left(\dWp(\alpha_r, \delta_{x'})\right)^p\,\alpha_r(dx')  \\
<& (1-t)^2 r^p+ 2t\,(1-t)r^p +t^2r^p \\
=& r^p.
\end{align*}

In the $\rad_p$ case, as $\rad_p(\alpha)<r$, there exists a point $y\in X$ such that $\dWp(\delta_y,\alpha) < r$.
Thus
\begin{align*}
\rad_p^p(h_t(\alpha)) &=\left(\inf_{x\in X}\dWp(\delta_x,h_t(\alpha))\right)^p \\
&\leq \left(\dWp(\delta_y,h_t(\alpha))\right)^p \\
&= (1- t)\left(\dWp(\delta_y,\alpha)\right)^p + t \left(\dWp(\delta_y,\alpha_r)\right)^p \\
&< (1-t) r^p + t r^p \\
&= r^p.
\end{align*}
This completes the proof.
\end{proof}

The upper bound for lifetime of the persistent homology features in $\filtf{\cP_X}{\mi^X}$ for $L$-controlled invariants is also related to the metric spread defined in~\cite{katz1983filling}.

\begin{defn}
The \emph{metric spread} of a metric space $X$ is defined to be
\[\mathrm{spread}(X):=\inf_{U\subset X,\,|U|< \infty}\max\big(d_{\mathrm{H}}(U, X),\diam(U)\big),\]
where $d_\mathrm{H}$ is the Hausdorff distance.
\end{defn}

Let $\mi$ be an $L$-controlled invariant and let $\max(\mi^X)$ be the maximum of the function $\mi^X$ on $\cP_X$, for $X$ a bounded metric space.
For any $r > \max(\mi^X)$, we have $\filt{\cP_X}{\mi^X}{r}=\cP_X$, and therefore $\filt{\cP_X}{\mi^X}{r}$ is contractible.
Inspired by the definition of the spread, we have the following result.

\begin{lem}\label{lem:bound_barcode_via_metric_spread}
Let $X$ be a metric space, let $\mi$ be an $L$-controlled invariant, and let $U$ be a finite subset of $X$.
Then for any $\xi > \max\left( d_\mathrm{H}(U, X),\, \frac{\max(\mi^U) - r}{2L}\right)$, the space $\filt{\cP_X}{\mi^X}{r}$ is contractible in $\filt{\cP_X}{\mi^X}{r+2L\xi}$.
In particular, any homology class of $\filt{\cP_X}{\mi^X}{r}$ will vanish in $\filt{\cP_X}{\mi^X}{r+2L\xi}$.
\end{lem}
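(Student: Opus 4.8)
The plan is to combine two facts: that measures on $X$ can be approximated by measures on the finite subset $U$ via pushing forward along a nearest-point assignment, and that on $\cP_U$ the invariant $\mi^U$ is globally bounded by $\max(\mi^U)$. Concretely, fix $\xi > \max\!\left(d_\mathrm{H}(U,X), \frac{\max(\mi^U)-r}{2L}\right)$. First I would choose $\xi' < \xi$ still satisfying both inequalities, and then build a map $f\colon X \to U$ sending each $x$ to a point of $U$ within distance $\xi'$ of $x$ (possible since $d_\mathrm{H}(U,X) < \xi'$); note $f$ need not be continuous, but $\mathrm{dis}(f) \le 2\xi'$ and $d_X(x, f(x)) < \xi'$ for all $x$. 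This is exactly the situation handled by $L$-controlled invariants, but one must be careful: $f_\sharp$ itself is not continuous on $\cP_X$. So instead of pushing forward directly, the idea is to route through a continuous approximation as in Lemma~\ref{lem:Ptau_cts_via_partition}: take a continuous partition of unity $\{\zeta_u^U\}$ subordinate to the cover $\bigcup_{u\in U} B(u;\xi')$ (possible since $d_\mathrm{H}(U,X)<\xi'$), giving the continuous map $\Phi_U\colon \cP_X \to \cP_U \hookrightarrow \cP_X$ with $\dWqq{\infty}(\alpha,\Phi_U(\alpha)) < \xi'$.

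Next I would define the contraction. Pick any basepoint $\alpha_* \in \cP_U \subseteq \cP_X$; for instance $\alpha_* = \delta_{u_0}$ for some $u_0 \in U$. The homotopy $h\colon [0,1]\times \filt{\cP_X}{\mi^X}{r} \to \cP_X$ is the concatenation (or simultaneous combination) of two linear homotopies: first $\alpha \mapsto (1-s)\,\alpha + s\,\Phi_U(\alpha)$ for $s\in[0,1]$, then $\beta \mapsto (1-s)\,\beta + s\,\alpha_*$ within $\cP_U$. Both are continuous by Proposition~\ref{prop:linear_homotopies} (using continuity of $\Phi_U$), and concatenating continuous homotopies is continuous. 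The endpoint is the constant map at $\alpha_*$, so this exhibits a null-homotopy of the inclusion, provided the whole homotopy stays inside $\filt{\cP_X}{\mi^X}{r+2L\xi}$.

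The verification of the codomain bound is the technical heart. Along the first leg, for $\alpha$ with $\mi^X(\alpha)<r$ we have $\dWqq{\infty}((1-s)\alpha + s\Phi_U(\alpha), \alpha) \le \dWqq{\infty}(\Phi_U(\alpha),\alpha) < \xi'$ by Lemma~\ref{lem:bound-distance-convex-comb}, so by stability with respect to $\dWqq{\infty}$, $\mi^X$ stays below $r + 2L\xi' < r + 2L\xi$. For the second leg, every measure $\gamma$ in play lies in $\cP_U$; the key point is that $\mi^U(\gamma) \le \max(\mi^U)$ and (by the pushforward stability applied to the inclusion $U \hookrightarrow X$, which has distortion $0$) $\mi^X(\gamma) = \mi^X(\iota_\sharp \gamma) \le \mi^U(\gamma) \le \max(\mi^U)$. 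Since $\xi' > \frac{\max(\mi^U)-r}{2L}$, we get $\max(\mi^U) < r + 2L\xi' < r + 2L\xi$, so the second leg also stays in $\filt{\cP_X}{\mi^X}{r+2L\xi}$. (One should double-check whether the definition of $L$-controlled invariant's pushforward stability applies with $\mathrm{dis}(f)=0$ to the inclusion of a finite subspace — the axiom is stated for maps between finite metric spaces, so strictly one first restricts to a finite $\xi'$-net of $X$ containing $U$, or observes that only finitely supported measures matter up to $\epsilon$-interleaving by Corollary~\ref{cor:cP_cPfin_0-interleaved}; alternatively one uses that $\mi^X$ restricted to $\cP_U$ agrees with $\mi^U$ by naturality, which is really what is needed.) The main obstacle I anticipate is precisely this last bookkeeping: making sure the bound $\mi^X|_{\cP_U} \le \max(\mi^U)$ is legitimately available, and that concatenating the two homotopies (possibly needing reparametrization so the two legs run on $[0,\tfrac12]$ and $[\tfrac12,1]$) produces a genuinely continuous map with the stated codomain; everything else is a routine application of Lemma~\ref{lem:bound-distance-convex-comb} and the stability axioms. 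Finally, since the inclusion $\filt{\cP_X}{\mi^X}{r}\hookrightarrow \filt{\cP_X}{\mi^X}{r+2L\xi}$ is null-homotopic, the induced map on $H_k$ is zero, giving the ``in particular'' claim.
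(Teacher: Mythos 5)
Your proof is correct and follows essentially the same route as the paper: choose a finite $\xi$-net, push measures to $\cP_U$ continuously via the partition-of-unity map $\Phi_U$ (Lemma~\ref{lem:Ptau_cts_via_partition}), bound $\mi^X$ along the linear interpolation using the $\dWqq{\infty}$-stability axiom, and use $\max(\mi^U)<r+2L\xi$ to keep the picture inside $\filt{\cP_X}{\mi^X}{r+2L\xi}$. The only cosmetic difference is that where you explicitly concatenate a second homotopy within $\cP_U$ to a basepoint, the paper instead stops after showing the inclusion is homotopic to $\iota_U\circ\Phi_U$ and then appeals to the fact that the latter has image in the contractible subset $\cP_U\subseteq\filt{\cP_X}{\mi^X}{r+2L\xi}$; these are two ways of writing the same null-homotopy. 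Your caveat about whether the pushforward-stability axiom, stated for maps \emph{between finite spaces}, legitimately yields $\mi^X\bigl((\iota_U)_\sharp\beta\bigr)\le\mi^U(\beta)$ is a fair observation --- the paper quietly uses this inequality with $\mathrm{dis}(\iota_U)=0$ as well, so the implicit extension is assumed on both sides; your suggested workarounds (restrict to a finite net, or use that the invariant is compatible with the inclusion $\cP_U\hookrightarrow\cP_X$) are sensible and consistent with what the paper does for $\iqp$ and $\rad_p$. Also note you do not actually need the intermediate $\xi'$; working directly with $\xi$ and the open cover $\{B(u;\xi)\}$ gives the strict bound already, since Lemma~\ref{lem:Ptau_cts_via_partition} provides $\dWqq{\infty}(\alpha,\Phi_U(\alpha))<\xi$.
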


\begin{proof}
As $\xi > d_\mathrm{H}(U, X)$, the balls $\{B(u; \xi)\}_{u\in U}$ form a open covering of $X$.
We choose a partition of unity subordinate to the covering and build the map $\Phi_U \colon \filt{\cP_X}{\mi^X}{r} \to \cP_U$ as in Lemma~\ref{lem:Ptau_cts_via_partition}.
Since $r + 2L\,\xi> \max(\mi^U)$, we know $\cP_U = \filt{\cP_U}{\mi^U}{r+2L\xi}$.
Since the inclusion map $\iota_U \colon U\to X$ is of zero distortion, we have $\mi^X((\iota_U)_\sharp(\beta))\leq \mi^U(\beta) \leq \max(\mi^U)$ for all $\beta \in \cP_U$.
This implies that the contractible set $\cP_U$ is inside $\filt{\cP_X}{\mi^X}{r+2L\xi}$, and so we have the following diagram.
\begin{equation*}
\xymatrix{
\filt{\cP_X}{\mi^X}{r} \ar[rrd]_{\Phi_U} \ar[rr]^{\nu^X_{r, r + 2L\,\xi}}
&& \filt{\cP_X}{\mi^X}{r+2L\xi}\\
&&\cP_U\ar@{^{(}->}[u]_{\iota_U}}
\end{equation*}
As the image of $\iota_U\circ \Phi_U$ maps $\filt{\cP_X}{\mi^X}{r}$ into a contractible subset $\cP_U\subset \filt{\cP_X}{\mi^X}{r+2L\xi}$, it is homotopy equivalent to a constant map.
To obtain this conclusion, it thus suffices to show that $\iota_U\circ \Phi_U$ is homotopy equivalent to the inclusion given by the structure map $\nu^X_{r, r+ 2L\, \xi}$.
Let's consider the linear homotopy
\[h_t\colon [0,1]\times \filt{\cP_X}{\mi^X}{r}\to \filt{\cP_X}{\mi^X}{r+2L\xi}\]
defined by
\[(t, \alpha)\mapsto (1-t)\,\alpha + t\, \Phi_U(\alpha).\]
From the stability property of $\mi$, Lemma~\ref{lem:bound-distance-convex-comb}, and the estimate in Lemma~\ref{lem:Ptau_cts_via_partition}, we have
\begin{align*}
    \mi^X(h_t(\alpha)) & \leq \mi^X(\alpha) + 2L\cdot \dWqq{\infty}(\alpha, h_t(\alpha))\\
    & \leq \mi^X(\alpha) + 2L\cdot \dWqq{\infty}(\alpha, \Phi_U(\alpha)) \\
    & \leq \mi^X(\alpha) +2L\,\xi \\
    & < r + 2L\,\xi.\end{align*}
This shows that the homotopy $h_t$ from $\iota_U\circ \Phi_U$ to $\nu^X_{r, r+ 2L\, \xi}$ is well-defined.
\end{proof}

\begin{remark}
For the 1-controlled $\mi_{q, p}$ invariant on a bounded metric space $X$, the maximum of $\mi_{q, p}^X$ is bounded by $\diam(X)$.
Therefore, for any finite subset $U$ we have
\[
    \max\left(d_\mathrm{H}(U, X), \diam(U)\right) \geq \max\left( d_\mathrm{H}(U, X),\, \frac{\max(\mi_{q, p}^U) - r}{2}\right).
\]
Hence, the previous lemma implies that the lifetime of features in $\filtf{\cP_X}{\iqp^X}$ is bounded by $2\,\mathrm{spread}(X)$.
In the $\diam_p$ case, the factor $2$ can be removed, as we show next, and therefore matches the result in~\cite[Proposition~9.6]{lim2020vietoris} for Vietoris--Rips simplicial complexes.
\end{remark}

\begin{prop}
For any $r>0$, $p\in[1, \infty]$, and $\xi> \mathrm{spread}(X)$, the space $\vrp{X}{r}$ is contractible in $\vrp{X}{r+\xi}$.
In particular, any homology class on $\vrp{X}{r}$ will vanish in $\vrp{X}{r+\xi}$.
\end{prop}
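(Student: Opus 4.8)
The plan is to contract $\vrp{X}{r}$ inside $\vrp{X}{r+\xi}$ in two stages: first push every measure onto a cleverly chosen finite net of $X$, and then contract within the simplex of measures supported on that net. Since $\xi>\mathrm{spread}(X)$, I would fix a finite subset $U\subseteq X$ with $\max\big(d_\mathrm{H}(U,X),\diam(U)\big)<\xi$, pick $\delta'$ with $d_\mathrm{H}(U,X)<\delta'<\xi$ so that the balls $\{B(u;\delta')\}_{u\in U}$ cover $X$, choose a continuous partition of unity subordinate to this cover, and take $\Phi_U\colon\cP_X\to\cP_U$ to be the associated map of Lemma~\ref{lem:Ptau_cts_via_partition}; thus $\Phi_U$ is continuous and $\dWp(\alpha,\Phi_U(\alpha))<\delta'$ for every $\alpha$. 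I would then observe that any $\beta\in\cP_U$ is supported in a set of diameter $<\xi$, so $\diam_p(\beta)\le\diam(U)<\xi<r+\xi$; hence $\iota_U(\cP_U)\subseteq\vrp{X}{r+\xi}$, and $\cP_U$ is convex (a simplex, by Lemma~\ref{lem:fin-prob-simplex}), hence contractible.

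The heart of the argument is to verify that the linear homotopy $h_t(\alpha):=(1-t)\,\alpha+t\,\Phi_U(\alpha)$ stays in $\vrp{X}{r+\xi}$ for every $\alpha\in\vrp{X}{r}$ and every $t$. For $p<\infty$, writing $\beta=\Phi_U(\alpha)$, I would expand
\[\diam_p^p(h_t(\alpha))=(1-t)^2\diam_p^p(\alpha)+2t(1-t)\iint_{X\times X}d_X^p(x,x')\,\alpha(dx)\,\beta(dx')+t^2\diam_p^p(\beta)\]
and bound the three summands. The first is $<r^p$ and the third is $\le\diam(U)^p<\xi^p$. The crucial one is the cross term: since $\big(\iint d_X^p(x,x')\,\alpha(dx)\,\beta(dx')\big)^{1/p}$ equals the $L^p(\beta)$-norm of the $1$-Lipschitz function $x'\mapsto F_{\alpha,p}(x')=\dWp(\alpha,\delta_{x'})$, Lemma~\ref{lem:dwq_stability_wrt_Lipschitz} yields
\[\Big(\iint_{X\times X}d_X^p(x,x')\,\alpha(dx)\,\beta(dx')\Big)^{1/p}\le\Big(\int_X F_{\alpha,p}^p\,d\alpha\Big)^{1/p}+\dWp(\alpha,\beta)=\diam_p(\alpha)+\dWp(\alpha,\beta)<r+\delta'<r+\xi.\]
Since $\diam_p^p(h_t(\alpha))$ is then a convex combination (weights $(1-t)^2,\ 2t(1-t),\ t^2$) of three numbers each $<(r+\xi)^p$, it is $<(r+\xi)^p$, so $\diam_p(h_t(\alpha))<r+\xi$. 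For $p=\infty$ I would argue directly with supports: $\supp(h_t(\alpha))\subseteq\supp(\alpha)\cup U$, each $u\in\supp(\Phi_U(\alpha))$ lies within $\delta'$ of $\supp(\alpha)$, and $\diam_\infty(\alpha)<r$, so every pair of points of $\supp(h_t(\alpha))$ is at distance at most $\max\big(\diam(\supp(\alpha))+\delta',\ \diam(U)\big)<r+\xi$.

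Finally I would assemble the homotopy. By Proposition~\ref{prop:linear_homotopies} and the continuity of $\Phi_U$, the map $h\colon\vrp{X}{r}\times[0,1]\to\vrp{X}{r+\xi}$ is continuous with $h_0$ the inclusion $\nu_{r,r+\xi}^X$ and $h_1=\iota_U\circ\Phi_U$; the latter factors through the contractible subspace $\iota_U(\cP_U)\subseteq\vrp{X}{r+\xi}$, hence is null-homotopic there (explicitly via $(1-s)\,\Phi_U(\alpha)+s\,\delta_{u_0}$ for a fixed $u_0\in U$, continuous again by Proposition~\ref{prop:linear_homotopies}). Concatenating, the inclusion $\vrp{X}{r}\hookrightarrow\vrp{X}{r+\xi}$ is null-homotopic, which is exactly the claim; the ``in particular'' about vanishing homology follows since a null-homotopic map induces the zero map on reduced homology. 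The step I expect to be the main obstacle is the cross-term estimate: the crude stability bound $|\diam_p(\alpha)-\diam_p(\beta)|\le2\,\dWqq{\infty}(\alpha,\beta)$ that underlies Lemma~\ref{lem:bound_barcode_via_metric_spread} only gives the shift $r+2\xi$, and recovering the sharp shift $r+\xi$ requires exploiting that $F_{\alpha,p}$ is $1$-Lipschitz together with the identity $\diam_p(\alpha)=\big(\int_X F_{\alpha,p}^p\,d\alpha\big)^{1/p}$.
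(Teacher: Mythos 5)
Your argument is correct and follows the same construction as the paper: choose a finite $U\subset X$ with $\diam(U)<\xi$ and $d_{\mathrm H}(U,X)<\xi$, pass to $\cP_U$ via the partition-of-unity map $\Phi_U$ of Lemma~\ref{lem:Ptau_cts_via_partition}, linearly homotope the inclusion to $\iota_U\circ\Phi_U$ (continuity from Proposition~\ref{prop:linear_homotopies}), and then contract inside the convex subset $\cP_U\subset\vrp{X}{r+\xi}$; the final convex-combination observation matches the paper as well.

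The one place your write-up genuinely diverges from what is displayed in the paper is the cross-term estimate, and your version is the one that cleanly produces the required bound. You bound $\int_X F_{\alpha,p}^p\,d\Phi_U(\alpha)$ via Lemma~\ref{lem:dwq_stability_wrt_Lipschitz} by $\bigl(\diam_p(\alpha)+\dWp(\alpha,\Phi_U(\alpha))\bigr)^p<(r+\xi)^p$. The displayed inequality in the paper instead applies the triangle inequality to $\dWp(\alpha,\delta_{x'})$ and Minkowski in $L^p(\Phi_U(\alpha))$, arriving at $\bigl(\dWp(\alpha,\Phi_U(\alpha))+\diam_p(\Phi_U(\alpha))\bigr)^p$; concluding $<(r+\xi)^p$ from there would need $\diam_p(\Phi_U(\alpha))<r$, which does not follow from the hypotheses (one only gets $\diam_p(\Phi_U(\alpha))<\xi$ from $\supp\Phi_U(\alpha)\subseteq U$). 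Your orientation of the Minkowski step is the right one. You also handle $p=\infty$ separately via the support argument, which the printed $\diam_p^p$ computation does not formally cover; that is a sound and necessary addition.
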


\begin{proof}
As $\xi > \mathrm{spread}(X)$, there exists some $U\subset X$ such that $\diam(U)< \xi$ and $d_\mathrm{H}(U, X)<  \xi$.
As the maximum of $\diam_p^U$ on $\cP_U$ is bounded by $\diam(U)$, the contractible subset $\cP_U$ lies inside $\vrp{X}{r+\xi}$, and therefore $\iota_U\circ \Phi_U$ is homotopy equivalent to a constant map.
Let $h_t$ be the linear homotopy used in Lemma~\ref{lem:bound_barcode_via_metric_spread}.
What is left to get a homotopy equivalence between $\iota_U\circ \Phi_U$ and $\nu^X_{r, r+ \xi}$ is to show $\diam_p(h_t(\alpha)) < r + \xi$.
This comes from the following calculation:
\begin{align*}
&\diam_p^p(h_t(\alpha)) \\
=\, &(1-t)^2\iint_{X\times X}d^p(x, x') \alpha(dx)\,\alpha(dx') + 2t\,(1-t)\iint_{X\times X} d^p(x, x') \alpha(dx)\,\Phi_U(\alpha)(dx')\\
&\quad \quad \quad + t^2\iint_{X\times X}d^p(x, x') \Phi_U(\alpha)(dx)\,\Phi_U(\alpha)(dx')\\
=\, & (1-t)^2 \diam_p^p(\alpha) + 2t\,(1-t)\int_X \left(\dWp(\alpha, \delta_{x'})\right)^p\,\Phi_U(\alpha)(dx')+ t^2\diam_p^p(\Phi_U(\alpha))\\
\leq\, & (1-t)^2 \diam_p^p(\alpha) + 2t\,(1-t)\int_X \left(\dWp(\alpha, \Phi_U(\alpha)) + \dWp(\Phi_U(\alpha), \delta_{x'})\right)^p\,\Phi_U(\alpha)(dx')\\
&\quad \quad \quad + t^2\diam_p^p(\Phi_U(\alpha))\\
\leq\, & (1-t)^2 \diam_p^p(\alpha) + 2t\,(1-t)\big(\xi + \diam_p(\Phi_U(\alpha))\big)^p + t^2\diam_p^p(\Phi_U(\alpha))\\
<\, & (1-t)^2 r^p+ 2t(1-t)(r+ \xi)^p +t^2\xi^p\\
<\, & (r+ \xi)^p.
\end{align*}
\end{proof}

The following result shows the lifetime of features of $\cechp{X}{\smb}$ is also bounded by the metric spread of $X$, $\mathrm{spread}(X)$.

\begin{prop}
For any $r>0$, $p\in[1, \infty]$, and $\xi> \mathrm{spread}(X)$, the space $\cechp{X}{r}$ is contractible in $\cechp{X}{r+\xi}$.
In particular, any homology class on $\cechp{X}{r}$ will vanish in $\cechp{X}{r+\xi}$.
\end{prop}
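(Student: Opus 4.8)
The plan is to follow the proof of the preceding proposition for $\vrp{X}{\smb}$ almost verbatim, reusing the scaffolding of Lemma~\ref{lem:bound_barcode_via_metric_spread} and replacing only the $\diam_p$ estimate with a matching bound on $\rad_p$. Since $\xi>\mathrm{spread}(X)$, I would first fix a finite subset $U\subseteq X$ with $\diam(U)<\xi$ and $d_{\mathrm H}(U,X)<\xi$. As $d_{\mathrm H}(U,X)<\xi$, the balls $\{B(u;\xi)\}_{u\in U}$ form an open cover of $X$, so I can choose a partition of unity subordinate to it and form the continuous map $\Phi_U\colon\cechp{X}{r}\to\cP_U$ of Lemma~\ref{lem:Ptau_cts_via_partition}, which satisfies $\dWp(\alpha,\Phi_U(\alpha))<\xi$ for every $\alpha$ and every $q\in[1,\infty]$.

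Next I would verify that $\cP_U$ lies inside $\cechp{X}{r+\xi}$: for $\beta\in\cP_U$ the inclusion $\iota_U\colon U\hookrightarrow X$ has zero distortion, so Lemma~\ref{lem:rad_p_dist} together with Proposition~\ref{prop:basic-diamp} and the monotonicity of $\diam_p$ in $p$ give $\rad_p\big((\iota_U)_\sharp\beta\big)=\rad_p(\beta)\le\diam_p(\beta)\le\diam(U)<\xi\le r+\xi$. Since $\cP_U$ is homeomorphic to a simplex by Lemma~\ref{lem:fin-prob-simplex}, hence contractible, the composite $\iota_U\circ\Phi_U\colon\cechp{X}{r}\to\cechp{X}{r+\xi}$ is nullhomotopic. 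It then remains to show that $\iota_U\circ\Phi_U$ is homotopic, as a map into $\cechp{X}{r+\xi}$, to the structure map $\nu^X_{r,r+\xi}$, for which I would use the linear homotopy $h_t(\alpha)=(1-t)\,\alpha+t\,\Phi_U(\alpha)$, continuous by Proposition~\ref{prop:linear_homotopies}.

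The only step that genuinely differs from the Vietoris--Rips case is the estimate $\rad_p(h_t(\alpha))<r+\xi$ for all $t\in[0,1]$, needed so that $h_t$ remains inside $\cechp{X}{r+\xi}$; since $\rad_p$ is an infimum, it cannot be expanded directly as $\diam_p$ was. Instead I would pick a single base point $y\in X$ with $\dWp(\delta_y,\alpha)<r$ (possible because $\rad_p(\alpha)<r$) and bound $\dWp(\delta_y,\Phi_U(\alpha))\le\dWp(\delta_y,\alpha)+\dWp(\alpha,\Phi_U(\alpha))<r+\xi$. Applying the convexity inequality of Lemma~\ref{lem:bound-distance-convex-comb} to $(1-t)\delta_y+t\delta_y$ against $(1-t)\alpha+t\Phi_U(\alpha)$ then yields, for $p<\infty$,
\begin{align*}
\rad_p(h_t(\alpha)) &\le \dWp\big(\delta_y,h_t(\alpha)\big) \le \Big((1-t)\,\big(\dWp(\delta_y,\alpha)\big)^p+t\,\big(\dWp(\delta_y,\Phi_U(\alpha))\big)^p\Big)^{1/p}\\
&< \big((1-t)\,r^p+t\,(r+\xi)^p\big)^{1/p} \le r+\xi,
\end{align*}
and for $p=\infty$ the $\max$-form of Lemma~\ref{lem:bound-distance-convex-comb} gives $\rad_\infty(h_t(\alpha))\le\max\big(\dWqq{\infty}(\delta_y,\alpha),\dWqq{\infty}(\delta_y,\Phi_U(\alpha))\big)<r+\xi$. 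Thus $h_t$ is a well-defined homotopy inside $\cechp{X}{r+\xi}$ between $\nu^X_{r,r+\xi}$ and $\iota_U\circ\Phi_U$, so the inclusion $\cechp{X}{r}\hookrightarrow\cechp{X}{r+\xi}$ factors up to homotopy through a contractible space and is therefore nullhomotopic; in particular every homology class of $\cechp{X}{r}$ dies in $\cechp{X}{r+\xi}$. The only delicate point is the clean choice of the common base point $y$; the rest parallels the $\vrp{X}{\smb}$ argument and is routine.
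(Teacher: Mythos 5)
Your proposal is correct and follows essentially the same approach as the paper: fix a finite $U$ witnessing $\mathrm{spread}(X)<\xi$, show $\cP_U\subseteq\cechp{X}{r+\xi}$ is contractible, and use the linear homotopy $h_t$ from Lemma~\ref{lem:bound_barcode_via_metric_spread} between $\iota_U\circ\Phi_U$ and the structure map. The only difference is cosmetic: for the estimate $\rad_p(h_t(\alpha))<r+\xi$ the paper invokes the $1$-Lipschitz property of $\rad_p$ (Lemma~\ref{lem:stability_of_rad_p}) together with $\dWp(\alpha,h_t(\alpha))\le t^{1/p}\dWp(\alpha,\Phi_U(\alpha))$, whereas you unfold that Lipschitz estimate into its underlying base-point form, choosing a single $y$ with $\dWp(\delta_y,\alpha)<r$ and bounding $\dWp(\delta_y,h_t(\alpha))$ by convexity; the two computations are equivalent.
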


\begin{proof}
As $\xi > \mathrm{spread}(X)$, there exists some $U\subset X$ such that $\diam(U)< \xi$ and $d_\mathrm{H}(U, X)< \xi$.
The maximum of $\rad_p^U$ on $\cP_U$ is bounded by $\diam(U)$, and the contractible subset $\cP_U$ lies inside $\cechp{X}{r+\xi}$.
This shows $\iota_U\circ \Phi_U$ is homotopy equivalent to a constant map.
Let $h_t = (1-t)\,\alpha + t\, \Phi_U(\alpha)$ be the linear homotopy used in Lemma~\ref{lem:bound_barcode_via_metric_spread}.
What is left is to show is $\rad_p(h_t(\alpha)) < r + \xi$.
By Lemma~\ref{lem:stability_of_rad_p} and Lemma~\ref{lem:bound-distance-convex-comb}, we have
\begin{align*}
\rad_p(h_t(\alpha)) & \leq \rad_p(\alpha) + \dWqq{p}(\alpha, h_t(\alpha)) \\
&\leq r + t^\frac{1}{p} \dWqq{p}(\alpha, \Phi_U(\alpha)) \\
&\leq r + \xi.
\end{align*}
\end{proof}

\section{Conclusion}
\label{sec:conclusion}

Filtrations, i.e.\ increasing sequences of spaces, play a foundational role in applied and computational topology, as they are the input to persistent homology.
To produce a filtration from a metric space $X$, one often considers a Vietoris--Rips or \v{C}ech simplicial complex, with $X$ as its vertex set, as the scale parameter increases.
Since a point in the geometric realization of a simplicial complex is a convex combination of the vertices of the simplex $[x_0,x_1,\ldots,x_k]$ in which it lies, each such point can alternatively be identified with a probability measure: a convex combination of Dirac delta masses $\delta_{x_0}$, $\delta_{x_1}$, \ldots, $\delta_{x_k}$.
We can therefore re-interpret the Vietoris--Rips and \v{C}ech simplicial complex filtrations instead as filtrations in the space of probability measures, which are referred to as the Vietoris--Rips and \v{C}ech \emph{metric thickenings}.
In~\cite{AAF} it is argued that the metric thickenings have nicer properties for some purposes: for example, the inclusion from metric space $X$ into the metric thickening is always an isometry onto its image, whereas an inclusion from metric space $X$ into the vertex set of a simplicial complex is not even continuous unless $X$ is discrete.
In this paper, we prove that these two perspectives are compatible: the $\infty$-Vietoris--Rips (resp.\ \v{C}ech) metric thickening filtration has the same persistent homology as the Vietoris--Rips (resp.\ \v{C}ech) simplicial complex filtration when $X$ is totally bounded.
Therefore, when analyzing these filtrations, one can choose to apply either simplicial techniques (simplicial homology, simplicial collapses, discrete Morse theory) or measure-theoretic techniques (optimal transport, Karcher or Fr\'{e}chet means), whichever is more convenient for the task at hand.

The measure-theoretic perspective motivates new filtrations to build on top of a metric space $X$.
Though the Vietoris--Rips simplicial complex filtration is closely related (at interleaving distance zero) to the metric thickening filtration obtained by looking at sublevelsets in the space of probability measures of the $\infty$-diameter functional, one can instead consider sublevelsets of the $p$-diameter functional for any $1\le p\le \infty$.
The same is true upon replacing Vietoris--Rips with \v{C}ech and replacing $p$-diameter with $p$-radius.
These relaxed $p$-Vietoris--Rips and $p$-\v{C}ech metric thickenings enjoy the same stability results underlying the use of persistent homology: nearby metric spaces produce nearby persistence modules.
The generalization to $p<\infty$ is a useful one: though determining the homotopy types of $\infty$-Vietoris--Rips thickenings of $n$-spheres is a hard open problem, we give a complete description of the homotopy types of $2$-Vietoris--Rips thickenings of $n$-spheres for all $n$.
We also prove a Hausmann-type theorem in the case $p=2$, and ask if the $p<\infty$ metric thickenings may be amenable to study using tools from Morse theory.

More generally, one can consider sublevelsets of any $L$-controlled function on the space of probability measures on $X$.
We prove stability in this much more general context.
This allows one to consider metric thickenings that are tuned to a particular task, perhaps incorporating other geometric notions besides just proximity, such as curvature, centrality, eccentricity, etc.
One can design an $L$-controlled functional to highlight specific features that may be useful for a particular data science task.

We hope these contributions inspire more work on metric thickenings and their relaxations.
We end with some open questions.

\begin{enumerate}

\item For $X$ totally bounded, is the $p=\infty$ metric thickening $\vrpp{\infty}{X}{r}$ homotopy equivalent to the simplicial complex $\vr{X}{r}$, and similarly is $\cechpp{\infty}{X}{r}$ homotopy equivalent to $\cech{X}{r}$?
Note we are using the $<$ convention.

\item For $X$ totally bounded, is $\vrp{X}{r}$ homotopy equivalent to $\vrpfin{X}{r}$, is $\cechp{X}{r}$ homotopy equivalent to $\cechpfin{X}{r}$, and for $\mi$ a controlled invariant is $\filt{\cP_X}{\mi^X}{r}$ homotopy equivalent to $\filt{\cPfin_X}{\mi^X}{r}$?

\item Is there an analogue to the Hausmann-type Theorem~\ref{thm:vr2_Hausmann} which holds for $p\in(2,\infty)$? The case $p=\infty$ was tackled in~\cite{AM}.
In a similar spirit, it seems interesting to explore whether analogous theorems hold when the ambient space is a more general Hadamard space instead of $\R^d$.

\item Can one prove Latschev-type theorems~\cite{Latschev2001} for $p$-metric thickenings?

\item For $p\neq 2$, what are the homotopy types of $p$-Vietoris--Rips and $p$-\v{C}ech thickenings of spheres at all scales?
Is the homotopy connectivity a non-decreasing function of the scale, and if so, how quickly does the homotopy connectivity increase?

\item What are the homotopy types of $p$-Vietoris--Rips and $p$-\v{C}ech metric thickenings of other manifolds, such as ellipses (see~\cite{AAR}), ellipsoids, tori, and projective spaces~\cite{katz1983filling,katz1983filling,katz1991rational,AdamsHeimPeterson}?

\item What versions of Morse theory~\cite{milnor1963morse} can be developed in order to analyze the homotopy types of $p$-metric thickenings of manifolds as the scale increases?
See Appendix~\ref{app:Morse} for some initial ideas in the case of $p$-\v{C}ech thickenings.
For homogeneous spaces such as spheres, versions of Morse--Bott theory~\cite{bott1954nondegenerate,bott1982lectures,bott1988morse} may be needed.

\item For $X$ finite, is $\vrp{X}{r}$ always homotopy equivalent to a subcomplex of the complete simplex on the vertex set $X$?
See Appendix~\ref{app:finite-Cech} for a proof of the \v{C}ech case.

\item For $X$ finite with $n+1$ points, the space $\cP_X$ is an $n$-simplex in $\R^{n+1}$ where coordinates are the weights of a measure at each point.
In this case, $\diam_p$ is a quadratic polynomial on $\R^{n+1}$ and $\rad_p$ is the minimum of $n+1$ linear equations.
Therefore, both $\vrp{X}{r}$ and $\cechp{X}{r}$ are semi-algebraic sets in $\R^{n+1}$.
Can one use linear programming along with the results of Appendix~\ref{app:finite-Cech} to calculate the homology groups of $\cech{X}{r}$, and the work on quadratic semi-algebraic sets~\cite{basu2008computing, basu2008computing2, burgisser2018computing} to calculate the homology groups of $\vrp{X}{r}$?
See also the recent paper~\cite{basu2022persistent}, which provides a singly exponential complexity algorithm for computing the sub-level set persistent homology induced by a polynomial on a semi-algebraic set up to some fixed homological dimension. 

\end{enumerate}

\subsection*{Acknowledgements}

HA would like to thank Florian Frick and \v{Z}iga Virk and FM would like to thank Sunhyuk Lim for helpful conversations. 

FM and QW acknowledge funding from these sources: NSF-DMS-1723003, NSF-CCF-1740761 and NSF-CCF-1839358.

\bibliographystyle{alpha}
\bibliography{VRp}

\appendix

\section{Appendices}

The appendices contain results which are related to but not central to the main thread of the paper.
In Appendix~\ref{app:Metrization_weak_topology} we explain how the $q$-Wasserstein distance metrizes the weak topology for $1\le q<\infty$.
We describe connections to min-type Morse theories in Appendix~\ref{app:Morse}, and ask what can be gained from these connections.
In Appendix~\ref{app:finite-Cech} we show that $p$-\v{C}ech thickenings of finite metric spaces are homotopy equivalent to simplicial complexes with one vertex for each point in the metric space.
We derive the persistent homology diagrams of the $p$-Vietoris--Rips and $p$-\v{C}ech metric thickenings of a family of discrete metric spaces in Appendix~\ref{app:pd_delta_1_space}, and we describe the $0$-dimensional persistent homology of the $p$-Vietoris--Rips and $p$-\v{C}ech metric thickenings of an arbitrary metric space in Appendix~\ref{app:PH0}.
We consider crushings in Appendix~\ref{app:crushings}.
In Appendix~\ref{app:ambient}, we show that the main properties we prove for the (intrinsic) $p$-\v{C}ech metric thickening also hold for the ambient $p$-\v{C}ech metric thickening.

\subsection{Metrization of the weak topology}
\label{app:Metrization_weak_topology}

For $1\le q<\infty$ the $q$-Wasserstein distance metrizes the weak topology, as we explain here for completeness.

\begin{defn}
Let $X$ be a metric space.
The L\'{e}vy-Prokhorov metric on $\cP_X$ is given by the formula
\[
d_{\mathrm{LP}}(\alpha, \beta):= \inf\{\varepsilon>0~|~\alpha(E)\leq \beta(E^\varepsilon) + \varepsilon,\, \beta(E)\leq \alpha(E^\varepsilon) + \varepsilon \text{ for all } E\in \mathfrak{B}(X)\}.
\]
Here $E^\varepsilon = \cup_{x\in E}B_\varepsilon(x)$ is the open $\varepsilon$-neighborhood of $E$ in $\cP_X$, and $\mathfrak{B}(X)$ is the Borel $\sigma$-algebra.
\end{defn}

We state two theorems that we will use\footnote{The first statement is from~\cite{bogachev2018weak}. 
We emphasize that we use $\cP_X$ to denote the set of all Radon probability measures on $X$ whereas the author instead uses $\cP_r(X)$.}

\begin{thm2}[{\cite[Theorem~3.1.4]{bogachev2018weak}}]
The weak topology on the set $\cP_X$ is generated by the L\'{e}vy-Prokhorov metric.\footnote{Theorem 3.1.4 in~\cite{bogachev2018weak} proves a stronger result result, namely that the claim is true within the set of $\tau$-additive probability measures. The restricted version we use follows from the fact that every Radon measure is $\tau$-additive, cf.~\cite[Proposition 7.2.2 (1)]{bogachev2007measure}. }
\end{thm2}

\begin{thm2}[{\cite[Theorem~2]{gibbs2002choosing}}]
On a metric space $X$, for any $\alpha, \beta$ in $\cP_X$, one has
\[(d_{\mathrm{LP}})^2\leq \dWqq{1} \leq (\diam(X)+ 1) \, d_{\mathrm{LP}}.\]
\end{thm2}

\begin{cor}\label{cor:dwq_generate_weak_topology}
On a bounded metric space $X$, for any $q\in [1, \infty)$, the $q$-Wasserstein metric generates the weak topology on $\cP_X$.
\end{cor}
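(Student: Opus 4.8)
The plan is to show that on a bounded metric space $X$, the $q$-Wasserstein metric $\dWqq{q}$ induces the same topology on $\cP_X$ as the weak topology, and then invoke the already-established fact that the weak topology is generated by the L\'evy--Prokhorov metric $d_{\mathrm{LP}}$ (Theorem~3.1.4 of~\cite{bogachev2018weak}, quoted above). Since two metrics generate the same topology whenever each is bounded above by a monotone function of the other that vanishes at $0$, it suffices to sandwich $\dWqq{q}$ between powers of $d_{\mathrm{LP}}$.

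First I would handle the case $q=1$ directly: the quoted Theorem~2 of~\cite{gibbs2002choosing} gives $(d_{\mathrm{LP}})^2 \le \dWqq{1} \le (\diam(X)+1)\,d_{\mathrm{LP}}$. Since $X$ is bounded, $\diam(X)+1$ is a finite constant, so this pair of inequalities shows $\dWqq{1}$ and $d_{\mathrm{LP}}$ are topologically equivalent on $\cP_X$, hence $\dWqq{1}$ generates the weak topology. Next I would bootstrap to general $q \in (1,\infty)$ using two comparisons already recorded in the excerpt. On one side, H\"older's inequality gives $\dWqq{1} \le \dWqq{q}$ (stated in the background section: ``for general $q \le q'$, it follows from H\"older's inequality that $\dWq \le \dWqq{q'}$''), so convergence in $\dWqq{q}$ implies convergence in $\dWqq{1}$, hence weak convergence. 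On the other side, since $\diam(X) =: D < \infty$, we have the pointwise bound $d_X^q(x,x') \le D^{q-1} d_X(x,x')$ for all $x,x'$, and integrating against any coupling $\mu \in \cpl(\alpha,\beta)$ and taking infima yields
\[
\dWqq{q}(\alpha,\beta) \le D^{(q-1)/q}\,\big(\dWqq{1}(\alpha,\beta)\big)^{1/q}.
\]
Thus $\dWqq{1}$-convergence implies $\dWqq{q}$-convergence. Combining the two directions, $\dWqq{q}$ and $\dWqq{1}$ are topologically equivalent on the bounded space $X$, and since $\dWqq{1}$ generates the weak topology, so does $\dWqq{q}$.

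The proof is essentially routine once the right ingredients are assembled; there is no serious obstacle, only the bookkeeping of chaining the inequalities $d_{\mathrm{LP}} \leftrightarrow \dWqq{1} \leftrightarrow \dWqq{q}$ and noting that boundedness of $X$ makes every constant finite. The one point that deserves a word of care is that topological equivalence of metrics follows from these two-sided estimates precisely because each bound is of the form ``$\rho \le C\sigma^{\theta}$'' with $C < \infty$ and $\theta > 0$, so sequences converging in one metric converge in the other, and since (by Theorem~3.1.4 of~\cite{bogachev2018weak} and the metrizability it entails) the weak topology is itself metrizable, sequential equivalence suffices to identify the topologies. I would close by remarking that this is exactly the content needed to justify the earlier assertions in Section~\ref{sec:p-relaxation} that $\vrp{X}{r}$, $\cechp{X}{r}$, and their finitely supported variants have a well-defined homeomorphism type independent of the choice of $q \in [1,\infty)$.
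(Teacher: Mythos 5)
Your proof is correct and follows essentially the same route as the paper: invoke the Gibbs--Su and Bogachev results to handle $q=1$, then sandwich $\dWqq{q}$ against $\dWqq{1}$ via H\"older on one side and the bound $\dWqq{q} \le \big(\diam(X)\big)^{(q-1)/q}\big(\dWqq{1}\big)^{1/q}$ on the other. The only cosmetic difference is that you spell out explicitly why two-sided estimates of the form $\rho \le C\sigma^{\theta}$ imply topological equivalence, which the paper leaves implicit.
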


\begin{proof}
From~\cite[Theorem~2]{gibbs2002choosing} and~\cite[Theorem~3.1.4]{bogachev2018weak}, we know $\dWqq{1}$ generates the weak topology.
For other values of $q$, notice that for any coupling $\mu$ between $\alpha$ and $\beta$, we have
\[ \left(\int_X d_X^q(x,x')\, \mu(dx\times dx')\right)^{\frac{1}{q}}\leq \big(\diam(X)\big)^{\frac{q-1}{q}} \left(\int_X d_X(x, x')\, \mu(dx \times dx')\right)^{\frac{1}{q}}.\]
This implies $\dWq \leq \big(\diam(X)\big)^{\frac{q-1}{q}}\big(\dWqq{1} \big)^{\frac{1}{q}}$.
Along with $\dWqq{1}\leq \dWq$, this implies that on a bounded metric space, all $q$-Wasserstein metrics with $q$ finite are equivalent and generate the weak topology on $\cP_X$.
\end{proof}

\subsection{Min-type Morse theory}
\label{app:Morse}

The paper~\cite{baryshnikov2014min} by Baryshnikov, Bubenik, and Kahle studies a Morse theory for min-type functions;
see also~\cite{gershkovich1997morse,bryzgalova1978maximum,matov1982topological}.
The following notation is from~\cite[Section~3.1]{baryshnikov2014min}.
Let $X$ be a compact metric space (called the \emph{parameter space}), let $M$ be a compact smooth manifold perhaps with boundary, and let $f\colon X\times M \to\R$ be a continuous function.
For each $x\in X$, define $f_x\colon M\to \R$ by $f_x(m)=f(x,m)$.
Let $\nabla f_x \colon M \to \R$ be the gradient of $f_x$ with respect to $m$.
We furthermore assume that the function $X \times M\to \R$ defined by $(x,m)\mapsto \nabla f_x(m)$ is continuous.
There is then an analogue of Morse theory, called ``min-type Morse theory'', for the min-type function $\tau\colon M\to \R$ defined by $\tau(m)=\min_{x\in X}f_x(m)$.

Whereas~\cite{baryshnikov2014min} uses min-type Morse theory to study configurations of hard spheres, we instead propose the use of min-type Morse theory to study $p$-\v{C}ech metric thickenings, as follows.
Let $X$ be a \emph{finite} metric space with $n+1$ points.
Define $M=\Delta_n$ to be the $n$-simplex on $n+1$ vertices; a point $m\in \Delta_n$ is given in barycentric coordinates as $m=(m_0,\ldots,m_n)$ with $m_i\ge 0$ and $\sum_i m_i=1$.
For $x\in X$, let $f_x \colon \Delta_n \to \R$ be defined by $f_x(m) = \sum_i m_i d_X^p(x,x_i)$; note this is equal to the $p$-th power of the $p$-Frech\'{e}t function, namely to $F_{\alpha,p}^p(x) = \sum_i m_i d_X^p(x,x_i)$, when $\alpha$ is the measure $\alpha = \sum_i m_i \delta_{x_i}$.
So $f\colon X\times \Delta_n \to\R$ is defined by $f(x,m)=f_x(m)$.
Note that each gradient $\nabla f_x \colon \Delta_n\to \R$ is linear and hence continuous, and so the joint function $X\times \Delta_n\to \R$ given by $(x,m)\mapsto \nabla f_x(m)$ is continuous since $X$ is discrete.
The function $\tau\colon \Delta_n\to \R$ is then defined by $\tau(m)=\min_{x\in X}f_x(m)$; note that this is equal to $\rad_p^p(\alpha) = \inf_{x\in X}F_{\alpha,p}^p(x)$ for $\alpha = \sum_i m_i \delta_{x_i}$.
By Lemma~\ref{lem:fin-prob-simplex} and its proof, we have not only a homeomorphism $\cP_X \cong \Delta_n$, but also a homeomorphism \[\cechp{X}{r}=\rad_p^{-1}\left((-\infty,r)\right)\cong\tau^{-1}\left((-\infty,r^p)\right),\] meaning that the $p$-\v{C}ech metric thickenings are homeomorphic to the sublevel sets of the min-type function $\tau$.
In this setting, we have the additional convenience that each function $f_x$ is affine.

\begin{question}
Can the machinery from~\cite{baryshnikov2014min} be used to prove new results about $p$-\v{C}ech metric thickenings, such as homotopy types?
A first step in this direction might be to use their balanced criterion (which derives from Farkas' lemma) to help identify which points are topological regular points or critical points of $\tau$.
\end{question}

\begin{question}
We have restricted to $X$ finite ($n+1$ points) so that $M=\Delta_n$ will be a manifold with boundary.
Can one build up towards letting $X$ be a manifold, such as a circle or $n$-sphere?
\end{question}

\subsection{Finite p-\v{C}ech metric thickenings are homotopy equivalent to simplicial complexes}
\label{app:finite-Cech}

The $p$-Vietoris--Rips and $p$-\v{C}ech matric thickenings considered in this paper are based on the corresponding simplicial complexes and are closely related to them.
The most direct relationship is given by Lemma~\ref{lem:finite_infty-thickenings_homeomorphic_to_complexes} in the case of finite metric spaces and $p=\infty$.
Further similarities in the case of totally bounded metric spaces and $p = \infty$ are observed in the persistence diagrams, as shown in Corollary~\ref{cor:infty-simplicial-dgms}.
For $p<\infty$, the metric thickenings are not as directly related to the corresponding simplicial complexes.
However, in this appendix, we establish that all $p$-\v{C}ech metric thickenings on finite metric spaces are homotopy equivalent to simplicial complexes (although generally not the corresponding \v{C}ech simplicial complexes).

\begin{thm}\label{thm_pCech_homotopy_equiv_simplicial_complexes}
Let $(X,d_X)$ be a finite metric space with $n+1$ points.
For any $p \in [1, \infty]$ and any ${r>0}$, $\cechp{X}{r}$ is homotopy equivalent to a simplicial complex on $n+1$ vertices, consisting of the simplices contained in the homeomorphic image of $\cechp{X}{r}$ in the standard $n$-simplex.
\end{thm}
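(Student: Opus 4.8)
The plan is to identify $\cechp{X}{r}$ with an open subset of the standard $n$-simplex $\Delta_n$ and then deformation retract it onto the subcomplex of $\Delta_n$ it contains. By Lemma~\ref{lem:fin-prob-simplex} we have a homeomorphism $\cP_X\cong\Delta_n$ sending $\sum_i a_i\delta_{x_i}$ to the barycentric point $(a_0,\dots,a_n)$; under this identification $\cechp{X}{r}$ becomes the set $W:=\{m\in\Delta_n\mid \tau(m)<r^p\}$, where $\tau(m)=\min_{0\le i\le n}f_{x_i}(m)$ and $f_{x}(m)=\sum_j m_j\,d_X^p(x,x_j)$ for $p<\infty$ (and the analogous min-of-radii description for $p=\infty$, which follows from the $\le$-to-$<$ comparison of $\rad_\infty$ with its support). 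So $W$ is the sublevel set of a minimum of finitely many affine functions on $\Delta_n$ — in particular $W$ is an open, convex-on-each-affine-piece region, and it is a union of relatively open faces? No: it is not a subcomplex, so the content of the theorem is that it is nevertheless homotopy equivalent to one.

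First I would describe the target subcomplex $K$: a (closed) face $\sigma$ of $\Delta_n$ lies in $K$ precisely when $\sigma\subseteq \overline{W}$, equivalently when every vertex–combination supported on $\sigma$ has $\tau<r^p$; since $\tau$ is concave (a min of affine functions) on the face $\sigma$, and affine functions attain their extrema at vertices, $\sigma\subseteq W$ iff all vertices of $\sigma$ lie in $W$ — wait, one must be careful, concavity gives $\tau$ on $\sigma$ is $\ge$ the affine interpolation of its vertex values, so $\tau>0$ in the interior is automatic but the bound $\tau<r^p$ is a \emph{sublevel} condition on a concave function, which is \emph{not} automatically inherited from the vertices. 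This is the crux. The right statement is: $\sigma\in K$ iff $\max_{m\in\sigma}\tau(m)<r^p$; because $\tau|_\sigma$ is concave its maximum over the simplex $\sigma$ is attained at an interior point (or is the whole face constant), so membership of $\sigma$ in $K$ is a genuine condition one checks by maximizing a concave piecewise-affine function — doable by linear programming, which connects to the remark after the theorem. The key geometric claim is then: $|K|\subseteq W\subseteq \Delta_n$, and $W$ deformation retracts onto $|K|$.

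The key steps, in order: (1) set up the homeomorphism $\cP_X\cong\Delta_n$ and rewrite $\cechp{X}{r}$ as $W=\tau^{-1}((-\infty,r^p))$; (2) define $K$ as the set of faces $\sigma$ with $\overline\sigma\subseteq W$ and check $K$ is a simplicial complex (closed under taking faces — immediate) and that $|K|\subseteq W$; (3) construct the deformation retraction $W\searrow |K|$. For (3) I would work face-by-face, from top-dimensional faces of $\Delta_n$ downward: for a face $\sigma$ not in $K$, $W\cap\mathrm{int}(\sigma)$ is a sublevel set of a \emph{concave} affine-piecewise (here genuinely affine, since $\tau$ restricted to one of the linear pieces $f_{x_i}$ is affine, but $\tau$ itself is only concave) function that is strictly below $r^p$ somewhere on $\partial\sigma$ near $|K|$; using concavity, the region $\{m\in\sigma: \tau(m)<r^p\}$ is "star-shaped away from" the super-level region, and one can push points radially toward $\partial\sigma$ along rays decreasing $\tau$ — more precisely, since $\{\tau\ge r^p\}\cap\sigma$ is a \emph{convex} subset of $\sigma$ (super-level set of a concave function!), its complement in $\sigma$ deformation retracts onto the part of $\partial\sigma$ outside it, compatibly on overlaps. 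Assembling these retractions over the face poset (a standard induction on skeleta, checking compatibility on shared subfaces) yields a deformation retraction of $W$ onto $|K|$.

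The main obstacle I expect is step (3): making the face-by-face retractions \emph{compatible} so they glue to a global homotopy, and correctly handling the corner case where $\tau$ is constant on a face or where the convex exclusion set $\{\tau\ge r^p\}\cap\sigma$ touches $\partial\sigma$. The cleanest route is probably to avoid explicit radial formulas and instead invoke convexity abstractly: on each closed face $\sigma\notin K$, the set $C_\sigma:=\{m\in\sigma:\tau(m)\ge r^p\}$ is a nonempty compact convex subset of the relative interior region, and $\sigma\setminus C_\sigma$ deformation retracts onto $\partial\sigma\setminus C_\sigma$ by projecting along rays emanating from a fixed interior point of $C_\sigma$; these radial projections from faces of decreasing dimension can be performed in sequence (first on top faces, then on their boundaries, etc.), and compatibility holds because each retraction fixes the already-retracted lower skeleton. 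One then checks the composite lands in $|K|$ because a point of $\Delta_n$ lies in $|K|$ iff the minimal face containing it is in $K$. I would present this as a lemma on sublevel sets of concave piecewise-affine functions on a simplex, which may be of independent interest and keeps the argument modular.
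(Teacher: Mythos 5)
Your overall approach matches the paper's: identify $\cechp{X}{r}$ with the sublevel set $W=\tau^{-1}((-\infty,r^p))$ in $\Delta_n$ (for $p<\infty$; the paper dispatches $p=\infty$ separately via Lemma~\ref{lem:finite_infty-thickenings_homeomorphic_to_complexes}), observe that each superlevel set of $\tau$ is convex because $\tau$ is a minimum of affine functions, and collapse $W$ onto the subcomplex $K$ of faces contained in $W$ by radial projections processed from high-dimensional faces downward. That is exactly the paper's strategy, and your convexity observation is the same: the paper notes $\Delta\setminus Y$ is an intersection of half-spaces with $\Delta$.

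However, there is a genuine gap at the point you yourself flag. You claim ``compatibility holds because each retraction fixes the already-retracted lower skeleton,'' but this fails precisely when the convex set $C_\sigma=\{m\in\sigma:\tau(m)\ge r^p\}$ lies entirely in $\partial\sigma$, which can and does happen. In that case $\sigma\notin K$ (so $\mathrm{int}(\sigma)$ must still be retracted away), but every projection center $z_0\in C_\sigma$ sits on $\partial\sigma$, and radial projection from $z_0$ then moves points inside every proper face of $\sigma$ that contains $z_0$. So it does \emph{not} fix the lower skeleton, and naively doing this face-by-face gives inconsistent homotopies on shared subfaces. Your parenthetical assertion that $C_\sigma$ is ``a nonempty compact convex subset of the relative interior region'' is simply false for such $\sigma$. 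The paper resolves this by splitting the $k$-dimensional faces at each stage into two classes: those with an \emph{interior} point outside $Y$ (project simultaneously from interior points, genuinely fixing $\partial$), and those with only \emph{boundary} points outside $Y$ (choose a common boundary point $y$, define the radial projection simultaneously on \emph{all} top-dimensional faces containing $y$, check consistency on overlaps pairwise using linearity, and set the homotopy to the identity on all faces not containing $y$). This simultaneous-and-consistent construction is the missing ingredient; a per-face retraction does not glue.
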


\begin{proof}

The result holds for the case $p = \infty$ by Lemma~\ref{lem:finite_infty-thickenings_homeomorphic_to_complexes}, so we will suppose $p \in [1,\infty)$.
We begin with some background notation and observations.
Let $\sigma$ be a simplex in a Euclidean space, let $z_0 \in \sigma$, and let $\sigma_{z_0}$ be the union of the closed faces of $\sigma$ not containing $z_0$ (we always have $\sigma_{z_0} \subseteq \partial \sigma$, and $\sigma_{z_0} = \partial \sigma$ if $z_0$ is in the interior of $\sigma$).
Then there is a continuous function $P:\sigma \setminus \{ z_0 \}\to \sigma_{z_0}$ defined by projecting radially from $z_0$.
Furthermore, if $C \subseteq \sigma$ is convex and contains $z_0$, then for any $z \in \sigma \setminus C$, the line segment connecting $z$ and $P(z)$ is contained in $\sigma \setminus C$, since this lies in the line segment connecting $z_0$ and $P(z)$.
Therefore a linear homotopy shows that $P |_{\sigma \setminus C}: \sigma \setminus C \to \sigma_{z_0} \setminus C$ is a strong deformation retraction.
We will apply such retractions successively to a simplex and its faces as described below.

Let $X = \{ x_0, \dots, x_n \}$.
If $\alpha = \sum_i a_i \delta_{x_i}$, then using the notation for the Fr\'{e}chet function from Section~\ref{sec:background}, we have $F_{\alpha, p}^p(x_j) = \sum_i a_i d_X^p(x_i,x_j)$.
Thus, $\alpha \in \cechp{X}{r}$ if and only if $\sum_i a_i d_X^p(x_i,x_j) < r^p$ for some $j$.
Let $\Delta = \{ (y_0, \dots, y_n) \in \R^{n+1} \mid \sum_i y_i = 1,\, y_i \geq 0 \text{ for all $i$} \}$ be the standard $n$-simplex in $\R^{n+1}$.
By Lemma~\ref{lem:fin-prob-simplex}, $\cechp{X}{r}$ is homeomorphic to
\[
Y = \left\{ (y_0, \dots, y_n) \in \Delta \hspace{.1cm} \bigr\vert \, \sum_i y_i d_X^p(x_i,x_j) < r^p \text{ for some $j$}\right\}.
\]
Equivalently, $Y$ is a sublevel set of the function $\rho \colon \Delta \to \R$ given by 
\[
\rho(y_0, \dots, y_n) = \min_j \left\{ \sum_i y_i d_X^p(x_i,x_j) \right\}.
\]
We note that $Y$ contains the vertices of $\Delta$ by the assumption that $r>0$.
If $Y = \Delta$, then $\cechp{X}{r} \cong \Delta$, and $\Delta$ is homeomorphic to a simplex with vertex set $X$, as required.
If not, then $\Delta \setminus Y$ is convex as it is the intersection of half-spaces and $\Delta$, so we may project radially from any point $z_0 \in \Delta \setminus Y$ as above.
This shows $Y \simeq Y \cap \Delta_{z_0}$, where $\Delta_{z_0}$ is the union of the closed faces of $\Delta$ not containing $z_0$.

Since $Y \cap \Delta_{z_0}$ is contained in the boundary of $\Delta$, we will next verify that we can define retractions within the $(n-1)$-dimensional faces of $\Delta$ contained in $\Delta_{z_0}$ that contain a point not in $Y$.
More generally, we will repeat for successively lower dimensional faces, inductively obtaining a sequence of strong deformation retractions $Y_n \to Y_{n-1} \to \dots \to Y_0$, where $Y_n = Y$.
Each $Y_k$ will consist of all closed faces of $\Delta$ contained in $Y$, along with some subset of the remaining $k$-dimensional closed faces intersected with $Y$.
Thus, $Y_0$ will be a simplicial complex consisting of the closed faces of $\Delta$ contained $Y$ and will be homotopy equivalent to $\cechp{X}{r}$, as required.

\begin{figure}[h]
    \centering
    \includegraphics[width = .9\textwidth]{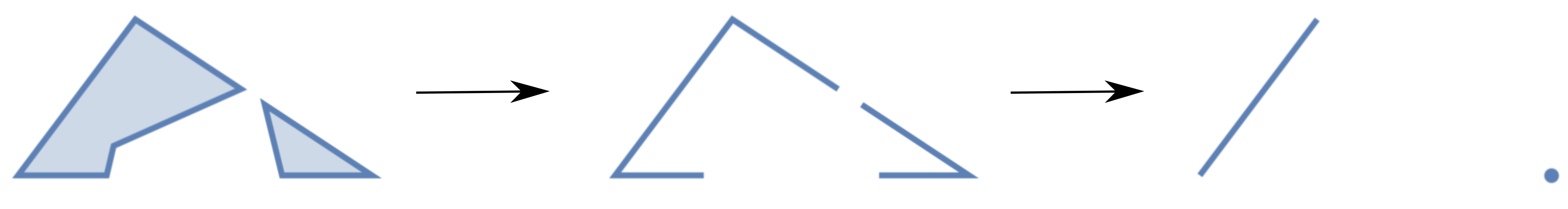}
    \caption{The sequence of deformation retractions used in the proof of Theorem~\ref{thm_pCech_homotopy_equiv_simplicial_complexes} collapses a subset of a simplex to a simplicial complex on its vertices.
    In this example, $n=2$ and the sequence of deformation retractions is $Y_2 \to Y_1 \to Y_0$.}
    \label{figure:Cech_collapse}
\end{figure}

We will use induction, so suppose $Y_k$ meets the description above.
Let $\sigma_1, \dots, \sigma_m$ be those remaining $k$-dimensional faces of $\Delta$ whose intersections with $Y$ are contained in $Y_k$ and that contain a point in their interior that is not in $Y_k$; let $\tau_1, \dots, \tau_{m'}$ be the remaining $k$-dimensional faces whose intersections with $Y$ are contained in $Y_k$ and that have only some boundary point not in $Y_k$.
On each $\sigma_l$, we may choose an interior point not in $Y_k$ and project radially from this point as above.
The homotopy is constant on the boundary, so we may define a homotopy on $Y_k$ that simultaneously retracts each $\sigma_l \cap Y_k$ to $\partial \sigma_l \cap Y_k$ and is constant on all points not in any $\sigma_l$.
Thus, $Y_k$ deformation retracts onto a subset $Y'_k$ consisting of faces of $\Delta$ that are contained in $Y$, along with $\tau_l \cap Y_k$ for all $l$.

Next, choose any point $y \in \partial \tau_1 \setminus Y'_k$.
Note that projecting radially from $y$ as above affects any simplex in $\partial \tau_1$ that contains $y$, so we must extend this to a homotopy $H$ on $Y'_k$ in a way that is consistent on the simplices containing $y$.
For any $\tau_l$ containing $y$, let $H$ be the defined on $\tau_l \cap Y'_k$ by the radial projection from $y$, as above.
For any two $\tau_l$ and $\tau_{l'}$ containing $y$, the two definitions of $H$ on $\tau_l \cap \tau_{l'} \cap Y$ are consistent, as they are both linear homotopies.
We also let $H$ be constant on any point in a face of $\Delta$ not containing $y$: this includes all faces of $\Delta$ contained in $Y$ and all $\tau_l \cap Y$ such that $y \notin \tau_l$.
The definitions are consistent on their overlap, since if $\tau_l$ contains $y$, then radial projection from $y$ is constant on the faces of $\tau_l$ that do not contain $y$.
Therefore $H$ is a well-defined homotopy, which shows that $Y'_k$ deformation retracts onto the subset that excludes the interiors of all $\tau_l$ containing $y$.
We can repeat for the remaining set of $\tau_l$ until all have been retracted.
Composing these deformation retractions, we have shown that $Y_k$ deformation retracts onto a subset $Y_{k-1}$, consisting of faces of $\Delta$ contained in $Y$ along with the intersections of $Y$ with a subset of the remaining $(k-1)$-dimensional faces of $\Delta$ (subsets of the boundaries of those $k$-dimensional simplices that were collapsed).
This completes the inductive step, so we obtain the sequence $Y_n \to Y_{n-1} \to \dots \to Y_0$ of strong deformation retractions as required.
\end{proof}

\begin{cor}\label{Coro:Cech_pSimplicalComplex}
Let $(X,d_X)$ be a finite metric space, let $p \in [1, \infty]$, and for any $r>0$, let $\mathrm{S}(X;r)$ be the simplicial complex from Theorem~\ref{thm_pCech_homotopy_equiv_simplicial_complexes} that is homotopy equivalent to $\cechp{X}{r}$.
These simplicial complexes form a filtration $\mathrm{S}(X;\smb)$, and for any integer $k \geq 0$, $H_k\circ\cechp{X}{\smb}$ and $H_k\circ \mathrm{S}(X;\smb)$ are isomorphic persistence modules.
\end{cor}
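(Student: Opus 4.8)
The plan is to produce a genuine natural transformation between the two $\R$-spaces, rather than a mere pointwise homotopy equivalence, so that applying $H_k$ yields an honest isomorphism of persistence modules. Fix $X$ with $n+1$ points and $p\in[1,\infty]$, and let $h\colon\cP_X\to\Delta$ be the $r$-independent homeomorphism onto the standard $n$-simplex from Lemma~\ref{lem:fin-prob-simplex}. Write $Y(r):=h(\cechp{X}{r})\subseteq\Delta$ for the sublevel set of $\rho$ appearing in the proof of Theorem~\ref{thm_pCech_homotopy_equiv_simplicial_complexes}, and recall that $\mathrm{S}(X;r)$ is by definition the subcomplex of $\Delta$ formed by those closed faces $\sigma$ of $\Delta$ with $\sigma\subseteq Y(r)$, viewed as a subspace of $\Delta$ through its geometric realization.

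First I would dispatch the filtration claim: for $r\le r'$ we have $\cechp{X}{r}\subseteq\cechp{X}{r'}$ by Definition~\ref{defn:cechp}, hence $Y(r)\subseteq Y(r')$ since $h$ is a fixed bijection, hence every closed face of $\Delta$ contained in $Y(r)$ is contained in $Y(r')$; therefore $\mathrm{S}(X;r)\subseteq\mathrm{S}(X;r')$, so $\mathrm{S}(X;\smb)$ is a filtration whose structure maps are subcomplex inclusions. Next I would record two natural transformations with common target the $\R$-space $Y(\smb)$ (whose structure maps are the subset inclusions $Y(r)\hookrightarrow Y(r')$): the restrictions of $h$ give homeomorphisms $h_r\colon\cechp{X}{r}\xrightarrow{\ \cong\ }Y(r)$ which assemble into a natural isomorphism precisely because they are all restrictions of the single map $h$; and the geometric realization of $\mathrm{S}(X;r)$ sits inside $Y(r)$, giving inclusions $j_r\colon\mathrm{S}(X;r)\hookrightarrow Y(r)$ whose naturality squares commute on the nose, since all four maps in each square are inclusions of subsets of $\Delta$.

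The heart of the argument is that each $j_r$ is a homotopy equivalence, and this is essentially contained in the proof of Theorem~\ref{thm_pCech_homotopy_equiv_simplicial_complexes}: that proof builds a finite chain of strong deformation retractions $Y(r)=Y_n\to Y_{n-1}\to\cdots\to Y_0$ with $Y_0$ equal to the union of the closed faces of $\Delta$ contained in $Y(r)$, i.e.\ $Y_0=\mathrm{S}(X;r)$; composing them gives a strong deformation retraction of $Y(r)$ onto $\mathrm{S}(X;r)$, so $j_r$ is a homotopy equivalence (with the degenerate case $Y(r)=\Delta$, where $j_r=\mathrm{id}$, handled trivially). Having established this, I would apply $H_k$ to the two natural transformations, obtaining morphisms of persistence modules $H_k\circ\cechp{X}{\smb}\xrightarrow{H_k(h)}H_k\circ Y(\smb)\xleftarrow{H_k(j)}H_k\circ\mathrm{S}(X;\smb)$; since $h_r$ is a homeomorphism and $j_r$ a homotopy equivalence for every $r$, both are pointwise isomorphisms and hence isomorphisms of persistence modules, and composing $H_k(j)$ with the inverse of $H_k(h)$ produces the required isomorphism $H_k\circ\cechp{X}{\smb}\cong H_k\circ\mathrm{S}(X;\smb)$.

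The main obstacle I anticipate is the verification in the previous paragraph: one must re-inspect the construction in Theorem~\ref{thm_pCech_homotopy_equiv_simplicial_complexes} to be certain the retraction it produces lands exactly on the subcomplex $\mathrm{S}(X;r)$ and is a genuine (strong) deformation retraction, not merely some homotopy equivalence. One might also worry about naturality of the retractions $Y(\smb)\to\mathrm{S}(X;\smb)$, since they depend on auxiliary choices (the base points for the radial projections) that vary with $r$; but I would sidestep this entirely by using only the inclusions $j_r$ in the opposite direction, which are canonical and visibly natural, together with the observation that a natural transformation that becomes a pointwise isomorphism after applying $H_k$ is already an isomorphism of persistence modules — no naturality of any homotopy inverse is needed. (Q-tameness, from Corollary~\ref{cor:totally_bounded_implies_q-tame}, is not required for this statement.)
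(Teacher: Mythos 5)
Your proof is correct and is essentially the paper's argument: the paper also identifies $\cechp{X}{r}$ with its homeomorphic image in $\Delta$, observes that the inclusions $\mathrm{S}(X;r)\hookrightarrow\cechp{X}{r}$ are natural (all maps in the naturality squares being subset inclusions) and are homotopy equivalences by the deformation retraction constructed in Theorem~\ref{thm_pCech_homotopy_equiv_simplicial_complexes}, and concludes that applying $H_k$ gives a pointwise isomorphism, hence an isomorphism of persistence modules. The only cosmetic difference is that you make the intermediate $\R$-space $Y(\smb)$ and the homeomorphism $h$ explicit and then invert $H_k(h)$, whereas the paper silently identifies $\cechp{X}{r}$ with $Y(r)$ from the outset.
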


\begin{proof}
The $p = \infty$ case again holds by Lemma~\ref{lem:finite_infty-thickenings_homeomorphic_to_complexes}, so let $p \in [1,\infty)$.
We will identify $\cechp{X}{r}$ with its homeomorphic image in the standard simplex.
By Theorem~\ref{thm_pCech_homotopy_equiv_simplicial_complexes}, $\mathrm{S}(X;r)$ consists of the simplices contained in $\cechp{X}{r}$.
So if $r_1 \leq r_2$, the fact that $\cechp{X}{r_1} \subseteq \cechp{X}{r_2}$ implies $\mathrm{S}(X;r_1) \subseteq \mathrm{S}(X;r_2)$, and thus $\mathrm{S}(X;\smb)$ is a filtration of simplicial complexes.

Since in the proof of the theorem we constructed a deformation retraction $\cechp{X}{r} \to \mathrm{S}(X;r)$, the inclusion $\mathrm{S}(X;r) \hookrightarrow \cechp{X}{r}$ is a homotopy equivalence for each $r$.
Therefore the induced maps on homology give isomorphisms $H_k(\mathrm{S}(X;r)) \cong H_k(\cechp{X}{r})$ for each $r$.
Furthermore, the following diagram commutes for all $r_1<r_2$, since all maps are inclusions.

\setlength\mathsurround{0pt}
\[
\begin{tikzcd}
\cechp{X}{r_1} \arrow[r, hook]                              & \cechp{X}{r_2}                              \\
\mathrm{S}(X;r_1) \arrow[r, hook] \arrow[u, hook] & \mathrm{S}(X;r_2) \arrow[u, hook]
\end{tikzcd}
\]

This shows that the induced maps on homology commute, giving a morphism of persistence modules.
Since each vertical map is an isomorphism, this is an isomorphism of persistence modules.
\end{proof}

These results show that $p$-\v{C}ech persistent homology can be computed, at least in principle.
The complex $\mathrm{S}(X;r)$ in Corollary~\ref{Coro:Cech_pSimplicalComplex} consists of all simplices contained in the $r^p$-sublevel set of the function $\rho$ in the proof of Theorem~\ref{thm_pCech_homotopy_equiv_simplicial_complexes}.
So finding the filtration $\mathrm{S}(X;\smb)$ would require finding the maximum of $\rho$ on each face.
Calculating persistent homology would involve finding the maximum of $\rho$ on each face of the necessary dimensions.

\subsection{Persistence diagrams of $\vrp{Z_{n+1}}{{\smb}}$ and $\cechp{Z_{n+1}}{{\smb}}$}
\label{app:pd_delta_1_space}

In this section, we will calculate the persistence diagrams of the 
$p$-Vietoris--Rips and $p$-\v{C}ech metric thickenings of the metric space $Z_{n+1}$, consisting of $n+1$ points with all interpoint distances equal to one.
Let $\Delta_n$ be the $n$-dimensional simplex on $n+1$ points, and let $\Delta_n^{(k)}$ denote its $k$-skeleton.

\begin{prop}
Let $Z_{n+1}$ be the metric space consisting of $n +1$ points with all interpoint distances equal to $1$.
For $\left(\frac{k}{k+1}\right)^{\frac{1}{p}}< r \leq \left(\frac{k+1}{k+2}\right)^{\frac{1}{p}}$ with $0\le k \le n-1$, we have
\[
\vrp{Z_{n+1}}{r} \simeq \cechp{Z_{n+1}}{r}
    \simeq \Delta_n^{(k)},
\]
and when $r>\left(\frac{n}{n+1}\right)^{\frac{1}{p}}$, both $\vrp{Z_{n+1}}{{r}}$ and $\cechp{Z_{n+1}}{{r}}$ become the $n$-simplex $\cP_{Z_{n+1}}$ which is contractible.
\end{prop}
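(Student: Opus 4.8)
The plan is to transfer everything to the standard simplex. By Lemma~\ref{lem:fin-prob-simplex}, identify $\cP_{Z_{n+1}}$ with $\Delta_n=\{(a_0,\dots,a_n):a_i\ge 0,\ \sum_i a_i=1\}$, so that $\sum_i a_i\,\delta_{x_i}$ corresponds to $(a_0,\dots,a_n)$. Since every nonzero distance in $Z_{n+1}$ equals $1$, a direct computation gives, for every finite $p$,
\[
\diam_p(\alpha)^p=\sum_{i\ne j}a_ia_j=1-\sum_i a_i^2,\qquad
\rad_p(\alpha)^p=\min_j\sum_{i\ne j}a_i=1-\max_j a_j,
\]
the infimum in $\rad_p$ ranging over the finite point set $Z_{n+1}$. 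Hence, with $c:=1-r^p$,
\[
\vrp{Z_{n+1}}{r}\cong\Bigl\{a\in\Delta_n:\textstyle\sum_i a_i^2>c\Bigr\},\qquad
\cechp{Z_{n+1}}{r}\cong\Bigl\{a\in\Delta_n:\max_j a_j>c\Bigr\}.
\]
In both cases the complement in $\Delta_n$ is a compact convex set $C$ — the superlevel set of a concave function in the Vietoris--Rips case, an intersection of half-spaces in the \v{C}ech case — and the same remains true after intersecting with any face of $\Delta_n$.

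Next I would do the combinatorial bookkeeping relating $c$ to the skeleton. For $r$ in the stated interval one has $c\in[\tfrac1{k+2},\tfrac1{k+1})$, i.e.\ $\tfrac1c\in(k+1,k+2]$. On a face $F_T$ of $\Delta_n$ spanned by $m+1$ vertices, both $\sum_i a_i^2$ and $\max_j a_j$ attain their minimum $\tfrac1{m+1}$ at the barycenter of $F_T$; consequently $F_T$ meets $C$ if and only if $\tfrac1{m+1}\le c$, that is, if and only if $\dim F_T=m\ge k+1$. Thus every closed face of dimension $\le k$ lies entirely in the thickening, so $\Delta_n^{(k)}$ is contained in both $\vrp{Z_{n+1}}{r}$ and $\cechp{Z_{n+1}}{r}$, whereas every face of dimension $\ge k+1$ meets the convex obstruction set $C$.

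Finally I would invoke the inductive sequence of radial-projection strong deformation retractions used in the proof of Theorem~\ref{thm_pCech_homotopy_equiv_simplicial_complexes}: for a simplex $\sigma$ and a convex $C\subseteq\sigma$ containing a chosen point $z_0$, $\sigma\setminus C$ deformation retracts onto $\sigma_{z_0}\setminus C$, and performing this compatibly across all faces from top dimension downward collapses $\Delta_n\setminus C$ onto the subcomplex spanned by the faces disjoint from $C$. By the previous paragraph that subcomplex is exactly $\Delta_n^{(k)}$, so both $\vrp{Z_{n+1}}{r}$ and $\cechp{Z_{n+1}}{r}$ deformation retract onto $\Delta_n^{(k)}$; for the \v{C}ech case this is a direct application of Theorem~\ref{thm_pCech_homotopy_equiv_simplicial_complexes} (whose associated complex is then $\Delta_n^{(k)}$), and for the Vietoris--Rips case the very same argument applies, since the proof only uses convexity of $C$ globally and within each face, which holds because $\diam_p^p$ is concave. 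The case $p=\infty$ reduces to Lemma~\ref{lem:finite_infty-thickenings_homeomorphic_to_complexes}. When $r>(\tfrac{n}{n+1})^{1/p}$ we get $c<\tfrac1{n+1}=\min_{\Delta_n}\sum_i a_i^2=\min_{\Delta_n}\max_j a_j$, so $C=\varnothing$ and $\vrp{Z_{n+1}}{r}=\cechp{Z_{n+1}}{r}=\cP_{Z_{n+1}}$, which is convex, hence contractible. The one genuinely nonroutine point is the compatibility of the radial projections on shared faces — but that is precisely the bookkeeping already carried out in Theorem~\ref{thm_pCech_homotopy_equiv_simplicial_complexes}, so I expect no new difficulty there.
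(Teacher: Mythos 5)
Your proof is correct and takes essentially the same approach as the paper's: both identify $\cP_{Z_{n+1}}$ with the standard simplex, observe that the complement of the thickening is convex (globally and on each face), determine exactly which faces meet the complement via barycenter calculations, and then collapse by the inductive radial-projection retraction from Theorem~\ref{thm_pCech_homotopy_equiv_simplicial_complexes}/Corollary~\ref{Coro:Cech_pSimplicalComplex}. The only difference is stylistic: you make the algebra fully explicit via the formulas $\diam_p^p(\alpha)=1-\sum_i a_i^2$ and $\rad_p^p(\alpha)=1-\max_j a_j$, where the paper instead asserts the key barycenter observation without computation.
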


\begin{proof}
We will use the following observation, for which we omit the proof.
Let $\alpha$ be a measure in $\cP_{Z_{m+1}}$, where $m+1$ is any positive integer.
Then both $\diam_p$ and $\rad_p$ will obtain the maximum $\left(\frac{m}{m+1}\right)^{\frac{1}{p}}$ only at the uniform measure supported on the $m+1$ points of the metric space $Z_{m+1}$, i.e.\ the barycenter of the $m$-simplex $\cP_{Z_{m+1}}$.

Let $k$ be an integer where $0\leq k\leq n-1$.
For any $r$ in $\left(\left(\frac{k}{k+1}\right)^{\frac{1}{p}}, \left(\frac{k+1}{k+2}\right)^{\frac{1}{p}}\,\right]$, we will show both $\vrp{Z_{n+1}}{r}$ and $\cechp{Z_{n+1}}{r}$ can be deformation retracted to the $k$-skeleton of the $n$-simplex $\cP_{Z_{n+1}}$, denoted $\Delta_{n}^{(k)}$.
Since $(\frac{k}{k+1})^{\frac{1}{p}}<r \leq (\frac{k+1}{k+2})^{\frac{1}{p}} $, we know both $\vrp{Z_{n+1}}{r}$ and $\cechp{Z_{n+1}}{r}$ contain the $k$-skeleton of $\Delta_{n}$, but not any higher-dimensional skeleta.
We can then use the radial projection as in Corollary~\ref{Coro:Cech_pSimplicalComplex} to work out the retraction.
From the above observation, the barycenter of any simplex in $\Delta_{n}$ with dimension higher than $k$ is not in $\vrp{Z_{n+1}}{r}$ and $\cechp{Z_{n+1}}{r}$.
We can use these barycenters as the base point for radial projection.
We start from $\Delta_{n}$: if the radial projection based at its barycenter can be restricted to $\vrp{Z_{n+1}}{r}$ or $\cechp{Z_{n+1}}{r}$, then it will retract them onto their intersections with the $(n-1)$-skeleton $(\Delta_{n})^{(n-1)}$.
From the proof of Corollary~\ref{Coro:Cech_pSimplicalComplex}, we know the restriction is well-defined for $\cechp{Z_{n+1}}{r}$.
Here, it is also well-defined for $\vrp{Z_{n+1}}{r}$, because our base point for the radial projection is an interior maximum of the quadratic function $\diam_p^p$, and $\diam_p^p$ will be concave along any line that passes through the base point.
This in turn shows the line segment connecting two points outside of $\vrp{Z_{n+1}}{r}$ is also outside of $\vrp{Z_{n+1}}{r}$.
We can continue the retraction process inductively on $\vrp{Z_{n+1}}{r}\cap \Delta_{n}^{(l)}$ and $\cechp{Z_{n+1}}{r}\cap \Delta_{n}^{(l)}$, where $k<l\leq n-1$, with radial projections based at barycenters of $l$-simplices of $\Delta_{n}$.
This results in a retraction onto $\vrp{Z_{n+1}}{r}\cap \Delta_{n}^{(l-1)}$ and $\cechp{Z_{n+1}}{r}\cap \Delta_{n}^{(l-1)}$, respectively.
Eventually, we will deformation retract onto $\vrp{Z_{n+1}}{r}\cap \Delta_{n}^{(k)}$ and $\cechp{Z_{n+1}}{r}\cap \Delta_{n}^{(k)}$, which are both equal to $\Delta_n^{(k)}$.
\end{proof}

\begin{cor}
Let $n$ be a positive integer and let $Z_{n+1}$ be the metric space consisting of $n+1$ points with interpoint distance equal to $1$.
The persistence diagrams of $\vrp{Z_{n+1}}{{\smb}}$ and $\cechp{Z_{n+1}}{{\smb}}$ are the same and of the following form:
\[
\dgmvr_{k, p}(Z_{n+1}) = \dgmcech_{k, p}(Z_{n+1}) =  \begin{cases}
\left(0, \left(\tfrac{1}{2}\right)^{\tfrac{1}{p}}\,\right)^{\otimes {n}}\oplus (0, \infty)  &\,\text{if $k=0$},\\
\left(\left(\tfrac{k}{k+1}\right)^{\tfrac{1}{p}}, \left(\tfrac{k+1}{k+2}\right)^{\tfrac{1}{p}}\,\right)^{\otimes {{n}\choose{k+1}}}  &\,\text{if $0<k\leq n-1$},\\
\emptyset &\,\text{if $k> n-1$}.
\end{cases}
\]
\end{cor}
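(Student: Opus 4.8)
The plan is to read the corollary off the preceding proposition, which records the homotopy type of $\vrp{Z_{n+1}}{r}$ (and, by the same maps, of $\cechp{Z_{n+1}}{r}$) at every scale: the empty space for $r\le 0$; the $k$-skeleton $\Delta_n^{(k)}$ for $r$ in the plateau $I_k:=\big((\tfrac{k}{k+1})^{1/p},(\tfrac{k+1}{k+2})^{1/p}\big]$, $0\le k\le n-1$; and the contractible simplex $\cP_{Z_{n+1}}$ for $r>(\tfrac{n}{n+1})^{1/p}$. Since $Z_{n+1}$ is finite, hence totally bounded, Corollary~\ref{cor:totally_bounded_implies_q-tame} gives $Q$-tameness of $H_k\circ\vrp{Z_{n+1}}{\smb}$ and $H_k\circ\cechp{Z_{n+1}}{\smb}$, so their persistence diagrams are well defined; and because the proposition realizes $\vrp{Z_{n+1}}{r}\simeq\Delta_n^{(k)}\simeq\cechp{Z_{n+1}}{r}$ by the same deformation retractions, it suffices to compute $\dgmvr_{k,p}(Z_{n+1})$.

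The crucial step, and the one requiring real care, is \emph{naturality}: one must know that these homotopy equivalences assemble into isomorphisms of persistence modules along each plateau and into the inclusion-induced maps across plateaus. I would argue this exactly as in the proof of Corollary~\ref{Coro:Cech_pSimplicalComplex}: the deformation retractions of the proposition are radial projections away from barycenters of faces of $\Delta_n$, and those barycenters are independent of $r$. Hence for $r\le r'$ in the same plateau $I_k$, the retraction $\vrp{Z_{n+1}}{r'}\to\Delta_n^{(k)}$ carries $\vrp{Z_{n+1}}{r}$ into $\Delta_n^{(k)}\subseteq\vrp{Z_{n+1}}{r}$ and restricts there, so the square with top edge $\mathrm{id}_{\Delta_n^{(k)}}$, vertical edges the inclusions of $\Delta_n^{(k)}$, and bottom edge the inclusion $\vrp{Z_{n+1}}{r}\hookrightarrow\vrp{Z_{n+1}}{r'}$ commutes strictly; as the vertical maps are homotopy equivalences, the structure map is an isomorphism on every $H_k$ inside a plateau. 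The same argument with top edge $\Delta_n^{(k)}\hookrightarrow\Delta_n^{(k+1)}$ (resp.\ $\Delta_n^{(n-1)}\hookrightarrow\cP_{Z_{n+1}}$) shows that the structure maps between plateaus are, on homology, precisely the inclusion-induced maps $\iota_*\colon H_*(\Delta_n^{(k)})\to H_*(\Delta_n^{(k+1)})$.

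Next I would record the elementary facts about skeleta: for $0\le k\le n-1$ the $k$-skeleton of an $n$-simplex has $\tilde H_i(\Delta_n^{(k)})$ free of rank $\binom{n}{k+1}$ for $i=k$ and zero otherwise (standard; e.g.\ $\Delta_n^{(k)}$ is $(k-1)$-connected with Euler characteristic $1+(-1)^k\binom{n}{k+1}$, hence a wedge of $\binom{n}{k+1}$ copies of $\Sp^k$), while $\cP_{Z_{n+1}}$ is contractible. Consequently, for $k\ge 1$ the map $\iota_*\colon H_k(\Delta_n^{(k)})\to H_k(\Delta_n^{(k+1)})$ is zero (the target vanishes in degree $k$ since $k\le n-1$) and $\iota_*\colon H_{k-1}(\Delta_n^{(k-1)})\to H_{k-1}(\Delta_n^{(k)})$ is zero; for $k=0$, $\iota_*\colon H_0(\Delta_n^{(0)})\to H_0(\Delta_n^{(1)})$ is a surjection of a rank-$(n{+}1)$ space onto a rank-$1$ space, and all later $H_0$-maps are isomorphisms.

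Finally I would assemble the persistence modules. For $1\le k\le n-1$, $H_k\circ\vrp{Z_{n+1}}{\smb}$ is zero except on $I_k$, where it is constant of rank $\binom{n}{k+1}$ with identity structure maps, and the maps into and out of $I_k$ vanish; thus it is a sum of $\binom{n}{k+1}$ interval modules with endpoints $(\tfrac{k}{k+1})^{1/p}$ and $(\tfrac{k+1}{k+2})^{1/p}$, yielding the stated multiset of diagram points. For $k\ge n$ the module is identically zero. For $k=0$, the module has rank $n{+}1$ on $\big(0,(\tfrac12)^{1/p}\big]$ and rank $1$ afterward, with the single nontrivial transition a rank-$1$ surjection that is then an isomorphism; it decomposes as $n$ interval modules with endpoints $0$ and $(\tfrac12)^{1/p}$ plus one interval module with endpoints $0$ and $\infty$, giving $\big(0,(\tfrac12)^{1/p}\big)^{\otimes n}\oplus(0,\infty)$. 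Running the identical argument for $\cechp{Z_{n+1}}{\smb}$ produces the same diagram. The main obstacle is the naturality verification of the second paragraph; once the homotopy equivalences are known to commute with the filtration maps, everything else is bookkeeping with the skeleta of a simplex.
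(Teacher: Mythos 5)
Your proposal is correct and follows the same basic approach as the paper: read off the homotopy types $\Delta_n^{(k)}$ from the preceding proposition, plug in the standard homology of skeleta of an $n$-simplex, and decompose the resulting persistence modules into intervals. The one substantive thing you add beyond what the paper writes is the naturality argument in your second paragraph --- verifying that the homotopy equivalences to $\Delta_n^{(k)}$ commute with the structure maps, both within a plateau and across plateaus, because the radial deformation retractions are anchored at $r$-independent barycenters and restrict consistently under inclusions of sublevel sets. The paper's proof leaves this step implicit (it says only ``combine with the previous result''), so your version is a more complete writeup of essentially the same argument; nothing in it conflicts with the paper, and the interval decomposition and endpoint bookkeeping are all correct.
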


The superscripts denote the multiplicity of a point in the persistence diagram.

\begin{proof}
For any integer $k$ with $0< k< n-1$, we know the homology of the $k$-skeleton of an $n$-simplex $\Delta_n$ is given by
\[
H_l(\Delta_n^{(k)}, \mathbb{Z}) = \begin{dcases}
\mathbb{Z} &\, \text{if $l=0$},\\
\mathbb{Z}^{{n}\choose {k+1}} &\, \text{if $l=k$},\\
\emptyset &\, \text{otherwise}.
\end{dcases}
\]
We get the result by combining with the previous result on the homotopy types of $\vrp{Z_{n+1}}{{\smb}}$ and $\cechp{Z_{n+1}}{{\smb}}$.
\end{proof}

\subsection{Zero-dimensional persistent homology of \(\vrp{X}{\smb}\) and \(\cechp{X}{\smb}\)}
\label{app:PH0}

For a finite metric space $X$, we will show that the $0$-dimensional persistent homology of $\vrp{X}{\smb}$ and $\cechp{X}{\smb}$ are the same, and that they both recover the single-linkage clustering up to a constant factor related to $p$.

\begin{lem}\label{lem:birth_of_zero_PH}
For $X$ a finite metric space, the birth time for all intervals in the $0$-dimensional barcodes of $\vrp{X}{r}$ and $\cechp{X}{r}$ is zero.
\end{lem}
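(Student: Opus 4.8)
The plan is to show that for any $r > 0$ and any point $x_0 \in X$, the Dirac measure $\delta_{x_0}$ lies in both $\vrp{X}{r}$ and $\cechp{X}{r}$, which immediately forces every connected component to be nonempty from time $0^+$ onwards, so no interval in $H_0$ can have positive birth time. Concretely, $\diam_p(\delta_{x_0}) = 0$ since $\iint_{X\times X} d_X^p(x,x')\,\delta_{x_0}(dx)\,\delta_{x_0}(dx') = d_X^p(x_0,x_0) = 0$ (and similarly $\diam_\infty(\delta_{x_0}) = \diam(\{x_0\}) = 0$); likewise $\rad_p(\delta_{x_0}) = 0$ by taking the infimum over $x \in X$ and evaluating at $x = x_0$. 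Hence $\delta_{x_0} \in \vrp{X}{r} \subseteq \cechp{X}{r}$ for every $r > 0$.

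From this, I would argue as follows. For a finite metric space $X = \{x_0,\dots,x_n\}$, the measures $\delta_{x_0},\dots,\delta_{x_n}$ all lie in $\vrp{X}{r}$ (resp.\ $\cechp{X}{r}$) for every $r>0$. Moreover, every $\alpha \in \cP_X$ lies in some path component: indeed, writing $\alpha = \sum_i a_i \delta_{x_i}$, I would like a path inside the filtration at level $r$ from $\alpha$ to one of the $\delta_{x_i}$. A clean way is to use the convexity of the relevant functional along suitable linear interpolations, but the more robust route here is simply to observe that for $r$ arbitrarily small there may be no such path; instead the correct statement is about the \emph{persistence module} $H_0 \circ \vrp{X}{\smb}$: its structure maps and the fact that the vertex measures $\delta_{x_i}$ are present at all scales $r>0$ means the rank of $H_0$ is already "saturated" by classes born at time $0$. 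So I would phrase the argument as: for any $r > 0$, the inclusion-induced map $H_0(\vrp{X}{r}) \to H_0(\vrp{X}{r'})$ for $r \le r'$ is surjective onto the image coming from classes represented by $\delta_{x_i}$, and as $r \downarrow 0$ every component of $\vrp{X}{r'}$ already contains at least one $\delta_{x_i}$ (since each path component must, being a union of measures); hence every bar in $\dgm_0$ is the image of a class present for all $r > 0$, i.e.\ has birth time $0$. The same verbatim argument applies to $\cechpf{X}$ using Proposition~\ref{prop:nesting} or directly the computation $\rad_p(\delta_{x_i}) = 0$.

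The main obstacle I anticipate is the bookkeeping needed to make "every bar has birth time $0$" precise in the language of undecorated persistence diagrams with the strict ($<$) convention: one must check that $\vrp{X}{r}$ is nonempty for all $r > 0$ (true, by the $\delta_{x_i}$ computation) and that the number of path components of $\vrp{X}{r}$ is non-increasing and stabilizes, so that $H_0$ is pointwise finite-dimensional and its diagram is well-defined — but Q-tameness is already available from Corollary~\ref{cor:totally_bounded_implies_q-tame}. Given Q-tameness, the decorated-to-undecorated passage of~\cite{chazal2016structure} handles the endpoint conventions, and the only real content is the identity $\diam_p(\delta_{x_0}) = \rad_p(\delta_{x_0}) = 0$ together with the observation that every path component of the filtration at scale $r$ meets the finite set $\{\delta_{x_0},\dots,\delta_{x_n}\}$ — the latter because $\cP_X$ is a simplex (Lemma~\ref{lem:fin-prob-simplex}) whose components of the sublevel set each contain a vertex, since the sublevel set contains all vertices and a point in the open star of a vertex is connected to that vertex within any sublevel set containing both. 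I would write the proof in two short paragraphs: first the vanishing computation for $\diam_p$ and $\rad_p$ on Diracs, then the component-counting consequence for $H_0$.
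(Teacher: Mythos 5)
Your reduction of the problem is correct: computing $\diam_p(\delta_{x_0}) = \rad_p(\delta_{x_0}) = 0$ places all Dirac masses in the filtration at every scale $r>0$, and if every path component of $\vrp{X}{r}$ (resp.\ $\cechp{X}{r}$) contains some $\delta_{x_i}$, then every class in $H_0$ at every scale is already represented at all smaller scales, so all births are at $0$. Q-tameness (via Corollary~\ref{cor:totally_bounded_implies_q-tame} or just the finiteness of $X$) makes the diagram well-defined. So far so good, and this matches the paper's strategy.

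However, the step you use to close the argument --- ``a point in the open star of a vertex is connected to that vertex within any sublevel set containing both'' --- is false as a general topological statement, and this is exactly the non-trivial content of the lemma. A sublevel set of an arbitrary continuous function on a simplex may contain all vertices and an interior point $\alpha$ yet place $\alpha$ in a separate component: for instance, take a $2$-simplex and a function equal to $0$ on the vertices and at the barycenter but large on an annulus around the barycenter; the $1$-sublevel set contains the barycenter and all three vertices but the barycenter is in its own component despite lying in the open star of every vertex. What makes the claim true for $\diam_p$ and $\rad_p$ is a structural property of these specific functionals along linear paths to a \emph{well-chosen} Dirac mass, precisely the ``convexity of the relevant functional along suitable linear interpolations'' that you mention as an alternative route and then set aside. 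You cannot set it aside; it is the substance of the proof.

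Concretely, the paper's argument is as follows. Given $\alpha \in \vrp{X}{r}$, one has $\int_X \bigl(\dWp(\alpha,\delta_x)\bigr)^p\, \alpha(dx) = \diam_p^{\,p}(\alpha) < r^p$, so there exists $x_0 \in \supp(\alpha)$ with $\dWp(\alpha,\delta_{x_0}) < r$ (the choice of $x_0$ depends on $\alpha$ --- it is not an arbitrary vertex in whose open star $\alpha$ lies). One then verifies by the Minkowski/triangle-inequality estimates in the paper that $\diam_p\bigl((1-t)\alpha + t\,\delta_{x_0}\bigr) < r$ for all $t\in[0,1]$, and similarly for $\rad_p$ one uses $\dWp\bigl((1-t)\alpha+t\,\delta_{x_0},\ \delta_{x_0}\bigr)=(1-t)\,\dWp(\alpha,\delta_{x_0})<r$. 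This linear-path estimate is the missing step in your writeup; without it, the claim that each component of a sublevel set contains a vertex has no justification.
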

\begin{proof}
Since all delta measures $\delta_x$ have $\diam_p$ and $\rad_p$ equal zero, it suffices to show for any measure $\alpha$ in $\vrp{X}{r}$ (or $\cechp{X}{r}$), there is a path in $\vrp{X}{r}$ (or $\cechp{X}{r}$) that connects $\alpha$ with some delta measure.

For $\alpha$ a measure in $\vrp{X}{r}$, we have
\[\int_X \big(\dWp(\alpha, \delta_x)\big)^p\, \alpha(dx) = \big(\diam_p(\alpha)\big)^p < r^p.\]
Therefore, there is some $x_0\in \mathrm{supp}(\alpha)$ such that $\dWp(\alpha, \delta_{x_0}) < r^p$.
We then pick the path $\alpha_t = (1-t)\, \alpha + t\, \delta_{x_0}$ for $t\in[0,1]$.
Now consider
\begin{align*}
\diam_p(\alpha_t) & = \left(\int_X \big(\dWp(\alpha_t, \delta_x)\big)^p\, \alpha_t(dx)\right)^{\frac{1}{p}} \\
& =\left(\int_X \big((1-t)\,\dWp(\alpha, \delta_x) + t\,\dWp(\delta_{x_0}, \delta_x) \big)^p\, \alpha_t(dx)\right)^{\frac{1}{p}} \\
&\leq(1-t)\,\left(\int_X \big(\dWp(\alpha, \delta_x)\big)^p\, \alpha_t(dx)\right)^{\frac{1}{p}}+ t\,\left(\int_X \big(\dWp(\delta_{x_0}, \delta_x )\big)^p\, \alpha_t(dx)\right)^{\frac{1}{p}} \\
&\leq(1-t)\,\bigg((1-t)\,\diam_p^p(\alpha) + t\, \big(\dWp(\alpha, \delta_{x_0}) \big)^p\bigg)^{\frac{1}{p}}+ t\,\bigg((1-t)\,\big(\dWp(\alpha, \delta_{x_0})\big)^p\, \bigg)^{\frac{1}{p}} \\
& < (1-t)\, r + t(1-t)^\frac{1}{p}\, r \\
& < r.
\end{align*}
This shows $\alpha_t\in\vrp{X}{r}$, and the path is continuous by Proposition~\ref{prop:linear_homotopies}.

For $\alpha$ a measure in $\cechp{X}{r}$, then there is some $x_0$ with $\dWp(\alpha, \delta_{x_0}) < r$.
Then $\dWp((1-t)\, \alpha + t\,\delta_{x_0}, \delta_{x_0}) = (1-t)\,\dWp(\alpha, \delta_{x_0}) < r$.
This shows $\alpha_t$ is a continuous path in $\cechp{X}{r}$.
\end{proof}

\begin{prop}
Let $(X, d_X)$ be a finite metric space.
Then the $0$-dimensional persistent module of $\vrp{X}{\smb}$ and $\cechp{X}{\smb}$ are both equal to the $0$-dimensional persistence module of the Vietoris--Rips simplicial complex filtration of the rescaled metric space $(X, \left(\frac{1}{2}\right)^p\, d_X)$.
\end{prop}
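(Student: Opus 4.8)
The plan is to reduce the $0$-dimensional persistent homology of $\vrp{X}{\smb}$ and $\cechp{X}{\smb}$ to counting path-components of the relevant spaces at each scale, then match these path-components to those of the Vietoris--Rips simplicial complex of the rescaled metric space. By Lemma~\ref{lem:birth_of_zero_PH}, every interval in the $0$-dimensional barcode of $\vrp{X}{r}$ or $\cechp{X}{r}$ is born at $0$, so the persistence module is entirely determined by the function $r\mapsto \pi_0(\vrp{X}{r})$ (respectively $\pi_0(\cechp{X}{r})$) together with the maps induced by inclusion; equivalently, by which pairs of vertices $\delta_x,\delta_{x'}$ become connected by which scale. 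Since $X$ is finite, $\cP_X$ is a simplex (Lemma~\ref{lem:fin-prob-simplex}), and the argument used in Lemma~\ref{lem:birth_of_zero_PH} shows every measure in $\vrp{X}{r}$ or $\cechp{X}{r}$ can be joined by a continuous path to a Dirac mass $\delta_{x_0}$ with $x_0\in\supp(\alpha)$; so the path-components are determined by an equivalence relation on $X$ itself.

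First I would make precise the equivalence relation. Two points $x,x'\in X$ lie in the same path-component of $\vrp{X}{r}$ iff there is a finite chain $x=z_0,z_1,\dots,z_\ell=x'$ such that for each consecutive pair the measure $\tfrac12\delta_{z_i}+\tfrac12\delta_{z_{i+1}}$ lies in $\vrp{X}{r}$; indeed, one direction is clear by concatenating straight-line paths, and the other follows because any two adjacent Dirac masses in a common path-component can be connected and, as in Lemma~\ref{lem:birth_of_zero_PH}, the intermediate measures retract down to Dirac masses. Now $\diam_p(\tfrac12\delta_z+\tfrac12\delta_{z'}) = \big(\tfrac12 d_X^p(z,z')\big)^{1/p} = (\tfrac12)^{1/p} d_X(z,z')$, and this is the \emph{smallest} $p$-diameter among measures whose support contains both $z$ and $z'$ — a one-line convexity check, since $\diam_p$ restricted to the edge $[\delta_z,\delta_{z'}]$ of $\cP_X$ is a concave function (for $\diam_p^p$ this is the standard computation $\diam_p^p((1-t)\delta_z+t\delta_{z'}) = 2t(1-t)d_X^p(z,z')$) maximized at the midpoint, and the same bound governs $\rad_p$ via Proposition~\ref{prop:basic-diamp} at the level of this relation. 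Hence $x\sim_r x'$ in $\vrp{X}{r}$ (or $\cechp{X}{r}$) iff $x$ and $x'$ are joined by a chain of points at $d_X$-distance strictly less than $(\tfrac12)^{-1/p}\cdot\big(\text{something}\big)$ — more carefully, iff there is a chain with each step of $d_X$-length $<r\cdot 2^{1/p}$... wait, I should state it cleanly: $\tfrac12\delta_z+\tfrac12\delta_{z'}\in\vrp{X}{r}$ iff $(\tfrac12)^{1/p}d_X(z,z')<r$ iff $d_X(z,z')<2^{1/p}r$, i.e.\ iff $(\tfrac12)^p d_X(z,z') < (\tfrac12)^p 2^{1/p} r$; so in the rescaled space $(X,(\tfrac12)^{p}d_X)$ — here I would double-check the exponent in the proposition's statement, which reads $(\tfrac12)^p$ — the condition becomes a Vietoris--Rips edge condition at a rescaled parameter.

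Then I would assemble this into an isomorphism of persistence modules. Define $c(p)$ so that $\tfrac12\delta_z+\tfrac12\delta_{z'}\in\vrp{X}{r}$ exactly when the edge $[z,z']$ is in $\vr{(X,c(p)d_X)}{r}$; the computation above gives the needed constant, and $\vr{(X,c(p)d_X)}{r}$ has an edge $[z,z']$ iff $c(p)d_X(z,z')<r$. The path-components of $\vrp{X}{r}$ are then in natural bijection with those of $\vr{(X,c(p)d_X)}{r}$, compatibly with inclusions as $r$ increases, because both are described by the same single-linkage merge structure on the point set $X$ with the same threshold function; applying $H_0$ gives the claimed isomorphism of persistence modules, and the $\cechp{}$ case is identical since $\rad_p$ and $\diam_p$ induce the same relation on pairs (both are controlled by the edge computation, and a measure supported on a two-point set has $\rad_p$ equal to half its $\diam_p$ — or one just notes $\vrp{X}{r}\subseteq\cechp{X}{r}\subseteq\vrp{X}{2r}$ does \emph{not} suffice, so I would instead directly compute $\rad_p(\tfrac12\delta_z+\tfrac12\delta_{z'})$, which equals $\min_{x}F_{\cdot,p}(x)$, attained at $x\in\{z,z'\}$, giving $(\tfrac12)^{1/p}d_X(z,z')$ — the \emph{same} value). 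The main obstacle is the first step: carefully justifying that path-connectedness in these metric thickenings is governed purely by the two-point (edge) condition, i.e.\ that no "higher-dimensional shortcut" through a measure with large support can connect two Dirac masses that the edge-chain relation keeps apart. This is handled by the retraction-along-a-line argument of Lemma~\ref{lem:birth_of_zero_PH} together with the concavity of $\diam_p^p$ and $\rad_p$ (hence of the relevant sublevel sets' complements' convexity) along segments, but it requires care to push an arbitrary continuous path in $\cP_X$ down to a chain of Dirac masses while staying within the sublevel set.
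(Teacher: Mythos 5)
Your computation $\diam_p(\tfrac12\delta_z+\tfrac12\delta_{z'})=(\tfrac12)^{1/p}d_X(z,z')$ is correct, and your suspicion about the exponent in the statement is well founded: the rescaling factor should be $(\tfrac12)^{1/p}$, not $(\tfrac12)^{p}$ (they coincide only at $p=1$), so the proposition as printed carries a typo which your derivation exposes. The overall plan --- reduce via Lemma~\ref{lem:birth_of_zero_PH} to a connectivity question about Dirac masses, then identify merge scales with rescaled distances --- is also the paper's plan. But there are two genuine gaps.

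First, a concrete error: you claim $\rad_p(\tfrac12\delta_z+\tfrac12\delta_{z'})=\inf_{x\in X}F_{\cdot,p}(x)$ is attained at $x\in\{z,z'\}$ and therefore equals $(\tfrac12)^{1/p}d_X(z,z')$. That is false in general, because the infimum runs over \emph{all} of $X$, not over the support. Take $X=\{z,z',w\}$ with $d_X(z,z')=1$ and $d_X(z,w)=d_X(z',w)=\tfrac12$: then the $p$-Fr\'echet function of $\tfrac12\delta_z+\tfrac12\delta_{z'}$ at $w$ is $\tfrac12$, which for $p>1$ is strictly below $(\tfrac12)^{1/p}$. So $\diam_p$ and $\rad_p$ do \emph{not} induce the same relation on two-point measures, and the \v{C}ech half of your argument does not go through as written. (The proposition is nonetheless true: whenever $\rad_p(\tfrac12\delta_z+\tfrac12\delta_{z'})<r$, any witness $w$ has $(\tfrac12)^{1/p}d_X(z,w)<r$ and $(\tfrac12)^{1/p}d_X(z',w)<r$, since each of the two terms making up $F^p_{\cdot,p}(w)$ is dominated by the whole; so $z$ and $z'$ merge at the same rescaled Vietoris--Rips scale, only through $w$ rather than directly. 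But that is a different argument from the one you give.)

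Second, the part you yourself flag as ``the main obstacle'' --- that no path through the interior of $\cP_X$ can connect two Dirac masses that the edge-chain relation keeps apart --- is left as a sketch, and the path-collapsing/retraction strategy you gesture at would be delicate to make precise. The paper avoids it entirely with a functoriality device: for $x'$ a closest point to $x$, the map $g_{x,x'}\colon X\to\{x,x'\}$ sending $x\mapsto x$ and everything else to $x'$ is $1$-Lipschitz; its pushforward $(g_{x,x'})_\sharp$ is a continuous map from $\vrp{X}{r}$ (resp.\ $\cechp{X}{r}$) to the corresponding thickening of the two-point space, and in that target one reads off directly that $\delta_x$ and $\delta_{x'}$ lie in distinct components for $r$ below the threshold. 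Adopting that pushforward step is the clean way to close this gap in your plan.
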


\begin{proof}
From Lemma~\ref{lem:birth_of_zero_PH}, we know that all the bars for the $0$-dimensional persistent module are born at zero.
Let $x$ be a point in $(X, d)$, and let $x'$ be a closest point to $x$ in the finite metric space $X$.
Then it suffices to show that$\delta_x$ and $\delta_{x'}$ will only be in the same connected component of $\vrp{(X, d)}{r}$ or $\cechp{(X, d)}{r}$ for any $r>\left(\frac{1}{2}\right)^p\, d_X(x, x')$.

Since the path $\gamma_t = (1-t)\, \delta_x + t\delta_{x'}$ has maximal $\diam_p$ and $\rad_p$ given by $\left(\frac{1}{2}\right)^p\, d_X(x, x')$, we know $\delta_x$ and $\delta_{x'}$ will be in the same connected component of $\vrp{(X, d)}{r}$ or $\cechp{(X, d)}{r}$ for any $r>\left(\frac{1}{2}\right)^p\, d_X(x, x')$.

On the other hand, let $(Y, d_Y)$ be the metric space $(\{x, x'\}, d_X|_{\{x, x'\}})$.
Then, the map $g_{x, x'}\colon (X, d)\to (Y, d_Y)$ that maps $x$ to $x$ and all other points to $x'$ is a $1$-Lipschitz map.
The map $g_{x, x'}$ induces a continuous map $G_{x, x'}\colon \vrp{X}{r} \to \vrp{Y}{r}$ for any $r> 0$ via pushforward.
Note that for any $r\leq \left(\frac{1}{2}\right)^p\, d_X(x, x')$, the image of $\delta_x$ and $\delta_{x'}$ are not in the same connected component of $\vrp{Y}{r}$.
By the continuity of $G_{x, x'}$, the delta masses $\delta_x$ and $\delta_{x'}$ cannot be in the same same connected component of $\vrp{X}{r}$ either.
A similar argument works for $\cechp{X}{r}$ as well.
This concludes our proof.
\end{proof}

\subsection{Crushings and the homotopy type distance}
\label{app:crushings}

A crushing is a particular type of deformation retraction that doesn't increase distances.
In this appendix, we consider the effects of a crushing applied to the metric space underlying a metric thickening.

Let $X$ be a metric space and $A\subseteq X$ is a subspace.
Following~\cite{Hausmann1995}, a \emph{crushing} from $X$ to $A$ is defined as a distance non-increasing strong deformation retraction from $X$ to $A$, that is, a continuous map $H\colon X\times [0, 1]\to X$ satisfying
\begin{enumerate}
    \item $H(x, 1) = x$, $H(x, 0)\in A$, $H(a, t) = a$ if $a\in A$.
    \item $d_X(H(x, t'), H(y, t'))\leq d_X(H(x, t), H(y, t))$ whenever $t'\leq t$.
\end{enumerate}
When this happens we say that \emph{$X$ can be crushed onto $A$.}

In~\cite[Proposition~2.2]{Hausmann1995}, Haussman proves that if $X$ can be crushed onto $A$, then inclusion $\vr{A}{r}\hookrightarrow\vr{X}{r}$ of Vietoris--Rips simplicial complexes is a homotopy equivalence.
A similar result is proven for Vietoris--Rips and \v{C}ech metric thickenings with $p=\infty$ in~\cite[Appendix~B]{AAF}.
We study how more general metric thickenings behave with respect to crushings.
We begin with some preliminaries.

\begin{lem}\label{lem:dLP_under_bounded_Lipschitz}
Let $X$ be a complete separable metric space and let $\phi$ be an $L$-Lipschitz function that is absolutely bounded by a constant $C>0$.
Then for any $\alpha, \beta \in \cP_X$, we have
\[\left\vert \int_X\phi(x)\,\, \alpha\,(dx) - \int_X\phi(x)\,\,\beta\,(dx) \right\vert \leq (L + 2C)\, d_{\mathrm{LP}}(\alpha, \beta).\]
\end{lem}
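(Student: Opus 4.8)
The plan is to estimate the difference of integrals by passing to a well-chosen coupling and splitting $X$ according to whether the two coupled points are close or far apart. First I would recall that on a complete separable metric space the Lévy--Prokhorov distance $d_{\mathrm{LP}}(\alpha,\beta)$ admits a coupling characterization: for any $\varepsilon > d_{\mathrm{LP}}(\alpha,\beta)$ there is a coupling $\mu \in \cpl(\alpha,\beta)$ with $\mu(\{(x,y) : d_X(x,y) \geq \varepsilon\}) \leq \varepsilon$ (Strassen's theorem). Writing $\int_X \phi\, d\alpha - \int_X \phi\, d\beta = \iint_{X\times X} (\phi(x) - \phi(y))\, \mu(dx\times dy)$, I would break the domain into the ``diagonal-ish'' part $D_\varepsilon = \{d_X(x,y) < \varepsilon\}$ and its complement.

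On $D_\varepsilon$, the $L$-Lipschitz bound gives $|\phi(x) - \phi(y)| \leq L\, d_X(x,y) < L\varepsilon$, so that piece contributes at most $L\varepsilon$. On the complement, the crude bound $|\phi(x) - \phi(y)| \leq 2C$ together with $\mu(D_\varepsilon^c) \leq \varepsilon$ gives a contribution of at most $2C\varepsilon$. Adding these, $\left|\int_X \phi\, d\alpha - \int_X \phi\, d\beta\right| \leq (L + 2C)\varepsilon$ for every $\varepsilon > d_{\mathrm{LP}}(\alpha,\beta)$, and letting $\varepsilon \downarrow d_{\mathrm{LP}}(\alpha,\beta)$ yields the claim. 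This is essentially the standard ``Lipschitz + bounded implies controlled by Lévy--Prokhorov'' estimate, and structurally it parallels Lemma~\ref{lem:dwq_stability_wrt_Lipschitz} but with the $\dWq$-coupling replaced by a Strassen coupling adapted to $d_{\mathrm{LP}}$.

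The main obstacle is ensuring the coupling characterization of $d_{\mathrm{LP}}$ is available in the stated generality; this is exactly where the hypothesis that $X$ is complete and separable (i.e.\ Polish) is used, since Strassen's theorem requires it. I would cite a standard reference (e.g.\ \cite{bogachev2018weak} or the classical Strassen/Dudley sources) for the precise statement that $d_{\mathrm{LP}}(\alpha,\beta) = \inf\{\varepsilon > 0 : \exists\, \mu \in \cpl(\alpha,\beta)\text{ with }\mu(\{d_X \geq \varepsilon\}) \leq \varepsilon\}$, and then the rest is the elementary two-region split above. One minor care point: the infimum defining $d_{\mathrm{LP}}$ may not be attained, so I work with $\varepsilon$ strictly larger than $d_{\mathrm{LP}}(\alpha,\beta)$ throughout and only take the limit at the very end, which is harmless since the final bound is continuous in $\varepsilon$.
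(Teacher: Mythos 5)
Your proof is correct and follows essentially the same coupling-based approach as the paper's: invoke the Strassen/Prokhorov coupling characterization of $d_{\mathrm{LP}}$, split $X\times X$ into the near-diagonal set and its complement, and bound the two contributions by $L\varepsilon$ and $2C\varepsilon$ respectively. The only cosmetic difference is that the paper uses the coupling at $\varepsilon = d_{\mathrm{LP}}(\alpha,\beta)$ exactly (since the infimum in the Strassen characterization is attained on Polish spaces), whereas you work with $\varepsilon > d_{\mathrm{LP}}(\alpha,\beta)$ strictly and take a limit at the end; both are valid and your more cautious version carries no extra cost.
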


\begin{proof}
Let $\varepsilon =d_{\mathrm{LP}}(\alpha, \beta)$, then by the expression of the L\'{e}vy-Prokhorov metric given in~\cite[Theorem 3.1.5]{bogachev2018weak}), there is a coupling between $\alpha$ and $\beta$ by $\mu\in \cP_{X\times X}$ such that
\[\mu(\{(x, x')\in X\times X~|~d_X(x, x')> \varepsilon\}) \leq \varepsilon.\]
Let $E$ be the set $\{(x, x')\in X\times X~|~d_X(x, x')> \varepsilon\}$.
Then we have
\begin{align*}
&\left\vert \int_X\phi(x)\, \alpha\,(dx) - \int_X\phi(x)\,\beta\,(dx) \right\vert \\
=\, & \left\vert \iint_{X\times X}\phi(x) - \phi(x')\,\,\mu\,(dx\times dx') \right\vert \\
\leq\, & \iint_{E}\left\vert \phi(x) - \phi(x')\right\vert\,\,\mu\,(dx\times dx') + \iint_{E^c}\left\vert \phi(x) - \phi(x')\right\vert\,\,\mu\,(dx\times dx') \\
\leq\, & 2C\varepsilon + L \iint_{E^c}d_X(x, x')\,\,\mu\,(dx\times dx') \\
=\, & (L + 2C)\,d_{\mathrm{LP}}(\alpha, \beta).
\end{align*}
\end{proof}

\begin{prop}
Let $X$ be a complete separable metric space such that there is a crushing from $X$ onto a subset $A\subset X$.
Then there is an induced deformation retraction from $\cP_X$ onto $\cP_A$.
\end{prop}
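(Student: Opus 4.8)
The plan is to lift the crushing $H$ to the space of measures by pushforward. Define $\widetilde H\colon\cP_X\times[0,1]\to\cP_X$ by $\widetilde H(\alpha,t):=(H_t)_\sharp\alpha$, where $H_t:=H(\cdot,t)\colon X\to X$. It is convenient to record that $\widetilde H$ factors as
\[
\cP_X\times[0,1]\xrightarrow{\Theta}\cP_{X\times[0,1]}\xrightarrow{H_\sharp}\cP_X,
\]
where $\Theta(\alpha,t)=\alpha\otimes\delta_t$ and $H_\sharp$ is the pushforward along the continuous map $H$; indeed, for any Borel $B\subseteq X$ one checks $H_\sharp(\alpha\otimes\delta_t)(B)=\alpha(H_t^{-1}(B))=(H_t)_\sharp\alpha(B)$. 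Note also that each $H_t$ is $1$-Lipschitz, since crushing property (2) applied with $t'=t\le 1$ together with property (1) gives $d_X(H_t(x),H_t(y))\le d_X(H_1(x),H_1(y))=d_X(x,y)$.

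First I would check that $\widetilde H$ is a strong deformation retraction of $\cP_X$ onto $\cP_A$, where we identify $\cP_A$ with the set of measures in $\cP_X$ concentrated on $A$ (with its subspace topology). This part is routine: (i) $H(x,1)=x$ gives $H_1=\mathrm{id}_X$, so $\widetilde H(\alpha,1)=\alpha$; (ii) $H(x,0)\in A$ for all $x$ forces $H_0^{-1}(X\setminus A)=\varnothing$, so $\widetilde H(\alpha,0)$ is concentrated on $A$, and it is Radon on $A$ because inner regularity transfers through $H_0$ (if $C\subseteq H_0^{-1}(B)$ is compact with $\alpha(H_0^{-1}(B)\setminus C)<\eta$, then $H_0(C)\subseteq B$ is compact and $\widetilde H(\alpha,0)(B\setminus H_0(C))<\eta$); (iii) $H(a,t)=a$ for $a\in A$ gives $H_t|_A=\mathrm{id}_A$, so $\widetilde H(\beta,t)=\beta$ whenever $\beta$ is concentrated on $A$.

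The real content is the continuity of $\widetilde H$. Since $X$ is complete and separable, the weak topologies on $\cP_X$ and $\cP_{X\times[0,1]}$ are metrized by the Lévy--Prokhorov metric $d_{\mathrm{LP}}$ (\cite[Theorem~3.1.4]{bogachev2018weak}); hence $\cP_X\times[0,1]$ is metrizable and it suffices to prove sequential continuity. The map $H_\sharp$ is continuous because $H$ is continuous (\cite[Chapter~5]{bogachev2018weak}), so it is enough to show $\Theta$ is sequentially continuous. Let $(\alpha_n,t_n)\to(\alpha,t)$; since bounded Lipschitz functions are convergence-determining for the weak topology on a separable metric space, it suffices to show $\int_X g(x,t_n)\,\alpha_n(dx)\to\int_X g(x,t)\,\alpha(dx)$ for every bounded Lipschitz $g\colon X\times[0,1]\to\R$. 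Write $A_n+B_n$ for this difference, with $A_n=\int_X g(x,t_n)\,\alpha_n(dx)-\int_X g(x,t_n)\,\alpha(dx)$ and $B_n=\int_X g(x,t_n)\,\alpha(dx)-\int_X g(x,t)\,\alpha(dx)$. Each $x\mapsto g(x,t_n)$ is Lipschitz with constant at most $\mathrm{Lip}(g)$ and absolutely bounded by $\|g\|_\infty$, uniformly in $n$, so Lemma~\ref{lem:dLP_under_bounded_Lipschitz} gives $|A_n|\le(\mathrm{Lip}(g)+2\|g\|_\infty)\,d_{\mathrm{LP}}(\alpha_n,\alpha)\to 0$. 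For $B_n$, $g(x,t_n)\to g(x,t)$ pointwise in $x$ by continuity of $g$ and $|g(x,t_n)|\le\|g\|_\infty$ is $\alpha$-integrable, so dominated convergence gives $B_n\to 0$. Thus $\Theta$, and hence $\widetilde H=H_\sharp\circ\Theta$, is continuous.

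I expect the only obstacle to be precisely this last step: handling the ``moving'' integrand $g(\cdot,t_n)$ against the varying measures $\alpha_n$. Without compactness of $X$ one cannot replace $A_n$ by a supremum, and the decomposition $A_n+B_n$ combined with Lemma~\ref{lem:dLP_under_bounded_Lipschitz} is what makes this work; an alternative route is to invoke tightness of $\{\alpha_n\}$ (automatic for a convergent sequence on a Polish space), but the Lipschitz-testing argument is cleaner. I would not attempt here the stronger assertion that $\widetilde H$ is distance non-increasing for the $q$-Wasserstein metrics (which would upgrade it to a genuine crushing of $\cP_X$), as that requires a separate argument and is not needed for the proposition as stated.
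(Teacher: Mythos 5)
Your proof is correct, and the core of it (the split into a ``change the measure, fix the time'' term handled via Lemma~\ref{lem:dLP_under_bounded_Lipschitz} plus a ``change the time, fix the measure'' term handled by dominated convergence) is exactly the same two-step estimate the paper uses in its continuity argument; the paper writes this as $|I_{n,n}-I_{\infty,\infty}|\leq|I_{n,n}-I_{n,\infty}|+|I_{n,\infty}-I_{\infty,\infty}|$ with $I_{i,j}=\int_X\gamma\circ f_{t_i}\,d\alpha_j$, which matches your $A_n+B_n$.
Where you genuinely deviate is in factoring $\widetilde H$ as $H_\sharp\circ\Theta$ and pushing all of the continuity work onto the product-measure map $\Theta(\alpha,t)=\alpha\otimes\delta_t$, with $H_\sharp$ handled by the general fact that pushforward along a continuous map is weakly continuous.
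This has a real advantage: the paper's version needs each $f_{t_n}$ to be $1$-Lipschitz (a consequence of crushing property (2)) in order to keep $\gamma\circ f_{t_n}$ uniformly Lipschitz and invoke Lemma~\ref{lem:dLP_under_bounded_Lipschitz}, whereas your factorization applies the lemma before pushing forward, so you only ever integrate the restriction $g(\cdot,t_n)$ of a fixed Lipschitz $g$ on $X\times[0,1]$, which is automatically uniformly Lipschitz.
As a result your argument does not use the distance-nonincreasing property of $H$ at all, and in fact proves the stronger assertion that \emph{any} strong deformation retraction of a complete separable metric space pushes forward to a strong deformation retraction on $\cP_X$; the $1$-Lipschitz observation you record at the outset is never needed.
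The remaining routine checks (identity at $t=1$, landing in $\cP_A$ at $t=0$, constancy on $\cP_A$, Radon-ness preserved under pushforward) are handled the same way in both.
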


\begin{proof}
Let $H$ be a crushing from $X$ to $A$, so $H$ is a continuous map from $X\times [0, 1]$ to $X$.
We use the notation $f_t(x)$ to denote the map $H(x, t)\colon X\to X$ for any fixed $t$ in $[0, 1]$.
Then we can define a map $\tilde{H}\colon \cP_X\times [0,1]\to \cP_X$ via
\[\tilde{H}(\alpha, t) = (f_t)_\sharp(\alpha).\]
For continuity, let $(\alpha_n, t_n)$ be a sequence that converges to $(\alpha_\infty, t_\infty)$.
Then for any bounded continuous function $\gamma(x)$ on $X$, we have
\begin{equation*}
\int_X\gamma(x)\, (f_{t_n})_\sharp(\alpha_n)(dx) = \int_X \gamma\circ f_{t_n}(x)\,\alpha_n(dx).
\end{equation*}
Without loss of generality, we may assume $\gamma$ is $1$-Lipschitz and absolutely bounded by $C>0$.
Therefore, every $\gamma\circ f_{t_n}(x)$ is $1$-Lipschitz and bounded by $C$ for any $n$.
We use the notation $I_{i, j}$ to denote $\int_X \gamma\circ f_{t_i}(x)\,\alpha_j(dx)$.
Then $|I_{i, j}|$ is uniformly bounded by $C$.
Lemma~\ref{lem:dLP_under_bounded_Lipschitz} then shows that for any $i$, we have
\[|I_{i, j} - I_{i,\infty}|\leq (1 + 2C)\, d_{\mathrm{LP}}(\alpha_j, \alpha_\infty).\]
The uniform bound on $|I_{i, \infty}|$ implies $I_{i, \infty}$ converges to $I_{\infty, \infty}$.
For any $\varepsilon > 0$, we can find an $N$ such that for any $n> N$, $|I_{n, \infty} - I_{\infty, \infty}|\leq \frac{\varepsilon}{2}$ and
\[d_{\mathrm{LP}}(\alpha_n, \alpha_\infty) \leq \frac{\varepsilon}{2(1 + 2C)}.\]
Then we have
\[ |I_{n ,n} - I_{\infty, \infty}| \leq |I_{n ,n} - I_{n, \infty}| + |I_{n, \infty}- I_{\infty, \infty}| \leq \varepsilon.\]
This shows $I_{i, i}$ converges to $I_{\infty, \infty}$, and therefore $\tilde{H}$ is continuous.
As $H$ satisfies $H(x, 1) = x$, $H(x, 0)= f_0(x)\in A$, $H(a, t) =f_t(a)= a$ if $a\in A$, we get $\tilde{H}(\alpha, 1) = (\mathrm{id}_{\cP_X})_\sharp(\alpha) = \alpha$, $\tilde{H}(\alpha, 0) = (f_0)_\sharp(\alpha) \in \cP_A$ for any $\alpha\in \cP_X$, and $\tilde{H}(\beta, t) = \beta$ for $\beta \in \cP_A$. Therefore, $\tilde{H}$ is a indeed a strong deformation retraction from $\cP_X$ to $\cP_A$.
\end{proof}

In the same spirit of~\cite[Lemma~B.1]{AAF}, we can apply the above deformation retraction to the sublevel set filtrations of a set of invariants that includes $\iqp$ and $\rad_p$.

\begin{thm}
Let $\mi$ be an invariant such that for any metric spaces $X$ and $Y$ and any $1$-Lipschitz map $f\colon X\to Y$, the induced map on $\cP_X$ does not increase the values of $\,\mi$.
More precisely, for any $\alpha\in \cP_X$, we require
$\mi^Y(f_\sharp(\alpha))\leq \mi^X(\alpha)$.
Then for any complete separable metric space $X$ and any subset $A$ such that $X$ can crushed onto $A$, we have
\[\dHT\big((\cP_X, \mi^X), (\cP_A, \mi^A)\big) = 0.\]
\end{thm}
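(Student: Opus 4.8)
The plan is to build an explicit $0$-homotopy equivalence between $(\cP_X,\mi^X)$ and $(\cP_A,\mi^A)$ directly from the crushing $H$, and then read off $\dHT=0$ from Definition~\ref{defn:dHT}. First I would record the Lipschitz consequence of the crushing axioms: writing $f_t:=H(\smb,t)\colon X\to X$, the monotonicity axiom $d_X(H(x,t'),H(y,t'))\le d_X(H(x,t),H(y,t))$ for $t'\le t$, evaluated at $t=1$ where $H(\smb,1)=\mathrm{id}_X$, shows that every $f_t$ is $1$-Lipschitz. Since $f_0$ has image in $A$ and $f_0|_A=\mathrm{id}_A$ (from $H(a,t)=a$ for $a\in A$), I can factor $f_0=\iota\circ g_0$, where $g_0\colon X\to A$ is the corestriction and $\iota\colon A\hookrightarrow X$ is the inclusion; both are $1$-Lipschitz, and $g_0\circ\iota=\mathrm{id}_A$.

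Next I would set $\Phi:=(g_0)_\sharp\colon\cP_X\to\cP_A$ and $\Psi:=\iota_\sharp\colon\cP_A\hookrightarrow\cP_X$ (the natural inclusion of $\cP_A$ into $\cP_X$). Applying the hypothesis on $\mi$ to the $1$-Lipschitz maps $g_0$ and $\iota$ gives $\mi^A(\Phi(\alpha))\le\mi^X(\alpha)$ and $\mi^X(\Psi(\beta))\le\mi^A(\beta)$, so $\Phi$ and $\Psi$ are each $0$-maps in the sense of Definition~\ref{defn:delta-homotopy}. Moreover $\Phi\circ\Psi=(g_0\circ\iota)_\sharp=(\mathrm{id}_A)_\sharp=\mathrm{id}_{\cP_A}$, which is trivially $0$-homotopic to $\mathrm{id}_{\cP_A}$ via the constant homotopy, so one of the two composite conditions is immediate.

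It remains to $0$-homotope $\Psi\circ\Phi=(g_0)_\sharp$ post-composed with $\iota_\sharp$, i.e.\ $(f_0)_\sharp$, to $\mathrm{id}_{\cP_X}$ with respect to $(\mi^X,\mi^X)$. For this I would use the map $\tilde H\colon\cP_X\times[0,1]\to\cP_X$, $\tilde H(\alpha,t)=(f_t)_\sharp(\alpha)$, whose continuity is exactly the content of the preceding proposition. Then $\tilde H(\smb,1)=\mathrm{id}_{\cP_X}$, $\tilde H(\smb,0)=(f_0)_\sharp=\Psi\circ\Phi$ (by functoriality of pushforward and $\iota\circ g_0=f_0$), and for each fixed $t$ the slice $\tilde H(\smb,t)=(f_t)_\sharp$ is a $0$-map because $f_t$ is $1$-Lipschitz, whence $\mi^X((f_t)_\sharp(\alpha))\le\mi^X(\alpha)$. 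Thus all four conditions in Definition~\ref{defn:dHT} hold with $\delta=0$, so $(\cP_X,\mi^X)$ and $(\cP_A,\mi^A)$ are $0$-homotopy equivalent and $\dHT\big((\cP_X,\mi^X),(\cP_A,\mi^A)\big)=0$.

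The only real subtlety — the step I would be most careful with — is the bookkeeping around corestriction: the hypothesis on $\mi$ is phrased for maps between (possibly distinct) metric spaces, so one must not feed it $f_0\colon X\to X$ directly (whose codomain is $X$, not $A$), but instead split off $g_0\colon X\to A$ to land genuinely in $\cP_A$ and so control $\mi^A$, while using $\mi^X$-estimates for $f_t$ as a self-map of $X$ along the homotopy. Everything else — the $1$-Lipschitz property of each $f_t$, the identities $f_0|_A=\mathrm{id}_A$ and $H(\smb,1)=\mathrm{id}_X$, and the continuity of $\tilde H$ — is immediate from the definition of a crushing together with the preceding proposition.
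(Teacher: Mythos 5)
Your argument is correct and follows the same route as the paper: use the crushing $H$ to get $1$-Lipschitz maps $f_t=H(\smb,t)$, push forward to get $0$-maps $\cP_X\to\cP_A$ and $\cP_A\to\cP_X$, and invoke $\tilde H(\alpha,t)=(f_t)_\sharp(\alpha)$ (from the preceding proposition) as the $0$-homotopy witnessing $\iota_\sharp\circ f_\sharp\simeq\mathrm{id}_{\cP_X}$. The explicit corestriction through $g_0\colon X\to A$ and the derivation that each $f_t$ is $1$-Lipschitz from the monotonicity axiom at $t=1$ are details the paper glosses over, and you have filled them in correctly.
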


\begin{proof}
Let $H$ be the crushing from $X$ onto $A$ and let $f$ be $H(x, 0)$.
Then as both $f$ and the inclusion $\iota\colon A\to X$ are $1$-Lipschitz maps, the above condition on the invariant $\mi$ implies
\begin{itemize}
\item $f_\sharp$ is a $0$-map from $(X,\mi^X)$ to $(A,\mi^A)$, and
\item $\iota_\sharp$ is a $0$-map from $(A,\mi^A)$ to $(X,\mi^X)$.
\end{itemize}
Note that $f_\sharp \circ \iota_\sharp$ is the identify map on $\cP_A$ and $\iota_\sharp \circ f_\sharp$ is $0$-homotopic to $\mathrm{id}_{\cP_X}$ with respect to $(\mi^X, \mi^X)$.
Therefore, we have $\dHT\big((\cP_X, \mi^X), (\cP_A, \mi^A)\big) = 0$.
\end{proof}

\begin{remark}
The crushing result could be leveraged to analyze the persistent homology of a space using the persistent homology of embedded submanifolds; see~\cite{virk2021footprints}.
\end{remark}

\subsection{Ambient filtrations from Lipschitz invariants}
\label{app:ambient}

We now show that the main properties for intrinsic $p$-\v{C}ech metric thickenings also hold for ambient $p$-\v{C}ech metric thickenings.

Let $M$ be a metric space and let $X$ be a subset of $M$.
Then any function $\mi^M$ naturally restricts to $\cP_X$ and induces a filtration.
We have the following stability result, given that $\mi^M$ is $C$-Lipschitz with respect to $d_{\mathrm{W}, \infty}^M$ for some $C>0$.

\begin{thm}\label{thm:general_ambient_stability}
Let $M$ be a metric space and let $\mi^M$ be $C$-Lipschitz function on $\cP_X$ with respect to $d_{\mathrm{W}, \infty}^M$.
Then for any two totally bounded subsets $X$ and $Y$ in $M$, we have
\begin{align*}\dHT\big((\cP_X, \mi^M), (\cP_Y, \mi^M)) &\leq C\,d_\mathrm{H}^M(X,Y) \\
\dHT\big((\cPfin_X, \mi^M), (\cPfin_Y, \mi^M)) &\leq C\,d_\mathrm{H}^M(X,Y).
\end{align*}
\end{thm}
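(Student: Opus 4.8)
The plan is to mimic the proof of Theorem~\ref{thm:general_stability} but working with ambient Hausdorff-type comparisons instead of Gromov--Hausdorff comparisons, and crucially, observing that since both $X$ and $Y$ sit inside the common ambient space $M$, we can build the comparison maps \emph{directly} without passing through abstract correspondences. So first I would fix $\eta > d_\mathrm{H}^M(X,Y)$ and $\delta>0$, and choose finite $\delta$-nets $U\subseteq X$ and $V\subseteq Y$. Since $d_\mathrm{H}^M(X,Y)<\eta$, every point of $U$ is within $\eta$ (in the ambient metric $d_M$) of some point of $Y$, and hence within $\eta+\delta$ of some point of $V$; this gives a map $\varphi\colon U\to V$ with $d_M(u,\varphi(u))<\eta+2\delta$ (accounting for the net approximation on both sides), and symmetrically a map $\psi\colon V\to U$ with $d_M(v,\psi(v))<\eta+2\delta$. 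These maps are ``close to the identity of $M$'' in a literal ambient sense, which is stronger than merely having bounded distortion and codistortion.

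Next I would define $\widehat{\Phi}\colon \cP_X\to\cP_Y$ and $\widehat{\Psi}\colon\cP_Y\to\cP_X$ exactly as in Lemma~\ref{lem:homotopy_scaffording}: $\widehat{\Phi} = \iota_V\circ\varphi_\sharp\circ\Phi_U$ and $\widehat{\Psi}=\iota_U\circ\psi_\sharp\circ\Phi_V$, where $\Phi_U,\Phi_V$ are the partition-of-unity maps from Lemma~\ref{lem:Ptau_cts_via_partition}. These are continuous by the same reasoning. The key estimate to establish is that $d_{\mathrm{W},\infty}^M(\widehat{\Phi}(\alpha), \alpha)$ is small — specifically bounded by $\eta + O(\delta)$ — where now we measure the Wasserstein distance in the \emph{ambient} space $M$. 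This follows because $\Phi_U$ moves mass by at most $\delta$ (within $M$), the pushforward by $\varphi$ moves each atom $\delta_u$ to $\delta_{\varphi(u)}$ which is within $\eta+2\delta$ of $u$ in $d_M$, so composing and using the convexity bound (Lemma~\ref{lem:bound-distance-convex-comb}) applied in $\cP_M$ gives $d_{\mathrm{W},\infty}^M(\widehat{\Phi}(\alpha),\alpha) < \eta + 3\delta$ or so. Then the $C$-Lipschitz hypothesis on $\mi^M$ with respect to $d_{\mathrm{W},\infty}^M$ immediately yields $\mi^M(\widehat{\Phi}(\alpha)) \le \mi^M(\alpha) + C(\eta+3\delta)$, so $\widehat{\Phi}$ is a $C(\eta+3\delta)$-map, and similarly for $\widehat{\Psi}$.

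Then I would verify the homotopies. The linear homotopies $H_t^X(\alpha) = (1-t)\widehat{\Psi}\widehat{\Phi}(\alpha) + t\alpha$ and $H_t^Y$ are continuous by Proposition~\ref{prop:linear_homotopies}, and the computation in Lemma~\ref{lem:homotopy_scaffording} (carried out in $\cP_M$ rather than relying on codistortion) shows $d_{\mathrm{W},\infty}^M(H_t^X(\alpha),\alpha) < 2\eta + O(\delta)$ — essentially twice the single-map bound since we compose $\widehat{\Psi}\circ\widehat{\Phi}$. Again invoking $C$-Lipschitzness gives that each $H_t^X$ is a $C(2\eta+O(\delta))$-map, so $\widehat{\Psi}\widehat{\Phi}$ is $C(2\eta+O(\delta))$-homotopic to $\mathrm{id}_{\cP_X}$ with respect to $(\mi^M,\mi^M)$. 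This verifies the four conditions in Definition~\ref{defn:dHT} for $(\cP_X,\mi^M)$ and $(\cP_Y,\mi^M)$ to be $C(\eta+O(\delta))$-homotopy equivalent. Letting $\delta\downarrow 0$ and then $\eta\downarrow d_\mathrm{H}^M(X,Y)$ gives $\dHT((\cP_X,\mi^M),(\cP_Y,\mi^M))\le C\, d_\mathrm{H}^M(X,Y)$. The finitely-supported case is identical since all maps restrict to finitely supported measures (as noted in the remark after Lemma~\ref{lem:homotopy_scaffording}).

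The main obstacle I anticipate is bookkeeping the constants correctly — in particular, making sure the ambient estimate $d_M(u,\varphi(u)) < \eta + 2\delta$ is derived cleanly (one $\delta$ for the net $U$ inside $X$, one $\delta$ for approximating the nearby point of $Y$ by a point of $V$), and then that the triangle-inequality chaining through $\Phi_U$, $\varphi_\sharp$, and $\iota_V$ in the $d_{\mathrm{W},\infty}^M$ metric produces the clean bound $C\,d_\mathrm{H}^M(X,Y)$ in the limit rather than something with a spurious factor of $2$. Since the ambient setting replaces ``codistortion'' (which contributed the extra factors in the intrinsic case) with direct ambient proximity, I expect the final constant to genuinely be $C$ times the Hausdorff distance with no factor of $2$, consistent with the statement; care is needed to confirm this is exactly what the estimates give.
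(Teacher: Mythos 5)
Your proposal is correct and takes essentially the same approach as the paper: both construct $\varphi\colon U\to V$ and $\psi\colon V\to U$ with direct ambient proximity $d_M(u,\varphi(u))$ small (rather than only bounded codistortion), compose the partition-of-unity maps $\Phi_U,\Phi_V$ with pushforwards, and then invoke the $C$-Lipschitz property of $\mi^M$ with respect to $d_{\mathrm{W},\infty}^M$ to check the $\delta$-map and $\delta$-homotopy conditions. The only cosmetic differences are the normalization of $\eta$ (you take $\eta>d_\mathrm{H}^M$, the paper takes $\eta>2d_\mathrm{H}^M$, both yield the same limit) and whether one redoes the $H_t^X$ estimate directly in $\cP_M$ or, as the paper does, notes that the codistortion bound $\mathrm{codis}(\varphi,\psi)\le\eta+4\delta$ still holds so Lemma~\ref{lem:homotopy_scaffording}'s bound applies verbatim.
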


Here $d_\mathrm{H}^M$ is the Hausdorff distance in $M$.

\begin{proof}
Overall, the proof follows a construction similar to that in Lemma~\ref{lem:homotopy_scaffording} in the setting of Hausdorff distance.
For $\eta > 2\cdot d_\mathrm{H}^M(X, Y)$ and $\delta>0$, we fix finite $\delta$-nets $U\subset X$ of $X$ and $V\subset Y$ of $Y$.
By the triangle inequality, we have $d_{\mathrm{H}}(U, V)<\frac{\eta}{2}+2\delta$.
For any point $u\in U$, there is a point $v$ in $V$ with $d_M(u, v)< \frac{\eta}{2}+2\delta$.
Through this construction, there is a map $\varphi\colon U\to V$ and $\psi\colon V\to U$ with
\begin{itemize}
	\item $d_M(u, \varphi(u)) < \frac{\eta}{2}+2\delta$ for any $u\in U$.
	\item $d_M(v, \psi(v)) < \frac{\eta}{2}+2\delta$ for any $v\in V$.
	\item $\max\big(\mathrm{dis}(\varphi),\mathrm{dis}(\psi),\mathrm{codis}(\varphi,\psi)\big)\leq \eta +4\delta.$
\end{itemize}
We use the notations $\widehat{\Phi}$, $\widehat{\Psi}$, $H_t^X$ and $H_t^Y$ as in Lemma~\ref{lem:homotopy_scaffording}.
Let $\alpha$ be a measure in $\cP_X$ and $\beta$ a measure in $\cP_Y$.
The last item implies that the following bound from Lemma~\ref{lem:homotopy_scaffording} still holds here:
we have
$d_{\mathrm{W}, \infty}^M(H^X_t(\alpha), \alpha)< \eta + 6\delta$ and $d_{\mathrm{W}, \infty}^M(H^Y_t(\beta), \beta)< \eta + 6\delta$.
Similar to the proof of Theorem~\ref{thm:general_stability}, it suffices to show
\begin{itemize}
	\item the map $\widehat{\Phi}\colon \cP_X\to \cP_Y$ is a $(\frac{\eta}{2} + 3\delta)\,C$--map from $(\cP_X, \mi^M)$ to $(\cP_Y, \mi^M)$,
	\item the map $\widehat{\Psi}\colon \cP_Y\to \cP_X$ is a $(\frac{\eta}{2} + 3\delta)\,C$--map from $(\cP_Y, \mi^M)$ to $(\cP_X, \mi^M)$,
	\item the map $\widehat{\Psi}\circ\widehat{\Phi}\colon \cP_X\to \cP_X$ is $(\eta + 6\delta)\,C$--homotopic to $\mathrm{id}_{\cP_X}$ with respect to $( \mi^M, \mi^M)$,
	\item the map $\widehat{\Phi}\circ\widehat{\Psi}\colon \cP_X\to \cP_X$ is $(\eta + 6\delta)\,C$--homotopic to $\mathrm{id}_{\cP_Y}$ with respect to $( \mi^M, \mi^M)$.
\end{itemize}
We will only show the first and the third items; the rest can be proved similarly.
For the first item, using the fact that $\mi^M$ is $C$-Lipschitz with respect to $d_{\mathrm{W}, \infty}^M$ and the estimate in Lemma~\ref{lem:Ptau_cts_via_partition}, we get
\begin{align*}
\mi^M(\widehat{\Phi})(\alpha) & = \mi^M(\varphi_\sharp(\Phi_U(\alpha)))\\
& \leq \mi^M(\Phi_U(\alpha)) + d_{\mathrm{W}, \infty}^M(\varphi_\sharp(\Phi_U(\alpha)), \Phi_U(\alpha))\,C\\
& \leq \mi^M(\Phi_U(\alpha)) + \left(\frac{\eta}{2} + 2\delta\right)\,C\\
&\leq \mi^M(\alpha) +d_{\mathrm{W}, \infty}^M(\alpha, \Phi_U(\alpha))\,C + \left(\frac{\eta}{2} + 2\delta\right)\,C\\
&\leq \mi^M(\alpha) +\left(\frac{\eta}{2} + 3\delta\right)\,C.
\end{align*}
For the third item, by the inequality $d_{\mathrm{W}, \infty}^M(H_t^X(\alpha), \alpha)< \eta + 6\delta$,
we have
\[\mi^M(H_t^X(\alpha)) \leq \mi^M(\alpha) + (\eta + 6 \delta)\,C.\]
\end{proof}

An interesting case is the ambient $p$-radius, which leads to $p$--ambient \v{C}ech filtrations.
Let $X$ be a bounded subset in a metric space $M$.
For any $\alpha\in\cP_X$, we define the \emph{$p$--ambient radius} of $\alpha$ to be
 \[\rad_p^M(\alpha):=\inf_{m\in M}F_{\alpha,p}(m) = \inf_{m\in M}\dWp(\delta_m,\alpha).\]

\begin{defn}[$p$-ambient \v{C}ech filtration]
\label{defn:p-ambient-cech}
Let $X\subseteq M$ be metric spaces.
For each $r>0$ and $p\in[1,\infty]$, let the \emph{$p$-ambient \v{C}ech metric thickening at scale $r$} be
\[\acechp{X}{M}{r}:=\{\alpha\in\cP_X~|~\rad_p^M(\alpha)<r\}.\]
Similarly, the $p$-ambient \v{C}ech metric thickening at scale $r$ with finite support is defined as
\[\acechpfin{X}{M}{r}:=\{\alpha\in\cPfin_X~|~\rad_p^M(\alpha)<r\}.\]
\end{defn}

Therefore, as a special instance of Theorem~\ref{thm:general_ambient_stability}, we have
\begin{thm}\label{thm:stability_ambient_cech}
Let $X$ and $Y$ be two totally bounded spaces sitting inside a metric space $M$.
We have
\small{\begin{align*}
\di\big(H_k\circ\acechpf{X}{M},H_k\circ\acechpf{Y}{M}\big) &\leq \dHT\big((\cP_X, \rad_p^M), (\cP_Y, \rad_p^M)) \leq d_\mathrm{H}^M(X,Y) \\
\di\big(H_k\circ\acechpffin{X}{M},H_k\circ\acechpffin{Y}{M}\big) &\leq \dHT\big((\cPfin_X, \rad_p^M), (\cPfin_Y, \rad_p^M)) \leq d_\mathrm{H}^M(X,Y).
\end{align*}}
\end{thm}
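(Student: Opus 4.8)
The plan is to deduce Theorem~\ref{thm:stability_ambient_cech} from the general ambient stability result, Theorem~\ref{thm:general_ambient_stability}, by checking that the $p$-ambient radius $\rad_p^M$ is a $1$-Lipschitz invariant with respect to $d_{\mathrm{W},\infty}^M$, and then converting the resulting $\dHT$ bound into an interleaving bound via Lemma~\ref{lem:dht_bound_interleaving}.

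First I would note that, by Definition~\ref{defn:p-ambient-cech}, the $p$-ambient \v{C}ech filtration is literally the sublevel set filtration of $\rad_p^M$: namely $\acechpf{X}{M}=\filtf{\cP_X}{\rad_p^M}$ and $\acechpffin{X}{M}=\filtf{\cPfin_X}{\rad_p^M}$, since $\acechp{X}{M}{r}=\{\alpha\in\cP_X\mid\rad_p^M(\alpha)<r\}=(\rad_p^M)^{-1}((-\infty,r))$. Hence Lemma~\ref{lem:dht_bound_interleaving}, applied to the pairs $(\cP_X,\rad_p^M)$ and $(\cP_Y,\rad_p^M)$ (and their finitely supported analogues), will give
\[\di\big(H_k\circ\acechpf{X}{M},H_k\circ\acechpf{Y}{M}\big)\leq\dHT\big((\cP_X,\rad_p^M),(\cP_Y,\rad_p^M)\big),\]
and similarly in the finitely supported case, so it remains to bound these $\dHT$-distances.

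Second --- and this is the only real computation --- I would verify the hypothesis of Theorem~\ref{thm:general_ambient_stability} for $\mi^M=\rad_p^M$ with Lipschitz constant $C=1$. For any $\alpha$ supported on the bounded set $X$ and any $m\in M$, the quantity $d_{\mathrm{W},p}^M(\delta_m,\alpha)=\big(\int_X d_M^p(m,x)\,\alpha(dx)\big)^{1/p}$ (with the usual $p=\infty$ convention) is finite since $d_M(m,\cdot)$ is finite on $X$, so $\rad_p^M(\alpha)=\inf_{m\in M}d_{\mathrm{W},p}^M(\delta_m,\alpha)$ is well-defined even when $M$ is unbounded. Then, exactly as in the proof of Lemma~\ref{lem:stability_of_rad_p}, the triangle inequality for $d_{\mathrm{W},p}^M$ gives, for all $p\in[1,\infty]$ and $\alpha,\beta\in\cP_X$,
\[\rad_p^M(\alpha)=\inf_{m\in M}d_{\mathrm{W},p}^M(\delta_m,\alpha)\leq\inf_{m\in M}\big(d_{\mathrm{W},p}^M(\delta_m,\beta)+d_{\mathrm{W},p}^M(\beta,\alpha)\big)=\rad_p^M(\beta)+d_{\mathrm{W},p}^M(\alpha,\beta),\]
and swapping $\alpha,\beta$ yields $|\rad_p^M(\alpha)-\rad_p^M(\beta)|\leq d_{\mathrm{W},p}^M(\alpha,\beta)\leq d_{\mathrm{W},\infty}^M(\alpha,\beta)$, the last inequality following from H\"{o}lder's inequality (as $p\le\infty$). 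Thus $\rad_p^M$ is $1$-Lipschitz with respect to $d_{\mathrm{W},\infty}^M$, and Theorem~\ref{thm:general_ambient_stability} applies with $C=1$, giving $\dHT\big((\cP_X,\rad_p^M),(\cP_Y,\rad_p^M)\big)\leq d_\mathrm{H}^M(X,Y)$ together with its finitely supported counterpart. Combining with the first step completes the proof.

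I do not expect a genuine obstacle here: the substantive work is already contained in Theorem~\ref{thm:general_ambient_stability}, which is a Hausdorff-distance variant of the Gromov--Hausdorff stability argument, and the present statement is a clean specialization. The only point requiring a little care is the well-definedness and finiteness of the ambient Wasserstein distances $d_{\mathrm{W},p}^M(\delta_m,\alpha)$ and $d_{\mathrm{W},\infty}^M(\alpha,\beta)$ for measures $\alpha,\beta$ concentrated on the bounded (indeed totally bounded) subsets $X,Y$ when the ambient space $M$ itself is not assumed bounded --- but this is immediate, since such measures have bounded support.
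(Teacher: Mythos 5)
Your proof is correct and follows essentially the same route as the paper: identify $\acechpf{X}{M}$ as the sublevel-set filtration of $\rad_p^M$, invoke Lemma~\ref{lem:dht_bound_interleaving} for the left inequality, and apply Theorem~\ref{thm:general_ambient_stability} with $C=1$ after noting that $\rad_p^M$ is $1$-Lipschitz for $d^M_{\mathrm{W},\infty}$ exactly as in Lemma~\ref{lem:stability_of_rad_p}. The only difference is that you spell out the Lipschitz verification and the finiteness of the ambient Wasserstein distances for measures with bounded support, whereas the paper simply cites Lemma~\ref{lem:stability_of_rad_p}; this is a legitimate (if minor) added clarification since that lemma is stated for bounded $X$ and $M$ here need not be bounded.
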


\begin{proof}
According to Lemma~\ref{lem:stability_of_rad_p}, $\rad_p^M$ is $1$-Lipschitz with respect to $d_{\mathrm{W}, \infty}^M$.
We apply Theorem~\ref{thm:general_ambient_stability} to get the inequality on the right.
The inequality on the left follows from Lemma~\ref{lem:dht_bound_interleaving}.
\end{proof}

\end{document}